\documentclass[11pt, a4paper]{scrartcl}
\usepackage{RGDM_linear_preamble}
\usepackage{aligned-overset}
\usepackage{physics}
\usepackage{tikz}
\usepackage{subcaption}

\makeatletter
\DeclareRobustCommand{\cev}[1]{%
  \mathpalette\do@cev{#1}%
}
\newcommand{\do@cev}[2]{%
  \fix@cev{#1}{+}%
  \reflectbox{$\m@th#1\vec{\reflectbox{$\fix@cev{#1}{-}\m@th#1#2\fix@cev{#1}{+}$}}$}%
  \fix@cev{#1}{-}%
}
\newcommand{\fix@cev}[2]{%
  \ifx#1\displaystyle
    \mkern#23mu
  \else
    \ifx#1\textstyle
      \mkern#23mu
    \else
      \ifx#1\scriptstyle
        \mkern#22mu
      \else
        \mkern#22mu
      \fi
    \fi
  \fi
}

\makeatother

\bibliography{bibliography_linear}
\usepackage{aligned-overset}

\title{\fontsize{16}{19}\selectfont Reflected diffusion models adapt to low-dimensional data}
\author{Asbjørn Holk\thanks{Aarhus University, Department of Mathematics, Ny Munkegade 118, 8000 Aarhus C, Denmark. \newline Email: \href{mailto:a.holk@math.au.dk}{a.holk@math.au.dk}} \and Claudia Strauch\thanks{Heidelberg University, Institute for Mathematics, Im Neuenheimer Feld 205, 69120 Heidelberg, Germany. \newline Email: \href{mailto:strauch@math.uni-heidelberg.de}{strauch@math.uni-heidelberg.de}} \and Lukas Trottner\thanks{University of Stuttgart, Department of Mathematics, Wankelstraße 5, 70563 Stuttgart, Germany. \newline Email: \href{mailto:lukas.trottner@isa.uni-stuttgart.de}{lukas.trottner@isa.uni-stuttgart.de}}}

\begin{document}

\maketitle

\begin{abstract}
While the mathematical foundations of score-based generative models are increasingly well understood for unconstrained Euclidean spaces, many practical applications involve data restricted to bounded domains. 
This paper provides a statistical analysis of reflected diffusion models on the hypercube $[0,1]^D$ for target distributions supported on $d$-di\-men\-si\-o\-nal linear subspaces. 
A primary challenge in this setting is the absence of Gaussian transition kernels, which play a central role in standard theory in $\mathbb{R}^D$. 
By employing an easily implementable infinite series expansion of the transition densities, we develop analytic tools to bound the score function and its approximation by sparse ReLU networks. 
For target densities with Sobolev smoothness $\alpha$, we establish a convergence rate in the $1$-Wasserstein distance of order $n^{-\frac{\alpha+1-\delta}{2\alpha+d}}$ for arbitrarily small $\delta > 0$, demonstrating that the generative algorithm fully adapts to the intrinsic dimension $d$. 
These results confirm that the presence of reflecting boundaries does not degrade the fundamental statistical efficiency of the diffusion paradigm, matching the almost optimal rates known for unconstrained settings.
\end{abstract}

\section{Introduction}
Deep generative models constitute a broad and rapidly evolving class of methods for learning complex data distributions from samples, with score-based diffusion models \cite{song21} emerging as a particularly powerful dynamic paradigm in recent years.
Motivated by the fact that many modern data sets are intrinsically low-dimensional yet embedded in high-dimensional bounded ambient spaces, we study the statistical performance diffusion-based generative modelling for probability measures supported on lower-dimensional manifolds within a bounded domain. 

The study of the statistical performance of diffusion models has become a central avenue of research in statistics for machine learning. Several papers \cite{oko23,azangulov24,tang24,puchkin24,chak26,dou24,zhang24,fan25,kwon26,chen23b} consider the convergence of such algorithms in the standard setting of an Ornstein--Uhlenbeck (OU in the following) noising model under different regularity and structural assumptions on the target  distribution as well as different score approximation classes such as neural networks with or without sparsity assumptions and kernel-type estimators. We provide more details on existing results for unconstrained models and how they relate to our work in the discussion in Section \ref{sec:discussion}, but  for now focus on the class of reflected diffusion models that we consider here. 

Such generative models were first introduced in \cite{lou23,fishman23} motivated by the fact that practical implementations of the generative backward process often rely on thresholding procedures to enforce geometric constraints on the data, even though the forward OU training model  ignores such constraints. To overcome such theoretical discrepancies \cite{lou23,fishman23} suggest to use a \textit{reflected} diffusion process as driver of noise instead. This allows to follow the same time-reversal rationale underlying unconstrained diffusion models since the time-reversal of a reflected diffusion is again a reflected diffusion with adjusted drift incorporating the information on the target data provided by the time evolution of the \textit{score}, that is the log-gradient of the forward marginals given initialisation in the data distribution. 

First statistical guarantees for such models were given in \cite{holk24} under the assumption that the data distribution has full support on the bounded reflection domain with Sobolev density bounded away from zero. The goal of this paper is to extend this analysis to singular target distributions supported on a lower-dimensional manifold $M \subset [0,1]^D$ and to demonstrate that the convergence rate of reflected diffusion models adapt to the intrinsic dimension of the data. In doing so, we provide the first rigorous statistical analysis of diffusion-based constrained generative models on bounded domains that explicitly accounts for low-dimensional data structures. This is particularly important from both an applied and a theoretical perspective in light of the so-called \textit{manifold hypothesis} \cite{ma12,loaiza24,fefferman16}. It postulates that image or text data (and many others) although being extremely high-dimensional have common structural features that make them supported on (unions of) much lower dimensional manifolds. Empirical evidence of this has, e.g., been provided by \cite{pope21,stan24,brown23}, which makes the manifold hypothesis a reasonable explanation for the tremendous success of deep generative models, provided that their adaptivity to intrinsic lower-dimensional geometric structures can be theoretically verified.

As a natural starting point, we focus on the simplified case where $M$ lies in a linear subspace of $\R^D$. This agrees with the route taken by first studies on adaptivity of unconstrained diffusion models to lower-dimensional data \cite{oko23,chen23b} and provides an important foundation for further investigations into more complex manifold structures.   
Our main contributions can be summarised as follows:
\begin{itemize}
\item 
While existing literature on manifold adaptation relies fundamentally on the Gaussian transition kernels of unconstrained OU processes, we develop analytic tools to control the score function associated with \emph{reflected Brownian motion on the hypercube}. 
Compared to \cite{holk24} we do not work with an eigenfunction expansion of the the score, but use the simple geometry of the hypercube together with symmetries of reflected Brownian motion to expand the transition densities as an infinite mixture of restricted Gaussian densities. This allows us to provide precise bounds on the spatial growth of the score and its singular behaviour as $t \searrow 0$, effectively \emph{decoupling the boundary effects of the domain} from the concentration of the measure $\mu$ around the low-dimensional subspace $M$. 
\item 
We prove that, despite the analytic complexities introduced by the reflecting boundaries and the resulting non-Gaussian transition densities, denoising score matching via sparse ReLU networks achieves the required approximation rates. 
In particular, we demonstrate that the \emph{complexity of the estimator depends only on the intrinsic dimension $d$} of the linear subspace supporting the data, rather than the ambient dimension $D$.
\item
We derive an \emph{upper bound for the $1$-Wasserstein distance} between the target distribution and the law of the generated samples. The established rate of $O(n^{-(\alpha+1-\delta)/(2\alpha+d)})$ confirms that imposing hard physical constraints via reflection on $[0,1]^D$ does \emph{not} degrade the fundamental statistical efficiency of the diffusion model, matching the almost optimal rates known for unconstrained dynamics.
\end{itemize}

In the following, we summarise the generative and statistical estimation procedure, introduce and discuss our assumptions on the target distribution and provide an informal version of our main result on 1-Wasserstein convergence rates of reflected diffusion models.

\paragraph{Forward reflected diffusion}
Let $\mu$ be a target probability distribution on $\R^D$, concentrated on a compact $d$-dimensional manifold $M \subset [0,1]^D$, where possibly $d\ll D$. Given an i.i.d.\ sample of data with distribution $\mu$, our aim is to generate approximate samples for $\mu$ in a two-step procedure via a time-reversal mechanism for reflected diffusions.

As a first step, we perturb $\mu$ by adding isotropic noise through a reflected Brownian motion on the hypercube. Specifically, we consider the reflected SDE
\begin{equation}
	\label{eq:forward_equation}
	\diff{X_t}
	=\diff{B_t}+n(X_t)\diff{L_t},\quad X_0\sim\mu,
\end{equation}
where $(B_t)_{t\ge0}$ is a standard $D$-dimensional Brownian motion, $n(x)$ denotes an inward-pointing normal vector at $x\in\partial[0, 1]^D$, and $(L_t)_{t\ge0}$ is the local time of $(X_t)_{t\ge0}$ at $\partial[0, 1]^D$, i.e., a one-dimensional, continuous and non-decreasing process  satisfying $L_t=\int_{0}^{t}\bm{1}_{\{X_s\in\partial[0, 1]^D\}}\,\mathrm{d}L_s$ and $\int_0^t \lvert n(X_s) \rvert \diff{L_s} < \infty$ almost surely. The presence of the stochastic forcing term $n(X_t) \diff{L_t}$ in the dynamics prevents the process from escaping the unit cube by normally reflecting it back into the interior when it hits the boundary.
Note that for boundary points $x \in \partial [0,1]^d$ where two or more faces of the cube intersect, the direction of $n(x)$ is not uniquely defined. If for $x\in\partial[0, 1]^D$, we let $I(x), J(x)\subseteq[D]$ denote the indices of $x$ for which $x_i=0$ and $x_j=1$, respectively, we specify $n(x)=\sum_{i\in I(x)}e_i-\sum_{j\in J(x)}e_j$, where $e_i$ is the $i$-th standard unit vector in $\R^D$. In particular, $n(x)$ is the unique inward pointing normal vector on smooth parts of the cube boundary, where faces do not intersect. The particular choice on the non-smooth part of the boundary $E\coloneqq\{x\mid \lvert I(x)\rvert + \lvert J(x) \rvert>1\}$ is without consequences, since the reflected Brownian motion will never hit $E$ almost surely when started in $[0,1]^D\setminus E$, cf.\ \cite[Theorem 1.1]{williams87} and we will assume without further mention that $\operatorname{supp}(\mu) \cap E = \varnothing$.

Existence and pathwise uniqueness of strong solutions for general reflected diffusions in bounded convex domains has been shown in \cite{tanaka79} under mild conditions on the coefficients, which are  satisfied for the Brownian case considered here.  In particular, because of the simple geometry of $[0,1]^D$ and the normal reflection direction, the $i$-th coordinate $X^i$ of the strong solution of \eqref{eq:forward_equation} is a strong solution to the one-dimensional reflected SDE 
\[\diff{X^i_t} = \diff{B^i_t} - \operatorname{sgn}(X_t^i)\diff{L^i_t},\]
where $L^i$ is the local time at $\{0,1\}$ and $\operatorname{sgn}(x) = -1$ for $x \leq 0$ and $\operatorname{sgn}(x) = 1$ for $x > 0$. Thus, conditional on the initialisation $X_0$, the components of $X$ are independent reflected Brownian motions on $[0,1]$ and $L = \sum_{i=1}^D L^i$ almost surely. The boundary local times $L^i$ at the faces are characterised via the occupation limit 
\[L^i_t = \lim_{\varepsilon \downarrow 0} \frac{1}{2\varepsilon} \int_{0}^t \one_{[0,\varepsilon] \cup [1-\varepsilon,1]}(X^i_s) \diff{s}, \]
which hold both almost surely and in $L^2$, uniformly on relatively compact sets in $t$, see \cite[Theorem 2.6]{burdzy04} in a more general context. Consequently, 
\begin{equation}\label{eq:occupation}
L_t = \lim_{\varepsilon \downarrow 0} \frac{1}{2\varepsilon}\int_0^t \sum_{i=1}^D \one_{[0,\varepsilon] \cup [1-\varepsilon,1]}(X_s^i) \diff{s},
\end{equation}
uniformly in $L^2$ and almost surely on relatively compact sets of $t$.
In the following, we let $p_t$ denote the density of $X_t$ wrt Lebesgue measure on $\R^D$. %For any $t>0$, $p_t$ is smooth and strictly positive on $[0,1]^D$, even though $\mu$ itself is singular. 

\paragraph{Time reversal and generative sampling}
Fix a terminal time $\overline{T}>0$, and define the time-reversed process $\cev{X}_t \coloneqq X_{\overline{T}-t}$ for $t\in[0,\overline{T}]$. 
Then, there exists a Brownian motion $(\overline{B}_t)_{t\ge0}$ such that $\cev{X}$ satisfies the reflected SDE
\begin{equation}
	\label{eq:backward_equation}
	\diff{\cev{X}_t} = \nabla \log p_{\overline{T}-t}(\cev{X}_t)\diff{t}
	+ \diff{\overline{B}_t}
	+ n(\cev{X}_t)\diff{\overline{L}_t},
	\qquad \cev{X}_0 \sim p_{\overline{T}},
\end{equation}
where $\overline{L}_t \coloneqq L_{\overline{T}} - L_{\overline{T}-t}$ is the local time of $\cev{X}$ at the boundary $\partial[0, 1]^D$. This result is proved in \cite[Theorem 2.5]{cattiaux88} for more general reflected diffusions on smooth domains, while \cite[Theorem 3.2]{fishman23} give an instructive probabilistic proof inspired by the non-reflected case \cite{haussmann86} for reflected Brownian motion on precompact, \textit{smooth} convex domains. Their proof relies on the boundary occupation limit characterisation of the local time, which in our case is provided by \eqref{eq:occupation}, and sufficient smoothness properties of the transition densities of $X_t$, which in our model can be verified based on the representation given in Lemma \ref{lem:explicit_score}. Thus, even though the unit cube $[0,1]^D$ is non-smooth, its simple geometry allows us to provide the technical tools needed to follow the proof of \cite[Theorem 2.3]{fishman23} to verify  \eqref{eq:backward_equation}.

If the score function $s_0(x,t)\coloneqq\nabla\log p_t(x)$ were known, then simulating \eqref{eq:backward_equation} would yield exact samples from $\mu$ at time $\overline{T}$. Since $p_t$ and $s_0$ depend on the unknown target distribution, they must be approximated from data.

\paragraph{Approximate backward diffusion.}
Given an approximation $s(x,t)$ of the score function, we instead consider the reflected SDE
\begin{equation}
	\label{eq:backward_equation_approx}
	\diff{\overline{X}_t^{s}}
	= s(\overline{X}_t^{s}, \overline{T}-t)\diff{t}
	+ \diff{\overline{B}_t}
	+ n(\overline{X}_t^{s})\diff{\overline{L}_t},
	\qquad \overline{X}_0^{s} \sim \mathcal U([0,1]^D).
\end{equation}
Several structural features of the proposed framework motivate the use of reflected diffusions on the hypercube.
First, the state space $[0,1]^D$ is natural in many applications, including image and signal generation, and reflection provides a principled mechanism to ensure that generated samples remain within prescribed bounds.
Second, the forward process \eqref{eq:forward_equation} has zero drift and unit diffusion, which allows for an explicit representation of its transition kernel (see Lemma~\ref{lem:explicit_solution}) and substantially simplifies the probabilistic analysis.
Third, the uniform distribution on $[0,1]^D$ is invariant for the forward reflected Brownian motion, yielding a simple and practically convenient initialisation for the backward dynamics. 
For numerical stability, we do not run the backward dynamics all the way to time $\overline{T}$, but instead output the sample $\overline{X}_{\overline{T}-\underline{T}}^{s}$ for some small $\underline{T}>0$. This introduces three distinct sources of error:
\begin{enumerate}[label=(\roman*)]
	\item the truncation error due to early stopping at time $\underline{T}$;
	\item the initialisation error from starting at stationarity rather than $p_{\overline{T}}$;
	\item the approximation error from using $s$ instead of the true score $s_0$.
\end{enumerate}

\paragraph{Error metric.}
Our goal is to quantify the discrepancy between $\mu$ and the law of the generated samples. Since the algorithm produces a random probability measure as the terminal law of a stochastic process, a natural notion of error is provided by Wasserstein distances. More specifically, our error criterion is the $1$-Wasserstein distance, for probability measures $\nu_1,\nu_2$ on $[0,1]^D$ defined by
\[
\mathcal W_1(\nu_1,\nu_2) \coloneqq \inf_{\pi\in\Pi(\nu_1,\nu_2)} \int_{[0,1]^D\times[0,1]^D} \lvert x-y \rvert\,\pi(\diff x,\diff y),
\]
where $\Pi(\nu_1,\nu_2)$ denotes the set of all couplings of $\nu_1$ and $\nu_2$.
Unlike divergences based on densities, $\mathcal W_1$ remains meaningful when $\nu_1$ or $\nu_2$ are supported on lower-dimensional sets, and it admits a natural interpretation in terms of couplings of stochastic processes, making it well suited for diffusion-based generative models.

\paragraph{Score estimation via denoising score matching.}
To estimate the score function, we discretise the time interval $[\underline{T},\overline{T}]$ into $K\in\N$ subintervals $\{[t_{i-1},t_i]\}_{i=1}^K$, where $K\asymp \log n$ and $t_i = \underline{T} c^i$  for some $c\in(1,2]$, $t_K=\overline{T}$.
On each subinterval, we approximate the map $(x,t)\mapsto\nabla\log p_t(x)$ separately.
The construction of the score estimator is based on the classical equivalence between the explicit score matching loss
\[
\int_{t_{i-1}}^{t_i}\mathbb{E}\big[\lvert s(X_t,t)-\nabla\log p_t(X_t)\rvert^2\big]\diff{t}
\]
and the denoising score matching loss
\[
\int_{t_{i-1}}^{t_i}\mathbb{E}\big[\lvert s(X_t,t)-\nabla\log q_t(X_0,X_t)\rvert^2\big]\diff{t},
\]
where $q_t(x,\cdot)$ denotes the transition density of the forward reflected diffusion at time $t$, started from $x$.
More precisely, for any measurable function $s$, one has
\[
\int_{t_{i-1}}^{t_i}\mathbb{E}\big[\lvert s(X_t,t)-\nabla\log p_t(X_t)\rvert^2\big]\diff{t}
= \mathbb{E}\bigg[\int_{t_{i-1}}^{t_i} \lvert s(X_t,t)-\nabla\log q_t(X_0,X_t)\rvert^2\diff{t}\bigg] + C_i,
\]
where
$C_i= -\mathbb{E}\big[\int_{t_{i-1}}^{t_i}\lvert\nabla\log p_t(X_t)-\nabla\log q_t(X_0,X_t)\rvert^2\diff{t}\big]$ is a constant independent of $s$.
Accordingly, for a given approximation class $\mathcal S_i$ and $i\in[K]$, we define
\[
L_s^{(i)}(x) \coloneqq \int_{t_{i-1}}^{t_i} \mathbb{E}\big[\lvert s(X_t,t)-\nabla\log q_t(x,X_t)\rvert^2 \mid X_0=x\big]\diff{t}.
\]
Minimising the explicit score matching loss over $\mathcal S_i$ is then equivalent to minimising $\mathbb{E}[L_s^{(i)}(X_0)]$ over the same class.
Given i.i.d.\ samples $Y_1,\ldots,Y_n\sim\mu$, a natural estimator of the score on the interval $[t_{i-1},t_i]$ is obtained by minimising the empirical denoising score matching loss
\[
\widehat{L}_s^{(i)} \coloneqq \frac{1}{n}\sum_{j=1}^n L_s^{(i)}(Y_j).
\]
For a collection of sparse ReLU neural network classes $\{\mathcal S_i\}_{i=1}^K$, we thus define the overall score estimator as the piecewise function
\[
\widehat{s}_n(x,t)
= \sum_{i=1}^K \widehat{s}_n^{(i)}(x,t)\,\bm{1}_{[t_{i-1},t_i)}(t),
\quad \text{ where }
\widehat{s}_n^{(i)} \in \argmin_{s\in\mathcal S_i} \widehat{L}_s^{(i)}.
\]
Conditional on $\widehat{s}_n$ we then simulate the reflected SDE \eqref{eq:backward_equation_approx} with $s =\hat{s}_n$ until time $\overline{T} - \underline{T}$ and use $\overline{X}^{\hat{s}_n}_{\overline{T} - \underline{T}}$ as an approximate sample for the target distribution $\mu$.

\paragraph{Assumptions and main result} 
The probabilistic results developed in Section~\ref{sec:prob} apply to general target measures supported on smooth submanifolds.
For the statistical analysis of score estimation and for deriving explicit convergence rates, however, we restrict attention to a setting in which the geometry of the support is sufficiently simple to permit sharp approximation bounds for neural networks.
Specifically, we introduce the following assumptions about $\mu$ and its support $M$:
\begin{enumerate}[label = ($\mathcal{H}$\arabic*), ref = ($\mathcal{H}$\arabic*)]
	\item \label{ass:H1} There exist orthonormal vectors $v_1,\ldots,v_d\in\R^D$ with $d \leq D$ and a shift $v_0\in[0,1]^D$ such that $M$ is connected with non-empty interior, has a Lipschitz boundary and is a closed subset of $(V+v_0)\cap[0, 1]^D$   where $V=\mathrm{Span}(v_1, v_2, \ldots, v_d)$.
	Moreover, there exist constants $c_0 \geq d, r_0>0$ such that, for all $x\in M$ and all $r>0$, we have $\mu\big(\mathcal B(x,r)\cap M\big)\gtrsim (r\wedge r_0)^{c_0}$ and $\mathrm{Vol}_d(\mathcal B(x, r)\cap M)\gtrsim(r\wedge r_0)^d$, where $\mathrm{Vol}_d$ denotes the restriction of the $d$-dimensional Lebesgue measure to $(V + v_0) \cap [0,1]^D$.
	\item \label{ass:H2} 
    The target distribution $\mu$ admits a density $p_0$ wrt to the $\mathrm{Vol}_d$ such that
	\begin{enumerate}[label = (\roman*)]
		\item $p_0\in H_0^{\alpha}(M)$ with $\alpha \in \N \cap (d/2,\infty)$, i.e., the density has Sobolev smoothness $\alpha$ on $M$ and the weak derivatives up to order $\alpha -1$ vanish at the boundary in the trace sense;
		\item $p_0$ is bounded and bounded away from zero on an interior region of $M$, i.e., there exist constants $0<p_{\min}\le p_{\max}<\infty$ and $\varepsilon_M>0$ satisfying $p_0(x)\le p_{\mathrm{max}}$ for all $x\in M$ and $p_0(x)\ge p_{\mathrm{min}}$ for all $x\in M_{-\varepsilon_M/2}\coloneqq M\setminus(\partial M)_{\varepsilon_M/2}$, where $(\partial M)_{\varepsilon_M/2}$ denotes the $\varepsilon_M/2$-fattening of $\partial M$.
	\end{enumerate}
\end{enumerate}

Existence of $r_0 > 0$ such that $\mathrm{Vol}_d(\mathcal B(x, r)\cap M)\gtrsim (r\wedge r_0)^d$ for all $r>0$ and $x\in M$ is guaranteed if $M$ is $\beta$-smooth for some $\beta\ge 2$ and has positive reach $\tau>0$; see, e.g., \cite[Lemma 20]{divol22}.
In this setting, existence of $c_0 \geq d$ such that  $\mu\big(\mathcal B(x,r)\cap M\big)\gtrsim (r\wedge r_0)^{c_0}$ is then further guaranteed if the target density $p_0$ decays polynomially towards $\partial M$, i.e., if $p_0(x)\gtrsim\mathrm{dist}(x, \partial M)^{c_0-d}$ for $x$ sufficiently close to the boundary. A typical construction of densities $p_0 \in H^\alpha_0(M)$ would model $p_0(x) = c\operatorname{dist}(x,\partial M)^{c_0 -d}$ in a neighbourhood of $\partial M$ with $c_0 \geq \alpha + d$. A variation of such an assumption on controlled decay at the boundary has also been used in \cite{steph25} and allows us to avoid rather artificial strict lower boundedness assumptions on the target density. A visualisation of our support assumption is given in Figure \ref{fig:support}.
For small times $t$, the forward density $p_t$ concentrates sharply around $M$, and the score $\nabla\log p_t(x)$ grows rapidly as $x$ moves away from $M$.
Accurate score estimation in this regime is statistically delicate, particularly near the boundary of $M$.
To avoid technical complications associated with boundary singularities, we  introduce the following auxiliary regularity and geometric conditions.
\begin{enumerate}[label = ($\mathcal{H}$\arabic*), ref = ($\mathcal{H}$\arabic*)]
\setcounter{enumi}{2}
	\item \label{ass:H3} When restricted to an area near the boundary, the target density $p_0$ is sufficiently smooth.
	Specifically, there exists $\varepsilon_M>0$ such that the restriction of $p_0$ to a neighbourhood of the boundary, $p_0\vert_{(\partial M)_{\varepsilon_M}\cap M}\in C^{\kappa}((\partial M)_{\varepsilon_M}\cap M, \R)$, where $\kappa\coloneqq \frac{d(c_0-d)}{2}+d+3\alpha+2$ and $(\partial M)_{\varepsilon_M}$ denotes the $\varepsilon_M$-fattening of $\partial M$.
    \item \label{ass:H4} $M$ does not intersect $\partial [0,1]^D$, i.e., there exists $\rho_{\min} > 0$ such that 
    \[\mathrm{dist}(M, \partial[0, 1]^D) \coloneq \inf_{x\in M, y\in\partial[0, 1]^D}|x-y| \geq \rho_{\min}.\]
\end{enumerate}
When imposing both \ref{ass:H2} and \ref{ass:H3}, we assume that the values of $\varepsilon_M$ coincide.
We note that \ref{ass:H3}  is comparable to assumptions made in related work on statistical estimation rates of unconstrained diffusion models, see, e.g., \cite[Assumption 6.3]{oko23}, \cite[Assumption (\textbf{B})]{kwon25}. If, e.g., $p_0(x) = c \operatorname{dist}(x,\partial M)^{c_0 -d}$ close to the boundary as in the discussion above, this assumption is always satisfied provided $\partial M$ is sufficiently smooth. Assumption \ref{ass:H4} can  always be enforced by rescaling the data and undo this scaling for the generated output. Generally, for any of our results, we will precisely state which (if any) of the above assumptions are needed.  

With this setup, we can state an informal version of our main theorem; the precise statement is given in Theorem~\ref{theo:main}.
\begin{theorem*}[informal]
	Assume \ref{ass:H1}--\ref{ass:H4}. For any $\delta>0$, choose $\underline{T}\in\mathrm{Poly}(n^{-1})$ and $\overline{T}\asymp\log n$.
	Then there exists a family $\{\mathcal S_i\}_{i=1}^K$ of sparse ReLU neural network classes such that the reflected diffusion generative algorithm driven by the empirical denoising score matching estimator $\widehat{s}_n$ satisfies
	\[
	\mathbb{E}\Big[\mathcal W_1\big(\mu,\,
	\mathcal L(\overline{X}^{\widehat{s}_n}_{\overline{T}-\underline{T}})\big)\Big]
	= O\big(n^{-\frac{\alpha+1-\delta}{2\alpha+d}}\big),
	\]
    where $\mathcal L(\overline{X}^{\widehat{s}_n}_{\overline{T}-\underline{T}})$ denotes the law of the output $\overline{X}^{\widehat{s}_n}_{\overline{T}-\underline{T}}$ conditional on the data.
\end{theorem*}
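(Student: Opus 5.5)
We split the $\mathcal W_1$ error along the three sources (i)--(iii) and balance them. Write $\widehat\mu \coloneqq \mathcal L(\overline{X}^{\widehat{s}_n}_{\overline{T}-\underline{T}})$. By the triangle inequality,
\[
\mathcal W_1(\mu,\widehat\mu)\le \mathcal W_1(\mu,p_{\underline{T}})+\mathcal W_1\big(p_{\underline{T}},\mathcal L(\overline{X}^{s_0,\mathrm{unif}}_{\overline{T}-\underline{T}})\big)+\mathcal W_1\big(\mathcal L(\overline{X}^{s_0,\mathrm{unif}}_{\overline{T}-\underline{T}}),\widehat\mu\big).
\]
Here $\overline{X}^{s_0,\mathrm{unif}}$ denotes the exact backward dynamics \eqref{eq:backward_equation} started in $\mathcal U([0,1]^D)$.

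\textbf{Truncation error.} Couple $X_0$ and $X_{\underline{T}}$ through the forward process, which gives $\mathcal W_1(\mu,p_{\underline{T}})\le \mathbb E|X_{\underline{T}}-X_0|\lesssim \sqrt{D\underline{T}}$. Reflection only contracts increments coordinatewise, since the one-dimensional reflection map is $1$-Lipschitz. Choosing $\underline{T}=n^{-2(\alpha+1)/(2\alpha+d)}$ (which is $\mathrm{Poly}(n^{-1})$) makes this term negligible.

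\textbf{Initialisation error.} Because $[0,1]^D$ is bounded, $\mathcal W_1\lesssim \sqrt D\,\mathrm{TV}$. Total variation is contracted by the Markov backward kernel, so this term is at most $\mathrm{TV}(p_{\overline{T}},\mathcal U)$. The Neumann Laplacian on the cube has spectral gap $\pi^2$, and $\mu$ is singular but $p_1$ is bounded. Hence $\mathrm{TV}(p_{\overline{T}},\mathcal U)\lesssim e^{-\pi^2(\overline{T}-1)/2}$, which is polynomially small for $\overline{T}\asymp\log n$.

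\textbf{Score error.} This term drives the rate. We do not use the KL/Girsanov bound globally, since it would only give $\sqrt{\text{score error}}$ and lose the extra $n^{-1/(2\alpha+d)}$. Instead we follow Oko et al.: split $[\underline{T},\overline{T}]$ at a time $t_*$ with $\sqrt{t_*}\asymp n^{-1/(2\alpha+d)}$.

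On $[t_*,\overline{T}]$, bound $\mathcal W_1$ by $\sqrt D$ times TV. Pinsker and the Girsanov theorem for reflected SDEs (the reflection terms cancel, since both processes share the same boundary and the local-time term depends only on the path) give
\[
\mathrm{TV}\lesssim\Big(\sum_i\int_{t_{i-1}}^{t_i}\mathbb E|\widehat s_n-s_0|^2\,\mathrm dt\Big)^{1/2}.
\]
On $[\underline{T},t_*]$, use that both backward processes move only distance $O(\sqrt{t_*})$ in $\mathcal W_1$ after time $\overline{T}-t_*$. This holds provided the estimator is clipped so that $|\widehat s_n(x,t)|\lesssim\sqrt{\log n/t}$, which the clipping bounds from the score-growth estimates in Section \ref{sec:prob} allow. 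The score matching error on the late interval is then weighted by the interval length.

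\textbf{Statistical error per interval.} Handle each interval with a standard oracle inequality for empirical risk minimisation over $\mathcal S_i$ with a bounded loss. The resulting bound is approximation error plus $\log$-covering number of $\mathcal S_i$ divided by $n$, with loss bound $\lesssim \log n/t_{i-1}$.

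The approximation step is the main obstacle. We must show that sparse ReLU networks of size $N$ approximate $s_0$ on $[t_{i-1},t_i]\times[0,1]^D$ to accuracy $N^{-\alpha/d}$ in $L^2(p_t)$ for $t\lesssim t_*$, and to better accuracy $\sqrt t\,N^{-(\alpha+1)/d}$-type for larger $t$. The approach has four parts. First, use the image-sum representation from Lemma \ref{lem:explicit_score}. Under \ref{ass:H4} and for $t\lesssim\log n$-scaled windows, all reflected images except the identity contribute $e^{-\rho_{\min}^2/(Ct)}$ near $M$, so the score reduces to a Gaussian-convolution score in the $d$ coordinates of $V$ plus the explicit linear term $-(x-\Pi_V x)/t$ orthogonally. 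Second, approximate $p_0$ via a local Taylor/B-spline basis and diffuse each piece by a $d$-dimensional Gaussian integral, with \ref{ass:H2}(ii) and \ref{ass:H3} controlling the denominator near $\partial M$. Third, truncate the image series at $O(\sqrt{\log n})$ terms. Fourth, implement the ratio with ReLU division and exponential subnetworks.

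Summing over $K\asymp\log n$ intervals with $N\asymp n^{d/(2\alpha+d)}$ yields a total error $n^{-(\alpha+1)/(2\alpha+d)}$ up to polylog factors. The $\delta$ loss absorbs these polylogs and the $t^{-1}$ growth near $\underline{T}$.
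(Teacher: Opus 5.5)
Your truncation and initialisation bounds are fine; the Neumann spectral-gap argument is a valid substitute for the cited uniform-ergodicity bound. The gap is in how you aggregate the score error, and it costs exactly the factor $n^{-1/(2\alpha+d)}$ you are trying to gain.

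With a single split at $t_*\asymp n^{-2/(2\alpha+d)}$ and plain Pinsker/Girsanov on $[t_*,\overline T]$, that part of the error is $\bigl(\sum_{t_i> t_*}\int_{t_{i-1}}^{t_i}\mathbb{E}|\widehat s_n-s_0|^2\,\mathrm{d}t\bigr)^{1/2}$, with no small prefactor. For $t\gtrsim t_*$, however, the score on $[t,2t]$ has resolution $\sqrt t$ along the $d$ directions of $V$. Its estimation (variance) error there is of order $t^{-d/2}/n$, up to $n^{O(\delta)}$ and logarithms; this is what the class widths $(t_i\wedge1)^{-d/2}n^{O(\delta)}$ in Theorem~\ref{theo:main} encode. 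At $t=t_*$ this equals $n^{d/(2\alpha+d)}/n=n^{-2\alpha/(2\alpha+d)}$. So your large-time term is of order $n^{-\alpha/(2\alpha+d)}$, which is precisely the rough total-variation rate that Proposition~\ref{lem:wasserstein_Yi} is designed to beat.

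With your uniform choice $N\asymp n^{d/(2\alpha+d)}$ on every interval it is worse still. The variance $N/n=n^{-2\alpha/(2\alpha+d)}$ then appears on every interval up to $\overline T$, and even a $\sqrt{t_i\wedge1}$ weight could not rescue the intervals with $t_i\gtrsim1$. So your concluding sentence, that summing over $K\asymp\log n$ intervals gives $n^{-(\alpha+1)/(2\alpha+d)}$, does not follow.

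Two coupled ingredients are missing.

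\emph{Localisation at every level.} The $\sqrt t$-localisation has to be applied at every geometric level, not once. Let $Y^{(i)}$ be driven by $s_0$ up to time $\overline T-t_i$ and by $s$ afterwards. The laws of $Y^{(i-1)}$ and $Y^{(i)}$ then agree up to $\overline T-t_i$. Kantorovich--Rubinstein duality, together with an $O(\sqrt{t_i\rho})$ displacement bound for both remaining paths, gives
$\mathcal W_1\bigl(Y^{(i-1)}_{\overline T-\underline T},Y^{(i)}_{\overline T-\underline T}\bigr)\lesssim e^{-\rho}+\bigl((t_i\wedge1)\rho\int_{t_{i-1}}^{t_i}\mathbb{E}|s-\nabla\log p_t|^2\,\mathrm{d}t\bigr)^{1/2}$.
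The displacement bound comes from Lemma~\ref{lem:affine_approx}(c), the clipping $|s|\lesssim\sqrt{\rho/t}$, and \ref{ass:H4}, which keeps these paths away from $\partial[0,1]^D$ when $t_i\lesssim1/\rho$. Crucially, each weight multiplies only the error on its own interval.

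\emph{Shrinking classes.} The classes $\mathcal S_i$ must shrink with $t_i$, to width $n^{d/(2\alpha+d)}\wedge(t_i\wedge1)^{-d/2}n^{O(\delta)}$. This is affordable because the diffused density is smoother there: Theorem~\ref{theo:score_approx} gives approximation error $m^{-2(\alpha+1)/d}$ in that regime. The large-time levels then contribute $n^{-1/2+O(\delta)}\sum_{i>K^*}(t_i\wedge1)^{(2-d)/4}$. For $d>2$ this is dominated by $t_i\approx t_*$ and has the target order. For $d=2$ the sum is $O(\log n)$, which still suffices. For $d=1$ it is dominated by $t_i\asymp1$ and gives only $n^{-1/2}$, in line with the parametric lower bound for $\mathcal W_1$ (already for the mean), so the stated rate is really a $d\ge2$ statement.

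Two smaller points.
\begin{itemize}
\item The oracle inequality needs $\sup_x L^{(i)}_s(x)\lesssim\log n$ uniformly in $i$. The per-time bound $\log n/t$ integrates to $\log n\cdot\log c$ over $[t_{i-1},t_i]$. If $\log n/t_{i-1}$ were the bound on $L^{(i)}_s$ itself, the variance near $\underline T$ would be inflated by $\underline T^{-1}$.
\item Your score-error term compares two drifts from a uniform start. The Girsanov expectations are therefore under the reverse process started in $\mathcal U([0,1]^D)$, not under $p_t$. This is harmless, but only after noting $\|1/p_{\overline T}\|_\infty\lesssim1$, and the same remark is needed to transfer the path bound of Lemma~\ref{lem:affine_approx}(c).
\end{itemize}
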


\paragraph{Organisation of the paper}
The remainder of the paper is organised as follows.
In Section~\ref{sec:prob}, we develop the probabilistic foundations of our model, including the construction of the forward reflected diffusion and the derivation of the explicit score representation. We also establish crucial bounds on the growth and regularity of the score function in Lemma~\ref{lem:affine_approx}. Section~\ref{sec:wasserstein} is dedicated to the statistical analysis and the proof of our main result. 
We present the error decomposition for the $1$-Wasserstein distance, specify the sparse ReLU network classes used for estimation, and combine these results to prove Theorem~\ref{theo:main}. 
Finally, Section~\ref{sec:discussion} places our findings in the context of recent minimax results and discusses extensions to general manifolds and discretisation errors. All technical proofs omitted from the main part  and auxiliary results are collected in the Appendix.

\section{Probabilistic analysis}\label{sec:prob}
Our statistical analysis relies on a detailed understanding of distributional and path properties of normally reflected Brownian motions in a hypercube, which we develop in this section. Generally, strong solutions to reflected SDEs can be constructed via the so-called Skorokhod map. In the present setting of a normally reflected Brownian motion in the hypercube, this is a mapping $\Gamma \colon C([0,\infty);\mathbb{R}^D) \to C([0,\infty);[0,1]^D)$ such that the strong solution of \eqref{eq:forward_equation} may be written as $X_\cdot = \Gamma(Y + B_\cdot)$.
Although existence and uniqueness of the Skorokhod map are well known, its explicit characterisation is generally intractable. For the purposes of simulation and for analysing distributional properties of the forward process, however, it is sufficient to work with weak solutions.
By pathwise uniqueness for reflected diffusions in convex domains, cf.\ \cite[Theorem 2.5.1]{pilipenko}, weak solutions are also unique in law (the classical Yamada--Watanabe argument assuming pathwise uniqueness for unconstrained SDEs, cf.\ \cite[Proposition~5.3.20]{karatzas91}, extends to the reflected setting).
We therefore begin by constructing a simple weak solution of \eqref{eq:forward_equation}, which is particularly convenient for training and theoretical analysis for the following reasons:
\begin{enumerate}[label = (\roman*), ref = (\roman*)] 
\item \label{weak_prop1} simulating the weak solution is as easy as simulating a Brownian motion.
\item the weak solution yields a simple, interpretable and numerically simple to approximate series representation of the transition densities $q_t(x,y)$, see Lemma \ref{lem:explicit_score}. Together with property \ref{weak_prop1}, this yields a simple recipe that does not require full forward path simulations to numerically approximate the empirical denoising score matching loss  by using Algorithm \ref{alg:approx_score_loss}.
\item the transition density formula from Lemma \ref{lem:explicit_score} is perfectly suited to capture the influence of the intrinsic dimensionality of the data support on theoretical approximation properties of the score $\nabla \log p_t(x)$, which eventually translates to faster estimation rates.
\end{enumerate}

All proofs of the lemmata stated in this section are deferred to Appendix~\ref{app:proofs_prob}.
\begin{lemma}
	\label{lem:explicit_solution}
	Let $\widehat{f} \colon \mathbb{R} \to [0,1]$ be the $2$-periodic function defined by
	\[
	\widehat{f}(x)
	\coloneqq
	\begin{cases}
		x, & \text{if } x \in [0,1),\\
		-x, & \text{if } x \in [-1,0),
	\end{cases}
	\]
	and extended periodically to all of $\mathbb{R}$.
	Define $f \colon \mathbb{R}^D \to [0,1]^D$ by applying $\widehat{f}$ component-wise, that is, $f(x) \coloneqq (\widehat{f}(x_i))_{i=1,\ldots,D}$.
	If $Y \sim \nu$ for some probability measure $\nu$ on $[0,1]^D$ and $(B_t)_{t \geq 0}$ is a $D$-dimensional Brownian motion independent of $Y$, then the process $(X_t)_{t \geq 0}$ defined by $X_t \coloneqq f(B_t + Y)$ is a weak solution to the reflected SDE
	\[
	\diff{X_t}= \diff{W_t} + n(X_t)\diff{L_t},\quad t \geq 0,
	\]
	with initial distribution $X_0 \sim \nu$ and Brownian motion $W$.
\end{lemma}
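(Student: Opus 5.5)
Since both the reflected SDE and the map $f$ act coordinatewise, and the coordinates of $B$ are independent, I will reduce to the one-dimensional case. Once each coordinate is shown to be a reflected Brownian motion on $[0,1]$ in the sense of the one-dimensional reflected SDE from the introduction, the $D$-dimensional statement follows. The Brownian motions $W^i$ will inherit joint independence because each is a stochastic integral of $B^i$ alone. The local times then sum as $L=\sum_i L^i$, and $n(X_t)\diff{L_t}=\sum_i n_i(X_t)\diff{L^i_t}$ with the sign convention given, since $\diff{L^i}$ only charges times where $X^i\in\{0,1\}$.

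In one dimension, set $Z_t = B_t+Y$, which is a Brownian motion started at $Y\in[0,1]$. The function $\widehat f$ is the $2$-periodic tent map: it has slope $\pm1$, its kinks sit at the integers, and even integers map to $0$ while odd integers map to $1$. It is the difference of two convex functions, so I apply the Itô--Tanaka (Meyer--Itô) formula. Its left derivative is $\widehat f'(z)=\sum_{k}(\one_{(2k,2k+1]}(z)-\one_{(2k-1,2k]}(z))$, a step function taking values in $\{\pm1\}$. Its second distributional derivative is $2\sum_k\delta_{2k}-2\sum_k\delta_{2k+1}$. This gives
\[
\widehat f(Z_t)=Y+\int_0^t \widehat f'(Z_s)\diff{B_s}+\sum_{k\in\mathbb Z}\big(\ell^{2k}_t(Z)-\ell^{2k+1}_t(Z)\big),
\]
where $\ell^a$ denotes the symmetric semimartingale local time of $Z$ at $a$. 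I use the symmetric convention so that the factor $\frac12\cdot 2$ yields exactly $\ell^a$. The sum has only finitely many nonzero terms on $[0,t]$ almost surely, because the path is bounded.

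Next, define $W_t\coloneqq\int_0^t \widehat f'(Z_s)\diff{B_s}$. Since $|\widehat f'|=1$, we have $\langle W\rangle_t=t$, so $W$ is a Brownian motion by Lévy's characterisation. In $D$ dimensions the cross-variations vanish, so $W$ is a $D$-dimensional Brownian motion. Now put $L_t\coloneqq\sum_k(\ell^{2k}_t+\ell^{2k+1}_t)$, which is continuous and nondecreasing. The term $\ell^{2k}$ increases only when $Z\in 2\mathbb Z$, that is, when $X=0$, where $n=+1$. The term $\ell^{2k+1}$ increases only when $X=1$, where $n=-1$. Therefore the drift term equals $\int_0^t n(X_s)\diff{L_s}$, and $L_t=\int_0^t\one_{\{X_s\in\{0,1\}\}}\diff{L_s}$. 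Since $X\in[0,1]$ and $X_0=\widehat f(Y)=Y\sim\nu$, the triple $(X,W,L)$ is a weak solution.

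The main obstacle is bookkeeping rather than depth. I need to fix the local time convention (left derivative versus symmetric) so that no factor $\frac12$ is off. I also need to identify the $L$ above with the boundary local time of $X$. For the latter, I would use the occupation-time formula: $\frac1{2\varepsilon}\int_0^t\one_{[0,\varepsilon]\cup[1-\varepsilon,1]}(X_s)\diff{s}$ equals $\frac1{2\varepsilon}\int_0^t\one_{\bigcup_a[a-\varepsilon,a+\varepsilon]}(Z_s)\diff{s}$ over integers $a$. This converges to $\sum_a\ell^a_t$, matching \eqref{eq:occupation}. A minor point is $Y$ on $\{0,1\}$, which is handled since the formulas hold for all starting points.
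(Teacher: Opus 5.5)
Your proposal is correct and follows essentially the same route as the paper: coordinatewise It\^o--Tanaka for the tent map $\widehat f$, L\'evy's characterisation of $W$ via vanishing cross-variations, and identification of $\sum_k\ell^{2k}$ and $\sum_k\ell^{2k+1}$ with the boundary local times of $X$ through occupation limits; your normalisation $L=\sum_k(\ell^{2k}+\ell^{2k+1})$ is in fact the one consistent with \eqref{eq:occupation}, whereas the paper's display $\int L^{(i)}_{t,x}\,\widehat f''(\diff{x})=L^{(i),0}_t-L^{(i),1}_t$ drops the factor $2$ coming from the jumps of $\widehat f'$ (and then compensates with the $\frac12$ in its definition of $L$). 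The one step to tighten is the passage to $D$ dimensions at edges and corners: the identity $n(X_s)\diff{L_s}=\sum_i e_i\,n_i(X_s)\diff{L^i_s}$ also needs that $\diff{L^i}$ does not charge $\{s: X^j_s\in\{0,1\}\}$ for $j\neq i$, which follows from conditional independence of $X^i,X^j$ given $Y$, Fubini, and $\mathbb{P}(X^j_s\in\{0,1\}\mid Y)=0$ for $s>0$ (the paper glosses over this as well).
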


\begin{figure}[h]
	\centering
	\begin{tikzpicture}[scale=1.5]
		\draw[<->] (-3.2, 0) -- (3.2, 0) node[right] {$x$};
		\draw[->] (0, -0.2) -- (0, 1.2) node[above] {$y$};
		\draw[dashed] (-3.1, 1) -- (3.1, 1) node[above] {$y=1$};
		\draw plot coordinates {(-3.1, 0.9) (-3, 1) (-2, 0) (-1, 1) (0, 0) (1, 1) (2, 0) (3, 1) (3.1, 0.9)} node[right] {$\widehat{f}(x)$};
	\end{tikzpicture}
	\caption{
		Graph of the function $\widehat{f}$ from Lemma~\ref{lem:explicit_solution}.
		The function reflects the identity between the lines $y=0$ and $y=1$; applied component-wise, $f$ therefore essentially reflects the identity at the boundary $\partial[0,1]^D$.
	}
\end{figure}
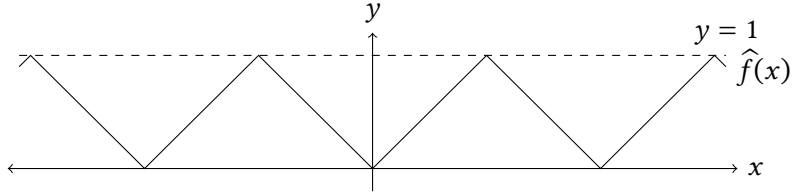

\begin{figure}[h]
	\centering
	\includegraphics[width=0.5\textwidth]{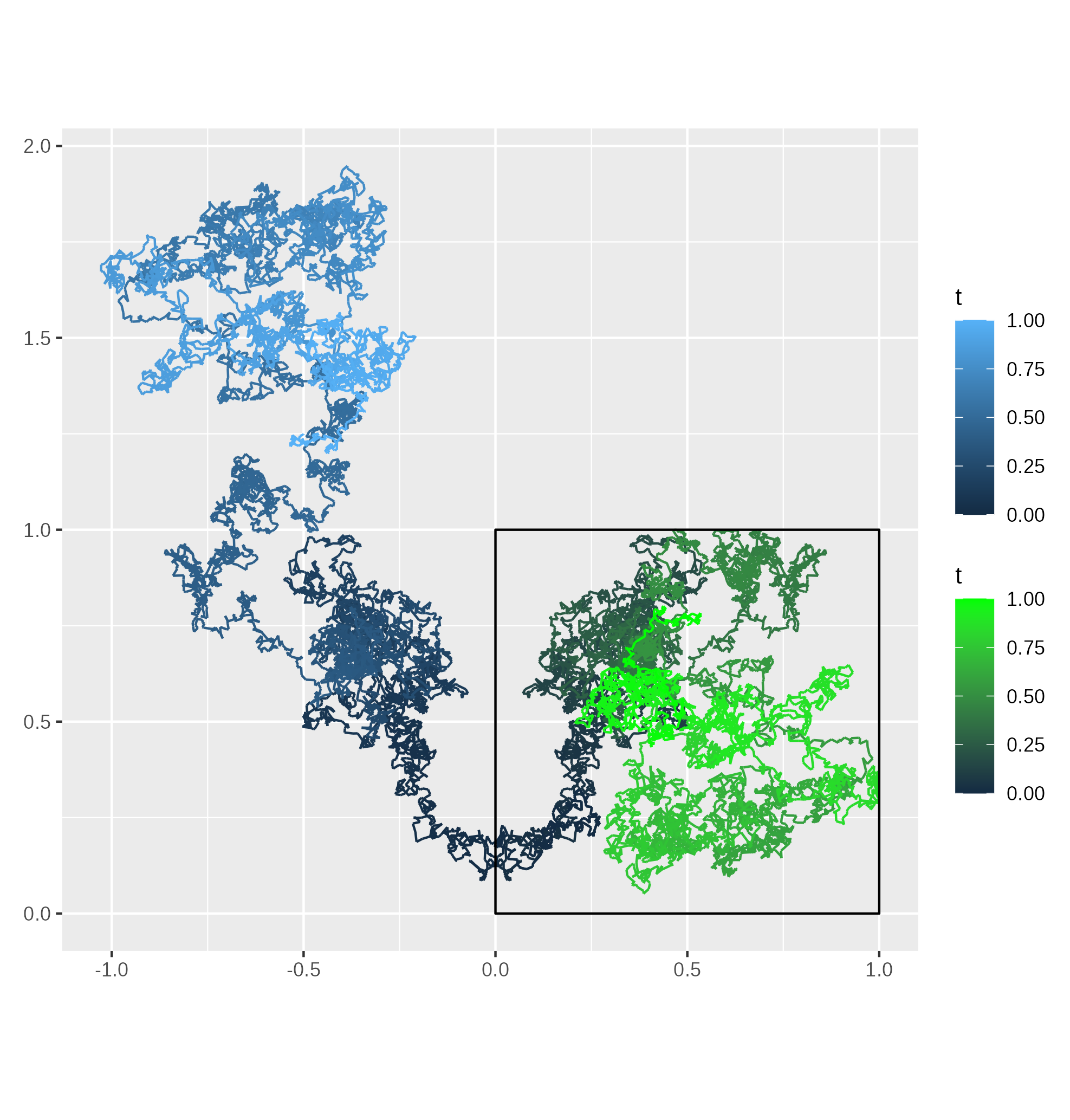}
	\caption{
	Simulation of a reflected Brownian motion (green) along with the non-reflected version (blue) that is used for its construction using Lemma \ref{lem:explicit_solution}.
	}
\end{figure}

Using the explicit construction from Lemma~\ref{lem:explicit_solution}, we can derive a closed-form expression for the density $p_t$ of the forward process $(X_t)_{t\geq 0}$ and, consequently, for the associated score function $s_0$.

\begin{lemma}\label{lem:explicit_score}
Let $(X_t)_{t \geq 0}$ be a solution to \eqref{eq:forward_equation} and define the reflection operator $R_z \colon [0,1]^D \to [0,1]^D$  component-wise by
\[
\bigl(R_z(x)\bigr)_i\coloneqq
	\begin{cases}
		x_i, & \text{if } z_i \text{ is even},\\
		1-x_i, & \text{if } z_i \text{ is odd},
	\end{cases}
\] 
where $i \in \N, z_i \in \mathbb{Z}, x_i \in [0,1]$.
Then, for all $x,y \in [0,1]^D$ and $t > 0$, the transition density of the reflected Brownian motion in $[0,1]^D$ is given by
\[q_t(y,x) = (2\pi t)^{-D/2}
\sum_{z \in \mathbb{Z}^D}\exp\left(-\frac{\lvert R_z(x)+z-y\rvert^2}{2t}\right),\]
and, for $p_t$ denoting the density of $X_t$ wrt.\ the Lebesgue measure on $\mathbb{R}^D$,
\[
p_t(x)=(2\pi t)^{-D/2}\sum_{z \in \mathbb{Z}^D}\int_{[0,1]^D}
	\exp\left(-\frac{\lvert R_z(x)+z-y\rvert^2}{2t}\right)\, \mu(\diff{y}), \quad x \in [0,1]^D.
\]
In particular, the score function $s_0(x,t)=\nabla \log p_t(x)$ admits the explicit representation
\begin{equation}\label{eq:score}
s_0(x,t)=-\frac{\sum_{z \in \mathbb{Z}^D}(-1)^z\int_{[0,1]^D}(R_z(x)+z-y)
			\exp\left(-\frac{\lvert R_z(x)+z-y\rvert^2}{2t}\right)\,\mu(\diff{y})}{
			t\sum_{z \in \mathbb{Z}^D}\int_{[0,1]^D}
			\exp\left(-\frac{\lvert R_z(x)+z-y\rvert^2}{2t}\right)
			\,\mu(\diff{y})},
\end{equation}
where we use the shorthand $(-1)^z \coloneqq \mathrm{diag}\bigl(((-1)^{z_i})_{i=1}^D\bigr)$.
\end{lemma}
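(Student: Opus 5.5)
The plan is to read off the law of $X_t$ from the weak solution in Lemma~\ref{lem:explicit_solution}, then integrate against $\mu$ and differentiate. By pathwise uniqueness, and hence uniqueness in law, for \eqref{eq:forward_equation}, $X_t$ has the same law as $f(Y+B_t)$ with $Y\sim\mu$ independent of $B$. It therefore suffices to compute the density of $f(U)$ when $U=y+B_t\sim\mathcal N(y,tI_D)$ for fixed $y\in[0,1]^D$. This density is $q_t(y,\cdot)$, and $p_t$ follows by mixing over $y\sim\mu$.

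\textbf{Step 1 (preimages of $f$).} Partition $\mathbb R^D$, up to Lebesgue null sets, into the cells $C_z\coloneqq z+(0,1)^D$ for $z\in\mathbb Z^D$. Coordinatewise, on $(z_i,z_i+1)$ the map $\widehat f$ equals $u\mapsto u-z_i$ when $z_i$ is even and $u\mapsto z_i+1-u$ when $z_i$ is odd. So $f|_{C_z}\colon C_z\to(0,1)^D$ is a bijective affine isometry with inverse $x\mapsto R_z(x)+z$. Its Jacobian is $(-1)^z$, so the absolute determinant equals $1$. Consequently, for $x\in(0,1)^D$ the preimage set $f^{-1}(x)$ is exactly $\{R_z(x)+z: z\in\mathbb Z^D\}$, and distinct $z$ give distinct points.

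\textbf{Step 2 (densities).} For a bounded measurable $g$ on $[0,1]^D$, split $\mathbb E[g(f(U))]=\sum_z\int_{C_z} g(f(u))\varphi_{y,t}(u)\,\diff u$, where $\varphi_{y,t}$ is the Gaussian density. In each cell, substitute $u=R_z(x)+z$, which has unit Jacobian. Monotone convergence then lets us swap the sum and the integral, giving
\[
\mathbb E[g(f(U))]=\int_{[0,1]^D} g(x)\,(2\pi t)^{-D/2}\sum_z \exp\Bigl(-\frac{\lvert R_z(x)+z-y\rvert^2}{2t}\Bigr)\diff x .
\]
This is the claimed formula for $q_t(y,x)$. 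Integrating in $y$ against $\mu$, with Tonelli justifying the interchange because all terms are nonnegative, gives the formula for $p_t$. The right-hand sides are continuous on all of $[0,1]^D$, so we take them as the version of the density there.

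\textbf{Step 3 (score).} For $x$ in the interior, the chain rule applied to each summand gives
\[
\nabla_x \exp\Bigl(-\frac{\lvert R_z(x)+z-y\rvert^2}{2t}\Bigr)=-\frac1t(-1)^z(R_z(x)+z-y)\exp\Bigl(-\frac{\lvert R_z(x)+z-y\rvert^2}{2t}\Bigr),
\]
using that $D R_z=(-1)^z$ is diagonal and symmetric. Dividing $\nabla p_t$ by $p_t$ then yields \eqref{eq:score}. Note that $p_t>0$, since for instance the $z=0$ term is bounded below by $e^{-D/(2t)}$.

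\textbf{Main obstacle.} The step I expect to need most care is justifying differentiation term by term under both $\sum_z$ and $\int\mu(\diff y)$. For $x,y\in[0,1]^D$ we have $\lvert R_z(x)+z-y\rvert\ge(\lvert z\rvert_\infty-1)_+$. Hence the differentiated summands are bounded, uniformly in $x,y$, by $C(1+\lvert z\rvert)\exp(-(\lvert z\rvert_\infty-1)_+^2/(2t))$, which is summable over $\mathbb Z^D$. Weierstrass' M-test together with dominated convergence (with $\mu$ a probability measure) then gives uniform convergence of the derivative series, so $p_t\in C^1$ and the termwise formula holds.

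At boundary points, $R_z$ is still smooth as a map on $\mathbb R^D$ and the series extends smoothly across the boundary. The gradient formula therefore holds up to $\partial[0,1]^D$, where it is interpreted as a one-sided derivative. By the reflection symmetry of the series, the normal component of the gradient vanishes there, which is consistent with the Neumann boundary condition.
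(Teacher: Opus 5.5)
Your proposal is correct and follows the paper's own argument. You identify $X_t$ in law with $f(Y+B_t)$ via Lemma~\ref{lem:explicit_solution} and uniqueness in law, and split $\mathbb{R}^D$ into unit cells on which $f$ is an affine isometry inverted by $x\mapsto R_z(x)+z$. You then change variables with unit Jacobian, integrate against $\mu$, and differentiate termwise. Your explicit summable bound $C(1+|z|)\exp\bigl(-(\|z\|_\infty-1)_+^2/(2t)\bigr)$ spells out the dominated-convergence step that the paper only cites by name.
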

%The proof can be found in Appendix \ref{app:proofs_prob}.

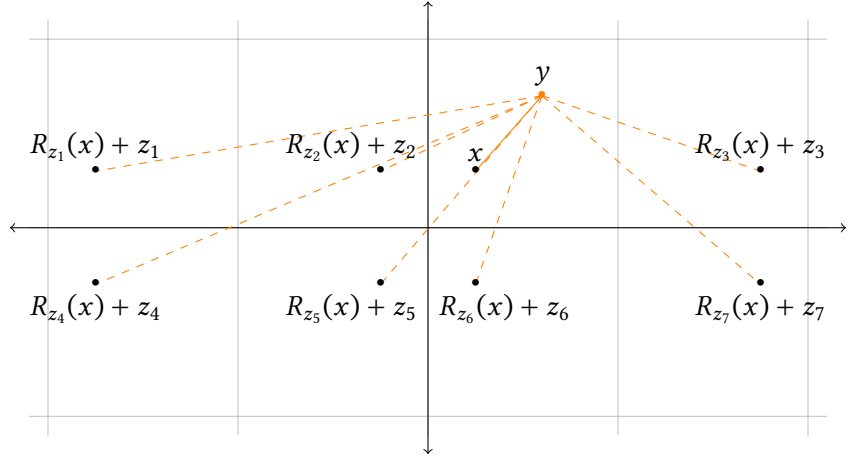
\begin{figure}[h]
\centering
\begin{tikzpicture}[scale=2.5]
\draw[very thin, color=lightgray] (-2.1, -1.1) grid (2.1, 1.1);
\draw[<->] (-2.2, 0) -- (2.2, 0);
\draw[<->] (0, -1.2) -- (0, 1.2);
\draw[-, color=orange] (0.6, 0.7) -- (0.25, 0.3);
\draw[-, dashed, color=orange] (0.6, 0.7) -- (-0.25, 0.3);
\draw[-, dashed, color=orange] (0.6, 0.7) -- (-1.75, 0.3);
\draw[-, dashed, color=orange] (0.6, 0.7) -- (-1.75, -0.3);
\draw[-, dashed, color=orange] (0.6, 0.7) -- (-0.25, -0.3);
\draw[-, dashed, color=orange] (0.6, 0.7) -- (0.25, -0.3);
\draw[-, dashed, color=orange] (0.6, 0.7) -- (1.75, -0.3);
\draw[-, dashed, color=orange] (0.6, 0.7) -- (1.75, 0.3);
\draw (0.25, 0.3) node[above] {$x$} node {\textbullet};
\draw (-0.25, 0.3) node[above] {$R_{z_2}(x)+z_2\quad\quad$} node {\textbullet};
\draw (0.25, -0.3) node[below] {$\quad\quad R_{z_6}(x)+z_6$} node {\textbullet};
\draw (-0.25, -0.3) node[below] {$R_{z_5}(x)+z_5\quad\quad$} node {\textbullet};
\draw (1.75, 0.3) node[above] {$R_{z_3}(x)+z_3$} node {\textbullet};
\draw (1.75, -0.3) node[below] {$R_{z_7}(x)+z_7$} node {\textbullet};
\draw (-1.75, 0.3) node[above] {$R_{z_1}(x)+z_1$} node {\textbullet};
\draw (-1.75, -0.3) node[below] {$R_{z_4}(x)+z_4$} node {\textbullet};
\draw (0.6, 0.7) node[above] {$y$} node[color=orange] {\textbullet};
\end{tikzpicture}
\caption{The points $R_{z_i}(x)+z_i$ are all mapped back to $x$ under the function $f$ from Lemma~\ref{lem:explicit_solution}.
Consequently, the reflected process $X_t$ moves from $y$ to $x$ if and only if the Brownian motion $B_t$ moves from $y$ to $x$ \emph{or} to any of the points $R_{z_i}(x)+z_i$.}
\end{figure}

Combining Lemma \ref{lem:explicit_solution} and Lemma \ref{lem:explicit_score} yields the simple numerical algorithm \ref{alg:approx_score_loss} to approximate the empirical denoising score matching loss, which is necessary for implementation of the score estimation procedure. 

\begin{algorithm}[ht]
   \caption{Numerical approximation of empirical denoising score matching loss $\hat{L}^{(i)}_s$}
   \label{alg:approx_score_loss}
\begin{algorithmic}
   \STATE {\bfseries Input:} data $\{Y_j\}_{j=1}^n \overset{iid}{\sim} \nu$,  $N \in \N$, time interval index $i \in [K]$, $s \in \mathcal{S}_i$
   \STATE set $\widetilde{L}^{(i)}_s = 0$
   \FOR{$k=1$ {\bfseries to} $N$}
   \STATE draw $y = Y_{j_k}$  uniformly from $\{Y_j\}_{j=1}^n$ 
   \STATE  draw independently $t \sim \mathcal{U}([t_{i-1}, t_i])$ and  $B_t \sim \mathcal{N}(0,t I_D)$
   \STATE set $x_t = f(Y_{j_k}+ B_{t})$
   \STATE choose $K \in \N$ and set 
   \[\widetilde{\nabla \log q_t}(y,x_t) = \frac{\sum_{z \in \mathbb{Z}^D, \lvert z \rvert \leq K} (-1)^z (R_z(x_t)+z-y) \exp\left(-\frac{\lvert R_z(x_t)+z-y\rvert^2}{2t}\right)}{
			t\sum_{z \in \mathbb{Z}^D, \lvert z \rvert \leq K}
			\exp\left(-\frac{\lvert R_z(x_t)+z-y\rvert^2}{2t}\right)} \]
   \STATE set $\widetilde{L}^{(i)}_s \leftarrow \widetilde{L}^{(i)}_s + \frac{1}{N} \lvert s(t,x_t) - \widetilde{\nabla \log q_t}(y,x_t) \rvert^2$
   \ENDFOR
   \STATE {\bfseries Output:} $\widetilde{L}^{(i)}_s$
\end{algorithmic}
\end{algorithm}
\begin{remark}
\begin{enumerate}[label = (\roman*), ref = (\roman*)]
\item choosing the cutoff parameter $K$ dependent on the initialisation $y$ and the drawn time $t$ is important since it should be proportional to the number of reflections along the path $y \to x_t$ (decreasing in $t$ and the distance of $y$ to $\partial [0,1]^D$). See also the discussion in \cite{lou23} on implementation of the model, where Gaussian approximations are made for small $t$ and spectral decompositions of the transition density are exploited for large $t$ approximations. 
\item in practice, one may simulate $(y_k, t_k, x_{t_k})_{k=1}^N$ only once and use these for Monte--Carlo approximation of $\hat{L}^{(i)}_s$ for the updated approximators $s$ in every optimisation step.
\end{enumerate}
\end{remark}

These results now allow us to give a precise analysis of the growth of the score $\nabla \log p_t(x)$ in $t$ depending on the distance of $x$ to the data manifold $M$ as well as the path behaviour of the reflected Brownian motion for small times. These properties will play a crucial role for constructing efficient neural network score approximators and for proving almost optimal rates in $1$-Wasserstein distance. %The proof of the next result is given in Section \ref{app:proofs_prob}.

\begin{lemma}
	\label{lem:affine_approx}
	Fix $t>0$ and let $M_{\rho, t}=\{x\in[0, 1]^D: \mathrm{dist}(x, M)\le\sqrt{t(D+2\rho)}\}$ denote the $\sqrt{t(D+2\rho)}$-fattening of $M$ for some $\rho>1$.
	Then, under \ref{ass:H1} the following hold:
	\begin{enumerate}[ref=\thelemma.(\alph*), label=(\alph*)]
		\item\label{lem:affine_approx_a} $|\nabla\log p_t(x)|\lesssim \frac{1}{t\wedge \sqrt{t}}$ for $x\in[0, 1]^D$;
		\item\label{lem:affine_approx_b} $\mathbb{E}[|\nabla\log p_t(X_t)|^2\bm{1}_{M_{\rho, t}^{\mathsf{c}}}(X_t)]
		\lesssim \frac{1}{t^2\wedge1}\mathrm{e}^{-\rho}$;
		\item \label{lem:} if $t\leq 1/2$, there exists a universal constant $C$ such that 
		\[
			\mathbb{P}\Big(\forall s \in [t,1]: \lvert X_s - X_0 \rvert \leq C\sqrt{D}\sqrt{s}\big(\log (1 + \log t^{-1}) + y\big)\Big)
			\geq 1 - 4D\mathrm{e}^{-2y^2}, \quad y > 0;
		\]
		\item\label{lem:affine_approx_d} $p_t(x)\gtrsim t^{\frac{c_0-D}{2}}\mathrm{e}^{-\rho}$ for $x\in M_{\rho, t}$;
		\item\label{lem:affine_approx_e} 
        for $t \in (0,1]$, 
		\begin{enumerate}[ref=(\roman*), label=(\roman*)]
		\item \label{lem:loggrad_score_e1}$\forall x,y \in [0,1]^D: \, \lvert \nabla_x \log q_t(y,x) \rvert \lesssim \tfrac{\lvert x-y\rvert}{t} + \tfrac{1}{\sqrt{t}}$; 
		\item \label{lem:loggrad_score_e2} $\forall x \in [0,1]^D: \, \lvert \nabla  \log p_t(x) \rvert = \lvert \E[\nabla_2 \log q_t(X_0, X_t) \mid X_t = x] \rvert \lesssim \tfrac{1}{t} \E[\lvert X_t - X_0 \rvert \mid X_t = x] + \tfrac{1}{\sqrt{t}}$;
		\item \label{lem:loggrad_score_e3}$\forall t \in (0,1], x\in M_{\rho, t}\colon \, \lvert \nabla\log p_t(x)\rvert \lesssim\frac{\sqrt{\rho+\log t^{-1}}}{\sqrt{t}}$.
		\end{enumerate}
	\end{enumerate}
\end{lemma}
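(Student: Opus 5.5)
All five parts rest on the mirror-image representation of Lemma~\ref{lem:explicit_score}. We write $q_t(y,x)=(2\pi t)^{-D/2}\sum_z \phi_z(x,y)$ with $\phi_z(x,y)=\exp(-\lvert R_z(x)+z-y\rvert^2/(2t))$. By construction $R_z(x)+z$ ranges over all preimages of $x$ under $f$.

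The central estimate is (e)(i). We have
\[
\nabla_x\log q_t(y,x)=-t^{-1}\sum_z w_z (-1)^z(R_z(x)+z-y), \qquad w_z=\phi_z/\textstyle\sum_{z'}\phi_{z'},
\]
so the gradient is $t^{-1}$ times a weighted average of the displacement vectors. Because everything factorises over coordinates, it suffices to treat $D=1$. There the image points $x+2k$ and $-x+2k$ are spaced at distance at least $2\min(x,1-x)$ or so. The nearest image to $y\in[0,1]$ is $x$ itself, at distance $\lvert x-y\rvert$, or one of its reflections across $0$ or $1$, which are at most as far from $y$ as $x$ is. All further images lie at distance $\geq \lvert x-y\rvert + 2(\lvert k\rvert-1)$. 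Their Gaussian weights are therefore dominated by a geometric-type series, and $\sum_z w_z\lvert R_z(x)+z-y\rvert\lesssim \lvert x-y\rvert+\sqrt t$, since terms with $\lvert k\rvert\geq 1$ contribute $\sum_k k\, e^{-k^2/(2t)}\cdot(\text{ratio})\lesssim \sqrt t$ after normalising by the dominant weight. This gives (e)(i).

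Part (e)(ii) then follows from the identity $p_t(x)=\int q_t(y,x)\mu(\diff y)$. Differentiating under the integral, which the Gaussian tails justify, gives $\nabla\log p_t(x)=\E[\nabla_2\log q_t(X_0,X_t)\mid X_t=x]$; applying (e)(i) and Jensen finishes it.

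Part (a) comes in two regimes. For $t\leq1$, (e)(ii) together with $\lvert X_t-X_0\rvert\leq\sqrt D$ gives the bound $1/t$. For $t\geq1$, Poisson summation converts the series into the cosine eigen-expansion $1+\sum_{k\neq0}c_k e^{-\pi^2\lvert k\rvert^2 t/2}\cos(\cdot)$. The gradient is then $O(e^{-\pi^2 t/2})$ and the density is $\geq 1/2$, which is stronger than $1/\sqrt t$.

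Part (d) needs a lower bound on $p_t$. Keep only the $z=0$ term and use $\lvert x-y\rvert^2\leq(\lvert x-\pi_M x\rvert+\lvert \pi_M x-y\rvert)^2$. Integrate over $y\in\mathcal B(\pi_Mx,\sqrt t)\cap M$, whose $\mu$-mass is $\gtrsim t^{c_0/2}$ by \ref{ass:H1} (taking $t\lesssim r_0^2$; large $t$ is handled by the eigen-expansion). The exponent is then at most $(\sqrt{t(D+2\rho)}+\sqrt t)^2/(2t)\lesssim D/2+\rho+O(\sqrt\rho)$. With care, for instance via $2ab\leq a^2/\rho'+\ldots$, the $\rho$ coefficient can be pushed to $1$ up to constants, which yields $p_t\gtrsim t^{(c_0-D)/2}e^{-\rho}$. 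This constant bookkeeping is where I expect the most fiddling.

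Part (e)(iii) combines (e)(ii) with (d). We need $\E[\lvert X_t-X_0\rvert\mid X_t=x]$, and for $x\in M_{\rho,t}$ we bound
\[
\E[\lvert X_t-X_0\rvert \mid X_t=x]=\int \lvert x-y\rvert q_t(y,x)\mu(\diff y)/p_t(x).
\]
The numerator is split at $\lvert x-y\rvert\leq R:=\sqrt{2t(\rho'+\log t^{-1})}$ for a suitable $\rho'$. The near part contributes at most $R$. On the far part, $\lvert x-y\rvert\, q_t(y,x)\lesssim t^{-D/2}e^{-R^2/(4t)}$ by using half of the exponent; choosing $\rho'\asymp\rho+\tfrac{c_0}{2}\log t^{-1}$ makes this $\lesssim\sqrt t\, p_t(x)$ by (d). Hence the conditional expectation is $\lesssim\sqrt{t(\rho+\log t^{-1})}$, and dividing by $t$ and adding $1/\sqrt t$ gives (e)(iii).

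Part (b) again uses (e)(ii), together with conditional Jensen:
\[
\E[\lvert\nabla\log p_t(X_t)\rvert^2\one_{M^{\mathsf c}_{\rho,t}}(X_t)]\lesssim t^{-2}\E[\lvert X_t-X_0\rvert^2\one\{\ldots\}]+t^{-1}\P(X_t\notin M_{\rho,t}).
\]
Since $X_0\in M$ and the reflection map $f$ is $1$-Lipschitz, we have $\lvert X_t-X_0\rvert\leq\lvert B_t\rvert$, so $X_t\notin M_{\rho,t}$ forces $\lvert B_t\rvert^2>t(D+2\rho)$. The standard chi-square tail (Laurent--Massart) gives $\P(\lvert B_t\rvert^2\geq t(D+2\rho))\lesssim e^{-\rho}$ up to a polynomial factor in $\rho$. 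Since $\lvert X_t-X_0\rvert\leq\sqrt D$, the second moment is bounded by $D$ times this probability. For $t\geq1$, part (a) is used directly.

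Part (c) follows since $\lvert X_s-X_0\rvert\leq\lvert B_s\rvert$ pathwise, so it suffices to prove the statement for Brownian motion, one coordinate at a time with a union bound over $D$ coordinates and both signs, which accounts for the factor $4D$. We use a peeling argument over dyadic blocks $s\in[2^{-k-1},2^{-k}]$, with about $\log_2 t^{-1}$ blocks. On each block the reflection principle gives $\P(\sup_{s\leq 2^{-k}}B_s>\sqrt{2^{-k-1}}\,u_k)\leq 2e^{-u_k^2/4}$. Choosing $u_k\asymp \log(1+\log t^{-1})+y$, or more carefully letting $u_k$ grow with $k$ to make the sum summable with total $\lesssim e^{-2y^2}$, yields the iterated-logarithm factor. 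Matching the exact constant $e^{-2y^2}$ requires choosing $C$ large and weighting the blocks as $u_k^2=c(\log(1+k)+y)^2$.
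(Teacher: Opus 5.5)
\paragraph{The gap: part (d).} The step that does not go through is (d). Keeping the $z=0$ term and integrating over $\mathcal B(\pi_Mx,\sqrt t)\cap M$ gives the exponent $\frac{(\sqrt{t(D+2\rho)}+\sqrt t)^2}{2t}=\frac{D+1}{2}+\rho+\sqrt{D+2\rho}$. The cross term cannot be pushed into a constant. The inequality $2ab\le\varepsilon a^2+b^2/\varepsilon$ is AM--GM, and its optimal $\varepsilon$ returns exactly $2ab$, so for every $\varepsilon$ the excess over $\frac{D+1}{2}+\rho$ is still at least $\sqrt{D+2\rho}$. The only way to remove the cross term is to shrink the ball to radius $\sqrt{t/(D+2\rho)}$. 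Then \ref{ass:H1} only supplies mass $\gtrsim t^{c_0/2}(D+2\rho)^{-c_0/2}$, so what your argument proves is $p_t(x)\gtrsim t^{\frac{c_0-D}{2}}\rho^{-\frac{c_0}{2}}\mathrm{e}^{-\rho}$ (for bounded $t$).

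This loss is genuine: (d) as stated fails uniformly in $\rho$. Take $d=D=1$, $M=[\frac14,\frac34]$, and $p_0(y)\asymp(y-\frac14)^{c_0-1}$ near $\frac14$ with $p_0\asymp1$ elsewhere on $M$. This is consistent with \ref{ass:H1} and gives $p_0(\frac14+s)\lesssim s^{c_0-1}$ on $M$. Let $t=\rho^{-2}$ with $\rho$ large, and $x=\frac14-r$ with $r=\sqrt{t(1+2\rho)}$. Then
\[
\int\mathrm{e}^{-\frac{|x-y|^2}{2t}}\,\mu(\mathrm{d}y)\lesssim\mathrm{e}^{-\frac{r^2}{2t}}\int_0^\infty\mathrm{e}^{-\frac{rs}{t}}s^{c_0-1}\,\mathrm{d}s\asymp\mathrm{e}^{-\rho}\Big(\frac{t}{\rho}\Big)^{\frac{c_0}{2}},
\]
while all other mirror images contribute $O(\mathrm{e}^{-c\rho^2})$. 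Hence $p_t(x)\lesssim t^{\frac{c_0-1}{2}}\rho^{-\frac{c_0}{2}}\mathrm{e}^{-\rho}$.

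The paper's own proof has the same hole: its claim $|x-y|\le\sqrt{t(D+3+2\rho)}$ for $y\in\mathcal B(y_0,\sqrt t)$ drops exactly this cross term. So you should state (d) with the extra factor $\rho^{-c_0/2}$. This is harmless for (e)(iii): adding a term of order $\log\rho$ to your $\rho'$ absorbs it and still gives $R\lesssim\sqrt{t(\rho+\log t^{-1})}$.

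\paragraph{The other parts.} Everything else is sound, and several parts take a different route from the paper.

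For (a), the paper bounds the tail of the mirror series combinatorially, uniformly in $t$. You instead use (e)(ii) for $t\le1$ (an alternative the paper itself notes after the lemma) and the Neumann cosine expansion for $t\ge1$. This is correct and even gives exponential decay for large $t$. However, it does not transfer to the truncated score $s_0^K$, which is why the paper prefers the combinatorial proof; it is reused in Lemma~\ref{lem:truncation_error} and Theorem~\ref{theo:score_approx}.

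For (c), your dyadic peeling with the reflection principle is more elementary than the paper's Dudley bound for $u\mapsto B_{\mathrm{e}^{-u}}/\sqrt{\mathrm{e}^{-u}}$. One common level $u\asymp\sqrt{\log\log_2(4/t)}+y$ on all blocks already suffices, since $\sqrt{\log\log_2(4/t)}\lesssim\log(1+\log t^{-1})$ for $t\le\frac12$. So no block-dependent weights are needed.

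In (e)(iii) you get the Gaussian upper bound on $q_t$ from the mirror series, using that every image satisfies $|R_z(x)+z-y|\ge|x-y|$, instead of citing Li--Yau. Both work.

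\paragraph{Two small repairs.} In (e)(i), the reflections $-x_i$ and $2-x_i$ are at least (not at most) as far from $y_i$ as $x_i$. Their relative weight can be of order one, so your geometric series does not cover them. Use the paper's split instead: an image with displacement $d_z>\sqrt2|x-y|$ has relative weight at most $\mathrm{e}^{-d_z^2/(4t)}$, and $\sum_z d_z\mathrm{e}^{-d_z^2/(4t)}\lesssim\sqrt t$ for $t\le1$.

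In (b), the polynomial factor $\rho^{D/2}$ comes from the Chernoff bound of Lemma~\ref{lem:brownian_bound_a}, not from Laurent--Massart, which at this threshold only gives $\mathrm{e}^{-\rho+O(\sqrt\rho)}$. The paper's proof carries the same factor, although the statement omits it.
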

%\begin{remark} 
Note that part \hyperref[lem:affine_approx_e]{(e)}.\ref{lem:loggrad_score_e2} together with the upper bound $\lvert X_t - X_0 \rvert \leq \sqrt{D}$ immediately implies the bound from part \hyperref[lem:affine_approx_a]{(a)} for $t \in (0,1]$. However, our combinatorial proof technique for \hyperref[lem:affine_approx_a]{(a)} can be translated directly to truncated versions of the score representation given in \eqref{eq:score}, which will form the basis of our neural network approximation strategy in the next section. Conversely, the proof of \hyperref[lem:affine_approx_e]{(e)}.\ref{lem:loggrad_score_e2} relies on the denoising score representation $\nabla \log p_t(x) = \E[\nabla_2 \log q_t(X_0, X_t) \mid X_t = x]$, which has no probabilistic analogue for the truncated score representation. 
%\end{remark}

\section{Wasserstein convergence rate}\label{sec:wasserstein}
This section establishes quantitative convergence guarantees in the $1$-Wasserstein distance for the reflected diffusion generative scheme driven by an estimated score.
Our analysis builds on a careful decomposition of the approximation error and combines statistical bounds for score estimation with probabilistic stability estimates for reflected stochastic dynamics.
In our earlier work \cite{holk24}, we derived upper bounds of order $n^{-\alpha/(2\alpha+D)}$ (up to polylogarithmic factors) for the total variation distance under Sobolev smoothness $\alpha>D/2$, expressed in terms of the ambient dimension $D$.
In the present bounded-domain setting, such bounds immediately imply corresponding guarantees in the $1$-Wasserstein distance. %, since $\mathcal W_1$ is controlled by total variation on compact spaces.
However, classical results from nonparametric density estimation in the i.i.d.\ setting (see, for instance, Theorem~2 in \cite{niwebe22}) suggest that these rates are suboptimal.
Indeed, \cite{oko23} obtained an improved upper bound of order $n^{-(\alpha+1-\delta)/(2\alpha+D)}$, for arbitrary $\delta>0$, by exploiting a refined multiscale analysis of the reverse diffusion.
While our overall strategy is inspired by the approach of \cite{oko23}, their arguments cannot be transferred verbatim to the present setting.
In particular, the pathwise stability estimates in \cite{oko23} rely heavily on properties of OU processes on $\R^D$, whereas our model is governed by reflected diffusions on a compact domain with boundary.
As a consequence, we must develop and invoke genuinely new probabilistic tools, including precise growth and regularity bounds for reflected Brownian paths and their associated scores, as established in Lemma~\ref{lem:affine_approx}.
At the same time, the compactness of the state space allows us to simplify several technical aspects of the construction and to avoid certain localisation arguments that are necessary in the unbounded setting.

\subsection{Error decomposition}
As outlined in the introduction, the overall approximation error decomposes into three distinct contributions: the error due to early stopping of the backward dynamics, the error incurred by approximating the score function, and the error arising from initialising the dynamics with the uniform distribution on $[0,1]^D$ rather than with the target distribution.
To disentangle these effects, we introduce, for a given score approximation $s$, an auxiliary reflected diffusion $(\widehat{X}^s_t)_{t\in[0,\overline T]}$ defined as the solution to
\[
\diff \widehat{X}_t^s = s(\widehat{X}_t^s,\overline{T}-t)\diff t + \diff B_t + n(\widehat{X}_t^s)\diff L_t,
\qquad t\in[0,\overline{T}],
\]
with initial condition $\widehat{X}_0^s\sim p_{\overline{T}}$.
The triangle inequality for the $1$-Wasserstein metric $\mathcal{W}_1$ yields for the process defined in \eqref{eq:backward_equation_approx} the error decomposition
\begin{equation}
	\label{eq:error_split}
	\mathcal W_1\bigl(\mu,\overline{X}^s_{\overline{T}-\underline{T}}\bigr) \le
	\mathcal W_1(\mu,X_{\underline{T}}) + \mathcal W_1\bigl(X_{\underline{T}},\widehat{X}^s_{\overline{T}-\underline{T}}\bigr) +
	\mathcal W_1\bigl(\widehat{X}^s_{\overline{T}-\underline{T}}, \overline{X}^s_{\overline{T}-\underline{T}}\bigr).
\end{equation}
Since $X_{\underline{T}}\sim\widehat{X}^{s_0}_{\overline{T}-\underline{T}}$, we may rewrite
\[
\mathcal W_1\bigl(X_{\underline{T}},\widehat{X}^s_{\overline{T}-\underline{T}}\bigr) =
\mathcal W_1\bigl(\widehat{X}^{s_0}_{\overline{T}-\underline{T}}, \widehat{X}^s_{\overline{T}-\underline{T}}\bigr),
\]
where the two reflected processes $\hat{X}^s$ and $\hat{X}^{s_0}$ are initialised in the same distribution $p_{\overline{T}}$, but have different drifts $s$ and $s_0$, respectively.
Likewise, $\widehat{X}^s$ and $\overline{X}^s$ share the same drift, but are started in different initial distributions $p_{\overline{T}}$ and $\mathcal{U}[0,1]^D$, respectively.
Consequently, the three terms on the right-hand side of \eqref{eq:error_split} correspond, respectively, to the error due to early stopping, the error caused by approximating the score function, and the error introduced by initialising the dynamics with the uniform distribution.
We begin by controlling the first and third term, which admit comparatively elementary bounds.

\begin{lemma}\label{lem:early_stopping}
	Let $\mu$ be an arbitrary probability distribution on $[0,1]^D$, and let $(X_t)_{t\ge0}$ be a solution to \eqref{eq:forward_equation} with initial condition $X_0\sim\mu$.
	Then, for all $t\ge0$,
	\[
	\mathcal W_1(\mu,X_t)\le \sqrt{D\,t}.
	\]
\end{lemma}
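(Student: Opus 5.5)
We bound $\mathcal W_1(\mu,X_t)$ through one explicit coupling. Lemma~\ref{lem:explicit_solution} supplies it: since weak solutions of \eqref{eq:forward_equation} are unique in law, we may realise $X_t = f(B_t + X_0)$, where $X_0\sim\mu$ and $B$ is a $D$-dimensional Brownian motion independent of $X_0$. The pair $(X_0,X_t)$ is then a coupling of $\mu$ and $\mathcal L(X_t)$. Hence
\[
\mathcal W_1(\mu,X_t)\le \mathbb{E}\bigl[\lvert X_t-X_0\rvert\bigr]=\mathbb{E}\bigl[\lvert f(X_0+B_t)-f(X_0)\rvert\bigr],
\]
where we used $f(X_0)=X_0$, valid because $X_0\in[0,1]^D$. (At $X_0^i=1$ the definition of $\widehat f$ on $[0,1)$ needs a one-line check: periodic extension gives $\widehat f(1)=\widehat f(-1)=1$, so the identity still holds.)

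The key observation is that $\widehat f$ is $1$-Lipschitz on $\mathbb{R}$. It is the triangle-wave function with slopes $\pm1$, continuous at the integers: at even integers it takes the value $0$, at odd integers the value $1$. Since $f$ acts componentwise, each coordinate of $f(u)-f(v)$ is bounded in absolute value by the corresponding coordinate of $u-v$. It follows that $\lvert f(u)-f(v)\rvert\le\lvert u-v\rvert$ in the Euclidean norm.

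Consequently,
\[
\mathbb{E}\bigl[\lvert X_t-X_0\rvert\bigr]\le \mathbb{E}\bigl[\lvert B_t\rvert\bigr]\le \bigl(\mathbb{E}\bigl[\lvert B_t\rvert^2\bigr]\bigr)^{1/2}=\sqrt{Dt},
\]
where the middle step is Jensen's inequality and the last uses $\mathbb{E}[\lvert B_t\rvert^2]=Dt$. This gives the claim.

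There is no real obstacle here. The only point needing care is justifying that the law of $X_t$ does not depend on which weak solution we choose. This follows from the uniqueness in law discussed just before Lemma~\ref{lem:explicit_solution}, together with the fact that $\mathcal W_1$ depends only on marginal laws.
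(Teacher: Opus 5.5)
Your proposal is correct and follows the paper's proof: you realise $X_t=f(B_t+X_0)$ via Lemma~\ref{lem:explicit_solution}, use the coupling $(X_0,X_t)$ together with $f(X_0)=X_0$ and the $1$-Lipschitz property of $f$, and then bound $\mathbb{E}[\lvert B_t\rvert]\le\sqrt{Dt}$ by Cauchy--Schwarz. Your additional checks of $\widehat f(1)=1$ and of uniqueness in law are details the paper leaves implicit.
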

\begin{proof}
	Let $Y\sim\mu$, and define $X_t=f(B_t+Y)$, where $f$ is as in Lemma~\ref{lem:explicit_solution}, and $(B_t)_{t\ge0}$ is a Brownian motion independent of $Y$.
	By Lemma~\ref{lem:explicit_solution}, the process $(X_t)_{t\ge0}$ indeed solves \eqref{eq:forward_equation}.
	Since $f$ is $1$-Lipschitz, we obtain
	\[
	\mathcal W_1(\mu,X_t)\le \E\bigl[|f(B_t+Y)-Y|\bigr]= \E\bigl[|f(B_t+Y)-f(Y)|\bigr]\le \E[|B_t|]
	\le \sqrt{D\,t},
	\]
	where the final inequality follows from the Cauchy--Schwarz inequality.
\end{proof}

We next address the error arising from initialising the backward dynamics in the stationary distribution rather than in $p_{\overline T}$. This can be bounded using uniform ergodicity of  reflected Brownian motions in bounded convex domains, which is analysed in detail in \cite{loper20}.

\begin{lemma}\label{lem:stat_dist}
	Let $0<\underline{T}\le\overline{T}$, and suppose that $s$ is such that \eqref{eq:backward_equation_approx} admits a unique strong solution on $[0,\overline{T}-\underline{T}]$ for any initial distribution.
	Let $(\overline{X}^s_t)_{t\in[0,\overline{T}]}$ and $(\widehat{X}^s_t)_{t\in[0,\overline{T}]}$ denote such solutions, with $\overline{X}^s_0\sim\mathcal{U}[0,1]^D$ and $\widehat{X}^s_0\sim p_{\overline{T}}$, respectively.
	Then,
	\[
	\mathcal{W}_1\bigl(\widehat{X}^s_{\overline{T}-\underline{T}},\overline{X}^s_{\overline{T}-\underline{T}}\bigr)\le \frac{8\sqrt{D}}{\pi}\exp\Big(-\frac{\pi^2 \overline{T}}{2D}\Big).
	\]
\end{lemma}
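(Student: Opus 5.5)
The plan is to split the argument into two steps. First we show that the time-marginals of the approximate backward SDE evolve through a Markov semigroup applied to the initial law. Then we bound the $\mathcal W_1$ distance by total variation and use the exponential ergodicity of reflected Brownian motion in the cube.

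\textbf{Step 1 (reduction to total variation of the initial laws).} Since \eqref{eq:backward_equation_approx} has a unique strong solution for every initial law, the solution is a (time-inhomogeneous) Markov process. Its law at time $r=\overline T-\underline T$ is $\nu P_{0,r}$ for a transition kernel $P_{0,r}$ that does not depend on the initial law $\nu$. Take the optimal TV coupling of $\mathcal U[0,1]^D$ and $p_{\overline T}$, and run the two processes with the same Brownian motion. Where the two initial points agree they stay equal by pathwise uniqueness; otherwise they differ by at most $\operatorname{diam}[0,1]^D=\sqrt D$. Hence
\[
\mathcal W_1\bigl(\widehat X^s_{r},\overline X^s_{r}\bigr)\le \sqrt D\,\|p_{\overline T}-\mathcal U[0,1]^D\|_{TV}.
\]
Equivalently, one can use that $\mathcal W_1\le \sqrt D\, d_{TV}$ on a set of diameter $\sqrt D$, together with the data-processing inequality $d_{TV}(\nu_1P,\nu_2P)\le d_{TV}(\nu_1,\nu_2)$.

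\textbf{Step 2 (ergodicity of the forward process).} It remains to show
\[
d_{TV}\bigl(p_{\overline T},\mathcal U\bigr)\le \tfrac{8}{\pi}e^{-\pi^2\overline T/(2D)}.
\]
Conditionally on $X_0=y$, the coordinates of the forward process are independent one-dimensional reflected Brownian motions on $[0,1]$. This suggests either of two routes. The first is to cite the uniform ergodicity bound of \cite{loper20}, which is of the form $\sup_y\|q_t(y,\cdot)-\mathcal U\|_{TV}\le C e^{-\pi^2 t/(2D)}$, or something similar adapted to diameter $\sqrt D$; the constant in the statement, $\pi^2/(2D)=\pi^2/(2\operatorname{diam}^2)$, is exactly their coupling rate. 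The second is a direct computation. Each one-dimensional coordinate has the cosine expansion
\[
q^{(1)}_t(y_i,x_i)=1+2\sum_{k\ge1}e^{-\pi^2k^2t/2}\cos(k\pi y_i)\cos(k\pi x_i),
\]
obtainable from Lemma~\ref{lem:explicit_score} via Poisson summation. For the one-dimensional TV, the $L^1$ norm of the sum is bounded by $\sum_k 2e^{-\pi^2k^2t/2}$, giving a constant times $e^{-\pi^2t/2}$. Tensorising by a coordinatewise union bound then gives a factor $D$ instead of $\sqrt D$.

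I would therefore rely on the reflection-coupling argument of \cite{loper20}. Couple two reflected Brownian motions started at $y,y'$ by reflecting the driving noise across the bisecting hyperplane. Convexity of the cube makes $|X_t-X_t'|$ dominated by a one-dimensional Brownian motion on $[0,\sqrt D]$ killed at $0$. The probability of no coupling by time $t$ is bounded by the survival probability of that motion, which the first eigenfunction $\sin(\pi x/(2\sqrt D))$ with Dirichlet–Neumann boundary conditions bounds by $\frac4\pi e^{-\pi^2 t/(8D)}\cdot(\dots)$; matching exactly $\pi^2/(2D)$ requires care with the factor $2$ in the speed of the difference process. Averaging over $y\sim\mu$ and $y'\sim\mathcal U$, using stationarity of $\mathcal U$, gives the TV bound with constant $8/\pi$. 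The main obstacle is getting exactly these constants and handling the non-smooth corners in the coupling. For the corners I would use that the process almost surely avoids $E$, by \cite{williams87}.
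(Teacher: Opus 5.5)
Your proposal is correct and follows essentially the paper's argument. You bound $\mathcal W_1$ by the diameter times total variation, use that total variation contracts under the common backward transition kernel, reduce to $\sup_y \mathrm{TV}(q_{\overline T}(y,\cdot),\mathcal U)$ by averaging over the initial law, and invoke the uniform ergodicity bound $\frac{4}{\pi}e^{-\pi^2 t/(2D)}$ of \cite[Theorem 4]{loper20}. Your factor $\sqrt D$ is sharper than the paper's $2\sqrt D$, so you only need $\mathrm{TV}\le\frac{8}{\pi}e^{-\pi^2\overline T/(2D)}$. Loper's constant $\frac{4}{\pi}$ gives this with room to spare; as you anticipated, its rate comes from the diffusion coefficient $2$ of the distance process, which turns $\pi^2/(8D)$ into $\pi^2/(2D)$.
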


\begin{remark}
For existence and uniqueness of strong solutions, it suffices that for each $t\in[0,\overline{T}-\underline{T}]$ the map $x\mapsto s(x,t)$ is Lipschitz continuous with a Lipschitz constant independent of $t$, cf.\ \cite{tanaka79}.
This condition is satisfied by all neural network score approximations $s$ considered in this paper.
\end{remark}
\begin{proof}[Proof of Lemma \ref{lem:stat_dist}]
	We begin by recalling that, for any two probability measures $\nu,\nu'$ on $[0,1]^D$,
	\begin{equation}\label{eq:tv}
	\mathcal W_1(\nu,\nu')\le 2\,\mathrm{diam}([0,1]^D)\,\TV(\nu,\nu')
	=2\sqrt{D}\,\TV(\nu,\nu').
	\end{equation}
	Let $(Q_{0,t})_{t\ge0}$ denote the transition kernels of the (possibly time-inhomogeneous) SDE \eqref{eq:backward_equation_approx}, and for any probability measure $\nu$ define $\nu Q_{0,t}(\diff x) \coloneqq \int_{[0,1]^D}Q_{0,t}(y,\diff x)\,\nu(\diff y)$.
	Writing $\mu_{\overline{T}}(\diff x)=p_{\overline{T}}(x)\diff x$ and letting $\rho$ denote the uniform distribution on $([0,1]^D,\mathcal B([0,1]^D))$, we have $\widehat{X}^s_{\overline{T}-\underline{T}}\sim\mu_{\overline{T}}Q_{0,\overline{T}-\underline{T}}$,
	while $\overline{X}^s_{\overline{T}-\underline{T}}\sim\rho Q_{0,\overline{T}-\underline{T}}$.
	Since $(Q_{0,t})_{t\ge0}$ is a contraction semigroup, it follows that
	\begin{align*}
	\mathcal W_1\bigl(\widehat{X}^s_{\overline{T}-\underline{T}}, \overline{X}^s_{\overline{T}-\underline{T}}\bigr)
	\le 2\sqrt{D}\,\TV\bigl(\mu_{\overline{T}}Q_{0,\overline{T}-\underline{T}},\rho Q_{0,\overline{T}-\underline{T}}\bigr)
	&\le 2\sqrt{D}\,\TV(\mu_{\overline{T}},\rho)\\
    &= 2\sqrt{D} \sup_{A \in \mathcal{B}([0,1]^D)} \Big\lvert\int_{[0,1]^D} \int_A (p_{\overline{T}}(x,y) - 1) \diff{y} \,\mu(\diff{x}) \big\rvert\\ 
    &\leq 2\sqrt{D} \sup_{x \in [0,1]^D} \sup_{A \in \mathcal{B}([0,1]^D)} \Big\lvert \int_A (p_{\overline{T}}(x,y) - 1) \diff{y} \big\rvert\\
    &= 2\sqrt{D} \sup_{x \in [0,1]^D}\operatorname{TV}(q_{\overline{T}}(x,\cdot), \rho).
    \end{align*}
    The result now follows from \cite[Theorem 4]{loper20}, which states that 
    \[ \sup_{x \in [0,1]^D}\operatorname{TV}(q_{\overline{T}}(x,\cdot), \rho) \leq \frac{4}{\pi}\exp\Big(-\frac{\pi^2 \overline{T}}{2D}\Big).\]
\end{proof}

We now turn to the second term in \eqref{eq:error_split} and follow the general strategy from \cite{oko23} to control it.  
We start by decomposing the time interval $[0,\overline{T}-\underline{T}]$ into a sequence of geometrically shrinking sub-intervals and introduce, on each such sub-interval, an auxiliary process in which the true score is only partially replaced by its approximation.
This multilevel construction allows us to localise the score approximation error in time and to derive sharper bounds on the resulting Wasserstein distance.
Fix a constant $c\in(1,2]$, and choose $K\in\mathbb{N}$ such that $\underline{T}c^{K}=\overline{T}$.
Define the intermediate times $t_i \coloneqq \underline{T}c^{i}$, $i=0,\ldots,K$.
For each $i\in\{0,\ldots,K\}$ and a given score approximation $s$, let $Y^{(i)}=(Y^{(i)}_t)_{t\in[0, \overline{T}-\underline{T}]}$ denote the solution to the reflected SDE
\begin{alignat*}{2}
	\diff Y^{(i)}_t
	&= \nabla \log p_{\overline{T}-t}\bigl(Y^{(i)}_t\bigr)\diff t
	+ \diff B_t
	+ n\bigl(Y^{(i)}_t\bigr)\diff L_t,
	\quad && t\in[0,\overline{T}-t_i), \\
	\diff Y^{(i)}_t
	&= s\bigl(Y^{(i)}_t,\overline{T}-t\bigr)\diff t
	+ \diff B_t
	+ n\bigl(Y^{(i)}_t\bigr)\diff L_t,
	\quad && t\in[\overline{T}-t_i,\overline{T}-\underline{T}],
\end{alignat*}
with initial condition $Y^{(i)}_0 \sim p_{\overline{T}}$.
Thus, the process $Y^{(i)}$ follows the exact reverse-time dynamics driven by the true score up to time $\overline{T}-t_i$, and subsequently evolves according to the approximate score $s$.
By construction, we have the distributional identities
\[
X_{\underline{T}} \sim \widehat{X}^{s_0}_{\overline{T}-\underline{T}} \sim Y^{(0)}_{\overline{T}-\underline{T}},
\qquad
\widehat{X}^{s}_{\overline{T}-\underline{T}} \sim Y^{(K)}_{\overline{T}-\underline{T}}.
\]
Applying the triangle inequality for the $1$-Wasserstein distance therefore yields
\[
\mathcal{W}_1\bigl(X_{\underline{T}}, \widehat{X}^{s}_{\overline{T}-\underline{T}}\bigr)
\le \sum_{i=1}^{K} \mathcal{W}_1\bigl(Y^{(i-1)}_{\overline{T}-\underline{T}}, Y^{(i)}_{\overline{T}-\underline{T}}\bigr).
\]
The following proposition provides a bound on each of the incremental Wasserstein distances, which essentially improves by a factor of $((t_i \wedge 1)\rho)^{1/2}$ the rough upper bound that can be derived from combining a total variation bound and Girsanov's theorem, provided that the score approximation satisfies $\lvert s(x,t) \rvert \leq C\sqrt{\rho /t}$ for $t \leq 1$. This growth control is motivated by Lemma \ref{lem:affine_approx}, whose combined conclusion tells us that for any $p \in \N$, with probability at least $1 - 1/n^p$, the true score satisfies
\[\forall t \geq \underline{T}: \lvert s_0(t,X_t) \rvert = \lvert \nabla \log p_t(X_t) \rvert \lesssim t^{-1/2} \sqrt{\log(1 + \log \underline{T}^{-1}) + \sqrt{p \log n}}.\]
The improved Wasserstein bound allows to compensate the higher difficulty of score approximation for small times $t$ caused by its increasing irregularity as $t \to 0$ and thereby obtain faster Wasserstein convergence rates. The detailed proof is postponed to Appendix \ref{app:wasserstein}.

\begin{proposition} \label{lem:wasserstein_Yi}
	Assume \ref{ass:H4}. Let $s$ be a score approximation satisfying $|s(x,t)| \le C\sqrt{\rho/t}$ for all $x\in[0,1]^D$, $t\in(0,1]$, for some constants $C>0$ and $\rho>1$.
	Assume moreover that $t_1\le 1$ and that $\log\bigl(1+\log(t_1^{-1})\bigr)\lesssim \sqrt{\rho}$.
	Then, for any $i=1,\ldots,K$, the corresponding processes $Y^{(i-1)}$ and $Y^{(i)}$ satisfy
	\[
	\mathcal W_1\bigl(Y_{\overline{T}-\underline{T}}^{(i-1)}, Y_{\overline{T}-\underline{T}}^{(i)}\bigr)
	\le \mathfrak{C}\Bigg(\mathrm{e}^{-\rho}+ \bigg( (t_i\wedge 1)\rho
	\int_{t_{i-1}}^{t_i} \mathbb{E}\bigl[ |s(X_t,t)-\nabla\log p_t(X_t)|^2\bigr]\,
	\diff t\bigg)^{\frac12}\Bigg),
	\]
	for some constant $\mathfrak{C}>0$ independent of $i$. In particular, 
    \[\mathcal{W}_1\bigl(X_{\underline{T}}, \widehat{X}^{s}_{\overline{T}-\underline{T}}\bigr)
    \le \mathfrak{C}\Bigg(K\mathrm{e}^{-\rho} + \sqrt{\rho} \sum_{i=1}^K\sqrt{t_i\wedge 1}
	\bigg(\int_{t_{i-1}}^{t_i} \mathbb{E}\bigl[ |s(X_t,t)-\nabla\log p_t(X_t)|^2\bigr]\,
	\diff t\bigg)^{\frac12}  \Bigg)\].
\end{proposition}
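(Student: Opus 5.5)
The plan is a coupling argument in the spirit of \cite{oko23}. The processes $Y^{(i-1)}$ and $Y^{(i)}$ have the same dynamics (true score) on $[0,\overline{T}-t_i)$. On $[\overline{T}-t_i,\overline{T}-t_{i-1})$ they differ: $Y^{(i-1)}$ still uses $\nabla\log p_{\overline{T}-t}$, while $Y^{(i)}$ uses $s$. On $[\overline{T}-t_{i-1},\overline{T}-\underline{T}]$ both are driven by $s$.

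\textbf{Construction of the coupling.} Run both processes from the same point $Y^{(i-1)}_{\overline{T}-t_i}=Y^{(i)}_{\overline{T}-t_i}$, whose law is that of $X_{t_i}$. Conditionally on this point, take a maximal coupling of the two path laws on $[\overline{T}-t_i,\overline{T}-t_{i-1}]$. From time $\overline{T}-t_{i-1}$ on, drive both with the same Brownian motion. By pathwise uniqueness for the $s$-driven reflected SDE (the Lipschitz remark after Lemma~\ref{lem:stat_dist}), the two processes then coincide up to $\overline{T}-\underline{T}$ on the event $G$ that they agree at time $\overline{T}-t_{i-1}$. Hence
\[
\mathcal W_1\bigl(Y^{(i-1)}_{\overline{T}-\underline{T}},Y^{(i)}_{\overline{T}-\underline{T}}\bigr)\le \E\bigl[\lvert Y^{(i-1)}_{\overline{T}-\underline{T}}-Y^{(i)}_{\overline{T}-\underline{T}}\rvert\,\one_{G^{\mathsf c}}\bigr].
\]
By Girsanov's theorem for the reflected SDE, the two path laws differ only in their drift on this interval. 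Pinsker's inequality then gives
\[
\mathbb P(G^{\mathsf c})\le \mathrm{TV}\lesssim\Bigl(\int_{t_{i-1}}^{t_i}\E\bigl[\lvert s(X_t,t)-\nabla\log p_t(X_t)\rvert^2\bigr]\diff t\Bigr)^{1/2},
\]
where I use that $Y^{(i-1)}_{\overline{T}-t}\sim X_t$ on this interval. Assumption \ref{ass:H4} and the smoothness of $p_t$ from Lemma~\ref{lem:explicit_score} are what I would invoke to justify Girsanov, i.e.\ a finite Novikov-type condition for the true-score drift near the boundary, where local times are unaffected by a change of drift. If $t_i>1$, I bound the distance by $\mathrm{diam}=\sqrt D$ and am done.

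\textbf{Localisation for $t_i\le 1$.} I introduce a good event $A$ on which each process stays within $C'\sqrt{\rho t_i}$ of the common starting point $Y_{\overline{T}-t_i}$ over the whole window $[\overline{T}-t_i,\overline{T}-\underline{T}]$. Then
\[
\E\bigl[\lvert\cdot\rvert\,\one_{G^{\mathsf c}}\bigr]\le 2C'\sqrt{\rho t_i}\,\mathbb P(G^{\mathsf c})+\sqrt D\,\mathbb P(A^{\mathsf c}),
\]
which yields the claim once I show $\mathbb P(A^{\mathsf c})\lesssim \mathrm e^{-\rho}$. I split the window into two segments.
\begin{itemize}
\item[(1)] \emph{True-score segment of $Y^{(i-1)}$.} This is the time reversal of $X$, so its displacement equals that of $X$ between $t_{i-1}$ and $t_i$. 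By the triangle inequality through $X_0$ and Lemma~\ref{lem:affine_approx}(c) with $y=\sqrt\rho$, it is at most $C\sqrt{D t_i}\bigl(\log(1+\log t_1^{-1})+\sqrt\rho\bigr)\lesssim\sqrt{\rho t_i}$, outside an event of probability $4D\mathrm e^{-2\rho}$. Here the hypothesis $\log(1+\log t_1^{-1})\lesssim\sqrt\rho$ is used.
\item[(2)] \emph{$s$-driven segments of both processes.} The drift integrates to at most $\int_{\underline{T}}^{t_i}C\sqrt{\rho/t}\diff t\lesssim\sqrt{\rho t_i}$. The reflection is handled coordinatewise: the one-dimensional Skorokhod map on $[0,1]$ is Lipschitz (constant $2$ suffices) in the sup-norm of the driving path. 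So the displacement is bounded by a constant times the drift contribution plus $\sup_{u\le t_i}\lvert B_u\rvert$, and Gaussian maximal inequalities give $\sup_{u\le t_i}\lvert B_u\rvert\lesssim\sqrt{\rho t_i}$ with probability $1-2D\mathrm e^{-\rho}$.
\end{itemize}

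\textbf{Assembling.} Combining the two bounds gives the first display of the proposition. Summing over $i$ via the triangle inequality already stated before the proposition gives the second display.

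The main obstacle I expect is making Girsanov and the maximal path coupling rigorous for reflected dynamics in the non-smooth cube, where the true-score drift is only locally controlled. If Novikov fails globally, I would fall back on a localisation or truncation argument together with lower semicontinuity of relative entropy.
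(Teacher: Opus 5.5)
Your proof is correct, and its skeleton matches the paper's. The paper also starts from the crude bound $2\sqrt D\,\mathrm{TV}$ combined with Pinsker and Girsanov, which already suffices when $t_i\gtrsim\rho^{-1}$. Otherwise it writes $\mathcal W_1$ by Kantorovich--Rubinstein duality, subtracts the vanishing term at time $\overline T-t_i$, and splits on the event that the displacement over $[\overline T-t_i,\overline T-\underline T]$ is at most $C_2\sqrt{t_i\rho}$. That is the duality counterpart of your maximal-plus-synchronous coupling: both give $\sqrt{t_i\rho}\,\mathrm{TV}+\sqrt D\,\mathbb P(\text{large displacement})$. The true-score segment of $Y^{(i-1)}$ is handled exactly as you do, through the forward process and Lemma~\ref{lem:affine_approx}(c).

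The genuine difference is how you control the $s$-driven segments.
\begin{itemize}
\item \textbf{The paper's route.} It compares $Y^{(i)}$ with an unreflected SDE $Z^{(i)}$. Using \ref{ass:H4} together with $t_i\le (C_1\rho)^{-1}$, it shows that on a high-probability event the path starts within $O(\sqrt{t_i\rho})$ of $M$ and never reaches $\partial[0,1]^D$. The local time is then inactive, and the displacement is just drift plus Brownian increment.
\item \textbf{Your route.} You use that normal reflection in the cube splits into one-dimensional two-sided Skorokhod problems on $[0,1]$. The solution map moves the path by at most a constant times the oscillation of the input (compare with the constant input path); the constant does not matter.
\end{itemize}
Your argument is shorter and never uses \ref{ass:H4}, so it proves the proposition without that assumption. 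The paper's no-hitting argument does not depend on the product structure of the cube, so it would carry over to other reflecting domains, but only at the price of \ref{ass:H4}.

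For the same reason, \ref{ass:H4} is not what justifies Girsanov either. The paper simply quotes Girsanov for reflected diffusions from \cite{holk24}. For $t_i\le 1$, Novikov is immediate, because both drifts are bounded on the window: by your hypothesis on $s$, and by Lemma~\ref{lem:affine_approx}(a) since $t\ge t_{i-1}>0$. Your worry that the true-score drift is only locally controlled is therefore unfounded.

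One small fix: Lemma~\ref{lem:affine_approx}(c) needs its starting time $t_{i-1}\le 1/2$. This can fail for $t_i\le 1$ when $c<2$. Just apply the crude bound for all $t_i>1/2$ as well, which works since then $\rho(t_i\wedge1)>1/2$.
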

Recall that, for given $\underline{T},\overline{T}$ and $t_i = \underline{T}c^i$, $i =0,\ldots,K$, as above such that $c^K = \overline{T}$, we estimate the score on on $[t_{i-1},t_i)$ by minimising the empirical denoising score loss via 
\begin{equation}\label{eq:estimator1}
    \widehat s_n^{(i)} \in \argmin_{s\in\mathcal S_i}\frac{1}{n}\sum_{k=1}^n L_s^{(i)}(Y_k),
\end{equation}
where $Y_1,\ldots,Y_n \overset{iid}{\sim} \mu$ is our given data, 
\[L_s^{(i)}(x) = \E\Big[\int_{t_{i-1}}^{t_i} \lvert s(X_t,t) - \nabla \log q_t(x,X_t) \rvert^2 \diff{t}\Big],\]
and $\mathcal{S}_i$ is an approximating class of neural networks that needs to be chosen. 
The full score estimator is then obtained by concatenating these minimisers across time,
\begin{equation}\label{eq:estimator2}
    \widehat s_n(x,t) = \sum_{i=1}^K \widehat s_n^{(i)}(x,t)\1_{[t_{i-1},t_i)}(t),
\end{equation}
which reflects the multiscale structure of the diffusion and allows the approximation complexity to adapt to the effective noise level at time $t$. By the equivalence of explicit and denoising score matching, $\hat{s}_n^{(i)}$ therefore serves as an empirical risk minimiser for the true score $s_0$ on $[t_{i-1},t_i)$. For a given approximation class $\mathcal{S}_i$, the $L^2$ estimation error 
\[\E\Big[\int_{t_{i-1}}^{t_i} \lvert \hat{s}_n^{(i)}(X_t,t) - \nabla \log q_t(x,X_t) \rvert^2 \diff{t}\Big]\]
therefore naturally splits into the conflicting effects of an approximation error 
\[\min_{s \in \mathcal{S}_i} \E\Big[\int_{t_{i-1}}^{t_i} \lvert s(t,X_t) - \nabla \log q_t(x,X_t) \rvert^2 \diff{t}\Big],\]
which decreases with larger networks sizes that increase the expressivity of the approximation class, and a complexity term that increases with the size of the network class. 

\subsection{Score approximation}\label{subsec:approx}
In order to optimally balance these two effects, given a desired target accuracy, it is necessary to make parsimonious choices regarding the network sizes. To specify this, we now introduce the class of sparsity-constrained neural networks with ReLU activation function that we use for score approximation.
For $b, x\in\R^m$, define
\[
	\sigma_b(x)
	=\mat{\sigma(x_1-b_1) \\ \sigma(x_2-b_2) \\ \vdots \\ \sigma(x_m-b_m)},\quad \sigma(y)=y\vee0,
\]
and  for $L\in\N$, $W\in\N^{L+2}$, $S\in\N$ and $B>0$ denote by $\Phi(L, W, S, B)$ the class of neural networks with depth (i.e., number of hidden layers) $L$, layer widths (including input and output layers) $W$, sparsity constraint $S$, and norm constraint $B$.
We thus consider functions of the form
\[
	\varphi(x)
	=A_L\sigma_{b_L}A_{L-1}\sigma_{b_{L-1}}\cdots A_1\sigma_{b_1}A_0x,
\]
where $A_i\in\R^{W_{i+1}\times W_{i}},b_i\in\R^{W_{i+1}}$ for $i=0,\ldots,L$ (to ease notation, we always set $b_0=0$), and where there are at most a total of $S$ non-zero entries of the $A_i$'s and $b_i$'s and all entries are numerically at most $B$.
In an abuse of notation, we denote $\sigma_{0}$ simply by $\sigma$.
This can be written succinctly as
\[
	\Phi(L, W, S, B)
	\coloneqq\left\{
	\begin{aligned}
		&A_L\sigma_{b_L}A_{L-1}\sigma_{b_{L-1}}\cdots A_1\sigma_{b_1}A_0\mid A_i\in\R^{W_{i+1}\times W_i}, b_i\in\R^{W_{i+1}}, \\
		&\sum_{i=0}^L(\n{A_i}_0+\n{b_i}_0)\le S,\max_{i\in\{0, \ldots, L\}}(\n{A_i}_\infty\vee\n{b_i}_\infty)\le B
	\end{aligned}
	\right\}.
\]
For larger and more complicated neural networks, their exact sizes are often unavailable, and we only have access to their asymptotic sizes.
Due to this, we also introduce the following class of neural networks that eases network size analysis in the proofs that follow:
\[
	\widetilde{\Phi}(\widetilde{L}, \widetilde{W}, \widetilde{S}, \widetilde{B})
	\coloneqq \Big\{\varphi\in\Phi(L, W, S, B): L\lesssim\widetilde{L}, \n{W}_{\infty}\lesssim\widetilde{W}, S\lesssim\widetilde{S}\text{ and }B\lesssim\widetilde{B}\Big\}.
\]
With this notation, we have for arbitrary networks $\varphi_i\in\widetilde{\Phi}(L_i, W_i, S_i, B_i)$, that
\begin{align*}
	\varphi_1\circ\varphi_2
	&\in\widetilde{\Phi}(L_1+L_2, W_1\vee W_2, S_1+S_2, B_1\vee B_2)\qquad\text{and} \\
	\mat{\varphi_1 \\ \varphi_2}
	&\in\widetilde{\Phi}(L_1\vee L_2, W_1+W_2, S_1+S_2, B_1\vee B_2).
\end{align*}
In particular, since $\varphi_1+\varphi_2=\mat{1 & 1}\mat{\varphi_1 & \varphi_2}^\top$, we have also
\[
	\sum_{i=1}^{k}\varphi_i
	\in\widetilde{\Phi}\Big(\max{\{L_i\}}, \sum_{i=1}^{k}W_i, \sum_{i=1}^{k}S_i, \max{\{B_i\}}\Big).
\]
Some basic neural network approximation results that we shall frequently use in our analysis are given in Appendix \ref{app:basic_neural}. Our main approximation result is the following. 

\begin{theorem}
	\label{theo:score_approx}
	Under assumptions \ref{ass:H1}--\ref{ass:H4}, for any $\delta>0$, large enough $m\in\N$ and $\underline{t}>0$ with $m^{-\frac{2\alpha+2}{2\alpha+d}}\lesssim \underline{t}\lesssim \log m$, there exists a neural network
	\[
		\varphi_{s_0}
		\in \begin{cases}
			\widetilde{\Phi}\big((\log m)^2(\log\log m)^2, m(\log m)^{D+1},  m(\log m)^{D+2}, m^{\frac{\alpha}{d}}\underline{t}^{-1}\vee m^{\nu}\big), &\text{ if }\underline{t}\le\frac{1}{2}m^{-\frac{2-\delta}{d}}\\
			\widetilde{\Phi}\big((\log m)^2(\log\log m)^2, m'(\log m)^{D+1}, m'(\log m)^{D+2}, m'\big), &\text{ if }\underline{t}>\frac{1}{2}m^{-\frac{2-\delta}{d}}
		\end{cases},
	\]
	where $\nu=\frac{2d}{2\alpha-d}+\frac{1}{d}$ and $m'=\underline{t}^{-\frac{d}{2}}m^{\frac{\delta}{2}}$ satisfying
	\[
		\int_{\underline{t}}^{2\underline{t}}\mathbb{E}[|s_0(X_t)-\varphi_{s_0}(X_t)|^2]\diff{t}
		\lesssim \begin{cases}
			(\log m)^{d+2D+3} m^{-\frac{2\alpha}{d}}, &\text{ if }\underline{t}\le m^{-\frac{2-\delta}{d}}\\
			(\log m)^{d+2D+3} m^{-\frac{2(\alpha+1)}{d}}, &\text{ if }\underline{t}>m^{-\frac{2-\delta}{d}}.
		\end{cases}
	\]
	Moreover, this network can be chosen such that $|s_0(x, t)|\lesssim\frac{\sqrt{\log m}}{\sqrt{\underline{t}\wedge1}}$ for all $x\in[0, 1]^D$ and $t\in[\underline{t}, 2\underline{t}]$.
\end{theorem}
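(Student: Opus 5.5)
The plan is to follow the strategy of \cite{oko23} (and its linear-subspace variant in \cite{chen23b}), replacing the Gaussian kernel by the mirror-image series of Lemma~\ref{lem:explicit_score}.

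\textbf{Step 1: truncating the image sum and reducing to $d$ dimensions.} By \ref{ass:H4}, $\mathrm{dist}(M,\partial[0,1]^D)\ge\rho_{\min}$. Hence for $t\lesssim 1$ every image term with $z\neq 0$ in \eqref{eq:score}, and $z$ ranging outside a finite set $\{|z|_\infty\le C\sqrt{\log m}\}$, is exponentially small relative to the main term on $M_{\rho,t}$. For large $t$ the number of relevant images grows like $\sqrt{t\log m}$ per coordinate. This is the origin of the $(\log m)^{D}$ factors in width and sparsity.

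For each retained $z$, write $R_z(x)+z-y = P_V(R_z(x)+z-v_0-y) + P_{V^\perp}(R_z(x)+z-v_0)$. Since $y\in V+v_0$, the Gaussian factorises into an explicit factor in the orthogonal component times
\[
g_z(u,t)=\int \exp\Big(-\tfrac{|u-w|^2}{2t}\Big)p_0(w)\,\mathrm{Vol}_d(\diff w),
\qquad u=P_V(R_z(x)+z-v_0)\in\mathbb{R}^d ,
\]
together with the analogous vector-valued integral $h_z(u,t)$ carrying the extra factor $(u-w)$. So numerator and denominator of \eqref{eq:score} are finite sums of products of
\begin{enumerate}[label=(\roman*)]
\item the linear/ReLU-implementable map $x\mapsto R_z(x)+z$, which is piecewise affine;
\item the explicit Gaussian in the orthogonal direction, approximated by ReLU networks of polylogarithmic size;
\item the $d$-dimensional smoothed quantities $g_z$ and $h_z$.
\end{enumerate}

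\textbf{Step 2: approximating $g_z,h_z$ by diffused local polynomials.} Here the appendix results are used. Expand $p_0$ in local Taylor polynomials of degree $\alpha$ on a grid of mesh $m^{-1/d}$ in $M$; this requires $\asymp m$ cells. Integrate each piece against the Gaussian, which gives products of one-dimensional Gaussian-polynomial integrals, each approximable to accuracy $m^{-\alpha/d}$ (and to $m^{-(\alpha+1)/d}$ after the $t$-gain).

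The two regimes are handled as follows.
\begin{enumerate}[label=(\alph*)]
\item For small $\underline t\le m^{-(2-\delta)/d}$, use the Sobolev approximation directly; this gives an $L^2$ error $m^{-2\alpha/d}$ after dividing by $p_t\gtrsim t^{(c_0-D)/2}e^{-\rho}$ (Lemma~\ref{lem:affine_approx_d}) and multiplying by $t$-integration.
\item For large $\underline t$, exploit that $g_z(\cdot,t)$ is the heat smoothing of $p_0$. Approximate it by the heat-smoothed network on a coarser grid of $m'=\underline t^{-d/2}m^{\delta/2}$ cells, gaining one extra order from the $\sqrt t$ smoothing. This produces $m^{-2(\alpha+1)/d}$, as in \cite[Section~B]{oko23}.
\end{enumerate}
Near $\partial M$, where $p_0$ may vanish polynomially, use \ref{ass:H3}: the higher regularity $C^\kappa$ lets the local polynomial approximation be fine enough to control the quotient even where $p_t$ is only of size $t^{(c_0-d)/2}$. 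This explains the exponent $\kappa$.

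\textbf{Step 3: quotient, clipping and error accounting.} Form the quotient with a ReLU reciprocal network on $[\,\underline t^{(c_0-D)/2}m^{-C},\,\infty)$, which costs depth $(\log m)^2(\log\log m)^2$. Then clip the output componentwise at $C\sqrt{\log m/(\underline t\wedge 1)}$. This gives the stated bound on the network, and by Lemma~\ref{lem:affine_approx_e3} clipping does not harm accuracy on $M_{\rho,t}$ with $\rho\asymp\log m$.

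Split the error into two parts. On $M_{\rho,t}$, the relative error of the denominator is amplified by the lower bound Lemma~\ref{lem:affine_approx_d}; here the weight magnitudes $m^{\alpha/d}\underline t^{-1}\vee m^\nu$ enter. On the complement, use Lemma~\ref{lem:affine_approx_b}, which gives a contribution of order $t^{-2}e^{-\rho}\lesssim m^{-2(\alpha+1)/d}$ for $\rho\asymp\log m$.

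\textbf{Main obstacle.} I expect the hardest step to be the uniform control of the denominator near $\partial M$ in the small-$t$ regime. There one must show that the approximation error of $g_0$ is small relative to $p_t(x)$, for $x$ close to the boundary of $M$ where $p_0$ degenerates, while keeping the network size at $m$ up to logs. I would first try a two-scale grid: a coarse Sobolev grid in the interior $M_{-\varepsilon_M/2}$, where $p_0\ge p_{\min}$, and a fine $C^\kappa$-Taylor grid in $(\partial M)_{\varepsilon_M}$ with relative error bounds.
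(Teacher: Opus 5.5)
\paragraph{Where the proposal matches the paper.}
Most of your architecture is the paper's. This includes the image truncation and the $V/V^\perp$ factorisation, which reduces everything to the $d$-dimensional functions $f_1,f_2$ of \eqref{eq:f_1f_2}. It also includes the small/large-$t$ split, a floored denominator, clipping via Lemma~\ref{lem:affine_approx_e}, and discarding $M_{\rho,t}^{\mathsf c}$ via Lemma~\ref{lem:affine_approx_b}. Diffusing local-polynomial approximants of $p_0$ exactly is a legitimate substitute for the paper's route. The paper instead approximates $n_t*q$ at fixed $t$ by Lemma~\ref{lem:sobolev_network} and then interpolates in $t$ at Chebyshev nodes (Lemma~\ref{lem:time_interpolation}); here $n_t$ is the $N(0,tI_d)$ density and $q=p_0(A\,\cdot+v_0)$, so $(2\pi t)^{-d/2}f_1=n_t*q$. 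One caveat on your route: for $p_0\in H^\alpha$, uniform-grid local polynomials give $m^{-\alpha/d}$ only in $L^2$, not uniformly. So the quotient estimate would have to be run in $L^2(p_t)$ rather than pointwise.

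\paragraph{The quotient estimate fails as written.}
This is the genuine gap. In Step~2(a) and Step~3 you divide the interior error $m^{-\alpha/d}$ by the global lower bound of Lemma~\ref{lem:affine_approx_d}. After removing the common factor $(2\pi t)^{-(D-d)/2}$, this multiplies the error by $t^{-\frac{c_0-d}{2}}\mathrm{e}^{\rho}$. For $\rho\asymp\log m$, which your complement term forces, this is a positive power of $m$. With the paper's $\rho=\frac{2(\alpha+1)}{d}\log m+\log\underline t^{-1}$ it equals $\underline t^{-\frac{c_0-d}{2}-1}m^{\frac{2(\alpha+1)}{d}}$. The rate is destroyed, both pointwise and in $L^2(p_t)$.

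The missing idea is a termwise treatment of the truncated image sum. Split the $z$'s according to whether $x_z^*=A^\top(R_z(x)+z-v_0)$ lies in $M^*_{-\varepsilon_M/2}$.
\begin{itemize}
\item For such $z$, $p_0\ge p_{\min}$ and the volume condition in \ref{ass:H1} give $(2\pi t)^{-d/2}f_1(x_z^*,t)\gtrsim 1\wedge(r_0^2/t)^{d/2}\gtrsim(\log m)^{-d/2}$. Hence the approximate denominator is at least of order $(\log m)^{-d/2}\sum_z\mathrm{e}^{-|x_z^\perp|^2/(2t)}$ over these $z$. The same factors $\mathrm{e}^{-|x_z^\perp|^2/(2t)}$ multiply their errors, so they contribute only a relative error $m^{-\alpha/d}$ up to logarithms. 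Lemma~\ref{lem:affine_approx_d} is never used for them.
\item For the remaining $z$, an absolute error is divided by the floor $c\,\underline t^{\frac{c_0-d}{2}+1}m^{-\frac{2(\alpha+1)}{d}}$. The exponent $\kappa$ is chosen exactly so that $m^{-\kappa/d}$ divided by this floor is $\lesssim m^{-\alpha/d}$, using $\underline t\gtrsim m^{-1}$.
\end{itemize}
Your closing paragraph mentions $p_0\ge p_{\min}$ in the interior but never turns it into this lower bound. It also seeks relative bounds near $\partial M$. There, the absolute-error-versus-floor argument you already sketch in Step~2 is what is actually used.

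\paragraph{The large-$t$ mechanism falls short.}
You have only $m'=\underline t^{-d/2}m^{\delta/2}$ parameters, as few as $m^{\delta/2}$ when $\underline t\asymp1$. Gaining one order of smoothness then gives error of order $(m')^{-(\alpha+1)/d}$, which for $\underline t\asymp1$ is only $m^{-\delta(\alpha+1)/(2d)}$. That is far from the required $m^{-(\alpha+1)/d}$. The paper's argument even needs $m^{-(\kappa+1)/d}$ there, because the division by the floor for image points projecting near $\partial M^*$ is still present.

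What is used instead is that the heat-smoothed density has Sobolev norms of every order:
$\|n_t*q\|_{H^\gamma}\le\|q\|_{L^1}\|n_t\|_{H^\gamma}\lesssim(t\wedge1)^{-(d+2\gamma)/4}$.
Hence $m'$ parameters yield error $(t\wedge1)^{-d/4}m^{-\gamma\delta/(2d)}$ up to logarithms, and one takes $\gamma\asymp\delta^{-1}$ (Lemma~\ref{lem:fixed_time_approx_large_t}). In your diffused-polynomial route, the analogue is to expand the Gaussian kernel in the integration variable to order $\asymp\delta^{-1}$ over cells of width $\sqrt{t}\,m^{-\delta/(2d)}$, so that only cell moments of $p_0$ enter. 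A one-order gain does not suffice.
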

The proof is technically involved and proceeds through several stages. We provide a high-level overview of the argument here, while all details are deferred to Section~\ref{app:approx}. The approximation strategy exploits the explicit score representation established in Lemma~\ref{lem:explicit_score}, together with the general neural network approximation framework for space-time functions developed in \cite{holk24}.

Recall from Assumptions~\ref{ass:H1} and~\ref{ass:H2} that the target distribution $\mu$ is supported on a closed subset $M$ of a $d$-dimensional affine subspace $(V+v_0)\cap[0,1]^D$ with non-empty interior, where $V=\operatorname{Span}(v_1,\ldots,v_d)$, $v_0\in[0,1]^D$, and $v_1,\ldots,v_d\in\R^D$ are vectors in the $D$-dimensional ambient space.
Let $A\coloneqq (v_1,\ldots,v_d)\in\R^{D\times d}$ and $P\coloneqq AA^\top$, so that $P$ is the orthogonal projection onto $V$ and any $x\in V$ can be written as $x=Au$ for some $u\in\R^d$.
Then, for any integrable function $g\colon M\to\R$,
\[
\int_M g\diff\mu
=\int_{M^*} g(Au+v_0)p_0(Au+v_0)\diff u,
\]
where $M^*\coloneqq A^\top(M-v_0)=\{u\in\R^d:Au+v_0\in M\}$.
\begin{figure}[h]
    \centering
	\begin{subfigure}[b]{0.45\textwidth}
		\begin{tikzpicture}[
		scale = 3, 
		axis/.style={->, thick},
		cube/.style={dashed},
		support/.style={thick}
		]
			% Cube
			\draw[cube] (0, 0, 0) -- (1, 0, 0) -- (1, 0, 1) -- (0, 0, 1) -- cycle;
			\draw[cube] (0, 1, 0) -- (1, 1, 0) -- (1, 1, 1) -- (0, 1, 1) -- cycle;
			\draw[cube] (1, 0, 0) -- (1, 0.43, 0) ;
			\draw[cube] (0, 0, 1) -- (0, 0.36, 1) ;
			\draw[cube] (1, 0, 1) -- (1, 0.5, 1) ;

			% Axes
			\draw[axis] (0, 0, 0) -- (1.2, 0, 0);
			\draw[axis] (0, 0, 0) -- (0, 1.2, 0);
			\draw[axis] (0, 0, 0) -- (0, 0, 1.2);

			% Plane
			\path[draw=black, fill=black!20, thick, opacity=0.8] (1.2, 0.54, 1.2) -- (1.2, 0.44, -0.2) -- (-0.2, 0.24, -0.2) -- (-0.2, 0.34, 1.2) -- cycle;
			\node[shift={(-0.5, 0.25)}] at (1.2, 0.44, -0.2) {$V+v_0$};
			\draw[-, thick] (0, 0.29, 0) -- (0, 1, 0);
			\draw[cube] (1, 0.43, 0) -- (1, 1, 0);
			\draw[cube] (0, 0.36, 1) -- (0, 1, 1);
			\draw[cube] (1, 0.5, 1) -- (1, 1, 1);

			% Support
			\draw[support] (0.076, 0.299, 0.03) .. controls (-0.06, 0.298, 0.297) and (0.45, 0.374, 0.333) .. (0.096, 0.345, 0.636);
			\draw[support] (0.076, 0.299, 0.03) .. controls (-0.06, 0.298, 0.297) and (0.45, 0.374, 0.333) .. (0.096, 0.345, 0.636);
			\draw[support] (0.096, 0.345, 0.636) .. controls (-0.092, 0.332, 0.825) and (0.27, 0.386, 0.86) .. (0.54, 0.431, 0.955);
			\draw[support] (0.54, 0.431, 0.955) .. controls (0.927, 0.495, 1.082) and (0.562, 0.407, 0.57) .. (0.61, 0.412, 0.55);
			\draw[support] (0.61, 0.412, 0.55) .. controls (0.998, 0.454, 0.353) and (0.627, 0.366, -0.136) .. (0.617, 0.376, 0.03);
			\draw[support] (0.617, 0.376, 0.03) .. controls (0.61, 0.383, 0.14) and (0.235, 0.305, -0.2) .. (0.076, 0.299, 0.03);
			\node[] at (0.43, 0.40, 0.5) {$M$};
		\end{tikzpicture}
	\end{subfigure}
	\begin{subfigure}[b]{0.35\textwidth}
		\begin{tikzpicture}[
		scale = 3, 
		axis/.style={->, thick},
		cube/.style={dashed}
		]
			% Axes
			\draw[axis] (-0.1, 0) -- (1.1, 0);
			\draw[axis] (0, -0.1) -- (0, 1.1);

			% Support
			\draw[-] (0.076, 0.03) .. controls (-0.06, 0.297) and (0.45, 0.333) .. (0.096, 0.636);
			\draw[-] (0.096, 0.636) .. controls (-0.092, 0.825) and (0.27, 0.86) .. (0.54, 0.955);
			\draw[-] (0.54, 0.955) .. controls (0.927, 1.082) and (0.562, 0.57) .. (0.61, 0.55);
			\draw[-] (0.61, 0.55) .. controls (0.998, 0.353) and (0.627, -0.136) .. (0.617, 0.03);
			\draw[-] (0.617, 0.03) .. controls (0.61, 0.14) and (0.235, -0.2) .. (0.076, 0.03);
			\node[] at (0.45, 0.45) {$M^*$};
		\end{tikzpicture}
	\end{subfigure}
    \caption{Example of a domain $M\subseteq\R^3$ and its lower-dimensional representation $M^*\subseteq\R^2$.\label{fig:support}}
\end{figure}
Moreover, for any $x\in\R^D$ and $u\in M^*$,
\begin{align*}
	x-(Au+v_0)
	&=\big((v_0+P(x-v_0))-(Au+v_0)\big)+\big(x-(v_0+P(x-v_0))\big) \\
	&=\big(P(x-v_0)-Au\big)+\big((I-P)(x-v_0)\big),
\end{align*}
where the first term lies in $V$ and the second in $V^\perp$. By the Pythagorean theorem,
\[
	|x-(Au+v_0)|^2
	=|P(x-v_0)-Au|^2+|(I-P)(x-v_0)|^2.
\]
Consequently, for any $x\in\R^D$,
\begin{align*}
	\int_M \mathrm{e}^{-\frac{|x-y|^2}{2t}}\,\mu(\diff{y})
	&=\int_{M^*} \mathrm{e}^{-\frac{|x-(Au+v_0)|^2}{2t}}p_0(Au+v_0)\diff{u} \\
	&=\mathrm{e}^{-\frac{|(I-P)(x-v_0)|^2}{2t}}\int_{M^*}\mathrm{e}^{-\frac{|P(x-v_0)-Au|^2}{2t}}p_0(Au+v_0)\diff{u} \\
	&=\mathrm{e}^{-\frac{|(I-P)(x-v_0)|^2}{2t}}\int_{M^*}\mathrm{e}^{-\frac{|A^\top(x-v_0)-u|^2}{2t}}p_0(Au+v_0)\diff{u}.
\end{align*}
Here we used that $|Au|=|u|$ for all $u\in\R^d$.
A similar decomposition yields
\begin{align*}
	\int_M \frac{x-y}{t}\mathrm{e}^{-\frac{|x-y|^2}{2t}}\,\mu(\diff{y})
	=\mathrm{e}^{-\frac{|(I-P)(x-v_0)|^2}{2t}}&\bigg(\frac{(I-P)(x-v_0)}{t}\int_{M^*}\mathrm{e}^{-\frac{|A^\top(x-v_0)-u|^2}{2t}}p_0(Au+v_0)\diff{u} \\
	&\qquad+A\int_{M^*}\frac{A^\top(x-v_0)-u}{t}\mathrm{e}^{-\frac{|A^\top(x-v_0)-u|^2}{2t}}p_0(Au+v_0)\diff{u}\bigg).
\end{align*}
In view of Lemma~\ref{lem:explicit_score}, which gives
\[s_0(x,t)=-\frac{\sum_{z \in \mathbb{Z}^D}(-1)^z\int_{[0,1]^D}(R_z(x)+z-y)
			\exp\left(-\frac{\lvert R_z(x)+z-y\rvert^2}{2t}\right)\,\mu(\diff{y})}{
			t\sum_{z \in \mathbb{Z}^D}\int_{[0,1]^D}
			\exp\left(-\frac{\lvert R_z(x)+z-y\rvert^2}{2t}\right)
			\,\mu(\diff{y})}, \quad (x,t) \in [0,1]^D \times (0,\infty),\]
all dependence on $\mu$ enters through the lower-dimensional projection $A^\top(x-v_0)$.
Accordingly, a substantial part of the approximation task reduces to approximating the functions
\begin{equation}
	\label{eq:f_1f_2}
	f_1\colon(u, t)
	\mapsto \int_{M^*}\mathrm{e}^{-\frac{|u-v|^2}{2t}}p_0(Av+v_0)\diff{v},\qquad
	f_2\colon (u, t)
	\mapsto \int_{M^*}\frac{u-v}{t}\mathrm{e}^{-\frac{|u-v|^2}{2t}}p_0(Av+v_0)\diff{v},
\end{equation}
defined on $\R^d\times(0,\infty)$.
This dimensional reduction allows us to derive error bounds that depend on the intrinsic dimension $d$ rather than the ambient dimension $D$. 
A comparable mechanism appears in \cite{oko23}, where the Gaussian transition densities of the OU forward process give rise to analogous expressions.
In contrast, the transition densities in the present model are given by infinite series of Gaussian densities restricted to $[0,1]^D$, which introduces substantial additional technical difficulties.
These are addressed using an approximation strategy adapted from \cite{holk24}, where spectral representations of the forward density and its gradient were analysed.

The proof proceeds along the following steps:
\begin{enumerate}
    \item For fixed $\underline{t}>0$ and $t\in[\underline{t}, 2\underline{t}]$, we truncate the series representation of the forward density by 
    \[
p_t^K(x)
=(2\pi t)^{-\frac{D}{2}}
\sum_{\substack{z\in\mathbb{Z}^D \\ \|z\|_{\infty}\le \sqrt{2\underline{t}(D+2K)}}}
\int_{[0,1]^D}
\exp\Big(-\frac{|R_z(x)+z-y|^2}{2t}\Big)\,\mu(\diff{y})\]
	and define the corresponding truncated score 
\begin{equation}\label{eq:score_truncation}    
s_0^K(x,t)\coloneqq\frac{\nabla p_t^K(x)}{p_t^K(x)}
\end{equation}
Lemma~\ref{lem:truncation_error} establishes for $t\in[\underline{t}, 2\underline{t}]$ an exponential convergence of $s_0^K$ to $s_0$ in $L^2$ as $K\to\infty$ allowing us to restrict attention to truncation levels of order $\log n$.
	Since the truncated sums can be approximated termwise by neural networks, combining these approximations increases the network depth only by a logarithmic factor, leading to a negligible additional error.
\item For fixed $\delta>0$, we split the time domain into small- and large-time regimes according to $t\lessgtr m^{-(2-\delta)/d}$.
Using Lemma~\ref{lem:sobolev_network}, we approximate $f_1(\cdot,t)$ and $f_2(\cdot,t)$ in \eqref{eq:f_1f_2} for fixed $t$; see Lemmas~\ref{lem:fixed_time_approx_small_t} and~\ref{lem:fixed_time_approx_large_t}.
In the small-time regime, Assumption~\ref{ass:H3} yields improved approximation rates of order $m^{-\kappa/d}$ outside $M^*$, compared to $m^{-\alpha/d}$ on $M^*$ (up to logarithmic factors), while in the large-time regime the regularising effect of the forward diffusion leads to rates of order $m^{-(\kappa+1)/d}$.
\item We extend these fixed-time approximations to short time intervals using polynomial interpolation in $t$, as shown in Lemma~\ref{lem:time_interpolation}.
\item Finally, we combine the resulting constructions with the general neural network approximation results from Appendix~\ref{app:basic_neural} to prove Theorem~\ref{theo:score_approx}, which provides $L^2$ approximation bounds for the score in both time regimes.
\end{enumerate}

\subsection{Main result}
With these preparations, we are now in a position to give a concise proof our main result.

\begin{theorem}\label{theo:main}
	Assume \ref{ass:H1}--\ref{ass:H3}, and set
	\[
		\underline{T}
		=D^{-1}n^{-\frac{2(\alpha+1)}{2\alpha+d}}
		\quad\text{and}\quad
		\overline{T}
		=\frac{8}{\pi^2}\log\Big(\frac{8D^{\frac{3}{2}}}{\pi}n^{\frac{\alpha+1}{2\alpha+d}}\Big).
	\]
	Then, for any $\delta >  0$ and $n \in \mathbb{N}$ large enough, there exist neural network classes 
    \[\mathcal S_i=\Big\{\varphi\in\Phi(L, W_i, S_i, B)\colon |\varphi(x, t)|\lesssim \frac{\sqrt{\log n}}{\sqrt{t_{i}\wedge 1}}\Big\},\] 
    where
	\begin{align*}
		L
		&\lesssim \log n\log\log n, \\
		\n{W_i}_{\infty}
		&\lesssim \big(n^{\frac{d}{2\alpha+d}}\wedge [(t_i\wedge1)^{-\frac{d}{2}}n^{\frac{\delta d}{2\alpha+d}}]\big)(\log n)^{D+1}, \\
		S_i
		&\lesssim \big(n^{\frac{d}{2\alpha+d}}\wedge [(t_i\wedge1)^{-\frac{d}{2}}n^{\frac{\delta d}{2\alpha+d}}]\big)(\log n)^{D+2}, \\
		B
		&\lesssim n^{\frac{4(\alpha+1)+d(c_0-d+2)}{2(2\alpha+d)}}\vee n^{\frac{2d^2}{4\alpha^2-d^2}+\frac{1}{2\alpha+d}},
	\end{align*}
	such that the reflected diffusion generative algorithm associated to the empirical denoising score matching loss minimiser $\widehat{s}_n$ defined via \eqref{eq:estimator1} and \eqref{eq:estimator2} satisfies
	\[
		\mathbb{E}\big[\mathcal W_1(\mu, \overline{X}^{\widehat{s}_n}_{\overline{T}-\underline{T}})\big]
		\lesssim n^{-\frac{\alpha+1-\delta}{2\alpha+d}}.
	\]
\end{theorem}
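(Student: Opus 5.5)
We combine the error decomposition \eqref{eq:error_split} with the three bounds already available: Lemma~\ref{lem:early_stopping}, Lemma~\ref{lem:stat_dist} and Proposition~\ref{lem:wasserstein_Yi}. The latter is fed with a statistical bound on each piece $\widehat{s}_n^{(i)}$, and that bound in turn rests on Theorem~\ref{theo:score_approx}.

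\textbf{Early stopping and initialisation.} Lemma~\ref{lem:early_stopping} gives $\mathcal W_1(\mu,X_{\underline{T}})\le\sqrt{D\underline{T}}=n^{-\frac{\alpha+1}{2\alpha+d}}$ for the chosen $\underline{T}$. For the initialisation term, the chosen $\overline{T}$ makes $\frac{8\sqrt D}{\pi}\exp(-\pi^2\overline{T}/(2D))$ of order $n^{-\frac{\alpha+1}{2\alpha+d}}$ at the very least. If the constant in the exponent does not quite match, one simply takes $\overline{T}$ a constant multiple of $\log n$ larger. Lemma~\ref{lem:stat_dist} applies because the networks are Lipschitz in $x$. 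Both terms are therefore of the target order. With $t_i=\underline{T}c^i$ and $c\in(1,2]$ we have $K\asymp\log n$.

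\textbf{Score error.} Take $\rho\asymp\log n$, large enough that $K\mathrm{e}^{-\rho}\lesssim n^{-1}$. The hypothesis $\log(1+\log t_1^{-1})\lesssim\sqrt{\rho}$ then clearly holds, and the class constraint $|\varphi|\lesssim\sqrt{\log n/(t_i\wedge1)}$ gives the growth condition of Proposition~\ref{lem:wasserstein_Yi}. Applying the proposition conditionally on the data and then using Jensen's inequality, it remains to bound
\[
\sqrt{\log n}\sum_{i=1}^K\sqrt{t_i\wedge1}\,\Big(\E\,\mathcal E_i\Big)^{1/2},
\]
where $\mathcal E_i=\int_{t_{i-1}}^{t_i}\E[|\widehat s^{(i)}_n(X_t,t)-\nabla\log p_t(X_t)|^2\mid\text{data}]\,\diff t$.

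For each $i$ we use a standard ERM oracle inequality for the empirical denoising loss. The loss is bounded by $\mathrm{polylog}(n)/(t_i\wedge1)$ on the truncated class, using Lemma~\ref{lem:affine_approx}(e)(i) together with a high-probability truncation of $|X_t-X_0|$ via Lemma~\ref{lem:affine_approx}(c). The inequality reads
\[
\E\,\mathcal E_i\lesssim\inf_{s\in\mathcal S_i}\int_{t_{i-1}}^{t_i}\E|s-s_0|^2\,\diff t+\frac{S_i L\log(nBL)\,\mathrm{polylog}(n)}{n(t_i\wedge1)}+n^{-1},
\]
where the covering numbers of sparse ReLU classes are $\log\mathcal N\lesssim SL\log(nBLW)$.

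The approximation term comes from Theorem~\ref{theo:score_approx} with $\underline t=t_{i-1}$ and $m=n^{\frac{d}{2\alpha+d}}$. The lower bound $\underline t\gtrsim m^{-(2\alpha+2)/d}$ is exactly the choice of $\underline{T}$. This explains the widths $n^{d/(2\alpha+d)}\wedge (t_i\wedge1)^{-d/2}n^{\delta d/(2\alpha+d)}$ in the statement.

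\textbf{Balancing the two regimes.} For small times, $t_i\le m^{-(2-\delta)/d}$, the approximation error is $m^{-2\alpha/d}=n^{-\frac{2\alpha}{2\alpha+d}}$ and the variance term is about $\frac{n^{d/(2\alpha+d)}}{n t_i}=\frac{n^{-2\alpha/(2\alpha+d)}}{t_i}$. Multiplying by $t_i$ gives $\sqrt{t_i\,\E\,\mathcal E_i}\lesssim \mathrm{polylog}(n)\,\big(n^{-\frac{2\alpha}{2\alpha+d}}(t_i+1)\big)^{1/2}$. Summing over at most $\log n$ indices, and using $t_i\lesssim n^{-(2-\delta)/(2\alpha+d)}$, the total is at most $n^{-\frac{\alpha+1-\delta/2}{2\alpha+d}}$ up to logarithms. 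I expect this regime to be the main obstacle. The naive variance bound contributes $\sqrt{n^{-2\alpha/(2\alpha+d)}}$ without the factor $t_i$, which is too big. The fix is to use the sharper variance: the score target has scale $t^{-1/2}$ on the relevant set $M_{\rho,t}$ by Lemma~\ref{lem:affine_approx}(e)(iii), so the loss bound is $\log n/t_i$ and the factor $t_i$ from Proposition~\ref{lem:wasserstein_Yi} cancels it exactly. This yields $\sqrt{S_i L/n}\cdot\mathrm{polylog}(n)=n^{-\frac{\alpha}{2\alpha+d}}$ per level. To gain the missing factor $n^{-1/(2\alpha+d)}$ I would check that the correct weight is $\sqrt{t_i}$ times the approximation part only, while the variance part is $\sqrt{t_i\cdot S_i/(n t_i)}$. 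Getting this right presumably requires the refined localised oracle inequality in the style of \cite{oko23}, with the loss bound scaled as $1/t_i$ so that the variance carries the factor $t_i$.

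For large times, $t_i> m^{-(2-\delta)/d}$, the approximation error is $n^{-\frac{2(\alpha+1)}{2\alpha+d}}$ and $S_i\asymp (t_i\wedge1)^{-d/2}n^{\frac{\delta d}{2\alpha+d}}$. The variance term after multiplying by $t_i\wedge 1$ is then about $(t_i\wedge1)^{-d/2}n^{\frac{\delta d}{2\alpha+d}-1}$, which is at most $n^{\frac{2-\delta}{2\alpha+d}\cdot\frac d2+\frac{\delta d}{2\alpha+d}-1}$. This is of order $n^{-\frac{2(\alpha+1)-O(\delta)}{2\alpha+d}}$. Taking square roots and summing the $\log n$ levels gives $n^{-\frac{\alpha+1-O(\delta)}{2\alpha+d}}$. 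Rescaling $\delta$ and absorbing the polylogarithmic factors into $n^{\delta}$ gives the claim. Finally, the bound $B$ on the weights only enters logarithmically, through the covering numbers.
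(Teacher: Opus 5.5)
\textbf{Gap in the small-time regime.} The proof has a genuine gap, and it is the one you flag yourself: your bound on the loss class is mis-scaled. You take $\sup_x L_s^{(i)}(x)\lesssim \mathrm{polylog}(n)/(t_i\wedge 1)$. This gives the variance term $S_iL\log(\cdot)\,\mathrm{polylog}(n)/(n(t_i\wedge1))$ for the \emph{time-integrated} error $\E\,\mathcal E_i$.

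But $L_s^{(i)}$ integrates over $[t_{i-1},t_i]$, an interval of length $(1-c^{-1})t_i$, and the integrand is only $O(\log n/t)$:
\begin{itemize}
\item the class constraint gives $|s(\cdot,t)|^2\lesssim \log n/t$;
\item for $t\le 1$, $\E[|\nabla_2\log q_t(x,X_t)|^2\mid X_0=x]\lesssim 1/t$. This follows from Lemma~\ref{lem:affine_approx}(e)(i) together with $|X_t-X_0|\le |B_t|$. Alternatively, as the paper does, use the Li--Yau bound $|\nabla\log q_t|^2\lesssim D/t+\partial_t\log q_t$ and $\int_{t_{i-1}}^{t_i}\int \partial_t q_t(x,y)\,\diff{y}\,\diff{t}=0$.
\end{itemize}
Hence $\sup_x L_s^{(i)}(x)\lesssim \log n\,\log c$, a polylogarithmic constant uniform in $i$; the levels with $t_i>1$ cost at most one more factor $\log n$. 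The ordinary oracle inequality then already gives
\[
\E\,\mathcal E_i\lesssim \inf_{s\in\mathcal S_i}\int_{t_{i-1}}^{t_i}\E|s-s_0|^2\,\diff{t}+\mathrm{polylog}(n)\,S_i/n+\Delta .
\]
No localised oracle inequality is needed. In the small-time regime this yields $t_i\,\E\,\mathcal E_i\lesssim t_i\, n^{-\frac{2\alpha}{2\alpha+d}}\mathrm{polylog}(n)$ with $t_i\le n^{-\frac{2-\delta}{2\alpha+d}}$. That is $n^{-\frac{\alpha+1-\delta/2}{2\alpha+d}}$ per level up to logarithms, which is exactly the paper's argument.

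With your scaling, the factor $t_i$ supplied by Proposition~\ref{lem:wasserstein_Yi} is cancelled. Your own display $(n^{-\frac{2\alpha}{2\alpha+d}}(t_i+1))^{1/2}$ is of order $n^{-\frac{\alpha}{2\alpha+d}}$, not $n^{-\frac{\alpha+1-\delta/2}{2\alpha+d}}$. Your proposed fix still gives $n^{-\frac{\alpha}{2\alpha+d}}$ per level.

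\textbf{Gap in the large-time regime.} The same mis-scaling also breaks this regime, and the arithmetic there is wrong as well. Evaluating $(t_i\wedge1)^{-d/2}n^{\frac{\delta d}{2\alpha+d}-1}$ at $t_i\approx n^{-\frac{2-\delta}{2\alpha+d}}$ gives $n^{-\frac{2\alpha-\delta d/2}{2\alpha+d}}$, not $n^{-\frac{2(\alpha+1)-O(\delta)}{2\alpha+d}}$. After the square root you are again short by $n^{-\frac{1}{2\alpha+d}}$.

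With the loss bounded by $\mathrm{polylog}(n)$, the weighted variance becomes $(t_i\wedge1)^{1-\frac d2}n^{\frac{\delta d}{2\alpha+d}-1}$. At the smallest admissible $t_i$ this equals $n^{-\frac{2(\alpha+1)-\delta(1+d/2)}{2\alpha+d}}$, as needed. Bounding the sum over levels by its smallest-$t_i$ term requires $d\ge 2$; the paper's proof makes the same implicit restriction.

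\textbf{What is fine.} Apart from this, your outline is the paper's:
\begin{itemize}
\item the same error decomposition;
\item Lemmas~\ref{lem:early_stopping} and~\ref{lem:stat_dist};
\item Proposition~\ref{lem:wasserstein_Yi} with Jensen's inequality;
\item Theorem~\ref{theo:score_approx} with $m\asymp n^{d/(2\alpha+d)}$;
\item covering bounds for sparse networks.
\end{itemize}
Your hedge on $\overline T$ is justified. With the stated value, $\exp(-\pi^2\overline T/(2D))=\big(\tfrac{8D^{3/2}}{\pi}n^{\frac{\alpha+1}{2\alpha+d}}\big)^{-4/D}$, which is of the target order only when $D\le 4$. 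Multiplying $\overline T$ by $D/4$ repairs this without changing $K\asymp\log n$.
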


\begin{proof}
	First, recalling the decomposition \eqref{eq:error_split}, it follows that
	\[
		\mathbb{E}\big[\mathcal W_1(\mu, \overline{X}^{\widehat{s}_n}_{\overline{T}-\underline{T}})\big]
		\le \mathcal W_1(\mu, X_{\underline{T}})+\mathbb{E}[\mathcal W_1(\widehat{X}^{\widehat{s}_n}_{\overline{T}-\underline{T}}, \overline{X}^{\widehat{s}_n}_{\overline{T}-\underline{T}})]+\mathbb{E}[\mathcal W_1(X_{\underline{T}}, \widehat{X}^{\widehat{s}_n}_{\overline{T}-\underline{T}})].
	\]
	Here, it immediately follows by Lemmas \ref{lem:early_stopping} and \ref{lem:stat_dist} that the first two terms are each bounded by $n^{-\frac{\alpha+1}{2\alpha+d}}$, and so we now focus on bounding $\mathbb{E}[\mathcal W_1(X_{\underline{T}}, \widehat{X}^{\widehat{s}_n}_{\overline{T}-\underline{T}})]$.
	By Lemma \ref{lem:wasserstein_Yi}, and its preceding discussion, we have for $\rho>0$ that
	\begin{align*}
		\mathbb{E}[\mathcal W_1(X_{\underline{T}}, \widehat{X}^{\widehat{s}_n}_{\overline{T}-\underline{T}})]
		&\lesssim K \mathrm{e}^{-\rho}+\sum_{i=1}^K \mathbb{E}\bigg[\bigg((t_i\wedge 1)\rho\int_{t_{i-1}}^{t_i}\mathbb{E}\big[|\widehat{s}_n(X_t, t)-\nabla\log p_t(X_t)|^2\mid \widehat{s}_n\big]\diff{t}\bigg)^{\frac{1}{2}}\bigg] \\
		&\le K \mathrm{e}^{-\rho}+\sum_{i=1}^K \bigg((t_i\wedge 1)\rho\,\mathbb{E}\bigg[\int_{t_{i-1}}^{t_i}\mathbb{E}\big[|\widehat{s}_n(X_t, t)-\nabla\log p_t(X_t)|^2\mid \widehat{s}_n\big]\diff{t}\bigg]\bigg)^{\frac{1}{2}},
	\end{align*}
	where in the last inequality we use Jensen's inequality.
	Recall here that $t_i=\underline{T}c^i$, where $c\in(1, 2]$ and $K\in\N$ are chosen such that $t_K=\overline{T}$, i.e. $K=\log_c\frac{\overline{T}}{\underline{T}}\asymp\log n$.
	Setting $\rho=\frac{\alpha+1}{2\alpha+d}\log n$, we thus have
	\[
		K \mathrm{e}^{-\rho}
		\asymp n^{-\frac{\alpha+1}{2\alpha+d}}\log n
		\lesssim n^{-\frac{\alpha+1-\delta}{2\alpha+d}}.
	\]
	Now, to further analyse each term, we fix $i\in[K]$ and introduce the induced function class $\mathcal L^{(i)}=\{L^{(i)}_s\mid s\in \mathcal S_i\}$.
	We then have by \cite[Theorem 3.4, Theorem B.2]{holk24} if $\sup_{s\in \mathcal S_i\cup\{s_0\}}\n{L_s^{(i)}}_{\infty}\le C(\mathcal L^{(i)})<\infty$ that for suitable $\Delta>0$,
	\begin{align*}
		\mathbb{E}&\bigg[\int_{t_{i-1}}^{t_i}\mathbb{E}\big[|\widehat{s}_n(X_t, t)-\nabla\log p_t(X_t)|^2\mid \widehat{s}_n\big]\diff{t}\bigg] \\
		&\le 2\inf_{s\in \mathcal S_i}\int_{t_{i-1}}^{t_i}\mathbb{E}\big[|s(X_t, t)-\nabla\log p_t(X_t)|^2\big]\diff{t}+2\frac{C(\mathcal L^{(i)})}{n}\Big(\frac{145}{9}\log \mathcal N(\mathcal L^{(i)}, \n{\cdot}_{\infty}, \Delta)+160\Big)+5\Delta.
	\end{align*}
	Following the proof of \cite[Lemma 3.5]{holk24}, we also have that if for $t\in[t_{i-1}, t_i)$, $\sup_{s\in \mathcal S_i}\n{s(\,\cdot, t)}_{\infty}\le \frac{C(\mathcal S_i)}{\sqrt{t\wedge 1}}$ for some $C(\mathcal S_i)<\infty$ and
	\begin{equation}
		\label{eq:bound_grad_log_q_t}
		\int_{t_{i-1}}^{t_i}\int_{[0, 1]^D}|\nabla \log q_t(x, y)|q_t(x, y)\,\diff{y}\diff{t}
		\lesssim \sqrt{t_i},
	\end{equation}
	then there exists a constant $c_0$ such that
	\begin{equation*}\label{eq:cov}
		\mathcal N(\mathcal L^{(i)}, \n{\cdot}_{\infty}, \delta)
		\le \mathcal N\Big(\mathcal S_i, \n{\cdot}_{\infty}, \frac{\Delta}{c_0 C(\mathcal S_i)(t_i+\sqrt{t_i})}\Big).
	\end{equation*}
	To this last condition, we have
	\begin{align*}
		\int_{[0, 1]^D}|\nabla\log q_t(x, y)|q_t(x, y)\,\diff{y}
		&=\int_{[0, 1]^D}|\nabla q_t(x, y)|\,\diff{y} \\
		&\le\frac{1}{t}\sum_{z\in\Z^D}\int_{[0, 1]^D}(2\pi t)^{-\frac{D}{2}}|R_z(x)+z-y|\mathrm{e}^{-\frac{|R_z(x)+z-y|^2}{2t}}\,\diff{y} \\
		&=\frac{1}{t}\int_{\R^D}(2\pi t)^{-\frac{D}{2}}|y|\mathrm{e}^{-\frac{|y|^2}{2t}}\,\diff{y} \\
		&\le \frac{\sqrt{D}}{\sqrt{t}},
	\end{align*}
	implying \eqref{eq:bound_grad_log_q_t}.
	Furthermore,  using the Li--Yau bound from \cite[Theorem 1.1]{liyau}, which gives $|\nabla \log q_t(x, y)|^2\lesssim\frac{D}{t}+\partial_t\log q_t(x, y)$, it follows that for $s\in \mathcal S_i$
	\begin{align*}
		L_s(x)
		&=\int_{t_{i-1}}^{t_i}\int_{[0, 1]^D}|s(y, t)-\nabla\log q_t(x, y)|^2q_t(x, y)\,\diff{y}\diff{t} \\
		&\le 2\int_{t_{i-1}}^{t_i}\int_{[0, 1]^D}\big(|s(y, t)|^2+|\nabla\log q_t(x, y)|^2\big)q_t(x, y)\,\diff{y}\diff{t} \\
		&\lesssim 2\int_{t_{i-1}}^{t_i}\int_{[0, 1]^D}\Big(\frac{C(\mathcal S_i)^2+D}{t}+\partial_t\log q_t(x, y)\Big)q_t(x, y)\,\diff{y}\diff{t} \\
		&=2\big(C(\mathcal S_i)^2+D\big)\log c,
	\end{align*}
	where we used that
	\[
		\int_{t_{i-1}}^{t_i}\int_{[0, 1]^D}\partial_t\log q_t(x, y)q_t(x, y)\,\diff{y}\diff{t}
		=\int_{[0, 1]^D}\int_{t_{i-1}}^{t_i}\partial_tq_t(x, y)\diff{t}\,\diff{y}
		=\int_{[0, 1]^D}q_{t_i}(x, y)-q_{t_{i-1}}(x, y)\,\diff{y}
		=0.
	\]
	Thus, it follows that $C(\mathcal L^{(i)})\lesssim (C(\mathcal S_i)^2+D)\log c\lesssim C(\mathcal S_i)^2$, and hence by the above
	\begin{align*}
		\mathbb{E}&\bigg[\int_{t_{i-1}}^{t_i}\mathbb{E}\big[|\widehat{s}_n(X_t, t)-\nabla\log p_t(X_t)|^2\mid \widehat{s}_n\big]\diff{t}\bigg] \\
		&\lesssim \inf_{s\in \mathcal S_i}\int_{t_{i-1}}^{t_i}\mathbb{E}\big[|s(X_t, t)-\nabla\log p_t(X_t)|^2\big]\diff{t}+\frac{C(\mathcal S_i)^2\log \mathcal N(\mathcal S_i, \n{\cdot}_{\infty}, \frac{\Delta}{c_0C(\mathcal S_i)(t_i+\sqrt{t_i})})}{n}+\Delta.
	\end{align*}
	Here, by a small modification of \cite[Lemma C.2]{oko23}, if $\mathcal S_i\subseteq\Phi(L, W, S, B)$, then
	\[
		\log \mathcal N\Big(\mathcal S_i, \n{\cdot}_{\infty}, \frac{\Delta}{c_0C(\mathcal S_i)(t_i+\sqrt{t_i})}\Big)
		\lesssim LS\log(c_0 C(\mathcal S_i)(t_i+\sqrt{t_i})\Delta^{-1}L\n{W}_{\infty}(B\vee 1)).
	\]
	Next, setting $m=\lceil n^{\frac{d}{2\alpha+d}}\rceil$, if $t_i\le n^{-\frac{2-\delta}{2\alpha+d}}$, we have by Theorem \ref{theo:score_approx} that there exists a neural network $\varphi_{s_0}^{(i)}\in\Phi(L, W, S, B)$ with
	\begin{alignat*}{2}
		L
		&\lesssim (\log m)^2(\log \log m)^2
		&&\lesssim \mathrm{Poly}(\log n) \\
		\n{W}_{\infty}
		&\lesssim m(\log m)^{D+1}
		&&\lesssim \mathrm{Poly}(n) \\
		S
		&\lesssim m(\log m)^{D+2}
		&&\lesssim n^{\frac{d}{2\alpha+d}}\,\mathrm{Poly}(\log n) \\
		B
		&\lesssim m^{\frac{2(\alpha+1)}{d}+\frac{c_0-d}{2}+1}\vee m^\nu
		&&\lesssim \mathrm{Poly}(n),
	\end{alignat*}
	satisfying $|\varphi_{s_0}^{(i)}|\lesssim\frac{\sqrt{\log n}}{\sqrt{t_{i}\wedge 1}}\le\frac{\sqrt{\log n}}{\sqrt{t\wedge1}}$ for $t\in[t_{i-1}, t_i)$ and
	\begin{align*}
		\int_{t_{i-1}}^{t_i}\mathbb{E}\big[|\varphi_{s_0}^{(i)}(X_t, t)-\nabla\log p_t(X_t)|^2\big]\diff{t}
		&\lesssim
		(\log m)^{2+2D+3}m^{-\frac{2\alpha}{d}}
		\le n^{-\frac{2\alpha}{2\alpha+d}}\,\mathrm{Poly}(\log n)
	\end{align*}
	Thus, setting $\mathcal S_i=\{\varphi\in\Phi(L, W, S, B)\mid |\varphi|\lesssim\frac{\sqrt{\log n}}{\sqrt{t_{i-1}}}\}$ and $\Delta=n^{-\frac{2\alpha}{2\alpha+d}}$, it follows that
	\[
		\mathbb{E}\bigg[\int_{t_{i-1}}^{t_i}\mathbb{E}\big[|\widehat{s}_n(X_t, t)-\nabla\log p_t(X_t)|^2\mid \widehat{s}_n\big]\diff{t}\bigg]
		\lesssim n^{-\frac{2\alpha}{2\alpha+d}}\,\mathrm{Poly}(\log n).
	\]
	Conversely, if $t_i>n^{-\frac{2-\delta}{2\alpha+d}}$, the same theorem yields a network $\varphi_{s_0}^{(i)}\in\Phi(L, W_i, S_i, B)$ with
	\begin{alignat*}{2}
		L
		&\lesssim (\log m)^2(\log \log m)^2
		&&\lesssim \mathrm{Poly}(\log n) \\
		\n{W_i}_{\infty}
		&\lesssim  (t_i\wedge 1)^{-\frac{d}{2}}m^{\frac{\delta}{2}}(\log m)^{D+1}
		&&\lesssim \mathrm{Poly}(n) \\
		S_i
		&\lesssim  (t_i\wedge 1)^{-\frac{d}{2}}m^{\frac{\delta}{2}}(\log m)^{D+2}
		&&\lesssim (t_i\wedge 1)^{-\frac{d}{2}}n^{\frac{\delta d}{2(2\alpha+d)}}\,\mathrm{Poly}(\log n) \\
		B_i
		&\lesssim  m^{\frac{2(\alpha+1)}{d}+\frac{c_0-d}{2}+1}
		&&\lesssim \mathrm{Poly}(n),
	\end{alignat*}
	satisfying
	\[
		\int_{t_{i-1}}^{t_i}\mathbb{E}\big[|\varphi_{s_0}^{(i)}(X_t, t)-\nabla\log p_t(X_t)|^2\big]\diff{t}
		\lesssim
		(\log m)^{2+2D+3}m^{-\frac{2(\alpha+1)}{d}}
		\le n^{-\frac{2(\alpha+1)}{2\alpha+d}}\,\mathrm{Poly}(\log n).
	\]
	Thus, setting $\Delta=n^{-\frac{2(\alpha+1)}{2\alpha+d}}$, we have now
	\[
		\mathbb{E}\bigg[\int_{t_{i-1}}^{t_i}\mathbb{E}\big[|\widehat{s}_n(X_t, t)-\nabla\log p_t(X_t)|^2\mid \widehat{s}_n\big]\diff{t}\bigg]
		\lesssim \Big(n^{-\frac{2(\alpha+1)}{2\alpha+d}}+\frac{(t_i\wedge1)^{-\frac{d}{2}}n^{\frac{\delta d}{2(2\alpha+d)}}}{n}\Big)\,\mathrm{Poly}(\log n).
	\]
	Now, letting $K^*=\max\{i\in[K]:t_i\le n^{-\frac{2-\delta}{2\alpha+d}}\}$, we have
	\begin{align*}
		\sum_{i=1}^K &\bigg((t_i\wedge 1)\rho\,\mathbb{E}\bigg[\int_{t_{i-1}}^{t_i}\mathbb{E}\big[|\widehat{s}_n(X_t, t)-\nabla\log p_t(X_t)|^2\mid \widehat{s}_n\big]\diff{t}\bigg]\bigg)^{\frac{1}{2}} \\
		&\lesssim \log n\bigg[\sum_{i=1}^{K^*} \bigg(t_i\,\mathbb{E}\bigg[\int_{t_{i-1}}^{t_i}\mathbb{E}\big[|\widehat{s}_n(X_t, t)-\nabla\log p_t(X_t)|^2\mid \widehat{s}_n\big]\diff{t}\bigg]\bigg)^{\frac{1}{2}} \\
		&\qquad\qquad\sum_{i=K^*+1}^{K} \bigg((t_i\wedge 1)\,\mathbb{E}\bigg[\int_{t_{i-1}}^{t_i}\mathbb{E}\big[|\widehat{s}_n(X_t, t)-\nabla\log p_t(X_t)|^2\mid \widehat{s}_n\big]\diff{t}\bigg]\bigg)^{\frac{1}{2}}\bigg].
	\end{align*}
	Here, the first sum is easily bounded by
	\[
		\sum_{i=1}^{K^*} \bigg(t_i\,\mathbb{E}\bigg[\int_{t_{i-1}}^{t_i}\mathbb{E}\big[|\widehat{s}_n(X_t, t)-\nabla\log p_t(X_t)|^2\mid \widehat{s}_n\big]\diff{t}\bigg]\bigg)^{\frac{1}{2}}
		\lesssim n^{-\frac{1-\delta/2}{2\alpha+d}}n^{-\frac{\alpha}{2\alpha+d}}\,\mathrm{Poly}(\log n) \\
		\lesssim n^{-\frac{\alpha+1-\delta}{2\alpha+d}},
	\]
	where we use that $K^*\le K\lesssim \log n$.
	As for the second sum, we have
	\begin{align*}
		\sum_{i=K^*+1}^{K} &\bigg((t_i\wedge 1)\,\mathbb{E}\bigg[\int_{t_{i-1}}^{t_i}\mathbb{E}\big[|\widehat{s}_n(X_t, t)-\nabla\log p_t(X_t)|^2\mid \widehat{s}_n\big]\diff{t}\bigg]\bigg)^{\frac{1}{2}} \\
		&\lesssim\mathrm{Poly}(\log n)\bigg(Kn^{-\frac{\alpha+1}{2\alpha+d}}+\frac{n^{\frac{\delta d}{4(2\alpha+d)}}}{\sqrt{n}}\sum_{i=K^*+1}^K(t_i\wedge1)^{\frac{2-d}{4}}\bigg) \\
		&\lesssim\mathrm{Poly}(\log n)\bigg(n^{-\frac{\alpha+1}{2\alpha+d}}+t_{K^*+1}^{\frac{2-d}{4}}\frac{n^{\frac{\delta d}{4(2\alpha+d)}}}{\sqrt{n}}\bigg) \\
		&\lesssim \mathrm{Poly}(\log n)\Big(n^{-\frac{\alpha+1}{2\alpha+d}}+n^{\frac{-(2-d)(2-\delta)+\delta d-2(2\alpha+d)}{4(2\alpha+d)}}\Big) \\
		&\lesssim n^{-\frac{\alpha+1-\delta}{2\alpha+d}}.
	\end{align*}
	Thus, it follows that
	\[
		\mathbb{E}[\mathcal W_1(X_{\underline{T}}, \widehat{X}^{\widehat{s}_n}_{\overline{T}-\underline{T}})]
		\lesssim n^{-\frac{\alpha+1-\delta}{2\alpha+d}},
	\]
	as desired.
\end{proof}

\section{Discussion}\label{sec:discussion}
We conclude by placing our results in the context of the existing statistical theory of diffusion-based generative models and by clarifying the scope and limitations of the present analysis.

\paragraph{Minimax optimal estimation}
We do \emph{not} aim to establish results on minimax optimality, as such questions, while important, lie beyond the scope of the present work. 
The first near-optimal convergence rates for diffusion models under the $1$-Wasserstein metric were obtained by \cite{oko23} who consider target distributions on $\R^D$ that admit an $\alpha$-smooth, compactly supported density wrt Lebesgue measure, and analyse classical Ornstein--Uhlenbeck (OU) dynamics. Under the assumption of perfect SDE simulation and working without the manifold hypothesis, they show that diffusion models achieve the rate $n^{-\frac{\alpha+1-\delta}{2\alpha+D}}$, for arbitrary $\delta>0$, which is near minimax-optimal in the ambient dimension $D$ and consistent with results established in the classical i.i.d.\ density estimation setting by \cite{niwebe22}; see in particular Theorem~1 therein for densities bounded away from zero.

In the context of diffusion-based generative models with non-singular target distributions, \cite{steph25} establish an upper bound of order $n^{-\frac{\alpha+1}{2\alpha+D}}$, up to polylogarithmic factors, for the $1$-Wasserstein distance between the true distribution and the law induced by the generative model. 
Their analysis uses empirical score matching over suitably chosen  neural network classes with $\tanh$-activation function and holds uniformly over classes of Hölder-$\alpha$ smooth densities that are bounded below on any compact subset of the interior of the support and exhibit controlled decay near its boundary. The avoidance of an arbitrarily small polynomial inefficiency in the  rate comes here at the price of having to consider polynomially many (in terms of the data) separate time intervals in the score estimation procedure as compared to the logarithmic dependence in our and previous work on 1-Wasserstein estimation rates \cite{oko23,tang24,azangulov24}.  This theoretical understanding has been further advanced by \cite{chak26}, who provide sharp finite-sample error bounds measured in the $p$-Wasserstein distance for arbitrary $p \ge 1$. Notably, they relax typical compact-support or smooth density assumptions, requiring only finite-moment conditions on the target distribution.

While these works constitute significant progress in the statistical theory of generative modelling, their analyses fundamentally rely on the explicit Gaussian structure of the transition kernels associated with unconstrained OU or standard Brownian dynamics on $\R^D$. In particular, \cite{steph25} and \cite{chak26} exploit a control on the temporal score regularity, which is  heavily tied to the specific analytical smoothing properties of the  Gaussian transition kernels. 
Our reflected diffusion framework on $[0,1]^D$ inherently breaks these structural properties. 
The corresponding transition densities are instead governed by an infinite series expansion of restricted Gaussian densities (or alternatively, by a spectral decomposition as in \cite{holk24}), which leads to a fundamentally different and analytically more complex score regularity, in particular due to the influence of boundary reflections, which become increasingly notable for larger $t$. 
Extending the minimax optimality proofs from the unconstrained setting to bounded domains with reflections would require entirely new analytic techniques to rigorously control these boundary effects and is consequently beyond the scope of this paper.

Furthermore, a strictly minimax-optimal convergence rate, completely free of extraneous logarithmic terms, has recently been derived by \cite{dou24} in the unconstrained variance-exploding setting. For target distributions with Hölder smoothness $\alpha > 0$, they establish rates wrt the score matching loss, which allows them to prove that the generated distribution achieves the minimax-optimal rate in terms of the expected squared total variation distance and, for $\alpha \ge 1$, the $1$-Wasserstein distance. 
However, these bounds are achieved by departing from neural network approximations and employing kernel-based score estimators instead. While this constitutes an important theoretical contribution to the understanding of the fundamental statistical limits of diffusion models, it diverges from common algorithmic practice, where gradient descent methods for score matching exploit the flexibility and inductive biases of expressive neural architectures. 
Our analysis maintains this connection by explicitly studying neural network-based score estimators. Moreover, from an analytical perspective, translating kernel-based techniques to reflected diffusions on bounded domains would introduce severe boundary biases, necessitating the construction of highly specialised boundary-correction kernels. Given the practical prevalence of neural networks in generative modelling and these domain-specific analytic challenges, a theoretical investigation of kernel-based score matching is conceptually distinct from our objectives and therefore falls outside the scope of the present work.

\paragraph{Extension to general manifolds}
In this work, we restrict our geometric setup to data supported on a linear subspace $d \ll D$ intersecting the hypercube. 
While simpler than general non-linear manifolds, this setting already poses significant mathematical challenges due to the aforementioned absence of Gaussian transition kernels. 
In unconstrained OU settings, this Gaussian structure allowed \cite{tang24, azangulov24} to extend convergence rates to unknown compact $d$-dimensional manifolds, achieving bounds that scale with the intrinsic dimension $d$.
Independently of any assumptions on the target distribution, the reflected diffusion case  requires managing complex transition densities (cf.\ Lemma \ref{lem:explicit_score}), making the bounding of the score function and its estimation considerably more involved. Let us emphasize here that all of the results given in Section \ref{sec:prob} do not rely on any specific geometric assumptions on the support of $\mu$ but only partially on the assumption that $p_0$ has controlled decay at the boundary.  For curved manifolds without boundary such that the density wrt the volume measure is bounded away from zero as in \cite{tang24,azangulov24} all statements therefore remain true if the lower bound in Lemma \ref{lem:affine_approx_d}   is replaced by $t^{-d/2}\mathrm{e}^{-\rho}$. In particular, parts of Lemma \ref{lem:affine_approx} provide a natural analogue to the crucial technical Lemma C.1 in \cite{tang24} and can therefore serve as a natural starting point for score approximation for general manifold data via considering linear sub-problems associated to local chart parametrisations of the data density.
% Extending our results to general curved manifolds would require controlling the interaction between manifold curvature and the reflecting boundary of the domain. This is a non-trivial problem in stochastic analysis requiring entirely new differential geometric tools and is therefore strictly beyond the scope of the present investigation.

\paragraph{Discretisation and sampling errors}
A further extension of the present analysis involves the relaxation of the assumption of exact simulation for the backward dynamics. In practice, sampling from the generative model requires the discretisation of the backward SDE. For standard unconstrained processes, this is typically achieved via the Euler--Maruyama scheme, whose error properties have been extensively quantified in recent literature.

For reflected diffusions, however, the discretisation is more involved, as simulated paths must be strictly constrained to the domain $[0,1]^D$. This necessitates the use of alternative schemes, such as projected or penalised Euler--Maruyama methods, which explicitly account for the local time at the boundary. The numerical analysis of these methods requires bounding the error in the presence of reflecting barriers, where the discretisation error is coupled with the approximation of the score function near the boundary. Establishing a comprehensive end-to-end bound that incorporates these discretisation effects is a significant objective in numerical stochastic analysis. This constitutes a natural direction for future research, as also pointed out in the survey article \cite{tang25}.
	
\paragraph{Acknowledgements} We thank Iskander Azangulov and Judith Rousseau for helpful discussions during the \textit{MFO Mini-Workshop: Statistical Challenges for Deep Generative Models} and Simon Bienewald for helpful remarks on Wasserstein generalisation bounds.

\printbibliography

\appendix
\section{Proofs for Section \ref{sec:prob}}\label{app:proofs_prob}

\begin{proof}[Proof of Lemma \ref{lem:explicit_solution}]
	Since $f(Y)=Y$ and $B_0=0$ a.s., we have $X_0 \sim \nu$.
	Fix $i \in [D]$ and define the one-dimensional process $Z_t^{(i)} \coloneqq B_t^{(i)} + Y^{(i)}$, $t \geq 0$.
	For $x \in \mathbb{R}$, let $L^{(i)}_{t,x}$ denote the local time of $Z^{(i)}$ at level $x$, that is, the unique process satisfying
	\[
	L^{(i)}_{t,x} = \lim_{\varepsilon \searrow 0}
	\frac{1}{2\varepsilon} \int_0^t \mathbf{1}_{(x-\varepsilon,x+\varepsilon)}(Z_s^{(i)})\diff{s} .
	\]
	Since $\widehat{f}$ is the difference of two convex functions, the It\^o--Tanaka formula implies that $X^{(i)}_t \coloneqq \widehat{f}(Z_t^{(i)})$ is a continuous semimartingale satisfying
	\[
	X_t^{(i)}= Y^{(i)} + \int_0^t \widehat{f}'_{-}(Z_s^{(i)})\diff{Z_s^{(i)}}
	+ \frac{1}{2} \int_{\mathbb{R}} L^{(i)}_{t,x}\,\widehat{f}''(\diff{x}),
	\]
	where $\widehat{f}'_{-}$ denotes the left derivative of $\widehat{f}$ and $\widehat{f}''$ its distributional second derivative.
	Define $W_t^{(i)} \coloneqq \int_0^t \widehat{f}'_{-}(Z_s^{(i)})\diff{Z_s^{(i)}}$.
	As $\widehat{f}'_{-}(x) \in \{-1,1\}$ for all $x \in \mathbb{R}$, we obtain
	\[
	\langle W^{(i)} \rangle_t = \int_0^t \widehat{f}'_{-}(Z_s^{(i)})^2\diff{\langle B^{(i)}\rangle_s}=\int_0^t 1\diff{s} = t,
	\]
	and hence $(W_t^{(i)})_{t \geq 0}$ is a standard Brownian motion by L\'evy's characterisation.
	Next, define
	\[
	L_t^{(i),0} \coloneqq \sum_{k \in \mathbb{Z}} L^{(i)}_{t,2k},
	\quad
	L_t^{(i),1} \coloneqq \sum_{k \in \mathbb{Z}} L^{(i)}_{t,2k+1}.
	\]
	Then, $\int_{\mathbb{R}} L^{(i)}_{t,x}\,\widehat{f}''(\diff{x}) = L_t^{(i),0} - L_t^{(i),1}$.
	Let $T_k \coloneqq \inf\{t \geq 0 : |Z_t^{(i)}| \geq k\}$, $k\in\mathbb Z$, and denote by $L^{X^{(i)}}$ the local time of $X^{(i)}$ at $0$.
	For all $t \geq 0$ and $n \in \mathbb{N}$, we have a.s.
	\begin{align*}
		L^{X^{(i)}}_{t \wedge T_{2n}}
		= \lim_{\varepsilon \searrow 0}
		\frac{1}{2\varepsilon}
		\int_0^{t \wedge T_{2n}} \mathbf{1}_{(-\varepsilon,\varepsilon)}(X_s^{(i)})\diff{s}
		 &= \lim_{\varepsilon \searrow 0} \frac{1}{2\varepsilon} \sum_{k \in \mathbb{Z}, \lvert k \rvert \leq n} \int_0^{t \wedge T_{2n}} \one_{(2k - \varepsilon,2k + \varepsilon)}(Z^{(i)}_s) \diff{s}\\
		&= \sum_{k \in \mathbb{Z},\,|k| \leq n}
		\lim_{\varepsilon \searrow 0}
		\frac{1}{2\varepsilon}
		\int_0^{t \wedge T_{2n}} \mathbf{1}_{(2k-\varepsilon,2k+\varepsilon)}(Z_s^{(i)})\diff{s} \\
		&= \sum_{k \in \mathbb{Z},\,|k| \leq n} L^{(i)}_{t \wedge T_{2n},2k},
	\end{align*}
	where we used that $X_s^{(i)}=0$ if and only if $Z_s^{(i)}=2k$ for some $k \in \mathbb{Z}$.
	By monotone convergence,
	\[
	L_t^{X^{(i)}} = \sum_{k \in \mathbb{Z}} L^{(i)}_{t,2k} = L_t^{(i),0},
	\quad \text{for all } t \geq 0 \text{ a.s.}
	\]
	An analogous argument shows that $L_t^{(i),1}$ is the local time of $X^{(i)}$ at $1$.
	Consequently, $X^{(i)}$ satisfies the one-dimensional reflected SDE
	\[
	\diff{X_t}^{(i)}
	= \diff{W_t^{(i)}}
	+ \frac{1}{2}\bigl(\diff{L_t^{(i),0}} - \diff{L_t^{(i),1}}\bigr).
	\]
	Finally, define
	$L_t \coloneqq \frac{1}{2}\sum_{i=1}^D \bigl(L_t^{(i),0} + L_t^{(i),1}\bigr)$ and $W_t \coloneqq (W_t^{(1)},\ldots,W_t^{(D)})$.
	Then,
	\begin{align*}
			X_t
			&=X_0+W_t+\frac{1}{2}\sum_{i=1}^{D}e_i\Big(L_t^{(i), 0}-L_t^{(i), 1}\Big) \\
			&=X_0+W_t+\sum_{i=1}^{D}\int_{s \in [0,t]: \{X^{(i)}_s\in\{0, 1\}\}}n(X_t)\diff{L_s}^{(i), X_s^{(i)}} \\
			&=X_0+W_t+\int_{s \in [0,t]:\bigcup_{i=1}^D\{X^{(i)}_s\in\{0, 1\}\}}n(X_s)\,\mathrm{d}\Big(\frac{1}{2}\sum_{j=1}^{D}L_s^{(j), X_s^{(j)}}\Big) \\
			&=X_0+W_t+\int_{s \in [0,t]:\{X_s\in\partial[0, 1]^D\}}n(X_s)\diff{L_s} =X_0+W_t+\int_{0}^tn(X_s)\diff{L_s}.
		\end{align*}
	Moreover, for $i,j \in [D]$,
	\[
	\langle W^{(i)}, W^{(j)} \rangle_t
	=\int_{0}^{t}\widehat{f}'_{-}(Z_s^{(i)})\widehat{f}'_{-}(Z_s^{(j)})\diff{\langle B^{(i)}, B^{(j)}\rangle_s}=
	\begin{cases}
		t, & i=j,\\
		0, & i\neq j,
	\end{cases}
	\]
	so that $(W_t)_{t \geq 0}$ is a $D$-dimensional Brownian motion.
	Since $(L_t)_{t\ge0}$ is a local time of $(X_t)_{t\ge0}$, this completes the proof.
\end{proof}

\begin{proof}[Proof of Lemma \ref{lem:explicit_score}]
Let $f$ be defined as in Lemma~\ref{lem:explicit_solution}.
By that lemma and by uniqueness in law of weak solutions to \eqref{eq:forward_equation}, the associated transition density $q_t$ satisfies
\[
q_t(y,x)\,\diff{x}= \mathbb{P}(X_t \in \diff{x} \mid X_0=y)
	= \mathbb{P}\bigl(f(B_t+y) \in \diff{x}\bigr).
\]
For any $z \in \mathbb{Z}^D$ and $x \in [0,1]^D + z$, we have $f(x)=R_z(x-z)$.
Since the collection $\bigl([0,1]^D + z\bigr)_{z \in \mathbb{Z}^D}$ forms a partition of $\mathbb{R}^D$ up to Lebesgue null sets, it follows that for $x,y \in [0,1]^D$,
\begin{align*}
q_t(y,x)\,\diff{x}&= \sum_{z \in \mathbb{Z}^D}\mathbb{P}\bigl(f(B_t+y) \in \diff{x},\; B_t+y \in [0,1]^D+z\bigr) \\
&= \sum_{z \in \mathbb{Z}^D}\mathbb{P}\bigl(R_z(B_t+y-z) \in \diff{x},\; B_t+y \in [0,1]^D+z\bigr) \\
&= \sum_{z \in \mathbb{Z}^D}
\mathbb{P}\bigl(B_t \in R_z(\diff{x})+z-y\bigr)= (2\pi t)^{-D/2}
\sum_{z \in \mathbb{Z}^D}\exp\left(-\frac{\lvert R_z(x)+z-y\rvert^2}{2t}\right)\diff{x}.
\end{align*}
In the third equality, we used that, for each $z \in \mathbb{Z}^D$, the mapping $R_z$ is an involution on $[0,1]^D$, and that
$R_z(\diff{x})+z-y \subseteq [0,1]^D+z-y$ for all $x \in [0,1]^D$. The fourth equality follows from the transformation theorem and the fact that the determinant of Jacobian $J_{R_z}(x) = (-1)^z$ has unit absolute value. 
Applying monotone convergence and integrating $q_t(y,x)$ against $\mu(\diff{y})$ yields
\[
p_t(x)=(2\pi t)^{-D/2}\sum_{z \in \mathbb{Z}^D}\int_{[0,1]^D}
	\exp\left(-\frac{\lvert R_z(x)+z-y\rvert^2}{2t}\right)	\,\mu(\diff{y}),
\]
as claimed.
To compute the score, note that for $z \in \mathbb{Z}^D$ and $x,y \in [0,1]^D$, the chain rule gives
\[
\nabla \e^{-\frac{\lvert R_z(x)+z-y\rvert^2}{2t}}= -\frac{\nabla \lvert R_z(x)+z-y\rvert^2}{2t}\e^{-\frac{\lvert R_z(x)+z-y\rvert^2}{2t}}= -(-1)^z\frac{R_z(x)+z-y}{t}
	\e^{-\frac{\lvert R_z(x)+z-y\rvert^2}{2t}}.
\]
Hence, dominated convergence again yields
\[
s_0(x, t)=\frac{\nabla p_t(x)}{p_t(x)}
	=-\frac{\sum_{z\in\Z^D}(-1)^z\int_{[0, 1]^D}(R_z(x)+z-y)\mathrm{e}^{-\frac{|R_z(x)+z-y|^2}{2t}}\,\mu(\diff{y})}{t\sum_{z\in\Z^D}\int_{[0, 1]^D}\mathrm{e}^{-\frac{|R_z(x)+z-y|^2}{2t}}\,\mu(\diff{y})},
\]
as desired.
\end{proof}

\begin{proof}[Proof of Lemma \ref{lem:affine_approx}]
	To show \hyperref[lem:affine_approx_a]{(a)}, notice that, by the triangle inequality,
	\[
		t|s_0(x, t)|
		\le\frac{\sum_{z\in\Z^D}\int_{[0, 1]^D}|R_z(x)+z-y|\mathrm{e}^{-\frac{|R_z(x)+z-y|^2}{2t}}\,\mu(\diff{y})}{\sum_{z\in\Z^D}\int_{[0, 1]^D}\mathrm{e}^{-\frac{|R_z(x)+z-y|^2}{2t}}\,\mu(\diff{y})},
	\]
	where, since the numerator is convergent, we must have for every $\varepsilon>0$ and $x, y\in[0, 1]^D$ that there exists a $K>0$ such that
	\[
		\sum_{\substack{z\in\Z^D \\ |R_z(x)+z-y|>K}}|R_z(x)+z-y|\mathrm{e}^{-\frac{|R_z(x)+z-y|^2}{2t}}
		<\varepsilon.
	\]
	In particular, there exists a $K(x, y, t)>0$ such that
	\[
		\sum_{\substack{z\in\Z^D \\ |R_z(x)+z-y|>K(x, y, t)}}|R_z(x)+z-y|\mathrm{e}^{-\frac{|R_z(x)+z-y|^2}{2t}}
		\le\sum_{z\in\Z^D}\mathrm{e}^{-\frac{|R_z(x)+z-y|^2}{2t}}.
	\]
	Thus, if we can show that $K(x, y, t)\le K(t)$, independent of $x$ and $y$, integrating both sides yields
	\begin{align*}
		\sum_{z\in\Z^D}\int_{[0, 1]^D}|R_z(x)+z-y|\mathrm{e}^{-\frac{|R_z(x)+z-y|^2}{2t}}\,\mu(\diff{y})
		&=\int_{[0, 1]^D}\bigg(\sum_{\substack{z\in\Z^D \\ |R_z(x)+z-y|\le K(t)}}|R_z(x)+z-y|\mathrm{e}^{-\frac{|R_z(x)+z-y|^2}{2t}} \\
		&\qquad+\sum_{\substack{z\in\Z^D \\ |R_z(x)+z-y|>K(t)}}|R_z(x)+z-y|\mathrm{e}^{-\frac{|R_z(x)+z-y|^2}{2t}}\bigg)\,\mu(\diff{y}) \\
		&\le\big(K(t)+1\big)\int_{[0, 1]^D}\sum_{z\in\Z^D}\mathrm{e}^{-\frac{|R_z(x)+z-y|^2}{2t}}\,\mu(\diff{y}),
	\end{align*}
	which then implies 
    \begin{equation}\label{eq:score_bound1}
    |s_0(x, t)|\le\frac{K(t)+1}{t}.
    \end{equation}
	To do this, we first note that, for each $x, y\in[0, 1]^D$ and $z\in\Z^D$, the point $R_z(x)+z-y$ lies in $[-1, 1]^D+z$, and that the function $|u|\mathrm{e}^{-\frac{|u|^2}{2t}}$ is decreasing in $|u|$ whenever $|u|>\sqrt{t}$.
	Thus, when $|z|>2\sqrt{D}+\sqrt{t}$, we have the rough estimate of
	\[
		|R_z(x)+z-y|\mathrm{e}^{-\frac{|R_z(x)+z-y|^2}{2t}}
		\le\int_{[-2, 2]^D}|z+u|\mathrm{e}^{-\frac{|z+u|^2}{2t}}\diff{u}.
	\]
	Now, notice that the set $\{z+[-2, 2]^D\mid z\in\Z^D\}$ constitutes a covering of $\R^D$, where each hypercube $z+[0, 1]^D$ is covered a total of $4^D$ times.
	Thus, we have for each integrable $g\colon \R^D\to\R$,
	\[
		\sum_{z\in\Z^D}\int_{[-2, 2]^D}g(u+z)\diff{u}
		=4^D\int_{\R^D}g(u)\diff{u}.
	\]
	Similarly, the set $\{z+[-2, 2]^D\mid z\in\Z^D, |z|>K\}$ is a covering of a subset of $\R^D$ containing $\{u\in\R^D\mid |u|>K-2\sqrt{D}\}$, and each hypercube $z+[0, 1]^D$ is covered at most $4^D$ times, whence
	\[
		\sum_{\substack{z\in\Z^D \\ |z|>K}}\int_{[-2, 2]^D}g(u+z)\diff{u}
		\le 4^D\int_{\{|u|>K-2\sqrt{D}\}}g(u)\diff{u}.
	\]
	Applying this to the above for some $K>2\sqrt{D}+\sqrt{t}$, we get
	\begin{align*}
		\sum_{\substack{z\in\Z^D \\ |z|>K}}|R_z(x)+z-y|\mathrm{e}^{-\frac{|R_z(x)+z-y|^2}{2t}}
		&\le \sum_{\substack{z\in\Z^D \\ |z|>K}}\int_{[-2, 2]^D}|u+z|\mathrm{e}^{-\frac{|u+z|^2}{2t}}\diff{u} \\
		&\le 4^D\int_{\{|u|>K-2\sqrt{D}\}}|u|\mathrm{e}^{-\frac{|u|^2}{2t}}\diff{u} \\
		&=(32\pi t)^{\frac{D}{2}}\mathbb{E}[|B_t|\bm{1}_{\{|B_t|>K-2\sqrt{D}\}}].
	\end{align*}
	Setting $K(t)=2\sqrt{D}+\sqrt{t(D+\frac{D}{t})}\lesssim1+ \sqrt{t} \vee1$, it follows by Lemma \ref{lem:brownian_bound_b} that
	\[
		\mathbb{E}[|B_t|\bm{1}_{\{|B_t|>K(t)-2\sqrt{D}\}}]
		\lesssim t^{-\frac{D}{2}}\mathrm{e}^{-\frac{D}{2t}},
	\]
	whence
	\[
		\sum_{\substack{z\in\Z^D \\ |z|>K(t)}}|R_z(x)+z-y|\mathrm{e}^{-\frac{|R_z(x)+z-y|^2}{2t}}
		\lesssim \mathrm{e}^{-\frac{D}{2t}}
		\le \mathrm{e}^{-\frac{|x-y|^2}{2t}}
		\le \sum_{z\in\Z^D}\mathrm{e}^{-\frac{|R_z(x)+z-y|^2}{2t}}.
	\]
	By the previous discussion discussion leading to \eqref{eq:score_bound1}, this proves part \hyperref[lem:affine_approx_a]{(a)}.
    
	To prove \hyperref[lem:affine_approx_b]{(b)}, we first note that \hyperref[lem:affine_approx_a]{(a)} gives
	\[
		\mathbb{E}[|\nabla\log p_t(X_t)|^2\bm{1}_{M_{\rho, t}^{\mathsf{c}}}(X_t)]
		\lesssim \frac{1}{t^2\wedge1}\mathbb{P}(X_t\in M_{\rho, t}^{\mathsf{c}}).
	\]
	To control the right hand side, let $f, \widehat{f}$ be as in Lemma \ref{lem:explicit_solution} such that $X_t\sim f(B_t+Y)$.
	For all $x\in\R$ and $y\in[0, 1]$, we have $|\widehat{f}(x+y)-y|\le|x|$, implying that $|f(B_t+Y)-Y|\le|B_t|$, and hence that if $|B_t|\le\sqrt{t(D+2\rho)}$, then also $\mathrm{dist}(f(B_t+Y), M)\le\sqrt{t(D+2\rho)}$.
	Using this and Lemma \ref{lem:brownian_bound_a}, we have
	\[
		\mathbb{P}(X_t\in M_{\rho, t}^{\mathsf{c}})
		\le\mathbb{P}(|B_t|>\sqrt{t(D+2\rho)})
		\lesssim \mathrm{e}^{-\rho},
	\]
	showing \hyperref[lem:affine_approx_b]{(b)}.
	% {\color{blue}
	
	We continue with the proof of \hyperref[lem:affine_approx_b]{(c)}. Using the same reasoning as above, we find for $Z_u \coloneq B^{(1)}_{\mathrm{e}^{-u}}/\sqrt{\mathrm{e}^{-u}}$  and $z > 0$ that
	\begin{equation}\label{eq:range}
		\PP\big(\exists s \in [t, 1]: \lvert X_s - X_0 \rvert > \sqrt{s}z\big) 
		\leq \PP\Big(\sup_{s \in [t, 1]} \lvert B_s/\sqrt{s} \rvert > z\Big)
		\leq D\PP\Big(\sup_{u \in [0,\log t^{-1}]} \lvert Z_u \rvert > z/ \sqrt{D}\Big).
	\end{equation}
	Note that $(Z_u)_{u \in [0,\log t^{-1}]}$ is a Gaussian process with canonical distance $d_Z$ given by
	\[
		d_Z(u,v)^2
		\coloneq \E\big[\lvert Z_u - Z_v \rvert^2\big] 
		= 2\big(1 - \tfrac{\mathrm{e}^{-(u \vee v)}}{\mathrm{e}^{-(u+v)/2}}\big) 
		= 2\big(1 - \mathrm{e}^{-\lvert u - v \rvert/2}\big).
	\]
	It is readily verified that for $\varepsilon \in (0,\sqrt{2})$ the covering number $N([0,\log t^{-1}], d_Z, \varepsilon)$ is bounded by 
	\[
		N([0,\log t^{-1}],d_Z,\varepsilon) 
		\leq 1 + \frac{\log t^{-1}}{-2 \log(1- \varepsilon^2/2)} 
		\leq 1 + \frac{\log t^{-1}}{\varepsilon^2}
	\]
	and thus we obtain for the entropy integral 
	\begin{align*} 
		\int_0^\infty \log N([0, \log t^{-1}], d_Z, \varepsilon) \diff{\varepsilon} 
		&\leq  \int_0^{\sqrt{2}} \log\Big(1 + \frac{\log t^{-1}}{\varepsilon^2}\Big) \diff{\varepsilon} \\ 
		&\leq \sqrt{2} \log (2 + \log t^{-1}) + 2 \int_0^{\sqrt{2}} \log(1/\varepsilon) \diff{\varepsilon} \\ 
		&\leq \sqrt{2} \log (2 + \log t^{-1}) + 2\\ 
		&\leq C_1\log (1 + \log t^{-1}),
	\end{align*}
	for some universal constant $C_1$.
	Moreover, we find for the diameter 
	\[
		\Delta_{d_Z}([0,\log t^{-1}]) 
		\coloneq \sup_{s, s' \in [0,\log t^{-1}]} d_Z(s,s') 
		\leq \sqrt{2}.
	\]  
	Thus, Dudley's entropy concentration bound for suprema of Gaussian processes, cf.\, e.g., \cite[Remark 8.1.6]{vershynin18}, yields for any $y > 0$ and some constant $C_1 > 0$ that
	\[
		\sup_{ u \in [0,\log t^{-1}]} \lvert Z_u - Z_0 \rvert 
		\lesssim \int_0^\infty \log N([0, \log t^{-1}], d_Z, \varepsilon) \diff{\varepsilon} + \Delta_{d_Z}([0,\log t^{-1}]) u \leq C_1\log (1 + \log t^{-1}) + \sqrt{2}y,
	\]
	with probability larger than $1 - 2\mathrm{e}^{-y^2/2}$.
	Since $Z_0$ is standard normal, we therefore conclude that there exists a universal constant $C \geq 1$ such that 
	\[
		\sup_{ u \in [0,\log t^{-1}]} \lvert Z_u \rvert 
		\leq C(\log (1 + \log t^{-1}) + y),
	\]
	with probability larger than $1 - 4 \mathrm{e}^{-2y^2}$. 
	Consequently, it follows from \eqref{eq:range} that 
	\[
		\PP\Big(\forall s \in [t,1]: \lvert X_s - X_0 \rvert \leq C\sqrt{Ds}\big(\log (1 + \log t^{-1}) + y\big)\Big) \geq 1 - 4D\mathrm{e}^{-2y^2},
	\]
	which proves \hyperref[lem:]{(c)}.
	
	To show \hyperref[lem:affine_approx_d]{(d)}, let $x\in M_{\rho, t}$ be given and choose some $y_0\in\overline{M}$ with $|y_0-x|\le\sqrt{t(D+2\rho)}$.
	Then, set $y_1=y_0+\frac{\sqrt{t}}{|y_0-x|}(y_0-x)$ such that $y_1$ lies on the line containing $x$ and $y_0$, but a distance of $\sqrt{t}$ further away from $x$ than $y_0$.
	Then, $\mathcal B(y_0, \sqrt{t})\subseteq \mathcal B(y_1, 2\sqrt{t})$ while $|x-y|\le|x-y_1|+|y_1-y|\le \sqrt{t(D+3+2\rho)}$ for all $y\in \mathcal B(y_0, \sqrt{t})$.
	Thus for all such $y$ we have $\mathrm{e}^{\frac{|x-y|^2}{2t}}\gtrsim \mathrm{e}^{-\rho}$, and it follows by Assumption \ref{ass:H1} that 
	\[
		\int_{[0, 1]^D}\mathrm{e}^{-\frac{|x-y|^2}{2t}}\,\mu(\diff{y})
		\ge \mathrm{e}^{-\frac{D+3+2\rho}{2}}\mu\big(\mathcal B(y_0, \sqrt{t})\big)
		\gtrsim t^{\frac{c_0}{2}}\mathrm{e}^{-\rho}.
	\]
	This implies that for such $x$, the same is true of $(2\pi t)^{\frac{D}{2}}p_t(x)$, showing \hyperref[lem:affine_approx_d]{(d)}.

	Finally, we prove part \hyperref[lem:affine_approx_e]{(e)}. We first show part \ref{lem:loggrad_score_e1}, that is $|\nabla_x\log q_t(y, x)|\lesssim\frac{|x-y|}{t}+\frac{1}{\sqrt{t}}$ for all $x, y\in[0, 1]^D$.
	To this end, for such $x, y$ let $Z_1(x, y)=\{z\in\Z^D:|R_z(x)+z-y|\le\sqrt{2}|x-y|\}$ and $Z_2=\Z^D\setminus Z_1$.
	Then,
	\begin{align*}
		|\nabla_x\log q_t(y, x)|
		&\le\frac{\sum_{z\in\Z^D}\frac{|R_z(x)+z-y|}{t}\mathrm{e}^{-\frac{|R_z(x)+z-y|^2}{2t}}}{\sum_{z\in\Z^D}\mathrm{e}^{-\frac{|R_z(x)+z-y|^2}{2t}}} \\
		&=\frac{\sum_{z\in Z_1(x, y)}\frac{|R_z(x)+z-y|}{t}\mathrm{e}^{-\frac{|R_z(x)+z-y|^2}{2t}}}{\sum_{z\in\Z^D}\mathrm{e}^{-\frac{|R_z(x)+z-y|^2}{2t}}} +
		\frac{\sum_{z\in Z_2(x, y)}\frac{|R_z(x)+z-y|}{t}\mathrm{e}^{-\frac{|R_z(x)+z-y|^2}{2t}}}{\sum_{z\in\Z^D}\mathrm{e}^{-\frac{|R_z(x)+z-y|^2}{2t}}} \\
		&\le\frac{\sqrt{2}|x-y|}{t}+\frac{\sum_{z\in Z_2(x, y)}\frac{|R_z(x)+z-y|}{t}\mathrm{e}^{-\frac{|R_z(x)+z-y|^2}{2t}}}{\sum_{z\in\Z^D}\mathrm{e}^{-\frac{|R_z(x)+z-y|^2}{2t}}}.
	\end{align*}
	Now, note that for $z\in Z_2(x, y)$ we have $|R_z(x)+z-y|^2-|x-y|^2 > \frac{1}{2}|R_z(x)+z-y|^2$, whence
	\begin{align*}
		\frac{\sum_{z\in Z_2(x, y)}\frac{|R_z(x)+z-y|}{t}\mathrm{e}^{-\frac{|R_z(x)+z-y|^2}{2t}}}{\sum_{z\in\Z^D}\mathrm{e}^{-\frac{|R_z(x)+z-y|^2}{2t}}}
		&\le\sum_{z\in Z_2(x, y)}\frac{|R_z(x)+z-y|}{t}\mathrm{e}^{\frac{|x-y|^2-|R_z(x)+z-y|^2}{2t}} \\
		&\le2\sum_{z\in\Z^D}\frac{|R_z(x)+z-y|}{2t}\mathrm{e}^{-\frac{|R_z(x)+z-y|^2}{4t}}.
	\end{align*}
	Now, to evaluate this final sum, let $f_t(x)=\frac{|x|}{t}\mathrm{e}^{-\frac{|x|^2}{2t}}$, and note that for all $z\in\Z^D$, we have
	\[
		\#\big\{z'\in\Z^D:R_{z'}(x)+z'-y\in[0, 1]^D+z\big\}
		\le 2^D,
	\]
	whereby
	\[
		\sum_{z\in\Z^D}f_{2t}(R_z(x)+z-y)
		\le 2^D\sum_{z\in\Z^D}\sup_{x\in[0, 1]^D+z}f_{2t}(x)
	\]
	Since $f_t(x)\le f_t(\sqrt{t}\frac{x}{|x|})\le\frac{1}{\sqrt{t}}$, it follows that for the $2^D$ terms where $[0, 1]^D+z\subseteq[-1, 1]^D$, we have $\sup_{x\in[0, 1]^D+z}f_{2t}(x) \leq \frac{1}{\sqrt{2t}}$, while for all others, it is the point in $[0, 1]^D+z$ closest to the origin since $f_t$ is decreasing in $|x|$ for $|x|>\sqrt{t}$.
	The set of all such points is merely $\Z^D\setminus\{0\}$ with points near the axes being repeated a maximum of $2^D$ times, whence
	\begin{equation}\label{eq:log_grad_trans1}
		\sum_{z\in\Z^D}\frac{|R_z(x)+z-y|}{2t}\mathrm{e}^{-\frac{|R_z(x)+z-y|^2}{4t}}
		\le 4^D\Big(\frac{1}{\sqrt{2t}}+\sum_{z\in\Z^D}\frac{|z|}{2t}\mathrm{e}^{-\frac{|z|^2}{4t}}\Big).
	\end{equation}
	Since
	\[
		\sum_{z\in\Z^D}\frac{|z|}{2t}\mathrm{e}^{-\frac{|z|^2}{4t}}
		\asymp\int_{\R^D}\frac{|x|}{2t}\mathrm{e}^{-\frac{|x|^2}{4t}}\,\mathrm{d}x
		=\frac{(2\pi t)^{\frac{D}{2}}}{2t}\mathbb{E}\big[|B_{2t}|\big]
		\le\frac{(2\pi t)^{\frac{D}{2}}}{\sqrt{2t}},
	\]
	it follows from \eqref{eq:log_grad_trans1} that
	\[
		\sum_{z\in\Z^D}\frac{|R_z(x)+z-y|}{2t}\mathrm{e}^{-\frac{|R_z(x)+z-y|^2}{4t}}
		\lesssim\frac{1}{\sqrt{t}},
	\]
	and hence $|\nabla_x\log q_t(y, x)|\lesssim\frac{|x-y|}{t}+\frac{1}{\sqrt{t}}$ as claimed.
	By the score matching identity 
	\[\nabla\log p_t(x)= \int \nabla_x \log q_t(y,x) \frac{q_t(y,x) \,\mu(\diff{y})}{p_t(x)} = \mathbb{E}[\nabla_2\log q_t(X_0, X_t)\mid X_t=x],\] 
	it follows that
	\[
		|\nabla\log p_t(x)|
		\le \mathbb{E}[|\nabla_2\log q_t(X_0, X_t)|\mid X_t=x]
		\lesssim \frac{1}{t}\mathbb{E}[|X_0-X_t|\mid X_t=x]+\frac{1}{\sqrt{t}},
	\]
    proving part \hyperref[lem:affine_approx_e]{(e)}.\ref{lem:loggrad_score_e2}. Let now $t< 1$. We have
	\begin{align*}
		\mathbb{E}[|X_0-X_t|\mid X_t=x]
		&=\frac{\int_{M}|x-y|q_t(y, x)\,\mu(\diff{y})}{p_t(x)} \\
		&=\frac{\int_{M\cap \mathcal B(x, \rho_t)}|x-y|q_t(y, x)\,\mu(\diff{y})}{p_t(x)}+\frac{\int_{M\cap \mathcal B(x, \rho_t)^{\mathsf{c}}}|x-y|q_t(y, x)\,\mu(\diff{y})}{p_t(x)},
	\end{align*}
	where $\rho_t=\sqrt{3t(\rho+\frac{c_0+1}{2}\log t^{-1})}$.
	Clearly, for the first term we have
	\[
		\frac{\int_{M\cap \mathcal B(x, \rho_t)}|x-y|q_t(y, x)\,\mu(\diff{y})}{p_t(x)}
		\le\frac{\rho_t\int_{M}q_t(y, x)\,\mu(\diff{y})}{p_t(x)}
		=\rho_t
		\lesssim \sqrt{t(\rho+\log t^{-1})},
	\]
	while for the second term, we have by \hyperref[lem:affine_approx_d]{(d)} that $p_t(x)\gtrsim t^{\frac{c_0-D}{2}}\mathrm{e}^{-\rho}$ for $x\in M_{\rho, t}$, and since the diameter of $[0, 1]^D$ is $\sqrt{D}$, we get
	\[
		\frac{\int_{M\cap \mathcal B(x, \rho_t)^{\mathsf{c}}}|x-y|q_t(y, x)\,\mu(\diff{y})}{p_t(x)}
		\le\sqrt{D}\frac{\int_{M\cap \mathcal B(x, \rho_t)^{\mathsf{c}}}q_t(y, x)\,\mu(\diff{y})}{p_t(x)}
		\lesssim t^{\frac{D-c_0}{2}}\mathrm{e}^{\rho}\int_{M\cap \mathcal B(x, \rho_t)^{\mathsf{c}}}q_t(y, x)\,\mu(\diff{y}).
	\]	
	Now, by \cite[Theorem 3.2]{liyau} we have $q_t(y, x)\lesssim t^{-\frac{D}{2}}\mathrm{e}^{-\frac{|x-y|^2}{3t}}$, whence
	\[
		t^{\frac{D-c_0}{2}}\mathrm{e}^{\rho}\int_{M\cap \mathcal B(x, \rho_t)^{\mathsf{c}}}q_t(y, x)\,\mu(\diff{y})
		\lesssim t^{-\frac{c_0}{2}}\mathrm{e}^{\rho-\frac{\rho_t^2}{3t}}
		=\sqrt{t}.
	\]
	Thus, using part \hyperref[lem:affine_approx_e]{(e)}.\ref{lem:loggrad_score_e2}, we find
	\[
		|\nabla\log p_t(x)| \lesssim \frac{1}{t}\mathbb{E}[|X_0-X_t|\mid X_t=x]+\frac{1}{\sqrt{t}}
		\lesssim\frac{\sqrt{t(\rho+\log t^{-1})}+\sqrt{t}}{t}+\frac{1}{\sqrt{t}}
		\lesssim\frac{\sqrt{\rho+\log t^{-1}}}{\sqrt{t}},
	\]
	showing \hyperref[lem:affine_approx_e]{(e)}.\ref{lem:loggrad_score_e3}.
\end{proof}

\section{Remaining proofs for Section \ref{sec:wasserstein}}\label{app:wasserstein}
\begin{proof}[Proof of Proposition \ref{lem:wasserstein_Yi}]
Combining \eqref{eq:tv} with Pinsker's inequality and Girsanov's theorem for reflected diffusions, cf.\ \cite[Theorem~A.1]{holk24}, we obtain
	\begin{align*}
		\mathcal W_1\bigl(Y_{\overline T-\underline T}^{(i-1)},Y_{\overline T-\underline T}^{(i)}\bigr)
		&\le 2\sqrt{D}\,\mathrm{TV}\bigl(Y_{\overline T-\underline T}^{(i-1)},Y_{\overline T-\underline T}^{(i)}\bigr) \\
		&\le \sqrt{2D\,\KL{Y_{\overline{T}-\underline{T}}^{(i-1)}}{Y_{\overline{T}-\underline{T}}^{(i)}}} = \sqrt{D\int_{t_{i-1}}^{t_i}
			\mathbb E\left[\bigl|s(X_t,t)-\nabla\log p_t(X_t)\bigr|^2\right]\diff t }.
	\end{align*}
	This bound already yields the claim whenever $t_i\gtrsim \rho^{-1}$. Hence, in the remainder of the proof we may and do assume that $t_i \le 1/(C_1\rho)$, for a constant $C_1>0$ to be chosen later.
	Let $\mathbb Q^{(i)}$ denote the law of the full path $Y^{(i)}$ on $C([0,\overline T-\underline T],[0,1]^D)$. By the Kantorovich--Rubinstein duality,
	\[
		\mathcal W_1\bigl(Y_{\overline T-\underline T}^{(i-1)},Y_{\overline T-\underline T}^{(i)}\bigr)
		= \sup_{\|f\|_{\mathrm{Lip}}\le1}
		\left|\int f\bigl(p(\overline T-\underline T)\bigr)\,(\mathbb Q^{(i-1)}-\mathbb Q^{(i)})(\diff p)\right|.
	\]
	Since the processes $Y^{(i-1)}$ and $Y^{(i)}$ coincide on $[0,\overline T-t_i)$, their marginals at time $\overline T-t_i$ agree, and thus
	\[
	\sup_{\|f\|_{\mathrm{Lip}}\le1}
	\left|\int f\bigl(p(\overline T-t_i)\bigr)\,(\mathbb Q^{(i-1)}-\mathbb Q^{(i)})(\diff p)\right|=\mathcal W_1\bigl(Y_{\overline T-t_i}^{(i-1)},Y_{\overline T-t_i}^{(i)}\bigr)
	=0.
	\]
	Subtracting this null term yields
	\[
		\mathcal W_1\bigl(Y_{\overline T-\underline T}^{(i-1)},Y_{\overline T-\underline T}^{(i)}\bigr)
		= \sup_{\|f\|_{\mathrm{Lip}}\le1}
		\left|\int \bigl[f(p(\overline T-\underline T))-f(p(\overline T-t_i))\bigr]
		(\mathbb Q^{(i-1)}-\mathbb Q^{(i)})(\diff p)\right|.
	\]	
Fix $C_2>0$, and introduce the event $A_i=\bigl\{|p(\overline T-\underline T)-p(\overline T-t_i)|\le C_2\sqrt{t_i\rho}\bigr\}\subset C([\overline T-\underline T],[0,1]^D)$.
Splitting the integral over $A_i$ and $A_i^\mathsf c$, and denoting by $|\nu|$ the total variation of a signed measure $\nu$, we obtain
\begin{align*}
&\sup_{\|f\|_{\mathrm{Lip}}\le1}
\left|\int \bigl[f(p(\overline T-\underline T))-f(p(\overline T-t_i))\bigr]
(\mathbb Q^{(i-1)}-\mathbb Q^{(i)})(\diff p)\right| \\
&\qquad\le C_2\sqrt{t_i\rho}\int |\mathbb Q^{(i-1)}-\mathbb Q^{(i)}|(\diff p)
+ \sqrt{D}\bigl(\mathbb Q^{(i-1)}(A_i^\mathsf c)+\mathbb Q^{(i)}(A_i^\mathsf c)\bigr).
\end{align*}
The first term is bounded using Pinsker's inequality as
\[
C_2\sqrt{t_i\rho}\,\mathrm{TV}\bigl(Y_{\overline T-\underline T}^{(i-1)},Y_{\overline T-\underline T}^{(i)}\bigr)\lesssim \sqrt{t_i\rho} \left(\int_{t_{i-1}}^{t_i}
\mathbb E\left[\bigl|s(X_t,t)-\nabla\log p_t(X_t)\bigr|^2\right]\diff t\right)^{1/2}.
\]
Hence, it remains to control the tail probabilities
\[
\mathbb P\bigl(|Y^{(i)}_{\overline T-\underline T}-Y^{(i)}_{\overline T-t_i}|>C_2\sqrt{t_i\rho}\bigr),
	\qquad
	\mathbb P\bigl(|Y^{(i-1)}_{\overline T-\underline T}-Y^{(i-1)}_{\overline T-t_i}|>C_2\sqrt{t_i\rho}\bigr).
\]
To this end, let $Z^{(i)}$ denote a solution to the SDE
\[
\diff Z^{(i)}_t = s\bigl(Z^{(i)}_t,\overline T-t_i-t\bigr)\diff t + \diff B_{\overline T-t_i+t},
\qquad Z^{(i)}_0 = Y^{(i)}_{\overline T-t_i},
\]
and define $\tau \coloneqq \inf\{t\ge 0 : Z^{(i)}_t \in \partial[0,1]^D\}$.
By construction, we have $Z^{(i)}_t = Y^{(i)}_{\overline T-t_i+t}$ for all $t\in[0,\tau\wedge (t_i-\underline T)]$.
Consequently,
\[
\mathbb P\bigl(|Y^{(i)}_{\overline T-\underline T}-Y^{(i)}_{\overline T-t_i}|>C_2\sqrt{t_i\rho}\bigr)
\le 1 - \mathbb P\bigl(|Z^{(i)}_{t_i-\underline T}-Z^{(i)}_0|\le C_2\sqrt{t_i\rho},\ \tau\ge t_i-\underline T\bigr).
\]
We now introduce the events
\[
A_1\coloneqq \bigl\{\forall t\in[t_i,1]: |X_t-X_0|\le C_3\sqrt{t\rho}\bigr\}, \quad
A_2 \coloneqq \Bigl\{\sup_{t\in[0,t_i-\underline T]} \bigl|B_{\overline T-t_i+t}-B_{\overline T-t_i}\bigr|
	\le C_4\sqrt{t_i\rho}\Bigr\},
\]
where the constants $C_3,C_4>0$ will be specified below.
On the event $A_1\cap A_2$, we use that $Z^{(i)}_0 = Y^{(i)}_{\overline T-t_i} = X_{t_i}$ and obtain
$\mathrm{dist}(Z^{(i)}_0,M) \le |X_{t_i}-X_0| \le C_3\sqrt{t_i\rho}$.
Moreover, using the assumed bound $|s(x,t)|\le C\sqrt{\rho/t}$, we have for all $t\in[0,t_i-\underline T]$,
\begin{align*}
	|Z^{(i)}_t - Z^{(i)}_0|
	&\le \int_0^t |s(Z^{(i)}_u,\overline T-t_i-u)|\diff u + \bigl|B_{\overline T-\underline T}-B_{\overline T-t_i}\bigr| \\
	&\le C\sqrt{\rho}\int_0^t \frac{1}{\sqrt{\overline T-t_i-u}}\diff u	+ C_4\sqrt{t_i\rho} \\
	&\le (2C+C_4)\sqrt{t_i\rho}
\end{align*}
on $A_1\cap A_2$.
Combining the previous two bounds yields
\[
\mathrm{dist}(Z^{(i)}_t,M)\le \mathrm{dist}(Z^{(i)}_0,M) + |Z^{(i)}_t-Z^{(i)}_0|
\le (C_3+2C+C_4)\sqrt{t_i\rho}
\]
on $A_1 \cap A_2$. Since $t_i\le (C_1\rho)^{-1}$ by assumption, we further obtain that on $A_1 \cap A_2$,
\[
\mathrm{dist}(Z^{(i)}_t,\partial[0,1]^D)
\ge \rho_{\min} - \mathrm{dist}(Z^{(i)}_t,M)
\ge \rho_{\min} - \frac{C_3+2C+C_4}{\sqrt{C_1}}.
\]
Choosing $C_1 \ge 1 \vee \Bigl(\frac{2(C_3+2C+C_4)}{\rho_{\min}}\Bigr)^2$ ensures that $\mathrm{dist}(Z^{(i)}_t,\partial[0,1]^D)>0$ for all $t\in[0,t_i-\underline T]$, and hence $\tau\ge t_i-\underline T$ on $A_1\cap A_2$. On this event, we therefore have
\[
|Z^{(i)}_{t_i-\underline T}-Z^{(i)}_0|\le C_{2,1}\sqrt{t_i\rho}, \quad C_{2,1}\coloneqq 2C+C_4.
\]
It follows that
\[
\mathbb P\bigl(|Y^{(i)}_{\overline T-\underline T}-Y^{(i)}_{\overline T-t_i}|>C_{2,1}\sqrt{t_i\rho}\bigr)
\le \mathbb P(A_1^\mathsf c)+\mathbb P(A_2^\mathsf c).
\]
We now bound the two probabilities on the right-hand side.  
Choosing $C_3 = \widetilde C\sqrt D\, \tfrac{\log\bigl(1+\log(t_1^{-1})\bigr)+\sqrt\rho}{\sqrt\rho}\lesssim 1$, where $\widetilde C$ is the universal constant from Lemma~\ref{lem:}, that lemma yields $\mathbb P(A_1^\mathsf c)\lesssim \mathrm e^{-\rho}$.
	Similarly, choosing $C_4=\frac{\sqrt{D+4\rho}}{\sqrt{\rho}}\lesssim 1$, it follows by Lemma \ref{lem:brownian_bound_a} that
	\begin{align*}
		\mathbb{P}(A_2^{\mathsf{c}})
		&=\mathbb{P}\Big(\sup_{t\in[0, t_i-\underline{T}]}|B_{\overline{T}-t_i+t}-B_{\overline{T}-t_i}|>\sqrt{t_i(D+4\rho)}\Big) \\
		&=\mathbb{P}\Big(\sup_{t\in[0, t_i-\underline{T}]}|B_t|>\sqrt{t_i(D+4\rho)}\Big) \\
		&\le 2 \mathbb{P}\big(|B_{t_i-\underline{T}}|>\sqrt{t_i(D+4\rho)}\big) \\
		&\lesssim \mathrm{e}^{-\rho}.
	\end{align*}
    For the first inequality we used the following classical argument for Brownian motion: let $t, a>0$ and  $\tau_a \coloneq \inf\{s\ge0:|B_s|=a\}$.
	Then
	\[
		\mathbb{P}\Big(\sup_{s\le t}|B_s|>a\Big)
		=\mathbb{P}(\tau_a\le t)
		=\mathbb{P}(\tau_a\le t, |B_t|>a)+\mathbb{P}(\tau_a\le t, |B_t|\le a) \leq \mathbb{P}(|B_t|>a)+\mathbb{P}(\tau_a\le t, |B_t|\le a),
	\]
	and by the tower rule and strong Markov property,
	\begin{equation}\label{eq:refl_princ}
    \begin{split}
		\mathbb{P}(\tau_a\le t, |B_t|\le a)
		=\mathbb{E}\big[\bm{1}_{\{\tau_a\le t\}}\mathbb{P}^{B_{\tau_a}}(|B_{t-\tau_a}|\le a)\big] 
		\le \frac{1}{2}\mathbb{P}(\tau_a\le t),
    \end{split}
	\end{equation}
	where we used that for any $s\ge0$ and $x\in\R^D$ with $|x|=a$ we have
	\[
		\mathbb{P}^x(|B_s|\le a) = \PP^0(\lvert B_s -x \rvert \leq a) 
		\le \mathbb{P}( B_s\cdot x \geq 0 )
		=\frac{1}{2},
	\]
	because $B_s \cdot x$ is a centered Gaussian random variable. Inserting this into \eqref{eq:refl_princ} and rearranging yields 
    \[\mathbb{P}\Big(\sup_{s\le t}|B_s|>a\Big) \leq 2 \PP(\lvert B_t \rvert \geq a).\]
	Next, since $|Y^{(i-1)}_{\overline{T}-\underline{T}}-Y^{(i)}_{\overline{T}-t_i}|\le |Y^{(i-1)}_{\overline{T}-\underline{T}}-Y^{(i-1)}_{\overline{T}-t_{i-1}}|+|Y^{(i-1)}_{\overline{T}-t_{i-1}}-Y^{(i-1)}_{\overline{T}-t_i}|$, it follows that
	\[
		\mathbb{P}(|Y^{(i-1)}_{\overline{T}-\underline{T}}-Y^{(i-1)}_{\overline{T}-t_i}|>C_2\sqrt{t_i\rho})
		\le\mathbb{P}(|Y^{(i-1)}_{\overline{T}-\underline{T}}-Y^{(i-1)}_{\overline{T}-t_{i-1}}|>C_2\sqrt{t_{i}\rho})+\mathbb{P}(|Y^{(i-1)}_{\overline{T}-t_{i-1}}-Y^{(i-1)}_{\overline{T}-t_i}|>C_2\sqrt{t_i\rho}).
	\]
	Here, since $t_{i-1}\le t_i$, we have
	\[
		\mathbb{P}(|Y^{(i-1)}_{\overline{T}-\underline{T}}-Y^{(i-1)}_{\overline{T}-t_{i-1}}|>C_2\sqrt{t_{i}\rho})
		\le\mathbb{P}(|Y^{(i-1)}_{\overline{T}-\underline{T}}-Y^{(i-1)}_{\overline{T}-t_{i-1}}|>C_2\sqrt{t_{i-1}\rho})
		\lesssim \mathrm{e}^{-\rho}
	\]
	by the exact same argument as above.
	Meanwhile, since $Y^{(i-1)}_{\overline{T}-t}=X_t$ for $t\in[t_{i-1}, t_i]$, we have
	\[
		|Y^{(i-1)}_{\overline{T}-t_{i-1}}-Y^{(i-1)}_{\overline{T}-t_i}|
		=|X_{t_{i-1}}-X_{t_i}|
		\le|X_{t_{i-1}}-X_0|+|X_{t_i}-X_0|,
	\]
	and so once again by Lemma \ref{lem:}
	\begin{align*}
		\mathbb{P}(|Y^{(i-1)}_{\overline{T}-t_{i-1}}-Y^{(i-1)}_{\overline{T}-t_i}|\le C_2\sqrt{t_i\rho})
		&\ge \mathbb{P}\Big(|X_{t_{i-1}}-X_0|\le \frac{C_2\sqrt{t_i\rho}}{2}, |X_{t_i}-X_0|\le \frac{C_2\sqrt{t_i\rho}}{2}\Big) \\
		&\ge \mathbb{P}\Big(\forall t\in[t_{i-1}, 1]:|X_t-X_0|\le\frac{C_2\sqrt{t\rho}}{2}\Big) \\
		&\gtrsim 1-\mathrm{e}^{-\rho},
	\end{align*}
	where $C_2=(2C+C_4)\vee(2\widetilde{C}\sqrt{D}\frac{\log(1+\log t_{i-1}^{-1})+\sqrt{\rho}}{\sqrt{\rho}}) \geq C_{2,1}.$ In summary, we conclude that 
    \begin{align*} 
    &\mathbb{P}(|Y^{(i-1)}_{\overline{T}-\underline{T}}-Y^{(i-1)}_{\overline{T}-t_i}|>C_2\sqrt{t_i\rho})+\mathbb{P}(|Y^{(i)}_{\overline{T}-\underline{T}}-Y^{(i)}_{\overline{T}-t_i}|>C_2\sqrt{t_i\rho})\\
    &\,\leq \mathbb{P}(|Y^{(i-1)}_{\overline{T}-\underline{T}}-Y^{(i-1)}_{\overline{T}-t_i}|>C_2\sqrt{t_i\rho})+\mathbb{P}(|Y^{(i)}_{\overline{T}-\underline{T}}-Y^{(i)}_{\overline{T}-t_i}|>C_{2,1}\sqrt{t_i\rho}) \lesssim \mathrm{e}^{-\rho},
    \end{align*}
    which finishes the proof.
\end{proof}

\subsection{Proof of the score approximation accuracy}\label{app:approx}
We follow the approximation programme outlined in Section \ref{subsec:approx}.

\paragraph{Step 1: score truncation error}
\begin{lemma}
	\label{lem:truncation_error}
	Fix $\underline{t}>0$, let $K\in\N_0$ be given and let $s_0^K$ be given by \eqref{eq:score_truncation}.
	Then we have for $t\in[\underline{t}, 2\underline{t}]$
	\[
		\mathbb{E}[|s_0(X_t, t)-s_0^K(X_t, t)|^2]
		\lesssim \frac{1}{t^2\wedge1}K^{\frac{D}{2}}\mathrm{e}^{-K}.
	\]
\end{lemma}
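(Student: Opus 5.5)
The plan is to compare the full score $s_0=\nabla p_t/p_t$ with the truncated score $s_0^K=\nabla p_t^K/p_t^K$ through a two-set decomposition. The first ingredient is a pointwise bound on the tail sums. Write $r_K\coloneqq\sqrt{2\underline{t}(D+2K)}$, and define the index sets $Z_K=\{z:\|z\|_\infty\le r_K\}$ and $Z_K^{\mathsf c}=\mathbb{Z}^D\setminus Z_K$. Set $p_t^{>K}\coloneqq p_t-p_t^K$ and $g_t^{>K}\coloneqq\nabla p_t-\nabla p_t^K$.

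For $z\in Z_K^{\mathsf c}$ and $x,y\in[0,1]^D$ the vector $R_z(x)+z-y$ lies in $[-1,1]^D+z$. Its norm therefore exceeds $r_K-1$. Using the covering argument from the proof of Lemma~\ref{lem:affine_approx_a}, I would sum over these $z$ and bound the tails by Gaussian tail integrals:
\[
p_t^{>K}(x)\lesssim \mathbb{P}\bigl(|B_t|>r_K-c\bigr),\qquad |g_t^{>K}(x)|\lesssim \tfrac1t\,\mathbb{E}\bigl[|B_t|\mathbf 1_{\{|B_t|>r_K-c\}}\bigr].
\]
Since $t\le 2\underline t$, we have $r_K\ge\sqrt{t(D+2K)}$. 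Modulo handling the shift by $c$ (absorbed when $\underline t$ is small; if $\underline t\gtrsim1$ one uses the direct bound instead), Lemma~\ref{lem:brownian_bound_a} and Lemma~\ref{lem:brownian_bound_b} give
\[
p_t^{>K}\lesssim K^{D/2}\mathrm e^{-K}\quad\text{and}\quad |g_t^{>K}|\lesssim t^{-1/2}K^{D/2}\mathrm e^{-K}.
\]
A useful fact is that the $x$-independence of these bounds means they hold uniformly in $x$.

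Next I would write the difference as
\[
s_0-s_0^K=\frac{g_t^{>K}}{p_t^K}-s_0\,\frac{p_t^{>K}}{p_t^K}.
\]
I would not divide by $p_t^K$ pointwise, because $p_t^K$ may be tiny far from $M$. Instead I would split the expectation $\mathbb{E}[\cdots]=\int|\cdots|^2p_t\,\diff x$ into two regions. The first is where $p_t^{>K}\le\frac12p_t$, so that $p_t^K\ge\frac12 p_t$. The second is its complement.

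On the first region, the integrand times $p_t$ is bounded by
\[
\frac{4}{p_t}\Bigl(|g_t^{>K}|^2+|s_0|^2(p_t^{>K})^2\Bigr)\lesssim |g_t^{>K}|+|s_0|^2p_t^{>K},
\]
using $|g_t^{>K}|\le |\nabla p_t|+|\nabla p_t^K|$ and $p_t^{>K}\le p_t$ to cancel one power of the denominator. The domain has unit volume and Lemma~\ref{lem:affine_approx_a} gives $|s_0|\lesssim 1/(t\wedge\sqrt t)$. Integrating then yields $\frac{1}{t^2\wedge1}K^{D/2}\mathrm e^{-K}$.

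On the second region I would use the crude bound $|s_0-s_0^K|^2\lesssim\frac1{t^2\wedge1}$. This requires the truncated analogue of Lemma~\ref{lem:affine_approx_a}; the remark after that lemma states that its combinatorial proof transfers to truncated sums. The mass of the region is $\int p_t\mathbf 1\le 2\int p_t^{>K}\lesssim K^{D/2}\mathrm e^{-K}$.

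The main obstacle is the truncated score bound $|s_0^K|\lesssim 1/(t\wedge\sqrt t)$, since the proof of part~(a) compares tail sums to a term $\mathrm e^{-|x-y|^2/2t}$ that must itself survive truncation. Here the $z=0$ term is always kept, so the same comparison goes through, provided $r_K$ exceeds the threshold $K(t)$ from that proof. For small $K$ the claimed bound is trivial anyway, since the left-hand side is $\lesssim 1/(t^2\wedge 1)$ whenever $|s_0^K|$ is.
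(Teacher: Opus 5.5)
\textbf{The gap: the additive shift.} You flag the shift by $c$ as a loose end, but your parenthetical resolves it backwards. At the endpoint $t=2\underline{t}$ you have $r_K=\sqrt{t(D+2K)}$ exactly, so there is no slack:
\[
\frac{(r_K-c)^2}{2t}=K+\frac{D}{2}-c\sqrt{\frac{D+2K}{t}}+\frac{c^2}{2t}.
\]
Hence, whenever $r_K>c$, your bounds $\mathbb{P}(|B_t|>r_K-c)$ exceed $K^{D/2}\mathrm{e}^{-K}$ by a factor of at least $\mathrm{e}^{\frac{c}{2}\sqrt{(D+2K)/t}}$, up to polynomial factors. This factor is superpolynomial in $K$ for every fixed $\underline{t}$, and it gets worse as $\underline{t}$ decreases. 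Once $r_K<c$, the tail bound is vacuous. The unspecified ``direct bound'' for $\underline{t}\gtrsim1$ does not help either, since the trivial bound $\frac{1}{t^2\wedge1}$ does not decay in $K$.

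This is not an artefact of your estimates: with truncation radius exactly $r_K$, the inequality itself can fail. Take $D=1$, $\mu=\delta_{1/10}$ and $r_K\in(1.99,2)$, so the kept indices are $z\in\{-1,0,1\}$. The discarded index $z=-2$ (image point $2.1$) is the mirror partner across $\{x=1\}$ of the kept $z=-1$ (image point $-0.1$). By symmetry $s_0(1,t)=0$, whereas for small $t$ one has $|s_0-s_0^K|\gtrsim t^{-1}\mathrm{e}^{-1/(5t)}$ on $[1-t/4,1]$. This gives
\[
\mathbb{E}|s_0(X_t,t)-s_0^K(X_t,t)|^2\gtrsim t^{-3/2}\mathrm{e}^{-1.61/(2t)}.
\]
At $t=2\underline{t}$ this is much larger than $\underline{t}^{-2}K^{1/2}\mathrm{e}^{-K}$, since $K\ge0.99/\underline{t}-\tfrac12$, as $\underline{t}\to0$. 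Tensorising with a fixed factor in the remaining coordinates gives the same failure in any $D$, with \ref{ass:H4} in force.

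\textbf{The paper has the same hole.} It bounds $\mathbb{P}(|B_t|\in[\lfloor K_{\underline{t}}\rfloor,K_{\underline{t}}))$ by $t^{-D/2}K_{\underline{t}}^D\mathrm{e}^{-K_{\underline{t}}^2/(2t)}$, but the exponent should contain $\lfloor K_{\underline{t}}\rfloor$.

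\textbf{Repair.} Enlarge the truncation radius by an additive constant.
\begin{itemize}
\item With the paper's tiling, keeping $\|z\|_\infty\le\lceil K_{\underline{t}}\rceil$ already forces $|x-y|\ge K_{\underline{t}}$ on the discarded cubes.
\item With your covering bound, you need radius $r_K+c$.
\end{itemize}
This changes the number of retained terms only by a constant factor, so Theorem~\ref{theo:score_approx} is unaffected, and after the change your argument goes through. One small correction: in your region-one display, the first term should carry the factor $|g_t^{>K}|/p_t\lesssim\frac{1}{t\wedge\sqrt t}$, which is harmless.

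\textbf{Comparison of routes.} Apart from this, your route differs from the paper's.
\begin{itemize}
\item The paper starts from $p_t(s_0-s_0^K)=\nabla(p_t-p_t^K)-s_0^K(p_t-p_t^K)$, so it only divides by $p_t$ and needs no region split.
\item It then uses $0\le p_t-p_t^K\le p_t$ and the Lemma~\ref{lem:affine_approx_a}-type bounds for $s_0^K$ and $\nabla(p_t-p_t^K)/p_t$. This reduces everything to $\int(p_t-p_t^K)\,\mathrm{d}x$ and $\int|\nabla(p_t-p_t^K)|\,\mathrm{d}x$.
\item Fubini and the tiling $\{R_z([0,1]^D)+z\}_z$ turn these into exact Gaussian tails of $y+B_t$ outside a cube.
\end{itemize}
Your split into $\{p_t^{>K}\le p_t/2\}$ and its complement reproduces the effect of dividing by $p_t$. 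Your uniform-in-$x$ tail bounds are stronger than needed and carry a larger shift.

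Finally, the obstacle you single out is not one. The $z=0$ term is always kept, and the large-$|z|$ part of the truncated numerator is a sub-sum of the full one. So the argument of Lemma~\ref{lem:affine_approx_a} gives $|s_0^K|\lesssim\frac{1}{t\wedge\sqrt t}$ for every $K$, with no condition relating $r_K$ and $K(t)$.
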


\begin{proof}
	For notation, set $K_{\underline{t}}=\sqrt{2\underline{t}(D+2K)}$.
	We first have by the triangle inequality that
	\begin{align}
		\mathbb{E}[|s_0(X_t, t)-s_0^K(X_t, t)|^2]
		&=\mathbb{E}\Big[\Big|\frac{1}{p_t(X_t)}\Big(s_0^K(X_t, t)\big(p_t(X_t)-p_t^K(X_t)\big)-\nabla\big(p_t(X_t)-p_t^K(X_t)\big)\Big)\Big|^2\Big]\nonumber \\
		&\lesssim \mathbb{E}\Big[|s_0^K(X_t, t)|^2\frac{\big(p_t(X_t)-p_t^K(X_t)\big)^2}{p_t(X_t)^2}\Big]+\mathbb{E}\Big[\frac{|\nabla(p_t(X_t)-p_t^K(X_t))|^2}{p_t(X_t)^2}\Big].\label{eq:trunc_error_decomp}
	\end{align}
	We first concentrate on the first term.
	From (the proof of) Lemma \ref{lem:affine_approx_a}, it follows that also $|s_0^K(x, t)|\lesssim\frac{1}{t\wedge1}$, whence
	\begin{align*}
		\mathbb{E}\Big[|s_0^K(X_t, t)|^2\frac{\big(p_t(X_t)-p_t^K(X_t)\big)^2}{p_t(X_t)^2}\Big]
		&\lesssim\frac{1}{t^2\wedge1}\mathbb{E}\Big[\frac{\big(p_t(X_t)-p_t^K(X_t)\big)^2}{p_t(X_t)^2}\Big] \\
		&=\frac{1}{t^2\wedge1}\int_{[0, 1]^D}\frac{\big(p_t(x)-p_t^K(x)\big)^2}{p_t(x)}\,\mathrm{d}x \\
		&\le\frac{1}{t^2\wedge1}\int_{[0, 1]^D}p_t(x)-p_t^K(x)\,\mathrm{d}x,
	\end{align*}
	where in the last inequality we use that the terms of $p_t(x)$ are non-negative, whence $p_t(x)-p_t^K(x)\le p_t(x)$.
	Next, notice that for all $z\in\Z^D$, we have $R_z([0, 1]^D)=[0, 1]^D$, whence $\{R_z([0, 1]^D)+z\mid z\in\Z^D\}$ is a disjoint (apart from a measure $0$ set) partition of $\R^D$, and similarly $\{R_z([0, 1]^D)+z\mid z\in\Z^D, \n{z}_{\infty}>K_{\underline{t}}\}$ is a partition of the set $\R^D\setminus[-\lfloor K_{\underline{t}}\rfloor, \lfloor K_{\underline{t}}\rfloor+1]^D$, whence for any integrable $g$,
	\[
		\sum_{\substack{z\in\Z^D \\ \n{z}_{\infty}>K_{\underline{t}}}}\int_{[0, 1]^D}g(R_z(x)+z)\,\mathrm{d}x
		=\int_{\R^D\setminus[-\lfloor K_{\underline{t}}\rfloor, \lfloor K_{\underline{t}}\rfloor+1]^D}g(x)\,\mathrm{d}x.
	\]
	In particular, by Fubini--Tonelli's theorem
	\begin{align*}
		\int_{[0, 1]^D}p_t(x)-p_t^K(x)\,\mathrm{d}x
		&=(2\pi t)^{-\frac{D}{2}}\sum_{\substack{z\in\Z^D \\ \n{z}_{\infty}>K_{\underline{t}}}}\int_{[0, 1]^D}\int_{[0, 1]^D}\mathrm{e}^{-\frac{|R_z(x)+z-y|^2}{2t}}\,\mu(\diff{y})\,\mathrm{d}x \\
		&=\int_{[0, 1]^D}\Big(\int_{\R^D\setminus[-\lfloor K_{\underline{t}}\rfloor, \lfloor K_{\underline{t}}\rfloor+1]^D}(2\pi t)^{-\frac{D}{2}}\mathrm{e}^{-\frac{|x-y|^2}{2t}}\,\mathrm{d}x\Big)\,\mu(\diff{y}).
	\end{align*}
	Now, for each $y\in M$ and $x\in\R^D\setminus[-\lfloor K_{\underline{t}}\rfloor, \lfloor K_{\underline{t}}\rfloor+1]^D$, we necessarily have that $|x-y|\ge\n{x-y}_{\infty}\ge \lfloor K_{\underline{t}}\rfloor$, whence Lemma \ref{lem:brownian_bound_a} yields
	\begin{align*}
		\int_{\R^D\setminus[-\lfloor K_{\underline{t}}\rfloor, \lfloor K_{\underline{t}}\rfloor+1]^D}(2\pi t)^{-\frac{D}{2}}\mathrm{e}^{-\frac{|x-y|^2}{2t}}\,\mathrm{d}x
		&\le\int_{\{|x-y|\ge \lfloor{K_{\underline{t}}\rfloor}\}}(2\pi t)^{-\frac{D}{2}}\mathrm{e}^{-\frac{|x-y|^2}{2t}}\,\mathrm{d}x \\
		&=\mathbb{P}(|B_t|\ge \lfloor{K_{\underline{t}}\rfloor}) \\
		&=\mathbb{P}(|B_t|\in[\lfloor K_{\underline{t}}\rfloor, K_{\underline{t}}))+\mathbb{P}(|B_t|\ge K_{\underline{t}}) \\
		&\lesssim \mathbb{P}(|B_t|\in[\lfloor K_{\underline{t}}\rfloor, K_{\underline{t}}))+K^{\frac{D}{2}}\mathrm{e}^{-K},
	\end{align*}
	since $t\le 2\underline{t}$.
	Meanwhile, we have
	\begin{align*}
		\mathbb{P}(|B_t|\in[\lfloor K_{\underline{t}}\rfloor, K_{\underline{t}}))
		&\propto t^{-\frac{D}{2}}\int_{\lfloor K_{\underline{t}}\rfloor}^{K_{\underline{t}}}r^{D-1}\mathrm{e}^{-\frac{r^2}{2t}}\,\mathrm{d}r \\
		&\lesssim t^{-\frac{D}{2}}K_{\underline{t}}^D \mathrm{e}^{-\frac{K_{\underline{t}}^2}{2t}} \\
		&\lesssim K^{\frac{D}{2}}\mathrm{e}^{-K},
	\end{align*}
	and hence
	\[
		\int_{\R^D\setminus[-\lfloor K_{\underline{t}}\rfloor, \lfloor K_{\underline{t}}\rfloor+1]^D}(2\pi t)^{-\frac{D}{2}}\mathrm{e}^{-\frac{|x-y|^2}{2t}}\,\mathrm{d}x
		\lesssim K^{\frac{D}{2}}\mathrm{e}^{-K}.
	\]

	By the above, this implies
	\[
		\mathbb{E}\Big[|s_0^K(X_t, t)|^2\frac{\big(p_t(X_t)-p_t^K(X_t)\big)^2}{p_t(X_t)^2}\Big]
		\lesssim \frac{1}{t^2\wedge 1}K^{\frac{D}{2}}\mathrm{e}^{-K}.
	\]
	As for the second term of \eqref{eq:trunc_error_decomp}, we again note that by following the proof of Lemma \ref{lem:affine_approx_a}, we have $\frac{|\nabla(p_t(x)-p_t^K(x))|}{p_t(x)}\lesssim\frac{1}{t\wedge1}$, whence we have similar to before
	\[
		\mathbb{E}\Big[\frac{|\nabla(p_t(X_t)-p_t^K(X_t))|^2}{p_t(X_t)^2}\Big]
		\lesssim\frac{1}{t\wedge1}\int_{[0, 1]^D}\big|\nabla\big(p_t(x)-p_t^K(x)\big)\big|\,\mathrm{d}x.
	\]
	Furthermore, we have by the triangle inequality and similar calculations as before
	\begin{align*}
		\int_{[0, 1]^D}\big|\nabla\big(p_t(x)-p_t^K(x)\big)\big|\,\mathrm{d}x
		&\le(2\pi t)^{-\frac{D}{2}}\sum_{\substack{z\in\Z^D \\ \n{z}_{\infty}>K_{\underline{t}}}}\int_{[0, 1]^D}\int_{[0, 1]^D}\frac{|R_z(x)+z-y|}{t}\mathrm{e}^{-\frac{|R_z(x)+z-y|^2}{2t}}\,\mu(\diff{y})\,\mathrm{d}x \\
		&=\int_{[0, 1]^D}\Big(\int_{\R^D\setminus[-K_{\underline{t}}, K_{\underline{t}}+1]^D}(2\pi t)^{-\frac{D}{2}}\frac{|x-y|}{t}\mathrm{e}^{-\frac{|x-y|^2}{2t}}\,\mathrm{d}x\Big)\,\mu(\diff{y}),
	\end{align*}
	where we have by Lemma \ref{lem:brownian_bound_b}
	\begin{align*}
		\int_{\R^D\setminus[-K_{\underline{t}}, K_{\underline{t}}+1]^D}(2\pi t)^{-\frac{D}{2}}\frac{|x-y|}{t}\mathrm{e}^{-\frac{|x-y|^2}{2t}}\,\mathrm{d}x
		&\le\frac{1}{t}\mathbb{E}[|B_t|\bm{1}_{\{|B_t|_{\infty}\ge K_{\underline{t}}\}}] \\
		&\lesssim \frac{1}{\sqrt{t}}K^{\frac{D}{2}}\mathrm{e}^{-K}.
	\end{align*}
	Inserting this into the above, we have
	\[
		\mathbb{E}\Big[\frac{|\nabla(p_t(X_t)-p_t^K(X_t))|^2}{p_t(X_t)^2}\Big]
		\lesssim \frac{1}{t^2\wedge1}K^{\frac{D}{2}}\mathrm{e}^{-K},
	\]
	which combined with the above and \eqref{eq:trunc_error_decomp} yields the result.
\end{proof}

\paragraph{Step 2: approximation of $f_1(\cdot,t), f_2(\cdot,t)$ for fixed $t$}
We start with the small time regime.
\begin{lemma}
	\label{lem:fixed_time_approx_small_t}
	Under assumptions \ref{ass:H1}--\ref{ass:H3}, for large enough $m\in\N$ and fixed $t\le m^{-\frac{2-\delta}{d}}$ there exist neural networks $\varphi_{1, t}\in\widetilde{\Phi}(\log m, m, m\log m, m^\nu)$ and $\varphi_{2, t}\in\widetilde{\Phi}(\log m, m, m\log m, t^{-\frac{1}{2}}\vee m^\nu)$, where $\nu=\frac{2d}{2\alpha-d}+\frac{1}{d}$ such that for $u\in\R^d$,
	\begin{align*}
		|(2\pi t)^{-\frac{d}{2}}f_1(u, t)-\varphi_{1, t}(u)|
		&\lesssim \begin{cases}
			m^{-\frac{\alpha}{d}}, &\text{ if }u\in M^*_{-\varepsilon_M/2}\\
			\mathrlap{(\log m)^{\frac{d}{2}}m^{-\frac{\kappa}{d}},}\phantom{\frac{1}{\sqrt{t}}(\log m)^{\frac{d+1}{2}}m^{-\frac{\kappa}{d}},} &\text{ if }u\notin M^*_{-\varepsilon_M/2}
		\end{cases}
		\intertext{and}
		|(2\pi t)^{-\frac{d}{2}}f_2(u, t)-\varphi_{2, t}(u)|
		&\lesssim \begin{cases}
			\frac{1}{\sqrt{t}}m^{-\frac{\alpha}{d}}, &\text{ if }u\in M^*_{-\varepsilon_M/2} \\
			\frac{1}{\sqrt{t}}(\log m)^{\frac{d+1}{2}}m^{-\frac{\kappa}{d}}, &\text{ if }u\notin M^*_{-\varepsilon_M/2}
		\end{cases}
	\end{align*}
	where $f_1, f_2$ are as in \eqref{eq:f_1f_2}.
\end{lemma}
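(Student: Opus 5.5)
We write $g(v)\coloneqq p_0(Av+v_0)$ on $M^*$, extended by zero (legitimate since $p_0\in H^\alpha_0$), so that $(2\pi t)^{-d/2}f_1(\cdot,t)=g*\phi_t$ and $(2\pi t)^{-d/2}f_2(\cdot,t)=-\nabla(g*\phi_t)$, where $\phi_t$ is the $d$-dimensional Gaussian density with variance $t$. The plan is to approximate $g$ itself by a network, and then realise the Gaussian smoothing either by a network-implementable operation or by transferring the bound through the convolution. Since $t\le m^{-(2-\delta)/d}$, the Gaussian has width $\sqrt t\ll m^{-1/d}$, below the grid resolution of an $m$-term approximation. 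Smoothing therefore neither helps nor hurts much, and we may essentially approximate $g*\phi_t$ by an approximant of $g$.

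\textbf{Interior region.} For $u\in M^*_{-\varepsilon_M/2}$ we will use Lemma~\ref{lem:sobolev_network} on $g\in H^\alpha$ (with $\alpha>d/2$, so $g$ is Hölder continuous by Sobolev embedding). This gives a network $\psi\in\widetilde\Phi(\log m,m,m\log m,m^\nu)$ with $\|g-\psi\|_\infty\lesssim m^{-\alpha/d}$ up to logs; this step is the source of $\nu$. Rather than convolving $\psi$ with $\phi_t$, which a ReLU network cannot do exactly, we will instead approximate $g_t\coloneqq g*\phi_t$ directly. Its $H^\alpha$ norm is bounded by that of $g$ uniformly in $t$, so the same lemma applied to $g_t$ yields $\varphi_{1,t}$ with error $m^{-\alpha/d}$.

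For $f_2$ we write $\nabla g_t=(\nabla g)*\phi_t$ and use $\|\nabla\phi_t\|_{L^1}\lesssim t^{-1/2}$. Concretely, $\nabla g_t = g*\nabla\phi_t$, and the local piecewise-polynomial approximant of $g_t$ (built as in the Sobolev approximation lemma) has derivative error controlled by $t^{-1/2}m^{-\alpha/d}$. We therefore approximate each coordinate of $\nabla g_t$ by the same construction applied to $g*\partial_j\phi_t$, whose norm is of order $t^{-1/2}$. This explains both the factor $t^{-1/2}$ in the error and the weight bound $t^{-1/2}\vee m^\nu$.

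\textbf{Boundary region.} For $u\notin M^*_{-\varepsilon_M/2}$ we split further. Far from $M^*$, i.e.\ at distance more than $\sqrt{t\,C\kappa\log m/d}$, the Gaussian tail gives $|g_t(u)|,|\nabla g_t(u)|\lesssim t^{-1/2}m^{-\kappa/d}$, so the networks need only output approximately zero there. Near $\partial M^*$, Assumption~\ref{ass:H3} gives $g\in C^\kappa$ on the collar, so $g_t$ is $C^\kappa$ on a $\sqrt{t\log m}$-neighbourhood (up to the boundary jump, where $g$ vanishes to high order). Taylor-based local polynomial approximation then yields error $m^{-\kappa/d}$ times the covering factor $(\log m)^{d/2}$, with an additional $(\log m)^{1/2}t^{-1/2}$ for the gradient. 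Finally, we glue the interior and boundary approximants with a ReLU partition of unity (as in Appendix~\ref{app:basic_neural}); this costs only constant depth and width factors.

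\textbf{Main obstacle.} The delicate step is the boundary layer. Because $M^*$ has only a Lipschitz boundary and $g$ is extended by zero, the high-order smoothness of $g_t$ across $\partial M^*$ is unclear; the vanishing traces in $H^\alpha_0$ together with the $C^\kappa$ assumption must supply it. Our first attempt will be to show that the zero extension of $g$ is $C^{\kappa'}$ near $\partial M^*$ for a suitable $\kappa'$, using the polynomial decay $p_0\gtrsim \mathrm{dist}^{c_0-d}$ and trace vanishing. Failing that, we will restrict to the collar $M^*\cap(\partial M^*)_{\varepsilon_M}$ and bound the Gaussian leakage across the boundary by an explicit calculation.
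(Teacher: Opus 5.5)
Your interior argument is the paper's. You extend $q$ by zero to $H^\alpha(\R^d)$ and use Young's inequality to get $\|n_t*q\|_{H^\alpha}\le\|q\|_{H^\alpha}$ and $\|\partial_j n_t*q\|_{H^\alpha}\lesssim\|\partial_j n_t\|_{L^1}\lesssim t^{-1/2}$. You then apply Lemma~\ref{lem:sobolev_network} to $g_t$ and to each $g*\partial_j\phi_t$. That last version is the right one, since the lemma gives no control of derivative errors of an approximant of $g_t$.

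The gap is the boundary region, which is the only place the rate $m^{-\kappa/d}$ can come from, and you give no working mechanism for it. What is needed is a bound on a \emph{fixed} collar: $\|g_t\|_{H^\kappa}$ uniformly bounded in $t$, and $\|g*\partial_j\phi_t\|_{H^\kappa}\lesssim t^{-1/2}$. Then Lemma~\ref{lem:sobolev_network} applies with $\gamma=\kappa$.

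The paper first notes that $\sqrt{t(d+2\frac{\kappa}{d}\log m)}\le\varepsilon_M/2$ for large $m$. So Lemma~\ref{lem:brownian_bound} disposes of all $u$ with $\mathrm{dist}(u,M^*)>\varepsilon_M/2$. This tail bound, not a covering factor, is where $(\log m)^{d/2}$ and $t^{-1/2}(\log m)^{(d+1)/2}$ come from; on the collar the error is a clean $m^{-\kappa/d}$.

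Only the fixed set $(\partial M^*)_{3\varepsilon_M/4}$ remains. For $u$ there, the paper proceeds as follows:
\begin{itemize}
\item It splits $(\partial^\beta n_t)*q(u)$ into the integral over $\mathcal B(u,\varepsilon_M/4)$ and the rest.
\item It integrates by parts on the ball to move all $|\beta|\le\kappa$ derivatives onto $q$.
\item It bounds the sphere terms and the outer integral by $\sup_{t>0}t^{-(d+\kappa)}\mathrm e^{-\varepsilon_M^2/(32t)}<\infty$.
\item For $\partial_j n_t*q$ it keeps $\partial_j$ on the Gaussian and pays $\|\partial_j n_t\|_{L^1}\lesssim t^{-1/2}$.
\end{itemize}
The fixed scale also matters for network size. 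Cutting off at your $t$-dependent distance $\sqrt{t\log m}$ would need indicator networks of size $(t\log m)^{-d/2}$ by Lemma~\ref{lem:covering_network}, which is unbounded as $t\to0$. With $\varepsilon=\varepsilon_M/4$, all gluing networks are $O(1)$.

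That integration by parts uses that $q$ is $C^\kappa$ on the whole ball $\mathcal B(u,\varepsilon_M/4)$, which straddles $\partial M^*$. You correctly identify this two-sided regularity of the zero extension as the crux. The paper does not derive it: it reads \ref{ass:H3} as giving $C^\kappa$ regularity of $q$ on $(\partial M^*)_{\varepsilon_M}$ and uses it directly.

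Neither of your fallback routes can produce it.
\begin{itemize}
\item A lower bound such as $p_0\gtrsim\mathrm{dist}(\cdot,\partial M)^{c_0-d}$ carries no regularity information.
\item $p_0\in H^\alpha_0$ only forces traces up to order $\alpha-1$ to vanish, far below $\kappa=\frac{d(c_0-d)}{2}+d+3\alpha+2$. Any $\kappa'<\kappa$ gained this way does not prove the stated rate.
\item The cross-boundary contribution is not a small leakage for $u$ within $O(\sqrt t)$ of $\partial M^*$. If $g$ behaves like $\mathrm{dist}(\cdot,\partial M^*)^s$ inside with $s<\kappa$, then $\partial^k g_t$ is of order $t^{(s-k)/2}$ on a layer of width $\sqrt t$. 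So the $H^\kappa$ norm of $g_t$ on the collar blows up as $t\to0$.
\end{itemize}
You therefore have to take the two-sided $C^\kappa$ regularity as a hypothesis, as the paper effectively does, and then run the local integration-by-parts estimate above. As written, your plan does not establish the boundary half of the lemma.
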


\begin{proof}
	To construct the neural networks $\varphi_{i, t}$, we first construct separate networks $\varphi_{i, t}^{(1)}$ and $\varphi_{i, t}^{(2)}$ corresponding to the cases where either $u\in M^*_{-\varepsilon_M/2}$ or $u\notin M^*_{-\varepsilon_M/2}$, in the latter case utilizing the increased smoothness near the boundary $\partial M$ of $M$ per assumption \ref{ass:H3}, which we then stitch together into one network.
	To this end, we first establish the following common notation:
	let $q(u)=p_0(Au+v_0)$ and $n_t(u)=(2\pi t)^{-\frac{d}{2}}\mathrm{e}^{-\frac{|u|^2}{2t}}$, such that $(2\pi t)^{-\frac{d}{2}}f_1(u)=n_t*q(u)$, while $(2\pi t)^{-\frac{d}{2}}f_2(u)=\nabla n_t*q(u)$.
	Suppose then that $\mathrm{dist}(u, M^*)>\varepsilon_M/2$.
	Since $t\le m^{-\frac{2-\delta}{d}}$, we have $t(d+2\frac{\kappa}{d}\log m)\to0$ for $m\to\infty$, so we may assume that $m$ is large enough that $\sqrt{t(d+2\frac{\kappa}{d}\log m)}\le\varepsilon_M/2$, and it follows by Lemma \ref{lem:brownian_bound} that
	\[
		|n_t*q(u)|
		\le p_{\mathrm{max}}\mathbb{P}\bigg(|B_t^*|>\sqrt{t\Big(d+2\frac{\kappa}{d}\log m\Big)}\bigg)
		\lesssim(\log m)^{\frac{d}{2}}m^{-\frac{\kappa}{d}}
	\]
	and
	\[
		|\nabla n_t*q(u)|
		\le \frac{p_{\mathrm{max}}}{t}\mathbb{E}\big[|B_t^*|\bm{1}_{\{|B_t^*|>\sqrt{t(d+2\frac{\kappa}{d}\log m)}\}}\big]
		\lesssim \frac{1}{\sqrt{t}}(\log m)^{\frac{d+1}{2}}m^{-\frac{\kappa}{d}},
	\]
	where $(B_t^*)_{t\ge0}$ is a $d$-dimensional Brownian motion.
	Thus for $u\notin M^*_{-\varepsilon_M/2}$, it suffices to approximate $n_t*q$ and $\nabla n_t*q$ on the set $(\partial M^*)_{3\varepsilon_M/4}\supset(\partial M^*)_{\varepsilon_M/2}$.
	Now since both $M^*_{-\varepsilon_M/2}$ and $(\partial M^*)_{3\varepsilon_M/4}$ are compact and have Lipschitz boundary, it follows by Lemma \ref{lem:sobolev_network} that there exists neural networks $\widetilde{\varphi}_{1, t}^{(i)}\in\widetilde{\Phi}(\log m, m, m\log m, C_{i, 1}\vee m^\nu)$ and $\widetilde{\varphi}_{2, t}^{(i), j}\in\widetilde{\Phi}(\log m, m, m\log m, C_{i, 2}\vee m^\nu)$ for $i=1, 2$ and $j\in[d]$ such that
	\begin{align*}
		|\varphi^{(i)}_{1, t}(u)-n_t*q(u)|
		&\lesssim \begin{cases}
			\n{n_t*q}_{H^{\alpha}(M^*_{-\varepsilon_M/2})}m^{-\frac{\alpha}{d}}, &\text{ if }u\in M^*_{-\varepsilon_M/2}, i=1\\
			\n{n_t*q}_{H^{\kappa}( (\partial M^*)_{3\varepsilon_M/4})}m^{-\frac{\kappa}{d}}, &\text{ if }u\in (\partial M^*)_{3\varepsilon/4}, i=2
		\end{cases}
		\intertext{and}
		|\varphi^{(i), j}_{2, t}(u)-\partial_jn_t*q(u)|
		&\lesssim \begin{cases}
			\n{\partial_jn_t*q}_{H^{\alpha}(M^*_{-\varepsilon_M/2})}m^{-\frac{\alpha}{d}}, &\text{ if }u\in M^*_{-\varepsilon_M/2}, i=1\\
			\n{\partial_jn_t*q}_{H^{\kappa}( (\partial M^*)_{3\varepsilon_M/4})}m^{-\frac{\kappa}{d}}, &\text{ if }u\in (\partial M^*)_{3\varepsilon/4}, i=2,
		\end{cases}
		\intertext{and where}
		C_{i, j}
		&=\begin{cases}
			\n{n_t*q}_{H^{\alpha}(M^*_{-\varepsilon_M/2})}, &\text{ if }(i, j)=(1, 1)\\
			\n{n_t*q}_{H^{\kappa}( (\partial M^*)_{3\varepsilon_M/4})}, &\text{ if }(i, j)=(2, 1)\\
			\n{\partial_jn_t*q}_{H^{\alpha}(M^*_{-\varepsilon_M/2})}, &\text{ if }(i, j)=(1, 2)\\
			\n{\partial_jn_t*q}_{H^{\kappa}( (\partial M^*)_{3\varepsilon_M/4})}, &\text{ if }(i, j)=(2, 2).
		\end{cases}
	\end{align*}
	In order to bound $\n{n_t*q}_{H^{\alpha}(M^*_{-\varepsilon_M/2})}$, first note that by Assumption \ref{ass:H2} we may extend $q$ to a global $\alpha$-Sobolev function on $\R^d$ with compact support  by simply setting $q(u)=0$ for $u\notin M^*$, i.e. we can assume $q\in H^{\alpha}_c(\R^d)$.
	It then follows by Young's convolution inequality that
	\begin{equation}
        \label{eq:young}
	    \n{n_t*q}_{H^{\alpha}(\R^d)}^2
		=\sum_{|\beta|\le \alpha}\n{\partial^{\beta}(n_t*q)}_{L^2(\R^d)}^2
		=\sum_{|\beta|\le \alpha}\n{n_t*(\partial^{\beta}q)}_{L^2(\R^d)}^2
		\le\n{n_t}^{2}_{L^1(\R^d)}\sum_{|\beta|\le \alpha}\n{\partial^{\beta}q}_{L^2(\R^d)}^2
		=\n{q}_{H^{\alpha}(\R^d)}^2,
	\end{equation}
	and hence also $\n{n_t*q}_{H^{\alpha}(M^*_{-\varepsilon_M/2})}\lesssim 1$.
	Similarly,
	\begin{equation}
	    \label{eq:dn_t_l1}
        \n{\partial_jn_t*q}_{H^{\alpha}(M^*_{-\varepsilon_M/2})}
		\lesssim\n{\partial_j n_t}_{L^1(\R^d)}
		=\int_{\R^d}(2\pi t)^{-\frac{d}{2}}\frac{|u_j|}{t}\mathrm{e}^{-\frac{|u|^2}{2t}}\diff{u}
		=\sqrt{\frac{2}{\pi t}}\int_{0}^{\infty}\frac{r}{t}\mathrm{e}^{-\frac{r^2}{2t}}\diff{r}
		=\sqrt{\frac{2}{\pi t}},
	\end{equation}
	and setting $(\varphi^{(1)}_{2, t})_j=\varphi^{(1), j}_{2, t}$ yields the desired networks on $M^*_{-\varepsilon_M/2}$.
	Next, to bound $\n{n_t*q}_{H^{\kappa}( (\partial M^*)_{3\varepsilon_M/4})}$, fix $u\in(\partial M^*)_{3\varepsilon/4}$ and $\beta\in\N_0^d$ with $|\beta|\le\kappa$.
	Then we have
	\[
		(\partial^{\beta}n_t)*q(u)
		=\int_{\mathcal B(u, \varepsilon_M/4)}\partial^{\beta}n_t(u-v)q(v)\,\mathrm{d}v+\int_{\mathcal B(u, \varepsilon_M/4)^\mathsf{c}}\partial^{\beta}n_t(u-v)q(v)\,\mathrm{d}v
		\coloneqq{I_1(u)+I_2(u)},
	\]
	where we note that in the first integral, $u-v\in(\partial M^*)_{\varepsilon_M}$ for all $v\in \mathcal B(u, \varepsilon_M/4)$, whence $q$ here is in $C^{\kappa}$.
	Thus we have by integration by parts for $i\in[d]$
	\begin{align*}
		\int_{\mathcal B(u, \varepsilon_M/4)}\partial_{u_i}n_t(u-v)q(v)\,\mathrm{d}v
		&=-\int_{\mathcal B(u, \varepsilon_M/4)}\partial_{v_i}n_t(u-v)q(v)\,\mathrm{d}v \\
		&=\int_{\mathcal B(u, \varepsilon_M/4)}n_t(u-v)\partial_{v_i}q(v)\,\mathrm{d}v-\frac{4}{\varepsilon_M}\int_{\partial\mathcal B(u, \varepsilon_M/4)}n_t(u-v)q(v)(v_i-u_i)\,\mathcal H^{d-1}(\mathrm{d}v),
	\end{align*}
	where we use that $\frac{4(v_i-u_i)}{\varepsilon_M}$ is the outward pointing normal vector of $\mathcal B(u, \varepsilon_M/4)$ at $v\in\partial\mathcal B(u, \varepsilon_M/4)$.
	Repeating this and setting $\beta=\beta^{(1)}+\beta^{(2)}+\ldots+\beta^{(|\beta|)}$ with $|\beta^{(i)}|=1$, we have
	\[
		I_1(u)
		=\int_{\mathcal B(u, \varepsilon/4)}n_t(u-v)\partial^{\beta}q(v)\,\mathrm{d}v-\frac{4}{\varepsilon_M}\sum_{i=1}^{|\beta|}B_i(u),
	\]
	where
	\[
		B_i(u)
		=\int_{\partial\mathcal B(u, \varepsilon_M/4)}\partial^{\beta-\sum_{j=1}^{i}\beta^{(j)}}n_t(u-v)\partial^{\sum_{j=1}^{i-1}\beta^{(j)}}q(v)(v-u)^{\beta^{(i)}}\,\mathcal H^{d-1}(\mathrm{d}v).
	\]
	Clearly we have
	\[
		\bigg|\int_{\mathcal B(u, \varepsilon/4)}n_t(u-v)\partial^{\beta}q(v)\,\mathrm{d}v\bigg|
		\le \sup_{|\beta|\le\kappa}\sup_{u\in(\partial M^*)_{\varepsilon_M}}|\partial^{\beta}q(u)|
	\]
	independently of $\beta$, while
	\[
		|B_i(u)|
		\le \sup_{|\beta|\le\kappa}\n{\partial^{\beta}q}_{\infty}\int_{\partial\mathcal B(u, \varepsilon_M/4)}\big|\partial^{\beta-\sum_{j=1}^{i}\beta^{(j)}}n_t(u-v)(v-u)^{\beta^{(i)}}\big|\,\mathcal H^{d-1}(\mathrm{d}v).
	\]
	Here, the integrand is of the form $\mathrm{Poly}(t^{-1})\mathrm{Poly}( (u-v) )\mathrm{e}^{-\frac{|u-v|^2}{2t}}\lesssim t^{-(d+|\beta|)}\mathrm{e}^{-\frac{\varepsilon_M^2}{32t}}$, and is hence uniformly bounded by $\sup_{t>0}t^{-(d+\kappa)}\mathrm{e}^{-\frac{\varepsilon_M^2}{32t}}<\infty$, ultimately implying that $|I_1(u)|\lesssim1$.
	The same is true of the integrand in $I_2(u)$, implying that also $|I_2(u)|\lesssim1$.
	Putting things together, we have
	\begin{align*}
		\n{n_t*q}_{H^{\kappa}( (\partial M^*)_{3\varepsilon_M/4})}
		&=\sum_{|\beta|\le\kappa}\n{\partial^{\beta}(n_t*q)}_{L^2( (\partial M^*)_{3\varepsilon_M/4})} \\
		&=\sum_{|\beta|\le\kappa}\n{(\partial^{\beta}n_t)*q}_{L^2( (\partial M^*)_{3\varepsilon_M/4})} \\
		&\lesssim \sup_{|\beta|\le\kappa}\sup_{u\in(\partial M^*)_{3\varepsilon_M/4}}\sup_{t>0}|(\partial^{\beta}n_t)*q(u)| \\
		&\lesssim 1.
	\end{align*}
	A similar analysis can be used to bound $\n{\partial_jn_t*q}_{H^{\kappa}( (\partial M^*)_{3\varepsilon_M/4})}$, the only difference being that we can no longer move all derivatives from $n_t$ to $q$ using integration by parts as this would require $q$ to be locally $C^{\kappa+1}$.
	In particular, following the same notation as before, we find that
	\[
		|I_1(u)|
		\lesssim \bigg|\int_{\mathcal B(u, \varepsilon_M/4)}\partial_j n_t(u-v)\partial^{\beta}q(v)\,\mathrm{d}v\bigg|+1
		\lesssim \n{\partial_jn_t}_{L^1(\R^d)}
		\lesssim \frac{1}{\sqrt{t}},
	\]
	implying in the same way as before that $\n{\partial_jn_t*q}_{H^{\kappa}( (\partial M^*)_{3\varepsilon_M/4})}\lesssim\frac{1}{\sqrt{t}}$.
	Once again setting $(\varphi_{2, t}^{(2)})_j=\varphi_{2, t}^{(2), j}$ yields the desired networks on $(\partial M^*)_{\varepsilon_M/2}$.
	Now, on the overlap $M^*_{-\varepsilon_M/2}\cap (\partial M^*)_{3\varepsilon_M/4}$, both $\varphi^{(i)}_{1, t}$ approximate $n_t*q$ at the desired rate, and hence the same is true of any convex combination of the two.
	In particular, if we let $\varphi_{\bm{1}_{M^*_{-3\varepsilon_M/4}}}$ and $\varphi_{\bm{1}_{M^*_{\varepsilon_M/4}}}$ be as in Lemma \ref{lem:covering_network} with $\varepsilon=\varepsilon_M/4$, it follows that
	\[
		\widetilde{\varphi}_{1, t}
		\coloneq\varphi_{\bm{1}_{M^*_{\varepsilon_M/4}}}\Big(\varphi^{(1)}_{1, t}\varphi_{\bm{1}_{M^*_{-3\varepsilon_M/4}}}+\varphi_{1, t}^{(2)}\big(1-\varphi_{\bm{1}_{M^*_{-3\varepsilon_M/4}}}\big)\Big)
	\]
	has the desired error rate for all $u\in\R^d$.
	Setting
	\[
		\varphi_{1, t}
		\coloneq \varphi_{\ell}^{\mathrm{mult}}\Big(\varphi_{\bm{1}_{M^*_{\varepsilon_M/4}}}, \varphi_{\ell}^{\mathrm{mult}}\big(\varphi_{\bm{1}_{M^*_{-3\varepsilon_M/4}}}, \varphi^{(1)}_{1, t}\big)+\varphi_{\ell}^{\mathrm{mult}}\big(1-\varphi_{\bm{1}_{M^*_{-3\varepsilon_M/4}}}, \varphi^{(2)}_{1, t}\big)\Big)
	\]
	with $\ell=\lceil\frac{\kappa}{d}\log m\rceil$ yields the desired network, as the error and network size stemming from the multiplication networks are negligible compared to the rest.
	The exact same method can be used to construct $\varphi_{2, t}$, finishing the proof.
\end{proof}

Having approximated $f_1$ and $f_2$ for fixed small $t$, we now use the induced smoothness of the forward process to approximate these for fixed large $t$.

\begin{lemma}
	\label{lem:fixed_time_approx_large_t}
	Under assumptions \ref{ass:H1} and \ref{ass:H2}, for $\delta>0$, large enough $m\in\N$ and fixed $t>0$ with $\frac{1}{2}m^{-\frac{2-\delta}{d}}<t\lesssim\log m$, there exists neural networks $\varphi_{1, t}, \varphi_{2, t}\in\widetilde{\Phi}(\log m', m', m'\log m', m')$, where $m'=(t\wedge1)^{-\frac{d}{2}}m^{\frac{\delta}{2}}$ such that for $u\in\R^d$ and $i=1, 2$
	\begin{align*}
		|(2\pi t)^{-\frac{d}{2}}f_i(u, t)-\varphi_{i, t}(u)|
		&\lesssim
			m^{-\frac{\kappa+1}{d}}
	\end{align*}
	where $f_1$ and $f_2$ are as in \eqref{eq:f_1f_2}.
\end{lemma}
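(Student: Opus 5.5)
The plan is to exploit that for $t>\frac12 m^{-(2-\delta)/d}$ the functions $n_t*q$ and $\nabla n_t*q$ (notation as in the proof of Lemma~\ref{lem:fixed_time_approx_small_t}, with $q(u)=p_0(Au+v_0)$ extended by zero) are smooth to arbitrary order. Their high-order Sobolev norms are controlled by powers of $t^{-1/2}$. This lets us apply Lemma~\ref{lem:sobolev_network} with a very large smoothness index $k$, chosen so that the price paid in derivative norms is absorbed into the factor $m^{\delta/2}$ appearing in $m'$.

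\textbf{Step 1: localisation.} First localise $u$. For $\operatorname{dist}(u,M^*)>R\coloneqq\sqrt{t(d+2\frac{\kappa+1}{d}\log m)}$ we have, exactly as in the proof of Lemma~\ref{lem:fixed_time_approx_small_t} and by Lemma~\ref{lem:brownian_bound},
\[
|n_t*q(u)|+|\nabla n_t*q(u)|\lesssim (1+t^{-1/2})(\log m)^{\frac{d+1}{2}}m^{-\frac{\kappa+1}{d}}.
\]
Since $t\gtrsim m^{-(2-\delta)/d}$, the factor $t^{-1/2}\le m^{(2-\delta)/(2d)}$ must be absorbed. To do this, I would enlarge the threshold by a constant factor in front of $\log m$, giving decay $m^{-(\kappa+2)/d}$ before multiplying by $t^{-1/2}$. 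Hence it suffices to approximate on the compact set $\Omega_t\coloneqq (M^*)_{R}$, whose diameter is $\lesssim 1+\sqrt{t\log m}\lesssim\log m$ since $t\lesssim\log m$. Outside $\Omega_t$ the network is set to (approximately) zero using the cutoff networks of Lemma~\ref{lem:covering_network} together with a multiplication network, exactly as at the end of the previous proof.

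\textbf{Step 2: derivative bounds.} Next bound derivatives. For any multi-index $\beta$, put $\partial^\beta$ onto the Gaussian. Using $\|q\|_{L^1}\lesssim 1$ (or Young with $\|q\|_{L^2}$), this gives
\[
\|\partial^\beta(n_t*q)\|_{L^\infty}\lesssim \|\partial^\beta n_t\|_{L^\infty}\lesssim t^{-(d+|\beta|)/2},
\]
and the same bound with one extra factor $t^{-1/2}$ for $\nabla n_t*q$. Alternatively, putting $\alpha$ derivatives on $q$ and the rest on $n_t$ saves $t^{\alpha/2}$. Either way, for $t\le1$ the $H^k(\Omega_t)$ norm is $\lesssim \mathrm{Poly}(\log m)\,t^{-(k+d+1)/2}$, and for $t\ge1$ it is $\lesssim\mathrm{Poly}(\log m)$.

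\textbf{Step 3: rescaling and choice of $k$.} This is the main obstacle: matching the width $m'=(t\wedge1)^{-d/2}m^{\delta/2}$ with error $m^{-(\kappa+1)/d}$. I would rescale $v=u/\sqrt{t\wedge1}$, so that $g(v)=n_t*q(\sqrt{t\wedge 1}\,v)$ has all derivatives of order $\le k$ bounded by $(t\wedge1)^{-d/2}$, uniformly in $k$-independent constants. The domain is blown up to a set of diameter $\lesssim(t\wedge1)^{-1/2}\log m$, i.e.\ volume $\lesssim (t\wedge1)^{-d/2}\mathrm{Poly}(\log m)$. Covering this domain by unit cubes and applying Lemma~\ref{lem:sobolev_network} with $m^{\delta/2}$ neurons per cube gives error $(t\wedge1)^{-d/2}m^{-\delta k/(2d)}$ with total width $\approx m'$. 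Choosing $k\ge \frac{2(\kappa+1)}{\delta}+\frac{2(2-\delta)}{\delta}$ makes this $\lesssim m^{-(\kappa+1)/d}$, even after absorbing $(t\wedge1)^{-d/2-1/2}\le m^{(2-\delta)(d+1)/(2d)}$. The Hölder/Sobolev norm entering Lemma~\ref{lem:sobolev_network} depends on $k$ only through constants, which are fixed once $\delta$ is.

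\textbf{Step 4: assembly.} Compose with the linear input rescaling and sum the local networks (they are polynomial-size, with weight bound $\lesssim m'$). Multiply by the cutoff, and stack the $d$ coordinate networks to obtain $\varphi_{2,t}$; by the composition and stacking rules for $\widetilde\Phi$, the result lies in $\widetilde\Phi(\log m',m',m'\log m',m')$.
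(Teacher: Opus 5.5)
Your outline is sound, but Step 3 takes a detour that the paper does not need. The paper applies Lemma~\ref{lem:sobolev_network} \emph{once}, on the whole localised cube of side $O(\log m)$, with $m'$ neurons and a large fixed smoothness index $\gamma=\gamma(\delta,\kappa,d)$.

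The $t$-dependence then cancels by pure algebra:
\begin{itemize}
\item the exact Gaussian scaling $\|\partial^\beta n_t\|_{L^2}^2=t^{-(d+2|\beta|)/2}\|\partial^\beta n_1\|_{L^2}^2$ and Young's inequality give $\|n_t*q\|_{H^\gamma}\lesssim (t\wedge 1)^{-(d+2\gamma)/4}$;
\item $(m')^{-\gamma/d}=(t\wedge1)^{\gamma/2}m^{-\gamma\delta/(2d)}$;
\item so the product is $(t\wedge 1)^{-d/4}m^{-\gamma\delta/(2d)}\lesssim m^{\frac{2-\delta}{4}-\frac{\gamma\delta}{2d}}$, with $\frac{2-\delta}{4}$ replaced by $\frac{2-\delta}{4}\cdot\frac{d+2}{d}$ for $f_2$.
\end{itemize}
The same $\gamma$ also fixes the localisation radius $\sqrt{t(d+2\frac{\gamma}{d}\log m)}$, so the factor $t^{-1/2}$ in the tail of $\nabla n_t*q$ is absorbed automatically.

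In fact your own Step~2 already finishes the job. Insert $\|\partial_j n_t*q\|_{H^k(\Omega_t)}\lesssim \mathrm{Poly}(\log m)(t\wedge1)^{-(k+d+1)/2}$ directly into Lemma~\ref{lem:sobolev_network} on $\Omega_t$ with $m'$ neurons. This yields $\mathrm{Poly}(\log m)(t\wedge 1)^{-(d+1)/2}m^{-\delta k/(2d)}$, because $(m')^{-k/d}$ cancels $(t\wedge1)^{-k/2}$. So matching $m'$ with the target accuracy is not a real obstacle. The rescaling is also neutral for that lemma, since its factor $K^{\gamma-d/2}$ comes from exactly such a rescaling. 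What your cell decomposition buys is a transparent reading of $m'$ as (number of cells)$\times$(neurons per cell), using only sup-norm derivative bounds.

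What it costs is a gluing step you have not supplied. ``Sum the local networks'' fails as written: each local approximant is controlled only on its own cube and is arbitrary elsewhere. You need a partition of unity $\{\psi_j\}$ subordinate to slightly enlarged cubes, for instance tensorised hat functions implemented with $\varphi^{\mathrm{mult}}_\ell$. Take local approximants $\varphi_j$ accurate on $\operatorname{supp}\psi_j$ and form $\sum_j\psi_j\varphi_j$.

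Two smaller corrections:
\begin{itemize}
\item By your own volume estimate there are up to $(t\wedge1)^{-d/2}(\log m)^d$ cells. Use $m^{\delta/2}(\log m)^{-d}$ neurons per cell to stay within width $m'$; this costs only logarithmic factors in the error.
\item Your threshold should read $k\ge\frac{2(\kappa+1)}{\delta}+\frac{(d+1)(2-\delta)}{\delta}$, plus some slack for the logarithms. The term $\frac{2(2-\delta)}{\delta}$ absorbs $(t\wedge1)^{-(d+1)/2}$ only when $d=1$.
\end{itemize}
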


\begin{proof}
We start by bounding the Sobolev norm of Gaussian densities. 
As in the previous proof, let $n_t(u)=(2\pi t)^{-\frac{d}{2}}\mathrm{e}^{-\frac{|u|^2}{2t}}$ denote the density of $N(0, tI_d)$.
Also, for $s\in\R$, let $\psi(s)=\mathrm{e}^{-s^2}$ and $\eta_t(s)=\frac{s}{\sqrt{2t}}$ such that $n_t(u)=(2\pi t)^{-\frac{d}{2}}\prod_{i=1}^d\psi\circ\eta_t(u_i)$.
Thus, for $\beta\in\N_0^d$,
\begin{align*}
\partial^{\beta}n_t(u)
=(2\pi t)^{-\frac{d}{2}}\prod_{i=1}^d\Big(\frac{\mathrm{d}^{\beta_i}}{\mathrm{d}s^{\beta_i}}\psi\circ\eta_t\Big)(u_i)
		=(2\pi t)^{-\frac{d}{2}}\prod_{i=1}^d(2t)^{-\frac{\beta_i}{2}}\Big(\frac{\mathrm{d}^{\beta_i}}{\mathrm{d}s^{\beta_i}}\psi\Big)\big(\eta_t(u_i)\big).
\end{align*}
Also, for any function $g\in L^2(\R)$,
\[
		\n{g\circ\eta_t}_{L^2}^2
		=\int_{\R}g\big(\eta_t(s)\big)^2\diff{s}
		=\sqrt{2t}\int_{\R}g(r)^2\diff{r}
		=\sqrt{2t}\n{g}_{L^2}^2.
\]
Combining these, we have that
	\begin{align*}
		\n{\partial^{\beta}n_t}_{L^2}^2
		&=\int_{\R^d}(2\pi t)^{-d}\prod_{i=1}^d(2t)^{-\beta_i}\Big(\frac{\mathrm{d}^{\beta_i}}{\mathrm{d}s^{\beta_i}}\psi\Big)\big(\eta_t(u_i)\big)^2\diff{u} \\
		&=\pi^{-d}(2t)^{-(d+|\beta|)}\prod_{i=1}^d\n[\Big]{\Big(\frac{\mathrm{d}^{\beta_i}}{\mathrm{d}s^{\beta_i}}\psi\Big)\circ \eta_t}_{L^2}^2 \\
		&=\pi^{-d}(2t)^{-\frac{d+2|\beta|}{2}}\prod_{i=1}^d\n[\Big]{\frac{\mathrm{d}^{\beta_i}}{\mathrm{d}s^{\beta_i}}\psi}_{L^2}^2,
	\end{align*}
	implying that $\n{\partial^\beta n_t}_{L^2}^2=t^{-\frac{d+2|\beta|}{2}}\n{\partial^\beta n_1}_{L^2}^2$, and hence for $\gamma \in \N_0$
	\begin{equation}
		\label{eq:n_t_sobolev}
		\n{n_t}_{H^\gamma}
		=\sqrt{\sum_{|\beta|\le\gamma}\n{\partial^\beta n_t}_{L^2}^2}
		\le(t\wedge 1)^{-\frac{d+2\gamma}{4}}\n{n_1}_{H^\gamma}.
	\end{equation}
	Next, in order to actually approximate $n_t*q$ and $\nabla n_t*q$, we first restrict the set on which to approximate these in order to apply Lemma \ref{lem:sobolev_network}.
	To this end, let $c^*, r^*$ be as in the previous proof and set $\rho_{t, \gamma}=\sqrt{t(d+2\frac{\gamma}{d}\log m)}$ for $\gamma\ge0$, and note that for $u\in\R^d$ with $\n{u-c^*}_{\infty}>r^*+\rho_{t, \gamma}$, we have $\mathrm{dist}(u, M^*)>\rho_{t, \gamma}$ and hence by Lemma \ref{lem:brownian_bound} we have
	\[
		n_t*q(u)
		\lesssim\Big(\frac{\gamma}{d}\log m\Big)^{\frac{d}{2}}m^{-\frac{\gamma}{d}}\quad\text{and}\quad
		|\nabla n_t*q(u)|
		\lesssim \frac{1}{\sqrt{t}}\Big(\frac{\gamma}{d}\log m\Big)^{\frac{d+1}{2}}m^{-\frac{\gamma}{d}},
	\]
	whence we need only approximate $n_t*q$ and $\nabla n_t*q$ on $[-(r^*+\rho_{t, \gamma}), r^*+\rho_{t, \gamma}]^d+c^*$.
	As such, let $\widetilde{\varphi}_{1}, \widetilde{\varphi}_{2, j}\in\widetilde{\Phi}(\gamma^2\log m', \gamma^2 m', \gamma^4 m'\log m', (m')^\nu)$ where $\nu=\frac{2d}{2\gamma-d}+\frac{1}{d}$ be such that
	\begin{align*}
		|\widetilde{\varphi}_1(u)-n_t*q(u)|
		&\lesssim(1+\rho_{t, \gamma})^{\gamma-\frac{d}{2}}\n{n_t*q}_{H^\gamma}(m')^{-\frac{\gamma}{d}}
		\qquad\text{and} \\
		|\widetilde{\varphi}_{2, j}(u)-\partial_jn_t*q(u)|
		&\lesssim(1+\rho_{t, \gamma})^{\gamma-\frac{d}{2}}\n{\partial_jn_t*q}_{H^\gamma}(m')^{-\frac{\gamma}{d}}
	\end{align*}
	for all $u$ with $\n{u-c^*}_{\infty}\le r^*+\rho_{t, \gamma}+1$ in accordance with Lemma \ref{lem:sobolev_network}.
	Then, letting $\ell=\lceil\frac{\gamma}{d}\log_2 m\rceil\asymp\gamma\log m$ and $\varphi_{\rho_{t, \gamma}}=(1\wedge(r^*+\rho_{t, \gamma}+1-\n{u-c^*}_{\infty}))\vee0$, set
	\[
		\varphi_{1, t}(u)
		=\varphi^{\mathrm{mult}}_{\ell}(\varphi_{\rho_{t, \gamma}}(u), \widetilde{\varphi}_{1}(u))
		\quad\text{and}\quad
		\varphi_{2, t}(u)
		=\varphi^{\mathrm{mult}, d}_{\ell}(\varphi_{\rho_{t, \gamma}}(u), \widetilde{\varphi}_2(u)),
	\]
	where $(\widetilde{\varphi}_2)_j=\widetilde{\varphi}_{2, j}$.
	Once again, as the sizes of the multiplication networks and $\varphi_{\rho_{t, \gamma}}$ are negligible compared to those of $\widetilde{\varphi}_{i}$, it follows that also $\varphi_{1, t}, \varphi_{2, t}\in\widetilde{\Phi}(\gamma^2\log m', \gamma^2 m', \gamma^4 m' \log m', (m')^\nu)$, while
	\begin{align*}
		|\varphi_{1, t}(u)-n_t*q(u)|
		&\lesssim \Big(\Big(\frac{\gamma}{d}\log m\Big)^{\frac{d}{2}}\vee\big((1+\rho_{t, \gamma})^{\gamma-\frac{d}{2}}\n{n_t*q}_{H^\gamma}\big)\Big)(m')^{-\frac{\gamma}{d}}
		\quad\text{and} \\
		|\varphi_{2, t}(u)-\nabla n_t*q(u)|
		&\lesssim \Big(\frac{1}{\sqrt{t}}\Big(\frac{\gamma}{d}\log m\Big)^{\frac{d+1}{2}}\vee\big((1+\rho_{t, \gamma})^{\gamma-\frac{d}{2}}\n{n_t*q}_{H^{\gamma+1}}\big)\Big)(m')^{-\frac{\gamma}{d}},
	\end{align*}
	where we use that $\n{\partial_j n_t*q}_{H^\gamma}\le\n{n_t*q}_{H^{\gamma+1}}$.
	Since $t\lesssim\log m$, it follows that
	\[
		|\varphi_{1, t}(u)-n_t*q(u)|
		\lesssim \mathrm{Poly}(\log m)\n{n_t*q}_{H^\gamma}(m')^{-\frac{\gamma}{d}},
	\]
	where as in \eqref{eq:young} we have by Young's inequality and \eqref{eq:n_t_sobolev} that
    \[
        \n{n_t*q}_{H^\gamma}
        \le\n{q}_{L^1}\n{n_t}_{H^\gamma}
        \lesssim(t\wedge1)^{-\frac{d+2\gamma}{4}}.
    \]
    Inserting the definition of $m'$ and using the assumption that $t>\frac{1}{2}m^{-\frac{2-\delta}{d}}$ we see
	\begin{align*}
		\n{n_t*q}_{H^\gamma}(m')^{-\frac{\gamma}{d}}
		&\lesssim (t\wedge1)^{-\frac{d+2\gamma}{4}}\big((t\wedge1)^{-\frac{d}{2}}m^{\frac{\delta}{2}}\big)^{-\frac{\gamma}{d}} \\
		&=(t\wedge1)^{-\frac{d}{4}}m^{-\frac{\delta\gamma}{2d}} \\
		&\lesssim m^{\frac{2-\delta}{4}-\frac{\gamma\delta}{2d}}.
	\end{align*}
	Similarly, we have
	\[
		|\varphi_{2, t}(u)-\nabla n_t*q(u)|
		\lesssim \mathrm{Poly}(\log m)m^{\frac{2-\delta}{4}\frac{d+2}{d}-\frac{\gamma\delta}{2d}}.
	\]
	Setting $\gamma=\lceil\frac{2d}{\delta}(\frac{2-\delta}{4}\frac{d+2}{d}+\frac{\kappa+1}{d}) \rceil$, it follows that
	\[
		|\varphi_{2, t}(u)-\nabla n_t*q(u)|
		\lesssim \big(\mathrm{Poly}(\log m)m^{-\frac{\delta}{2d}}\big)m^{-\frac{\kappa+1}{d}}
		\lesssim m^{-\frac{\kappa+1}{d}},
	\]
	and hence also $|\varphi_{1, t}(u)-n_t*q(u)|\lesssim m^{-\frac{\kappa+1}{d}}$ as desired.
	Notice also that since $\kappa\ge 2d$, we have $\gamma>5d$ and hence $\nu=\frac{2d}{2\gamma-d}+\frac{1}{d}\le 1$.
\end{proof}

\paragraph{Step 3: extend fixed time approximations to time intervals}

\begin{lemma}
	\label{lem:time_interpolation}
	Under assumptions \ref{ass:H1}--\ref{ass:H3}, for $\delta>0$, large enough $m\in\N$ and $\underline{t}>0$ with $m^{-\frac{2\alpha+2}{2\alpha+d}}\lesssim \underline{t}\lesssim\log m$ there exists neural networks
	\[
	\varphi_1, \varphi_2
	\in \begin{cases}
		\widetilde{\Phi}(\log m\log\log m, m\log m, m\log^2 m, m^{\nu}\vee\underline{t}^{-1}), &\text{ if }\underline{t}\le \frac{1}{2}m^{-\frac{2-\delta}{d}}\\
		\widetilde{\Phi}(\log m\log\log m, m'\log m, m'\log^2m, m'), &\text{ if }\underline{t}>\frac{1}{2}m^{-\frac{2-\delta}{d}}
	\end{cases}
	\]
	where $\nu=\frac{2d}{2\alpha-d}+\frac{1}{d}$ and $m'=(\underline{t}\wedge 1)^{-\frac{d}{2}}m^{\frac{\delta}{2}}$ such that for $u\in\R^d$ and $t\in[\underline{t}, 2\underline{t}]$,
	\begin{align*}
		|(2\pi t)^{-\frac{d}{2}}f_1(u, t)-\varphi_{1}(u, t)|
		&\lesssim \begin{cases}
			(\log m)m^{-\frac{\alpha}{d}}, &\text{ if }\underline{t}\le \frac{1}{2}m^{-\frac{2-\delta}{d}}, u\in M^*_{-\varepsilon_M/2}\\
			\mathrlap{(\log m)^{\frac{d+2}{2}}m^{-\frac{\kappa}{d}},}\phantom{\frac{1}{\sqrt{\underline{t}\wedge 1}}(\log m)^{\frac{d+3}{2}}m^{-\frac{\kappa}{d}},} &\text{ if }\underline{t}\le \frac{1}{2}m^{-\frac{2-\delta}{d}}, u\notin M^*_{-\varepsilon_M/2} \\
			(\log m)m^{-\frac{\kappa+1}{d}}, &\text{ if }\underline{t}>\frac{1}{2}m^{-\frac{2-\delta}{d}}
		\end{cases}
		\intertext{and}
		|(2\pi t)^{-\frac{d}{2}}f_2(u, t)-\varphi_{2}(u, t)|
		&\lesssim \begin{cases}
			\frac{1}{\sqrt{\underline{t}\wedge 1}}(\log m)m^{-\frac{\alpha}{d}}, &\text{ if }\underline{t}\le \frac{1}{2}m^{-\frac{2-\delta}{d}}, u\in M^*_{-\varepsilon_M/2} \\
			\frac{1}{\sqrt{\underline{t}\wedge 1}}(\log m)^{\frac{d+3}{2}}m^{-\frac{\kappa}{d}}, &\text{ if }\underline{t}\le \frac{1}{2}m^{-\frac{2-\delta}{d}}, u\notin M^*_{-\varepsilon_M/2} \\
			\frac{1}{\sqrt{\underline{t}\wedge 1}}(\log m)m^{-\frac{\kappa+1}{d}}, &\text{ if }\underline{t}>\frac{1}{2}m^{-\frac{2-\delta}{d}}
		\end{cases}
	\end{align*}
	where $f_1, f_2$ are as in \eqref{eq:f_1f_2}.
\end{lemma}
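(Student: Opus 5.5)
We will extend the fixed-time approximations of Lemmas~\ref{lem:fixed_time_approx_small_t} and~\ref{lem:fixed_time_approx_large_t} to the interval $[\underline{t},2\underline{t}]$ by polynomial interpolation in $t$.

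\textbf{Interpolation scheme.} Set $N\asymp\log m$ and let $\underline{t}\le\tau_1<\dots<\tau_N\le 2\underline{t}$ be Chebyshev nodes on $[\underline{t},2\underline{t}]$. Let $\ell_k(t)=\prod_{j\neq k}(t-\tau_j)/(\tau_k-\tau_j)$ denote the Lagrange basis polynomials. For $i=1,2$ we then define
\[
\varphi_i(u,t)\approx\sum_{k=1}^N \ell_k(t)\,\varphi_{i,\tau_k}(u),
\]
where the $\varphi_{i,\tau_k}$ are the fixed-time networks from the appropriate regime. If $\underline{t}\le\frac12 m^{-(2-\delta)/d}$, all nodes satisfy $\tau_k\le m^{-(2-\delta)/d}$, so Lemma~\ref{lem:fixed_time_approx_small_t} applies. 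Otherwise all $\tau_k>\frac12 m^{-(2-\delta)/d}$, so Lemma~\ref{lem:fixed_time_approx_large_t} applies; there $(\tau_k\wedge1)\asymp(\underline{t}\wedge 1)$, hence $m'$ is uniform in $k$. The error splits into two parts:
\[
\sum_k|\ell_k(t)|\,\bigl|g_i(u,\tau_k)-\varphi_{i,\tau_k}(u)\bigr|+\Bigl|g_i(u,t)-\sum_k\ell_k(t)g_i(u,\tau_k)\Bigr|,
\qquad g_i(u,t)\coloneqq(2\pi t)^{-d/2}f_i(u,t).
\]
The Lebesgue constant of Chebyshev nodes is $\lesssim\log N\lesssim\log m$. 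The first part is therefore the fixed-time error times $\log m$, which accounts exactly for the additional $\log m$ factors in the stated bounds, including $(\log m)^{d/2}\to(\log m)^{(d+2)/2}$.

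\textbf{Interpolation error.} This is the main obstacle. The remainder is controlled by $\frac{(\underline{t}/4)^N}{N!\,2^{N-1}}\sup_{t\in[\underline{t},2\underline{t}]}|\partial_t^N g_i(u,t)|$. By the heat equation, $\partial_t g_1=\frac12\Delta_u g_1$, so $\partial_t^N g_1=2^{-N}\Delta^N n_t*q$. I would bound this Gaussian-derivative convolution directly, putting all derivatives on the kernel: $\|\partial^\beta n_t\|_{L^1}\lesssim t^{-|\beta|/2}\sqrt{\beta!}\,C^{|\beta|}$. This yields $|\partial_t^N g_1|\lesssim C^N N!\,t^{-N}$, and similarly with an extra factor $t^{-1/2}$ for $g_2$. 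The remainder is then $\lesssim (C/4)^N\cdot 2^{-N}$ (times $\underline{t}^{-1/2}$ for $g_2$). This is a geometric decay, but one must check that the constant satisfies $C/8<1$. If it does not, I would subdivide $[\underline{t},2\underline{t}]$ into $O(1)$ subintervals of relative length small enough to make the ratio below $1/2$. Then $N\asymp\log m$ gives remainder $\le m^{-(\kappa+1)/d}$, negligible in all regimes. The subtle point is that this bound uses only $q\in L^1$, not smoothness, so it is uniform in $u$ both inside and outside $M^*_{-\varepsilon_M/2}$.

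\textbf{Network realisation and sizes.} Each $\ell_k(t)$ is a polynomial of degree $N-1\lesssim\log m$ on a bounded interval. I would implement it via the multiplication networks $\varphi^{\mathrm{mult}}_\ell$ from Appendix~\ref{app:basic_neural}, after rescaling $t\mapsto(t-\underline{t})/\underline{t}\in[0,1]$; this rescaling is responsible for the weight bound $\underline{t}^{-1}$. Computing the product tree takes depth $\log N\cdot\log m$ up to the $\log\log m$ factors, giving depth $\log m\log\log m$. The Lagrange coefficients themselves can be large ($\lesssim C^N=\mathrm{Poly}(m)$), so I would instead write them as products of bounded factors $(t-\tau_j)/(\tau_k-\tau_j)$, each at most $N$ in magnitude. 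Multiplying $\ell_k(t)$ with $\varphi_{i,\tau_k}(u)$ uses one more multiplication network of accuracy $m^{-(\kappa+1)/d}$. Summing $N$ such networks in parallel multiplies the width by $\log m$ and the sparsity by $\log m$, giving the classes $\widetilde{\Phi}(\log m\log\log m,\,m\log m,\,m\log^2 m,\,m^\nu\vee\underline{t}^{-1})$ in the small-time case, and the same with $m'$ in the large-time case. This uses that the factor $t^{-1/2}$ in the weights of $\varphi_{2,\tau_k}$ is dominated by $\underline{t}^{-1}$. The multiplication errors are absorbed into the stated bounds.
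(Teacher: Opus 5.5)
Your proposal is correct, and its overall scheme matches the paper's. Both approximate in $t$ by Chebyshev interpolation of the fixed-time networks of Lemmas~\ref{lem:fixed_time_approx_small_t} and~\ref{lem:fixed_time_approx_large_t}, with $\asymp\log m$ nodes and a polynomial network in $t$. The analytic estimates, however, are obtained differently.

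For the interpolation remainder, the paper extends $t\mapsto n_t*q(u)$ holomorphically to the right half-plane. It then applies the Bernstein-ellipse theorem with $\rho=2$, which reduces the supremum over $\partial E_2$ to the real Gaussian convolution $(17/7)^{d/2}n_{17\underline{t}/8}*q\le(17/7)^{d/2}p_{\max}$. This gives $2^{-k}$ decay with no derivative bookkeeping.

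Your route (Cauchy remainder, $\partial_t=\frac{1}{2}\Delta$, Hermite $L^1$ bounds) is more elementary but sensitive to constants, and your remainder factor is off. For $N$ Chebyshev nodes on an interval of length $\underline{t}$ it is $(\underline{t}/2)^N/(N!\,2^{N-1})$, not $(\underline{t}/4)^N/(N!\,2^{N-1})$, so what you actually need is $C<4$. This holds anyway:
\begin{itemize}
\item $\|\partial^\beta n_t\|_{L^1}\le t^{-|\beta|/2}\sqrt{\beta!}$ and $\sqrt{(2j)!}\le 2^j j!$ give $|\partial_t^N g_1|\le p_{\max}\,N!\,t^{-N}\,\mathrm{poly}(N)$.
\item Hence the remainder is $\lesssim 4^{-N}\mathrm{poly}(N)$, and your subdivision fallback is unnecessary. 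That is fortunate, since it would also require blending networks in $t$.
\item Since you put the derivatives on the kernel in $L^1$, the bound uses $\|q\|_{L^\infty}\le p_{\max}$, not merely $q\in L^1$.
\end{itemize}

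For the fixed-time error term, the paper proves $|c_ip_i(t)|\le 2$ and sums over $k+1$ nodes, giving a factor $\asymp\log m$. Your Lebesgue constant $\lesssim\log N\asymp\log\log m$ is slightly sharper and equally sufficient.

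For the network realisation, the paper multiplies a network for the monic $p_i$ by $c_i$, where $|c_i|\le 2^{k-1}/k$, and compensates with accuracy $\ell\asymp k+\log m$. In your product form:
\begin{itemize}
\item The factors $(t-\tau_j)/(\tau_k-\tau_j)$ are $O(N^2)$ rather than $O(N)$, because node gaps near the endpoints are of order $\underline{t}/N^2$.
\item Partial products can reach $\prod_{j\ne k}\underline{t}/|\tau_k-\tau_j|\lesssim 4^N$.
\end{itemize}
So you face the same $\mathrm{Poly}(m)$ error amplification and need the same accuracy $\ell\asymp N+\log m$; this does not change the claimed network sizes.
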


\begin{proof}
	We start by constructing networks with the desired approximation rates and consider their sizes at the end.
	To this end, for notation, let
	\begin{align*}
		\varepsilon_1(u, t)
		&=\begin{cases}
			m^{-\frac{\alpha}{d}}, &\text{ if }t\le \frac{1}{2}m^{-\frac{2-\delta}{d}}, u\in M^*_{-\varepsilon_M/2}\\
			\mathrlap{(\log m)^{\frac{d}{2}}m^{-\frac{\kappa}{d}},}\phantom{\frac{1}{\sqrt{\underline{t}\wedge 1}}(\log m)^{\frac{d+1}{2}}m^{-\frac{\kappa}{d}},} &\text{ if }t\le \frac{1}{2}m^{-\frac{2-\delta}{d}}, u\notin M^*_{-\varepsilon_M/2} \\
			m^{-\frac{\kappa+1}{d}}, &\text{ if }t>\frac{1}{2}m^{-\frac{2-\delta}{d}}
		\end{cases}
		\intertext{and}
		\varepsilon_2(u, t)
		&=\begin{cases}
			\frac{1}{\sqrt{\underline{t}\wedge 1}}m^{-\frac{\alpha}{d}}, &\text{ if }t\le \frac{1}{2}m^{-\frac{2-\delta}{d}}, u\in M^*_{-\varepsilon_M/2} \\
			\frac{1}{\sqrt{\underline{t}\wedge 1}}(\log m)^{\frac{d+1}{2}}m^{-\frac{\kappa}{d}}, &\text{ if }t\le \frac{1}{2}m^{-\frac{2-\delta}{d}}, u\notin M^*_{-\varepsilon_M/2} \\
			\frac{1}{\sqrt{\underline{t}\wedge 1}}m^{-\frac{\kappa+1}{d}}, &\text{ if }t>\frac{1}{2}m^{-\frac{2-\delta}{d}}
		\end{cases}
	\end{align*}
	and for $t>0$ let $\varphi_{i, t}$ denote either the networks in Lemma \ref{lem:fixed_time_approx_small_t} if $t\le \frac{1}{2}m^{-\frac{2-\delta}{d}}$ or those in Lemma \ref{lem:fixed_time_approx_large_t} if $t>\frac{1}{2}m^{-\frac{2-\delta}{d}}$.
	In either case, we have $|(2\pi t)^{-\frac{d}{2}}f_i(u, t)-\varphi_{i, t}|\lesssim\varepsilon_i(u, t)$.
	Also, as in the previous proofs, let $q(u)=p_0(Au+v_0)$ and $n_t(u)=(2\pi t)^{-\frac{d}{2}}\mathrm{e}^{-\frac{|u|^2}{2t}}$, such that $(2\pi t)^{-\frac{d}{2}}f_1=n_t*q$ and $(2\pi t)^{-\frac{d}{2}}f_2=\nabla n_t*q$.
	The idea of the proof is, as in the proof of \cite[Lemma 3.13]{holk24}, to use polynomial interpolation in time between $\varphi_{1, t_i}$ and $\varphi_{2, t_i}$ for appropriate time points $\{t_i\}$.
	Since the time dependence of both $n_t*q$ and $\nabla n_t*q$ are well-behaved, for any fixed $u\in\R^d$, the functions $t\mapsto n_t*q(u)$ and $t\mapsto\nabla n_t*q(u)$ can be efficiently approximated by polynomial interpolation, and this property carries over to the neural network approximations, as we will show.

	To this end, we first center our time interval, as this makes analysis easier, so let $a=\frac{1}{2}\underline{t}$ and $b=\frac{3}{2}\underline{t}$, and set $n_{ t}^*(u)=n_{at+b}(u)$ for $t\in(-1, 1)$ such that $n_{(-1, 1)}^*(u)=n_{(\underline{t}, \overline{t})}(u)$.
	Then, for some $k\in\N$ to be determined later, let $\{t_i\}_{i=0}^k=\{\cos\frac{i\pi}{k}\}_{i=0}^k$ be the first $k+1$ Chebyshev nodes on $(-1, 1)$.
	Then, for $i=0,\ldots,k$, let $p_i(t)=\prod_{j\neq i}(t-t_j)$ and set $c_i=\frac{1}{p_i(t_i)}$.
	Furthermore, set
	\begin{align*}
		\varphi_{1}^*(u, t)
		&=\sum_{i=0}^{k}c_i\varphi_{\ell}^{\mathrm{mult}}\big(\varphi^{p_i}_{\ell}(t), \varphi_{1, t_i}^*(u)\big), \\
		\psi(u, t)
		&=\sum_{i=0}^{k}c_ip_i(t)\varphi_{1, t_i}^*(u),\qquad\text{and} \\
		P(u, t)
		&=\sum_{i=0}^{k}c_ip_i(t)n_{t_i}^**q(u).
	\end{align*}
	Here, $\varphi_{1, t}^*=\varphi_{1, at+b}$, while $\varphi^{\mathrm{mult}}_{\ell}$ is as in Lemma \ref{lem:mult_network_asymp} and $\varphi^{p_i}_{\ell}$ is a neural network approximations of $p_i$.
	In particular, we can construct $\varphi^{p_i}_{\ell}\in\widetilde{\Phi}(\ell\log k, k, k\ell, 1)$ such that
	\[
		|\varphi_{\ell}^{p_i}(t)-p_i(t)|\lesssim k2^{-\ell},\qquad\forall t\in[-1, 1].
	\]
	We defer this construction to (the proof of) \cite[Lemma 3.13]{holk24}.
	We then find by the triangle inequality that
	\begin{equation}
		\label{eq:interp_error}
		|\varphi_{1}^*(u, t)-n_{ t}^**q(u)|
		\le|\varphi_{1}^*(u, t)-\psi(u, t)|
		+|\psi(u, t)-P(u, t)|
		+|P(u, t)-n_{ t}^**q(u)|,
	\end{equation}
	and so setting $\varphi_{1}(u, t)=\varphi_{1}^*(u, \frac{2}{\underline{t}}t-3)$, the error analysis is completed if we can show that each of the above terms can be bounded by $\varepsilon_1(u, t)\log m$.
	Recalling from the proof of Lemma \ref{lem:fixed_time_approx_small_t} that $\n{\varphi_{1, t}}_{\infty}\le p_{\mathrm{max}}$, we find that
	\begin{align*}
		\big|\varphi_{\ell}^{\mathrm{mult}}\big(\varphi_{\ell}^{p_i}(t), \varphi_{1, t_i}^*(u)\big)-p_i(t)\varphi_{1, t_i}^*(u)\big|
		&\le\big|\varphi_{\ell}^{\mathrm{mult}}\big(\varphi_{\ell}^{p_i}(t), \varphi_{1, t_i}^*(u)\big)-\varphi_{\ell}^{p_i}(t)\varphi_{1, t_i}^*(u)\big|
		+p_{\mathrm{max}}|\varphi^{p_i}_{\ell}(t)-p_i(t)|
		\lesssim k2^{-\ell}.
	\end{align*}
	Furthermore, by \cite[Theorem 5.2]{trefethen13}, it holds that $|c_i|\le \frac{2^{k-1}}{k}$, and so the first term of \eqref{eq:interp_error} is upper bounded by
	\begin{align*}
		|\varphi_{1}^*(u,t)-\psi(u,t)|
		&\le\sum_{i=0}^{k}|c_i| |\varphi_{\ell_1}^{\mathrm{mult}}\big(\varphi^{p_i}_{\ell_2}(t), \varphi_{1, t_i}^*(u)\big)-p_i(t)\varphi_{1, t_i}^*(u)|
		\lesssim k2^{k-\ell}
	\end{align*}
	and choosing $\ell=\lceil k+\log_2 k+\frac{\kappa+1}{d}\log_2 m\rceil\asymp k+\log m$ bounds this term by $m^{-\frac{\kappa+1}{d}}\le\varepsilon_1 (u, t)$.
	For the second term of \eqref{eq:interp_error}, it can be shown that $|p_i(t)c_i|\lesssim 1$ (see Appendix), whence
	\[
		|\psi(u,t)-P(u,t)|
		\le\sum_{i=0}^{k}|c_ip_i(t)||\varphi_{1, t_i}^*(u)-n_{t_i}^**q(u)|
		\lesssim k\varepsilon_1(u, t).
	\]
	Finally, for the third term of \eqref{eq:interp_error}, we start by showing that for each fixed $u\in\R^d$, the function $t\mapsto n_{ t}*q(u)$ is analytically extendable to $\C^+\coloneqq\{w\in\C\mid\Re{w}>0\}$.
	To this end, we first see that $t\mapsto n_t(u)$ is analytic on $\C^+$ for all $u\in\R^d$ as the composition of an analytic function with a rational function with a pole at $0$.
	Thus, for each $w_0\in\C^+$, there exists an open neighbourhood $D_0$ of $w_0$ and integrable functions $\{a_n\}_{n\in\N_0}$ such that
	\[
		n_w(u)
		=\sum_{n=0}^{\infty}a_n(u)(w-w_0)^n,\qquad\forall w\in D_0,
	\]
	where this sum converges uniformly and absolutely on $D_0$.
	Since $q$ is a probability density, it then follows by dominated convergence that for $w\in D_0$
	\[
		n_w*q(u)
		=\int_{M^*}\Big(\sum_{n=0}^{\infty}a_n(u-v)(w-w_0)^n\Big)q(v)\diff{v}
		=\sum_{n=0}^{\infty}\Big(\int_{M^*}a_n(u-v)q(v)\diff{v}\Big)(w-w_0)^n,
	\]
	showing that $n_t*q$ is analytic as claimed.
	It then follows by \cite[Theorem 8.2]{trefethen13} that for $\rho>1$ satisfying $b-a(\frac{\rho+\rho^{-1}}{2})>0$, we have
	\[
		|P(u, t)-n_{t}^**q(u)|
		\le\frac{4R_{\rho}(u)\rho^{-k}}{\rho-1},\qquad\forall t\in(-1, 1),
	\]
	where
	\[
		R_{\rho}(u)
		=\max_{w\in\partial E_\rho}|n_{ w}^*(u)|,\quad\text{and}\quad
		\partial E_\rho
		=\left\{\frac{w+w^{-1}}{2}\mid |w|=\rho\right\}.
	\]
	We claim that $\rho=2$ works for our purposes.
	Indeed, we have $b-a(\frac{2+2^{-1}}{2})=\frac{7}{8}\underline{t}>0$, and since one readily checks that
	\[
		\min_{w\in\partial E_2}\frac{a\Re w+b}{|aw+b|^2}
		=\frac{1}{\frac{5}{4}a+b}
		=\frac{8}{17\underline{t}},
	\]
	we find that for $u\in\R^d$ and $w\in\partial E_2$
	\[
		|n_w^*(u)|
		=|(2\pi w)^{-\frac{d}{2}}\mathrm{e}^{-\frac{|u|^2}{2(aw+b)}}|
		=(2\pi|w|)^{-\frac{d}{2}}\mathrm{e}^{-\frac{|u|^2}{2}\cdot\frac{a\Re w+b}{|aw+b|^2}}
		\le\Big(\frac{14\pi}{8}\underline{t}\Big)^{-\frac{d}{2}}\mathrm{e}^{-\frac{4|u|^2}{17\underline{t}}},
	\]
	whence
	\[
		R_2(u)
		\le\int_{M^*}\Big(\frac{7\pi}{4}\underline{t}\Big)^{-\frac{d}{2}}\mathrm{e}^{-\frac{4|u-v|^2}{17\underline{t}}}q(v)\diff{v}
		=\Big(\frac{17}{7}\Big)^{\frac{d}{2}}\int_{M^*}\Big(2\pi\Big(\frac{17}{8}\underline{t}\Big)\Big)^{-\frac{d}{2}}\mathrm{e}^{-\frac{|u-v|^2}{2(\frac{17}{8}\underline{t})}}q(v)\diff{v}
		=\Big(\frac{17}{7}\Big)^{\frac{d}{2}}n_{\frac{17}{8}\underline{t}}*q(u).
	\]
	Now by Young's convolution inequality, we find that
	\[
		\n{R_2}_{L^\infty}
		\le \Big(\frac{17}{7}\Big)^{\frac{d}{2}}\n{n_{\frac{17}{8}\underline{t}}}_{L^1}\n{q}_{L^\infty}
		\le\Big(\frac{17}{7}\Big)^{\frac{d}{2}} p_{\mathrm{max}},
	\]
	whereby
	\[
		|P(u, t)-n_t^**q(u)|
		\lesssim2^{-k},
	\]
	and choosing $k=\lceil\frac{\kappa+1}{d}\log m\rceil\asymp\log m$ ensures that this is bounded by $m^{-\frac{\kappa+1}{d}}\le\varepsilon_1(u, t)$.

	The exact same strategy can be used to approximate $\nabla n_t*q$, i.e. if we set in a recycling of notation
	\begin{align*}
		\varphi_{2}^*(u, t)
		&=\sum_{i=0}^{k}c_i\varphi_{\ell}^{\mathrm{mult}, d}\big(\varphi^{p_i}_{\ell}(t), \varphi_{2, t_i}^*(u)\big), \\
		\psi(u, t)
		&=\sum_{i=0}^{k}c_ip_i(t)\varphi_{2, t_i}^*(u),\qquad\text{and} \\
		P(u, t)
		&=\sum_{i=0}^{k}c_ip_i(t)\nabla n_{t_i}^**q(u),
	\end{align*}
	then we once again have by the triangle inequality
	\begin{equation}
		\label{eq:interp_error_2}
		|\varphi_{2}^*(u, t)-\nabla n_{t}^**q(u)|
		\le|\varphi_{2}^*(u, t)-\psi(u, t)|
		+|\psi(u, t)-P(u, t)|
		+|P(u, t)-\nabla n_{ t}^**q(u)|.
	\end{equation}
	Here, using the exact same approach as before, we can bound the first two terms by $\varepsilon_2(u, t)$.
	As for the third term, noting that $|P(u, t)-\nabla n_t*q(u)|^2=\sum_{j=1}^{d}|P_j(u, t)-\partial_jn_t*q(u)|^2$, we can use the same method as before to bound each of these summands.
	In particular, following the same steps as above and defining $R_{2, j}$ as above for the $j$'th summand, we would find that
	\[
		\n{R_{2, j}}_{L^\infty}
		\le\Big(\frac{17}{7}\Big)^{\frac{d}{2}}\n{\partial_jn_{\frac{17}{8}\underline{t}}}_{L^1}\n{q}_{L^\infty},
	\]
	and thus by \eqref{eq:dn_t_l1},
	\[
		|P(u, t)-\nabla n_t*q(u)|^2
		\lesssim 2^{-2k}\sum_{j=1}^{d}\n{\partial_jn_{\frac{17}{8}\underline{t}}}_{L^1}^2
		\lesssim\frac{1}{\underline{t}}m^{-\frac{2(\kappa+1)}{d}}
		\le \varepsilon_2(u, t)^2.
	\]
	As for the sizes of the networks, we first recall that if $\underline{t}\le \frac{1}{2}m^{-\frac{2-\delta}{d}}$, we have for each $i$ that $\varphi_{1, t_i}\in\widetilde{\Phi}(\log m, m, m \log m, m^{\nu}\vee(\underline{t}\log m)^{-\frac{1}{2}})$, while $\varphi_{\ell}^{p_i}\in\widetilde{\Phi}(\log m\log\log m, \log m, \log^2m, 1)$, whereby each summand in the definition of $\varphi_1^*$ is in $\widetilde{\Phi}(\log m\log\log m, m, m\log m, m^{\nu}\vee(\underline{t}\log m)^{-\frac{1}{2}})$, and since there are $k\asymp\log m$ such terms, we have $\varphi_1\in\widetilde{\Phi}(\log m\log\log m, m\log m, m\log^2 m, m^{\nu}\vee\underline{t}^{-1})$.
	Similarly, if $\underline{t}>\frac{1}{2}m^{-\frac{2-\delta}{d}}$, we have $\varphi_{1, t_i}\in\widetilde{\Phi}(\log m', m', m'\log m', m')$, and hence by the same argumentation $\varphi_1\in\widetilde{\Phi}(\log m\log\log m, m', m'\log m', m')$.
	Similar analysis shows that the same is true of $\varphi_2$, finishing the proof.
\end{proof}

\subsubsection*{Step 4: Putting things together}
With all of the above we are now in a position to prove Theorem \ref{theo:score_approx}.

\begin{proof}[Proof of Theorem \ref{theo:score_approx}]
	First, by Lemmas \ref{lem:affine_approx_b} and \ref{lem:truncation_error} and the triangle inequality we need only approximate $s_0^K\bm{1}_{M_{\rho, \underline{t}}}$ with
	\begin{align*}
		\rho
		=K
		=\frac{2(\alpha+1)}{d}\log m+\log\underline{t}^{-1}
		\lesssim\log m.
	\end{align*}
	To this end, we let for sake of notation $x^*=A^\top(x-v_0)\in\R^d$ and $x^\perp=(I-P)(x-v_0)\in\R^D$ for $x\in\R^D$ such that $x^*$ is the local coordinates of the projection $Px$ of $x$ onto $M$, while $x^\perp$ is the perpendicular component of $x$ with respect to $M$.
	Recalling then that
	\begin{align*}
		\int_M \mathrm{e}^{-\frac{|x-y|^2}{2t}}\,\mu(\diff{y})
		&=\mathrm{e}^{-\frac{|x^{\perp}|^2}{2t}}\int_{M^*}\mathrm{e}^{-\frac{|x^*-u|}{2t}}p_0(Au+v_0)\diff{u}
		=\mathrm{e}^{-\frac{|x^{\perp}|^2}{2t}}f_1(x^*, t),
	\end{align*}
	and
	\begin{align*}
		\int_M \frac{x-y}{t}\mathrm{e}^{-\frac{|x-y|^2}{2t}}\,\mu(\diff{y})
		&=\mathrm{e}^{-\frac{|x^{\perp}|^2}{2t}}\bigg(\frac{x^{\perp}}{t}\int_{M^*}\mathrm{e}^{-\frac{|x^*-u|^2}{2t}}p_0(Au+v_0)\diff{u}+A\int_{M^*}\frac{x^*-u}{t}\mathrm{e}^{-\frac{|x^*-u|^2}{2t}}p_0(Au+v_0)\diff{u}\bigg) \\
		&=\mathrm{e}^{-\frac{|x^{\perp}|^2}{2t}}\bigg(\frac{x^{\perp}}{t}f_1(x^*, t)+Af_2(x^*, t)\bigg),
	\end{align*}
	we let $h_1(x, t)=(2\pi t)^{-\frac{d}{2}}\mathrm{e}^{-\frac{|x^{\perp}|^2}{2t}}f_1(x^*, t)$ and $h_2(x, t)=(2\pi t)^{-\frac{d}{2}}\mathrm{e}^{-\frac{|x^{\perp}|^2}{2t}}\big(\frac{x^{\perp}}{t}f_1(x^*, t)+Af_2(x^*, t)\big)$ such that
	\[
		s_0^K(x, t)
		=\frac{-\sum_{\substack{z\in\Z^D \\ \n{z}_{\infty}\le K_{\underline{t}}}}(-1)^zh_2(R_z(x)+z, t)}{\sum_{\substack{z\in\Z^D \\ \n{z}_{\infty}\le K_{\underline{t}}}}h_1(R_z(x)+z, t)},
	\]
	where $K_{\underline{t}}=\sqrt{2\underline{t}(D+2K)}$.
	Furthermore, let $\widehat{h}_1(x, t)=\mathrm{e}^{-\frac{|x^{\perp}|^2}{2t}}\varphi_1(x^*, t)$ and $\widehat{h}_2(x, t)=\mathrm{e}^{-\frac{|x^{\perp}|^2}{2t}}\big(\frac{x^{\perp}}{t}\varphi_1(x^*, t)+A\varphi_2(x^*, t)\big)$, where $\varphi_1, \varphi_2$ are as in Lemma \ref{lem:time_interpolation}. By (the proof of) Lemma \ref{lem:affine_approx_d} there exists a constant $c > 0$ such that for all $(x, t)\in M_{\rho, \underline{t}}\times[\underline{t}, 2\underline{t}]$,
	\begin{equation}\label{eq:lower_main_proof}
		p_t^K(x)
		\geq c t^{\frac{c_0-D}{2}}\mathrm{e}^{-\rho}
		= ct^{\frac{c_0-D}{2}+1}m^{-\frac{2(\alpha+1)}{d}},
	\end{equation} 
    and we set
	\[
		\widehat{s}_0^K(x, t)
		=\frac{-\sum_{\substack{z\in\Z^D \\ \n{z}_{\infty}\le K_{\underline{t}}}}(-1)^z\widehat{h}_2(R_z(x)+z, t)}{\Big(\sum_{\substack{z\in\Z^D \\ \n{z}_{\infty}\le K_{\underline{t}}}}\widehat{h}_1(R_z(x)+z, t)\Big)\vee c\underline{t}^{\frac{c_0-d}{2}+1}m^{-\frac{2(\alpha+1) }{d}}}
		\eqcolon\frac{\widehat{\nabla p}_t^K(x)}{\widehat{p}_t^K(x)}.
	\]
	To clearly separate our sources of error, we first show that $\widehat{s}_0^K$ is a good approximation of $s_0^K$ on $M_{\rho, \underline{t}}\times[\underline{t}, 2\underline{t}]$ and only then approximate $\widehat{s}_0^K$ by a neural network.
	To this end, we have first
	\begin{align*}
		|s_0^K(x, t)-\widehat{s}_0^K(x, t)|
		&=\Big|\frac{\widehat{\nabla p}_t^K(x)}{\widehat{p}_t^K(x)}-\frac{(2\pi t)^{\frac{D-d}{2}}\nabla p_t^K(x)}{(2\pi t)^{\frac{D-d}{2}}p_t^K(x)}\Big|\\
		&\le\frac{1}{p_t^K(x)\widehat{p}_t^K(x)}\Big(|\nabla p_t^K(x)|\big|(2\pi t)^{\frac{D-d}{2}}p_t^K(x)-\widehat{p}_t^K(x)\big| \\
		&\qquad\qquad\qquad\qquad+p_t^K(x)\big|(2\pi t)^{\frac{D-d}{2}}\nabla p_t^K(x)-\widehat{\nabla p}_t^K(x)\big|\Big) \\
		&=\frac{1}{\widehat{p}_t^K(x)}\Big(|s_0^K(x, t)|\big|(2\pi t)^{\frac{D-d}{2}}p_t^K(x)-\widehat{p}_t^K(x)\big|+\big|(2\pi t)^{\frac{D-d}{2}}\nabla p_t^K(x)-\widehat{\nabla p}_t^K(x)\big|\Big).
	\end{align*}
	Then by \eqref{eq:lower_main_proof},
	\[
		|(2\pi t)^{\frac{D-d}{2}}p_t^K(x)-\widehat{p}_t^K(x)|
		\lesssim\Big|(2\pi t)^{\frac{D-d}{2}}p_t^K(x)-\sum_{\substack{z\in\Z^D \\ \n{z}_{\infty}\le K_{\underline{t}}}}\widehat{h}_1(R_z(x)+z, t)\Big|.
	\]
	Next, we further split this error into four parts to analyse separately.
	In particular, let $S_1=\{x\in\R^D\mid x^*\in M^*_{-\varepsilon_M/2}\}$ and $S_2=\R^D\setminus S_1$.
	Furthermore, for $x\in[0, 1]^D$, and $z\in\Z^D$ let $x_z=R_z(x)+z$ and set $Z^K_i(x)=\{z\in\Z^D\mid \n{z}_{\infty}\le K_{\underline{t}}, x_z\in S_i\}$.
	Repeated use of the triangle inequality (along with the fact that $A$ and $(-1)^z$ are orthogonal matrices) then yields that the distance $|s_0^K(x, t)-\widehat{s}_0^K(x, t)|$ is upper bounded by the sum
	\begin{align}
		&\frac{1}{\widehat{p}_t^K(x)}|s_0^K(x, t)|\sum_{z\in Z_1^K(x)}\mathrm{e}^{-\frac{|x_z^\perp|^2}{2t}}|(2\pi t)^{-\frac{d}{2}}f_1(x_z^*, t)-\varphi_1(x_z^*, t)| \label{eq:err_decomp_1} \\
		&\quad +\frac{1}{\widehat{p}_t^K(x)}|s_0^K(x, t)|\sum_{z\in Z_2^K(x)}\mathrm{e}^{-\frac{|x_z^\perp|^2}{2t}}|(2\pi t)^{-\frac{d}{2}}f_1(x_z^*, t)-\varphi_1(x_z^*, t)| \label{eq:err_decomp_2} \\
		&\quad +\frac{1}{\widehat{p}_t^K(x)}\sum_{z\in Z_1^K(x)}\mathrm{e}^{-\frac{|x_z^\perp|^2}{2t}}\Big(\frac{|x_z^\perp|}{t}|(2\pi t)^{-\frac{d}{2}}f_1(x_z^*, t)-\varphi_1(x_z^*, t)|+|(2\pi t)^{-\frac{d}{2}}f_2(x_z^*, t)-\varphi_2(x_z^*, t)|\Big) \label{eq:err_decomp_3} \\
		&\quad +\frac{1}{\widehat{p}_t^K(x)}\sum_{z\in Z_2^K(x)}\mathrm{e}^{-\frac{|x_z^\perp|^2}{2t}}\Big(\frac{|x_z^\perp|}{t}|(2\pi t)^{-\frac{d}{2}}f_1(x_z^*, t)-\varphi_1(x_z^*, t)|+|(2\pi t)^{-\frac{d}{2}}f_2(x_z^*, t)-\varphi_2(x_z^*, t)|\Big) \label{eq:err_decomp_4}.
	\end{align}
	We will analyse each of these terms separately.
	To ease notation, let $\varepsilon_{\underline{t}}$ denote either $m^{-\frac{1}{d}}$ if $\underline{t}>m^{-\frac{2-\delta}{d}}$ and $1$ otherwise.

\smallskip

\noindent \textbf{Term \eqref{eq:err_decomp_1}}:
	For $z\in Z_1^K(x)$, we have $|(2\pi t)^{-\frac{d}{2}}f_1(x_z^*, t)-\varphi_1(x_z^*, t )|\lesssim m^{-\frac{\alpha}{d}}\varepsilon_{\underline{t}}\log m$ by Lemma \ref{lem:time_interpolation}, while also
	\[
		f_1(x_z^*, t)
		\ge\int_{\mathcal B(x^*_z, \sqrt{t})\cap M^*_{-\varepsilon_M/2}}\mathrm{e}^{-\frac{|x_z^*-u|^2}{2t}}p_0(Au+v_0)\diff{u}
		\ge \mathrm{e}^{-\frac{1}{2}}p_{\mathrm{min}}\mathrm{Vol}_d(\mathcal B(x_z^*, \sqrt{t})\cap M_{-\varepsilon_M/2}^*)
		\gtrsim (t\wedge r_0^2)^{\frac{d}{2}}
	\]
	by assumption \ref{ass:H1}.
	This implies in particular that
	\begin{align*}
		\widehat{p}_t^K(x)
		&\ge \sum_{z\in Z_1^K(x)}\mathrm{e}^{-\frac{|x_z^\perp|^2}{2t}}\varphi_{1}(x_z^*, t) \\
		&\ge \sum_{z\in Z_1^K(x)}\mathrm{e}^{-\frac{|x_z^\perp|^2}{2t}}\big( (2\pi t)^{-\frac{d}{2}}f_1(x_z^*, t)-|(2\pi t)^{-\frac{d}{2}}f_1(x_z^*, t)-\varphi_{1}(x_z^*, t)|\big) \\
		&\gtrsim \sum_{z\in Z_1^K(x)}\mathrm{e}^{-\frac{|x_z^\perp|^2}{2t}}\big( (1\wedge t^{-\frac{d}{2}}r_0^d)-m^{-\frac{\alpha}{d}}\log m\big) \\
		&\gtrsim (\log m)^{-\frac{d}{2}}\sum_{z\in Z_1^K(x)}\mathrm{e}^{-\frac{|x_z^\perp|^2}{2t}},
	\end{align*}
	where we use that $t^{-1}\ge(2\underline{t})^{-1}\gtrsim(\log m)^{-1}$.
	Combining these, we have
	\begin{align*}
		\frac{1}{\widehat{p}_t^K(x)}|s_0^K(x, t)|\sum_{z\in Z_1^K(x)}\mathrm{e}^{-\frac{|x_z^\perp|^2}{2t}}|(2\pi t)^{-\frac{d}{2}}f_1( x_z^*, t)-\varphi_1( x_z^*, t )|
		\lesssim |s_0^K(x, t)|m^{-\frac{\alpha}{d}}\varepsilon_{\underline{t}}(\log m)^{\frac{d+2}{2}}.
	\end{align*}

\smallskip

\noindent \textbf{Term \eqref{eq:err_decomp_2}}:
	For $z\in Z_2^K(x)$, we can no longer lower bound $\widehat{p}_t^K(x)$ as we did above.
	Instead, we have by definition that $\widehat{p}_t^K(x)\ge\underline{t}^{\frac{c_0-d}{2}+1}m^{-\frac{2(\alpha+1) }{d}}$.
	Furthermore, using Lemma \ref{lem:time_interpolation},
	\[
		|(2\pi t)^{-\frac{d}{2}}f_1(x_z^*, t)-\varphi_1(x_z^*, t )|
		\lesssim (\log m)^{\frac{d+2}{2}}m^{-\frac{\kappa}{d}}\varepsilon_{\underline{t}}
		\lesssim\underline{t}^{\frac{c_0-d}{2}+1}(\log m)^{\frac{d+2}{2}}m^{-\frac{3\alpha+2}{d}}\varepsilon_{\underline{t}},
		\]
		where we used that $\underline{t}\gtrsim m^{-\frac{2\alpha+2}{2\alpha+d}}\ge m^{-1}$.
	Thus it follows that
	\begin{align*}
		\frac{1}{\widehat{p}_t^K(x)}&|s_0^K(x, t)|\sum_{z\in Z_2^K(x)}\mathrm{e}^{-\frac{|x_z^\perp|^2}{2t}}|(2\pi t)^{-\frac{d}{2}}f_1(x_z^*, t)-\varphi_1(x_z^*, t)| \\
		&\lesssim\frac{m^{\frac{2(\alpha+1)}{d}}}{\underline{t}^{\frac{c_0-d}{2}+1}}|s_0^K(x, t)|\sum_{z\in Z_2^K(x)}\mathrm{e}^{-\frac{|x_z^\perp|^2}{2t}}\underline{t}^{\frac{c_0-d}{2}+1}(\log m)^{\frac{d+2}{2}}m^{-\frac{3\alpha+2}{d}}\varepsilon_{\underline{t}} \\
		&= |s_0^K(x, t)|(\log m)^{\frac{d+2}{2}}m^{-\frac{\alpha}{d}}\varepsilon_{\underline{t}}\sum_{z\in Z_2^K(x)}\mathrm{e}^{-\frac{|x_z^\perp|^2}{2t}} \\
		&\lesssim |s_0^K(x, t)|(\log m)^{\frac{d+2D+2}{2}}m^{-\frac{\alpha}{d}}\varepsilon_{\underline{t}},
	\end{align*}
	where in the last step we use that
	\[
		\#Z_2^K(x)
		\le\#\{z\in\Z^D\mid \n{z}_{\infty}\le K_{\underline{t}}\}
		\leq (2K_{\underline{t}}+1)^D
		\lesssim (\log m)^D.
	\]

\smallskip

\noindent \textbf{Term \eqref{eq:err_decomp_3}}:	Here again using Lemma \ref{lem:time_interpolation} we have $|(2\pi t)^{-\frac{d}{2}}f_1(x_z^*, t)-\varphi_1(x_z^*, t )|\lesssim m^{-\frac{\alpha}{d}}\varepsilon_{\underline{t}}\log m$ and $|(2\pi t)^{-\frac{d}{2}}f_2(x_z^*, t)-\varphi_2(x_z^*, t )|\lesssim \frac{1}{\sqrt{\underline{t}\wedge 1}}m^{-\frac{\alpha}{d}}\varepsilon_{\underline{t}}\log m$, and so by the exact same reasoning as in case \eqref{eq:err_decomp_1},
	\[
		\frac{1}{\widehat{p}_t^K(x)}\sum_{z\in Z_1^K(x)}\mathrm{e}^{-\frac{|x_z^\perp|^2}{2t}}|(2\pi t)^{-\frac{d}{2}}f_2(x_z^*, t)-\varphi_2(x_z^*, t)|
		\lesssim \frac{1}{\sqrt{\underline{t}\wedge 1}}m^{-\frac{\alpha}{d}}\varepsilon_{\underline{t}}(\log m)^{\frac{d+2}{2}}.
	\]
	Also, like in case \eqref{eq:err_decomp_1}, we have
	\begin{align*}
		\frac{1}{\widehat{p}_t^K(x)}\sum_{z\in Z_{1}^K(x)}\mathrm{e}^{-\frac{|x_z^\perp|^2}{2t}}\frac{|x_z^\perp|}{t}|(2\pi t)^{-\frac{d}{2}}f_1(x_z^*, t)-\varphi_1(x_z^*, t)|
		&\lesssim (\log m)^{\frac{d+2}{2}}m^{-\frac{\alpha}{d}}\varepsilon_{\underline{t}}\frac{\sum_{z\in Z_{1}^K(x)}\mathrm{e}^{-\frac{|x_z^\perp|^2}{2t}}\frac{|x_z^\perp|}{t}}{\sum_{z\in Z_{1}^K(x)}\mathrm{e}^{-\frac{|x_z^\perp|^2}{2t}}},
	\end{align*}
	and to bound this, we first note that for all $z\in\Z^D$ with $\n{z}_{\infty}\le K_{\underline{t}}$, we have $x_z\in[-K_{\underline{t}}, K_{\underline{t}}]^D$, whence $|x_z^\perp|\le 2\sqrt{D}K_{\underline{t}}\lesssim\sqrt{t}\log m$ and so
	\[
		(\log m)^{\frac{d+2}{2}}m^{-\frac{\alpha}{d}}\varepsilon_{\underline{t}}\frac{\sum_{z\in Z_{1}^K(x)}\mathrm{e}^{-\frac{|x_z^\perp|^2}{2t}}\frac{|x_z^\perp|}{t}}{\sum_{z\in Z_{1}^K(x)}\mathrm{e}^{-\frac{|x_z^\perp|^2}{2t}}}
		\lesssim\frac{1}{\sqrt{\underline{t}\wedge 1}}(\log m)^{\frac{d+4}{2}} m^{-\frac{\alpha}{d}}\varepsilon_{\underline{t}}.
	\]

\smallskip

\noindent \textbf{Term \eqref{eq:err_decomp_4}}: Repeating the arguments used in cases \eqref{eq:err_decomp_2} and \eqref{eq:err_decomp_3}, we obtain
	\begin{align*}
		\frac{1}{\widehat{p}_t^K(x)}&\sum_{z\in Z_2^K(x)}\mathrm{e}^{-\frac{|x_z^\perp|^2}{2t}}|(2\pi t)^{-\frac{d}{2}}f_2(x_z^*, t)-\varphi_2(x_z^*, t)| \\
		&\lesssim \frac{1}{\sqrt{\underline{t}\wedge 1}}(\log m)^{\frac{d+3}{2}}m^{-\frac{\alpha}{d}}\varepsilon_{\underline{t}}\sum_{z\in Z_2^K(x)}\mathrm{e}^{-\frac{|x_z^\perp|^2}{2t}} \\
		&\lesssim \frac{1}{\sqrt{\underline{t}\wedge 1}}(\log m)^{\frac{d+2D+3}{2}}m^{-\frac{\alpha}{d}}\varepsilon_{\underline{t}},
	\end{align*}
	and, since $\mathrm{e}^{-\frac{r}{2t}}\frac{r}{t}\le\frac{1}{\sqrt{t}}$ for all $r>0$,
	\begin{align*}
		\frac{1}{\widehat{p}_t^K(x)}&\sum_{z\in Z_{2}^K(x)}\mathrm{e}^{-\frac{|x_z^\perp|^2}{2t}}\frac{|x_z^\perp|}{t}|(2\pi t)^{-\frac{d}{2}}f_1(x_z^*, t)-\varphi_1(x_z^*, t)|
		\lesssim\frac{1}{\sqrt{\underline{t}\wedge 1}}(\log m)^{\frac{d+2D+3}{2}}m^{-\frac{\alpha}{d}}\varepsilon_{\underline{t}}.
	\end{align*}

\smallskip

\noindent Combining all of these different cases, we see that for $x\in M_{\rho, \underline{t}}$
	\begin{equation}
		\label{eq:s_0^K_bound}
		|s_0^K(x, t)-\widehat{s}_0^K(x, t)|
		\lesssim\Big(|s_0^K(x, t)|+\frac{1}{\sqrt{\underline{t}\wedge 1}}\Big)(\log m)^{\frac{d+2D+3}{2}}m^{-\frac{\alpha}{d}}\varepsilon_{\underline{t}},
	\end{equation}
	whence
	\[
		\int_{\underline{t}}^{2\underline{t}}\mathbb{E}\big[|s_0^K(X_t, t)-\widehat{s}_0^K(X_t, t)|^2\bm{1}_{M_{\rho, t}}(X_t)\big]\diff{t}
		\lesssim \Big(\int_{\underline{t}}^{2\underline{t}}\mathbb{E}[|s_0^K(X_t, t)|^2]\diff{t}+\frac{\underline{t}}{\underline{t}\wedge 1}\Big)(\log m)^{d+2D+3}m^{-\frac{2\alpha}{d}}\varepsilon_{\underline{t}}.
	\]
	Now, to estimate the remaining integral, we first have by Lemma \ref{lem:truncation_error} and our choice of $K$ that
	\begin{align*}
		\int_{\underline{t}}^{2\underline{t}}\mathbb{E}[|s_0^K(X_t, t)|^2]\diff{t}
		&\leq 2 \int_{\underline{t}}^{2\underline{t}}\big(\mathbb{E}[|s_0(X_t, t)|^2]+\mathbb{E}[|s_0(X_t, t)-s_0^K(X_t, t)|^2]\big)\diff{t} \\
		&\lesssim \int_{\underline{t}}^{2\underline{t}}\mathbb{E}[|s_0(X_t, t)|^2]\diff{t}+(\log m)^{\frac{D}{2}}m^{-\frac{2(\alpha+1)}{d}}.
	\end{align*}
 	Furthermore, since $(x,t) \mapsto p_t(x)$ is a positive solution of the heat equation $\partial_t u(t,x) = \frac{1}{2}\Delta u(t,x)$ with Neumann boundary conditions on $M \times (0,\infty)$ for the compact, convex set $M = [0,1]^D$, the Li--Yau bound \cite[Theorem 1.1]{liyau} yields that
	\[\lvert s_0(x,t) \rvert^2 = \lvert \nabla \log p_t(x) \rvert^2 \lesssim \partial_t \log p_t(x) + \frac{D}{t}, \quad (x,t) \in [0,1]^d \times (0,\infty).\]
	Thus,
	\[
		\int_{\underline{t}}^{2\underline{t}}\mathbb{E}[|s_0(X_t, t)|^2]\diff{t}
		\lesssim \int_{\underline{t}}^{2\underline{t}}\mathbb{E}\Big[\Big(\frac{\partial_tp_t(X_t)}{p_t(X_t)}\Big]+\frac{D}{t}\Big)\diff{t}
		=D\int_{\underline{t}}^{2\underline{t}}\frac{1}{t}\diff{t}
		= D \log 2,
	\]
	where we use that by Fubini--Tonelli's theorem
	\[
		\int_{\underline{t}}^{2\underline{t}}\mathbb{E}\Big[\frac{\partial_tp_t(X_t)}{p_t(X_t)}\Big]\diff{t}
		=\int_{\underline{t}}^{2\underline{t}}\int_{[0, 1]^D}\partial_tp_t(x)\,\mathrm{d}x\diff{t}
		=\int_{[0, 1]^D}\int_{\underline{t}}^{2\underline{t}}\partial_tp_t(x)\diff{t}\,\mathrm{d}x
		=0.
	\]
	Inserting this into the above, we have that
	\begin{equation}
		\label{eq:s^K_hat_error}
		\int_{\underline{t}}^{2\underline{t}}\mathbb{E}\big[|s_0^K(X_t, t)-\widehat{s}_0^K(X_t, t)|^2\bm{1}_{M_{\rho, t}}(X_t)\big]\diff{t}
		\lesssim (\log m)^{d+2D+3}m^{-\frac{2\alpha}{d}}\varepsilon_{\underline{t}}^2,
	\end{equation}
	as desired.
	With this established, all that is left is to approximate $\widehat{s}_0^K\bm{1}_{M_{\rho, \underline{t}}}$ by a neural network $\varphi_{s_0}$.

	To this end, letting $\varphi^{\mathrm{exp}}_{\ell}$, $\varphi^{\mathrm{mult}}_{\ell}$, $\varphi^{\mathrm{rec}}_{\ell}$ and $\varphi^{\mathrm{norm}}_{\ell}$ be as in Lemmas \ref{lem:exp_network}, \ref{lem:mult_network_asymp}, \ref{lem:rec_network_asymp} and \ref{lem:norm_network} and setting
	\[
		\widehat{\varphi}^{\mathrm{exp}}_{\ell}(x, t)
		=\varphi^{\mathrm{exp}}_{\ell}\Big(\frac{1}{2}\varphi^{\mathrm{mult}}_{\ell}\big(\varphi^{\mathrm{rec}}_{\ell}(t), \varphi^{\mathrm{norm}}_{\ell}(x)\big)\Big),
	\]
	we have
	\begin{align*}
		\big|\mathrm{e}^{-\frac{|x|^2}{2t}}-\widehat{\varphi}_{\ell}^{\mathrm{exp}}(x, t)\big|
		&\le 2^{-\ell}+\big|\mathrm{e}^{-\frac{|x|^2}{2t}}-\mathrm{e}^{-\frac{1}{2}\varphi^{\mathrm{mult}}_{\ell}(\varphi^{\mathrm{rec}}_{\ell}(t), \varphi^{\mathrm{norm}}_{\ell}(x))}\big| \\
		&\lesssim 2^{-\ell}+\Big|\frac{|x|^2}{t}-\varphi^{\mathrm{mult}}_{\ell}\big(\varphi^{\mathrm{rec}}_{\ell}(t), \varphi^{\mathrm{norm}}_{\ell}(x)\big)\Big| \\
		&\lesssim K_{\underline{t}}2^{-\ell}+\Big|\frac{|x|^2}{t}-\varphi^{\mathrm{rec}}_{\ell}(t)\varphi^{\mathrm{norm}}_{\ell}(x)\Big| \\
		&\lesssim K_{\underline{t}}2^{-\ell}+K_{\underline{t}}^2\Big|\frac{1}{t}-\varphi^{\mathrm{rec}}_{\ell}(t)\Big|+\frac{1}{\underline{t}}\big||x|^2-\varphi^{\mathrm{norm}}_{\ell}(x)\big| \\
		&\lesssim \frac{K_{\underline{t}}^2}{\underline{t}}2^{-\ell}
	\end{align*}
	for all $x\in\R^D$ with $\n{x}_{\infty}\le K_{\underline{t}}$ and all $t\in[\underline{t}, 2\underline{t}]$.
	Next, let
	\begin{align*}
		\varphi_{h_1}(x, t)
		&=\varphi^{\mathrm{mult}}_{\ell}\big(\widehat{\varphi}^{\mathrm{exp}}_{\ell}(x^\perp, t), \varphi_1(x^*, t)\big), \\
		\widetilde{\varphi}_1(x, t)
		&=\varphi^{\mathrm{mult}, D}_{\ell}\big(\varphi_1(x^*, t), \varphi^{\mathrm{mult}, D}_{\ell}(\varphi^{\mathrm{rec}}_{\ell}(t), x^{\perp})\big),\quad\text{and} \\
		\varphi_{h_2}(x, t)
		&=\varphi_{\ell}^{\mathrm{mult}, D}\big(\widehat{\varphi}_{\ell}^{\mathrm{exp}}( x^{\perp}, t), \widetilde{\varphi}_1(x, t)+A\varphi_2(x^*, t)\big),
	\end{align*}
	and it follows that
	\[
		|\widehat{h}_1(x, t)-\varphi_{h_1}(x, t)|
		\lesssim |\varphi_1(x^*, t)|\Big(1+\frac{K_{\underline{t}}^2}{\underline{t}}\Big)2^{-\ell}
		\lesssim (2\pi\underline{t})^{-\frac{d}{2}}\frac{K_{\underline{t}}^2}{\underline{t}}2^{-\ell},
	\]
	where we once again use that we can assume $|\varphi_1|\le(2\pi\underline{t})^{-\frac{d}{2}}|f_1|\le(2\pi \underline{t})^{-\frac{d}{2}}$ by Lemma \ref{lem:capping_network}.
	Similarly, 
	\begin{align*}
		\Big|\widetilde{\varphi}_1(x, t)-\frac{x^{\perp}}{t}\varphi_1(x, t)\Big|
		\lesssim K_{\underline{t}}(2\pi\underline{t})^{-\frac{d}{2}}\Big(1+\frac{1}{\underline{t}}\Big)2^{-\ell},
	\end{align*}
	and so, since we may once again assume $|\widetilde{\varphi}_1+A\varphi_2|\lesssim(2\pi\underline{t})^{-\frac{d}{2}}\frac{K_{\underline{t}}}{\underline{t}}$,
	\begin{align*}
		|\widehat{h}_2(x, t)-\varphi_{h_2}(x, t)|
		\lesssim (2\pi\underline{t})^{-\frac{d}{2}}\frac{K_{\underline{t}}^2}{\underline{t}} 2^{-\ell}.
	\end{align*}
	Setting $\varphi_{p_t^K}(x, t)=\sum_{\substack{z\in\Z^D \\ \n{z}_{\infty}\le K_{\underline{t}}}}\varphi_{h_1}(x_z, t)$ and $\varphi_{\nabla p_t^K}(x, t)=-\sum_{\substack{z\in\Z^D \\ \n{z}_{\infty}\le K_{\underline{t}}}}(-1)^z\varphi_{h_2}(x_z, t)$, it thus follows that
	\[
		|\varphi_{p_t^K}(x, t)-\widehat{p}_t^K(x, t)|,
		|\varphi_{\nabla p_t^K}(x, t)-\widehat{\nabla p}_t^K(x, t)|
		\lesssim (2\pi \underline{t})^{-\frac{d}{2}}\frac{K_{\underline{t}}^{D+2}}{\underline{t}}2^{-\ell}.
	\]
	Finally, let
	\[
		\varphi_{s_0}(x, t)
		=\varphi^{\mathrm{mult}, D}_{\ell}\big(\varphi^{\mathrm{rec}}_{\ell}(\varphi_{p_t^K}(x, t)\vee \underline{t}^{\frac{c_0-d}{2}+1}m^{-\frac{2(\alpha+1) }{d}}), \varphi_{\nabla p_t^K}(x, t)\big),
	\]
	and we have for $x\in M_{\rho, \underline{t}}$ that
	\begin{align*}
		|\varphi_{s_0}(x, t)-\widehat{s}_0^K(x, t)|
		&\le|\varphi_{s_0}(x, t)-\varphi^{\mathrm{rec}}_{\ell}(\varphi_{p_t^K}(x, t)\vee c\underline{t}^{\frac{c_0-d}{2}+1}m^{-\frac{2(\alpha+1) }{d}})\varphi_{\nabla p_t^K}(x, t)| \\
		&\qquad+|\varphi_{\nabla p_t^K}(x, t)|\Big|\varphi^{\mathrm{rec}}_{\ell}(\varphi_{p_t^K}(x, t)\vee c\underline{t}^{\frac{c_0-d}{2}+1}m^{-\frac{2(\alpha+1) }{d}})-\frac{1}{\varphi_{p_t^K}(x, t)\vee c\underline{t}^{\frac{c_0-d}{2}+1}m^{-\frac{2(\alpha+1) }{d}}}\Big| \\
		&\qquad+\Big|\frac{\varphi_{\nabla p_t^K}(x, t)}{\varphi_{p_t^K}(x, t)\vee c\underline{t}^{\frac{c_0-d}{2}+1}m^{-\frac{2(\alpha+1) }{d}}}-\frac{\widehat{\nabla p}_t^K(x)}{\widehat{p}_t^K(x)}\Big|.
	\end{align*}
	Here, the first two terms together are bounded by $|\varphi_{\nabla p_t^K}(x, t)|(c\underline{t}^{-\frac{c_0-d}{2}-1}m^{\frac{2(\alpha+1) }{d}}+1)2^{-\ell}$, where again $|\varphi_{\nabla p_t^K}(x, t)|\lesssim(2\pi\underline{t})^{-\frac{d}{2}}\frac{K_{\underline{t}}^D}{\underline{t}}$ by Lemma \ref{lem:capping_network}.
	For the third term, we have
	\begin{align*}
		\Big|\frac{\varphi_{\nabla p_t^K}(x, t)}{\varphi_{p_t^K}(x, t)\vee c\underline{t}^{\frac{c_0-d}{2}+1}m^{-\frac{2(\alpha+1) }{d}}}-\frac{\widehat{\nabla p}_t^K(x)}{\widehat{p}_t^K(x, t)}\Big|
		&\le\frac{m^{\frac{2(\alpha+1)}{d}}}{c\underline{t}^{\frac{c_0-d}{2}+1}}\Big(|\widehat{s}_0^K(x, t)| |\widehat{p}_t^K(x, t)-\varphi_{p_t^K}(x, t)|+|\widehat{\nabla p}_t^K(x, t)-\varphi_{\nabla p_t^K}(x, t)|\Big) \\
		&\lesssim \frac{K_{\underline{t}}^{D+2}m^{\frac{2(\alpha+1)}{d}}}{\underline{t}^{\frac{c_0}{2}+2}}(|\widehat{s}_0^K(x, t)|+1)2^{-\ell}.
	\end{align*}
	Here, since $|s_0^K(x, t)|\lesssim\frac{1}{t}$ by (the proof of) Lemma \ref{lem:affine_approx_a} and $|s_0^K(x, t)-\widehat{s}_0^K(x, t)|\lesssim \frac{1}{t}$ by \eqref{eq:s_0^K_bound}, we have also that $|\widehat{s}_0^K(x, t)|\lesssim\frac{1}{t}$ for $x\in M_{\rho, \underline{t}}$, and hence
	\[
		\Big|\frac{\varphi_{\nabla p_t^K}(x, t)}{\varphi_{p_t^K}(x, t)\vee c\underline{t}^{\frac{c_0-d}{2}+1}m^{-\frac{2(\alpha+1) }{d}}}-\frac{\widehat{\nabla p}_t^K(x)}{\widehat{p}_t^K(x, t)}\Big|
		\lesssim \frac{K_{\underline{t}}^{D+2}m^{\frac{2(\alpha+1)}{d}}}{\underline{t}^{\frac{c_0}{2}+3}}2^{-\ell},
	\]
	whereby all in all
	\[
		|\varphi_{s_0}(x, t)-\widehat{s}_0(x, t)|
		\lesssim\frac{K_{\underline{t}}^{D+2}m^{\frac{2(\alpha+1)}{d}}}{\underline{t}^{\frac{c_0}{2}+3}}2^{-\ell}
		\lesssim (\log m)^{D+2}m^{\frac{2(\alpha+1)}{d}+\frac{c_0}{2}+3}2^{-\ell}.
	\]
	Thus, setting $\ell=\lceil(\frac{3(\alpha+1)}{d}+\frac{c_0}{2}+3)\log_2 m+(D+2)\log_2\log m\rceil\lesssim\log m$ ensures that this is bounded by $m^{-\frac{\alpha+1}{d}}$.
	As for $x\notin M_{\rho, t}$, we can once again assume by Lemmas \ref{lem:capping_network} and \ref{lem:affine_approx_e} that $|\varphi_{s_0}(x, t)|\lesssim\frac{\sqrt{\rho+\log \underline{t}^{-1}}}{\sqrt{\underline{t}\wedge 1}}\lesssim\frac{\sqrt{\log m}}{\sqrt{\underline{t}\wedge 1}}$ (since this is true of $s_0^K\bm{1}_{M_{\rho, t}}$), and so it follows by Lemma \ref{lem:brownian_bound_a}
	\begin{align*}
		\mathbb{E}[|\varphi_{s_0}(X_t, t)-\widehat{s}_0^K(X_t, t)\bm{1}_{M_{\rho, t}}(X_t)|^2]
		&\lesssim m^{-\frac{2(\alpha+1)}{d}}\mathbb{P}(X_t\in M_{\rho, t})+\frac{\log m}{\underline{t}\wedge 1}\mathbb{P}(X_t\notin M_{\rho, t}) \\
		&\lesssim m^{-\frac{2(\alpha+1)}{d}}+\frac{\log m}{\underline{t}\wedge 1}\rho^{\frac{D}{2}}\mathrm{e}^{-\rho} \\
		&\lesssim (\log m)^{\frac{D+2}{2}}m^{-\frac{2(\alpha+1)}{d}}.
	\end{align*}
	Finally, combining this, \eqref{eq:s^K_hat_error} and Lemmas \ref{lem:affine_approx_b} and \ref{lem:truncation_error} along with repeated use of the triangle inequality, we have that
	\begin{align*}
		\int_{\underline{t}}^{2\underline{t}}\mathbb{E}[|\varphi_{s_0}(X_t, t)-s_0(X_t, t)|^2]\diff{t}
		&\lesssim \int_{\underline{t}}^{2\underline{t}}\mathbb{E}[|\varphi_{s_0}(X_t, t)-\widehat{s}_0^K(X_t, t)\bm{1}_{M_{\rho, t}}(X_t)|^2]\diff{t} \\
		&\qquad +\int_{\underline{t}}^{2\underline{t}}\mathbb{E}\big[|\widehat{s}_0^K(X_t, t)-s_0^K(X_t, t)|^2\bm{1}_{M_{\rho, t}}(X_t)\big]\diff{t}\\
		&\qquad +\int_{\underline{t}}^{2\underline{t}}\mathbb{E}\big[|s_0^K(X_t, t)-s_0(X_t, t)|^2\bm{1}_{M_{\rho, t}}(X_t)\big]\diff{t}\\
		&\qquad +\int_{\underline{t}}^{2\underline{t}}\mathbb{E}[s_0(X_t, t)\bm{1}_{M_{\rho, t}}(X_t)-s_0(X_t, t)|^2]\diff{t} \\
		&\lesssim (\log m)^{d+2D+3}m^{-\frac{2\alpha}{d}}\varepsilon_{\underline{t}}^2,
	\end{align*}
	as desired.
	As for the size of the network, we first have that for our choice of $\ell$ (and recalling that all sizes being multiplied or divided are bounded by $\underline{t}^{\frac{d-c_0}{2}-1}m^{\frac{2(\alpha+1) }{d}}\lesssim m^{\frac{2(\alpha+1)}{d}+\frac{c_0-d}{2}+1}$)
	\begin{align*}
		\varphi^{\mathrm{exp}}_{\ell}
		&\in\widetilde{\Phi}( (\log m)^2(\log\log m)^2, \log m \log\log m,  (\log m)^3(\log\log m)^3, 1) \\
		\varphi^{\mathrm{mult}}_{\ell}
		&\in\widetilde{\Phi}(\log m, 1, \log m, m^{\frac{2(\alpha+1)}{d}+\frac{c_0-d}{2}+1}) \\
		\varphi^{\mathrm{rec}}_{\ell}
		&\in\widetilde{\Phi}(\log m\log\log m, \log m, \log m\log\log m, m^{\frac{2(\alpha+1)}{d}+\frac{c_0-d}{2}+1}) \\
		\varphi^{\mathrm{norm}}_{\ell}
		&\in\widetilde{\Phi}(\log m, 1, \log m, \log m),
	\end{align*}
	whereby $\widehat{\varphi}^{\mathrm{exp}}_{\ell}\in\widetilde{\Phi}((\log m)^2(\log\log m)^2, \log m \log\log m,  (\log m)^3(\log\log m)^3, m^{\frac{2(\alpha+1)}{d}+\frac{c_0-d}{2}+1})$.
	Since the size of $\varphi^{\mathrm{mult}}_{\ell}$ is comparably negligible to those of $\varphi_1$ and $\widehat{\varphi}^{\mathrm{exp}}_{\ell}$, this implies that 
	\[
		\varphi_{h_1}
		\in \begin{cases}
			\widetilde{\Phi}\big((\log m)^2(\log\log m)^2, m\log m,  m\log^2m, m^{\frac{2(\alpha+1)}{d}+\frac{c_0-d}{2}+1}\vee m^{\nu}\big), &\text{ if }\underline{t}\le\frac{1}{2}m^{-\frac{2-\delta}{d}}\\
			\widetilde{\Phi}\big((\log m)^2(\log\log m)^2, m'\log m, m'(\log m)^2, m^{\frac{2(\alpha+1)}{d}+\frac{c_0-d}{2}+1}\big), &\text{ if }\underline{t}>\frac{1}{2}m^{-\frac{2-\delta}{d}}
		\end{cases}
	\]
	and similar analysis shows that the same is true of $\varphi_{h_2}$.
	Finally, summing $\lceil K_{\underline{t}}\rceil^D\lesssim(\log m)^D$ copies of these yields that
	\[
		\varphi_{p_t^K}, \varphi_{\nabla p_t^K}
		\in\begin{cases}
			\widetilde{\Phi}\big((\log m)^2(\log\log m)^2, m(\log m)^{D+1},  m(\log m)^{D+2}, m^{\frac{2(\alpha+1)}{d}+\frac{c_0-d}{2}+1}\vee m^{\nu}\big), &\text{ if }\underline{t}\le\frac{1}{2}m^{-\frac{2-\delta}{d}}\\
			\widetilde{\Phi}\big((\log m)^2(\log\log m)^2, m'(\log m)^{D+1}, m'(\log m)^{D+2}, m^{\frac{2(\alpha+1)}{d}+\frac{c_0-d}{2}+1}\big), &\text{ if }\underline{t}>\frac{1}{2}m^{-\frac{2-\delta}{d}}
		\end{cases}
	\]
	and since the remaining networks are once again negligible to this, we have also
	\[
		\varphi_{s_0}
		\in\begin{cases}
			\widetilde{\Phi}\big((\log m)^2(\log\log m)^2, m(\log m)^{D+1},  m(\log m)^{D+2}, m^{\frac{2(\alpha+1)}{d}+\frac{c_0-d}{2}+1}\vee m^{\nu}\big), &\text{ if }\underline{t}\le\frac{1}{2}m^{-\frac{2-\delta}{d}}\\
			\widetilde{\Phi}\big((\log m)^2(\log\log m)^2, m'(\log m)^{D+1}, m'(\log m)^{D+2}, m^{\frac{2(\alpha+1)}{d}+\frac{c_0-d}{2}+1}\big), &\text{ if }\underline{t}>\frac{1}{2}m^{-\frac{2-\delta}{d}}
		\end{cases},
	\]
	as desired.
\end{proof}

\section{Basic neural network approximation results}\label{app:basic_neural}
\begin{lemma}[\cite{holk24} Lemma 3.10]
	\label{lem:mult_network_asymp}
	For $m\in\N$ and $C\ge1$, there exist neural networks $\varphi_{m}^{\mathrm{mult}}\in\widetilde{\Phi}(m, 1, m, C)$ and $\varphi_{m}^{\mathrm{mult}, d}\in\widetilde{\Phi}(m, d, dm, C)$ satisfying
	\[
		|\varphi^{\mathrm{mult}}_m(x, y)-xy|
		\le C2^{-m},\quad x\in[0, 1],y\in[-C, C],
	\]
	and
	\[
		|\varphi^{\mathrm{mult}, d}_m(x, y)-xy|
		\le\sqrt{d}C2^{-m},\quad x\in[0, 1],y\in[-C, C]^d.
	\]
	These also satisfy $\varphi^{\mathrm{mult}}_m(x, 0)=\varphi^{\mathrm{mult}}_m(0, y)=0$.
\end{lemma}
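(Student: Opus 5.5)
We construct the multiplication network explicitly along the lines of the classical Yarotsky construction of approximate squaring. The idea is to first approximate $x\mapsto x^2$ on $[0,1]$ and then use polarisation. Let $g\colon[0,1]\to[0,1]$ be the hat function $g(x)=2\sigma(x)-4\sigma(x-\tfrac12)+2\sigma(x-1)$, and let $g_s=g\circ\cdots\circ g$ denote its $s$-fold composition, a sawtooth with $2^{s-1}$ teeth. The identity
\[
f_m(x)\coloneqq x-\sum_{s=1}^m \frac{g_s(x)}{2^{2s}}
\]
gives the piecewise linear interpolant of $x^2$ on the grid $2^{-m}\mathbb{Z}\cap[0,1]$. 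Hence $\lvert f_m(x)-x^2\rvert\le 2^{-2m-2}$ on $[0,1]$. The map $f_m$ is realised by a ReLU network of depth $\asymp m$ and constant width. We pass the running partial sum and the current $g_s(x)$ forward through identity channels, using $y=\sigma(y)$ for $y\ge0$ and $y=\sigma(y)-\sigma(-y)$ in general. Its number of nonzero weights is therefore $\asymp m$, and all weights are bounded by $4$. So $f_m\in\widetilde\Phi(m,1,m,1)$.

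For the product with $x\in[0,1]$ and $y\in[-C,C]$, we rescale to $\tilde y=(y+C)/(2C)\in[0,1]$, writing $y=C(2\tilde y-1)$. Then $xy=2Cx\tilde y-Cx$. We handle $x\tilde y$ by polarisation,
\[
x\tilde y=2\Big(\tfrac{x+\tilde y}{2}\Big)^2-\tfrac12 x^2-\tfrac12\tilde y^2,
\]
where all arguments lie in $[0,1]$. We replace each square by $f_{m}$ applied in parallel. The resulting error is at most $3\cdot 2^{-2m-1}$ before multiplying by $2C$, which yields the bound $C2^{-m}$ with room to spare.

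The outer linear combinations with coefficients of size $\lesssim C$ are placed in the last affine layer. The input rescaling $y\mapsto (y+C)/(2C)$ uses weights $\le 1$ plus the bias $1/2$, so the weight bound is $B\lesssim C$. This gives $\varphi^{\mathrm{mult}}_m\in\widetilde\Phi(m,1,m,C)$, up to constants in width, which the $\lesssim$ convention absorbs. For the vector version we stack $d$ copies in parallel, sharing the input $x$. Stacking multiplies width and sparsity by $d$, and the Euclidean error is at most $\sqrt d\,C2^{-m}$.

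The main obstacle is the exact vanishing property $\varphi^{\mathrm{mult}}_m(x,0)=\varphi^{\mathrm{mult}}_m(0,y)=0$, which polarisation does not give automatically. We would fix this by using the symmetric form
\[
xy=\tfrac{C}{2}\Big(\big(\tfrac{x}{2}+\tfrac{y}{2C}+\tfrac12\big)^2\cdot 4-\big(\tfrac{x}{2}-\tfrac{y}{2C}+\tfrac12\big)^2\cdot 4\Big)-Cx\cdot\text{const}.
\]
A cleaner route is to note that $f_m$ is exactly the interpolant, so it is exact at grid points and affine in between. Alternatively, we can enforce the property by post-composing with the clipping $\min\{\sigma(\cdot),\,C x\}$-type truncations: we output $\sigma(\psi)\wedge C\sigma(x)\wedge C\sigma(|y|/C)$ minus the analogous negative part. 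Here the minima are computed with ReLUs via $a\wedge b=a-\sigma(a-b)$. This forces zero output whenever $x=0$ or $y=0$, and it does not increase the error, since the true product satisfies the same constraints ($\lvert xy\rvert\le \min(Cx,\lvert y\rvert)$). It only adds constant depth and size.
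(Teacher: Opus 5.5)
Your construction is correct, but only through the last of your three suggested fixes for the vanishing property. Commit to that one and drop the other two. The squaring, rescaling, polarisation and stacking steps give the stated error and size bounds.

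The displayed ``symmetric form'' is wrong as written. With $a=\tfrac x2+\tfrac12$ and $b=\tfrac{y}{2C}$ one has $\tfrac C2\cdot 4\,[(a+b)^2-(a-b)^2]=2(x+1)y$. So the correction would have to be a multiple of $y$, not of $Cx$. Moreover, $a\pm b$ ranges over $[0,\tfrac32]$, where $f_m$ no longer interpolates the square. Your remark that $f_m$ is exact at grid points and affine in between does not by itself give $\varphi^{\mathrm{mult}}_m(0,y)=0$.

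The clipping step, by contrast, is sound, and you should state the identity behind it. With $c\coloneqq\min\{C\sigma(x),\lvert y\rvert\}$, your output $\sigma(\psi)\wedge c-\sigma(-\psi)\wedge c$ equals $(\psi\wedge c)\vee(-c)$, the projection of $\psi$ onto $[-c,c]$. Since $xy\in[-c,c]$ on the domain, the error cannot increase, and $c=0$ whenever $x=0$ or $y=0$. Carrying $Cx$ and $\sigma(\pm y)$ through the $\asymp m$ layers by identity channels costs $O(m)$ nonzero weights, so you stay in $\widetilde\Phi(m,1,m,C)$.

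There is also an arithmetic slip. The three squaring errors add to $(2+\tfrac12+\tfrac12)2^{-2m-2}=3\cdot2^{-2m-2}$, which gives $3C2^{-2m-1}\le C2^{-m}$ for all $m\ge1$. Your stated bound $3\cdot 2^{-2m-1}$ would fail at $m=1$.

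The paper gives no proof of this lemma; it is imported from \cite{holk24}. Your route differs from the usual constructions of such multiplication networks (e.g.\ Schmidt-Hieber's) in how the zeros are obtained. There, the zeros come from the structure of the squaring approximant rather than from an extra clipping layer. The interpolation error $e=f_m-(\cdot)^2$ is $2^{-m}$-periodic and even, so it takes equal values at points symmetric about a grid point. For instance, in
\[
xy=4C\Big[\big(\tfrac x4+\tfrac12+\tfrac{y}{4C}\big)^2-\big(\tfrac x4+\tfrac12-\tfrac{y}{4C}\big)^2\Big]-2y
\]
all arguments lie in $[\tfrac14,1]$. Replacing both squares by $f_m$ costs at most $2C\,2^{-2m}\le C2^{-m}$. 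The two terms coincide when $y=0$. When $x=0$, the errors $e(\tfrac12\pm\tfrac{y}{4C})$ cancel because $\tfrac12\in 2^{-m}\mathbb{Z}$, so the output vanishes exactly.

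Equivalently, one can build an exactly vanishing multiplier $\widetilde\varphi$ on $[0,1]^2$ and set $\varphi(x,y)=C[\widetilde\varphi(x,\sigma(y)/C)-\widetilde\varphi(x,\sigma(-y)/C)]$. The vanishing makes this sign split exact. That route needs no extra channels or clipping layers. Your clipping trick is more robust: it works on top of any approximate multiplier and additionally gives $\lvert\varphi\rvert\le\min\{Cx,\lvert y\rvert\}$.
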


\begin{lemma}[\cite{holk24} Lemma 3.11]
	\label{lem:rec_network_asymp}
	For $m,\underline{k},\overline{k}\in\N$, there exists a neural network $\varphi_{m}^{\mathrm{rec}}\in\widetilde{\Phi}( (k+m)\log(k+m), k, (k+m)\log(k+m), 2^k)$, where $k=\underline{k}+\overline{k}$, satisfying
	\[
		|\varphi_{m}^{\mathrm{rec}}(x)-x^{-1}|
		\le2^{-m},\qquad x\in[2^{-\underline{k}}, 2^{\overline{k}}].
	\]
\end{lemma}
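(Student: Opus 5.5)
Since the statement is cited from \cite{holk24}, Lemma~3.11, I would reproduce the construction of a reciprocal network on a dyadic range; it is the standard combination of range reduction and Newton/series approximation. The plan is first to rescale the input to the interval $[1/2,1]$ (or $[2^{-k},1]$), then approximate $1/y$ there, and finally undo the scaling. Concretely, put $y = 2^{-\overline{k}}x\in[2^{-k},1]$, so that $x^{-1}=2^{-\overline{k}}y^{-1}$; the factor $2^{-\overline{k}}$ is a single weight bounded by $1$, and the weight bound $2^k$ in the class comes from the unscaling on the other side.

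To approximate $1/y$ for $y\in[2^{-k},1]$, I would write $1/y=\sum_{j\ge0}(1-y)^j$. This converges only geometrically with ratio $1-2^{-k}$, which is too slow. A better route is therefore a piecewise decomposition of $[2^{-k},1]$ into the $k$ dyadic blocks $I_j=[2^{-j-1},2^{-j}]$. On $I_j$ one has $1/y = 2^{j}\cdot 1/(2^j y)$ with $2^jy\in[1/2,1]$, and there the Neumann series in $1-2^jy\in[0,1/2]$ converges like $2^{-N}$. Truncating at $N\asymp m+k$ terms and realising the powers via $\lceil\log_2 N\rceil$-fold repeated squaring with the multiplication networks of Lemma~\ref{lem:mult_network_asymp} (each at precision $2^{-(m+k)}$) gives depth $\asymp(k+m)\log(k+m)$ and total parameter count of the same order. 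Equivalently, one can use the product formula $1/y'=\prod_{i\ge0}(1+(1-y')^{2^i})$ with $\log(k+m)$ factors, which explains the logarithmic factor in the depth.

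The $k$ blocks are then glued with a ReLU partition of unity $\{\chi_j\}$ subordinate to slightly enlarged blocks. Each $\chi_j$ is piecewise linear with at most four breakpoints and slopes $\lesssim 2^{k}$, which is where the norm bound $B\lesssim 2^k$ enters. The final network is $\sum_j \varphi^{\mathrm{mult}}(\chi_j(y),\,2^{j}g_N(2^jy))$, which has width $\asymp k$. For the error, each summand contributes $2^{j}\cdot 2^{-(m+k)}\le 2^{-m}$ up to constants, and at most two $\chi_j$ are active at any point. Adjusting $m$ by an additive constant then gives the claimed $2^{-m}$ accuracy.

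I expect the main obstacle to be propagating errors through the composed multiplications. The intermediate quantities stay in $[0,1]$ only before the $2^j$ rescaling, so all products should be taken before the rescaling, with the weight $2^j\le 2^k$ applied as a final linear layer. Controlling the relative error there requires running the multiplications at precision $2^{-(m+k)}$, which is exactly why $k+m$ rather than $m$ appears in the depth and sparsity.
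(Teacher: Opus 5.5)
Your building blocks work. After rescaling to $[2^{-k},1]$, a single approximant $g_N$ of $1/y'$ on $[1/2,1]$ can be built from the truncated Neumann series by $\lceil\log_2 N\rceil$ rounds of repeated squaring with the networks of Lemma~\ref{lem:mult_network_asymp} at precision $2^{-(m+k)}$. It has depth and sparsity $\asymp(k+m)\log(k+m)$ and constant width. The gap is in the gluing step. The network $\sum_{j=0}^{k-1}\varphi^{\mathrm{mult}}(\chi_j(y),2^jg_N(2^jy))$ runs $k$ independent copies of $g_N$ in parallel, one per dyadic block, and parallelisation adds sparsities. Each branch has nonzero weights in each of its $\asymp(k+m)\log(k+m)$ layers, so the total sparsity really is $\asymp k(k+m)\log(k+m)$. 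Your construction therefore lies in $\widetilde{\Phi}((k+m)\log(k+m),k,k(k+m)\log(k+m),2^k)$ and misses the stated sparsity bound $(k+m)\log(k+m)$ by a factor $k$. In this paper the lemma is only used with $k,\ell\asymp\log m$, where the extra factor would be harmless, but your argument does not prove the statement as written.

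The missing idea is to do the range reduction once, in a cheap piecewise-linear layer, and then run a single expensive approximation. Take $s(y)\coloneqq\sum_j 2^j\chi_j(y)$, or the piecewise-linear interpolant of $1/y$ at the nodes $2^{-j}$. It is computed with width $O(k)$, $O(k)$ nonzero parameters and weights $\lesssim 2^k$; split any larger slope across two layers. It satisfies $c_1\le s(y)\,y\le c_2$ on $[2^{-k},1]$ for absolute constants $0<c_1<c_2<2$. The single-chain construction is then:
\begin{itemize}
\item compute $y'=\varphi^{\mathrm{mult}}(y,s(y))$ with $C=2^k$ and precision parameter $\asymp m+2k$;
\item apply one copy of your $g_N$, adapted to $[c_1,c_2]$, to $y'$;
\item carry $s(y)$ along through identity layers;
\item output $2^{-\overline{k}}s(y)\,g_N(y')$ using one further multiplication.
\end{itemize}
Alternatively, run $\asymp\log(k+m)$ Newton steps $z\mapsto z(2-yz)$ from $z_0=s(y)$; these contract because $|1-ys(y)|\le\max(|1-c_1|,|1-c_2|)<1$. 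In either version the width $k$ is confined to the first layer and the sparsity is $O(k+(k+m)\log(k+m))$, as claimed. This also removes a loose end in your version. For inactive $j$ the argument $2^jy$ lies far outside $[1/2,1]$, so you would have to clip it to the enlarged block before applying $g_N$, or rely on $\varphi^{\mathrm{mult}}(0,\cdot)\equiv 0$ for arbitrarily large second arguments, to guarantee those branches contribute exactly zero.
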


\begin{lemma}
	\label{lem:exp_network}
	For $m\in\N$ there exists a neural network $\varphi_m^{\mathrm{exp}}\in\widetilde{\Phi}(m^2\log^2m, m\log m, m^3\log^3 m, 1)$ satisfying
	\[
		|\varphi_m^{\mathrm{exp}}(x)-\mathrm{e}^{-x}|
		\le 2^{-m},\qquad x\ge0.
	\]
\end{lemma}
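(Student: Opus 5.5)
The plan is to build $\varphi_m^{\mathrm{exp}}$ by combining range reduction with a Taylor polynomial realised through the multiplication networks of Lemma~\ref{lem:mult_network_asymp}.

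\emph{Step 1: truncation.} Since $\mathrm{e}^{-x}\le 2^{-m-1}$ for $x\ge x_m\coloneqq (m+1)\log 2$, it suffices to approximate $\mathrm{e}^{-x}$ on $[0,x_m]$. The final output will then be clipped to $[0,1]$ using two ReLUs, and the input will be replaced by $\min(x,x_m)=x-\sigma(x-x_m)$. For $x\ge x_m$ both $\mathrm{e}^{-x}$ and $\mathrm{e}^{-x_m}$ lie in $[0,2^{-m-1}]$, so the error there is at most $2^{-m-1}$ plus the approximation error at $x_m$.

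\emph{Step 2: range reduction.} Write $\mathrm{e}^{-x}=\big(\mathrm{e}^{-x/N}\big)^N$ with $N=2^{j}$ and $j\asymp\log m$, chosen so that $x/N\in[0,1]$; this is possible since $x_m\asymp m$. On $[0,1]$, approximate $y\mapsto \mathrm{e}^{-y}$ by its Taylor polynomial of degree $k\asymp m$. The tail is at most $1/(k+1)!\le 2^{-2m-j}$. The monomials $y^i$ are computed iteratively, $y^{i}=\varphi^{\mathrm{mult}}_{\ell}(y,y^{i-1})$, with all arguments in $[0,1]$, and then summed with coefficients $(-1)^i/i!$. This uses $k\asymp m$ multiplications of depth $\ell\asymp m+\log m$. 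Evaluated in parallel via a stepwise chain, this gives depth $\asymp m\cdot\ell\lesssim m^2$, width $O(k)$, sparsity $O(k\ell)$, and weights bounded by $1$, using $C=1$ in Lemma~\ref{lem:mult_network_asymp}. The result $g$ is clipped to $[0,1]$.

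\emph{Step 3: repeated squaring.} Apply $j$ squarings $u\mapsto\varphi^{\mathrm{mult}}_{\ell}(u,u)$. Since all values stay in $[0,1]$, each squaring is $2$-Lipschitz in its input and adds error $2^{-\ell}$, so the total error after $j$ steps is at most $2^{j}(\varepsilon_0+j2^{-\ell})$, where $\varepsilon_0$ is the error of Step 2. Choosing $\varepsilon_0,2^{-\ell}\le 2^{-m-j-2}/j$, with $\ell\asymp m+\log m$, gives total error $2^{-m}$. The resulting size is within the claimed $\widetilde{\Phi}(m^2\log^2 m, m\log m, m^3\log^3 m,1)$; the stated bounds leave logarithmic slack, which covers the bookkeeping of the identity channels needed to carry $y$ through the depth.

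\emph{Main obstacle.} The hardest part is the error propagation in Step 2. The iterated products $y^{i}$ accumulate multiplication errors, which grow at most linearly in $i$ because $|\partial_u\varphi^{\mathrm{mult}}(y,u)|\lesssim 1$ on $[0,1]$. The coefficients $1/i!$ then damp the accumulated errors, so the sum error is $O(k2^{-\ell})$. The remaining concern is keeping every intermediate value inside $[0,1]$ so that Lemma~\ref{lem:mult_network_asymp} applies; this will be enforced by clipping each intermediate with $\min(\max(\cdot,0),1)$, which costs $O(1)$ extra ReLUs per layer.
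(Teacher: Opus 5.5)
Your construction is correct, and its skeleton is the paper's. You write $\mathrm{e}^{-x}=(\mathrm{e}^{-x/N})^N$ with $N\asymp m$, approximate $\mathrm{e}^{-y}$ on $[0,1]$ by a Taylor polynomial whose monomials come from chained multiplication networks, and then raise to the $N$-th power. The differences are in how you implement it.

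\emph{Powering.} The paper uses the linear chain $\varphi^{\mathrm{pow},\lceil K\rceil}_k$ of $\lceil K\rceil\asymp m$ multiplications. You take $N=2^j$ and use $j\asymp\log m$ squarings. Both schemes amplify the Taylor-stage error by the Lipschitz constant $N$ of $u\mapsto u^N$ on $[0,1]$, so the error analysis is the same. Your powering stage, however, has depth only $\asymp m\log m$.

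\emph{Large $x$.} The paper multiplies the output by the cutoff $p(x)$. You clamp the input instead, which saves the final multiplication and keeps the Taylor stage from ever seeing arguments outside $[0,1]$.

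\emph{Sizes.} With your choice $k,\ell\asymp m$ (the paper takes $k\ge m\log m$), you get depth $\asymp m^2$, width $\lesssim m$ and sparsity $\lesssim m^3$. This is inside the claimed class with logarithmic room. Your clipping of intermediates to $[0,1]$ also makes explicit that every call to Lemma~\ref{lem:mult_network_asymp} stays within its range, which the paper leaves implicit.

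Two small repairs are needed.

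First, linear growth of the monomial errors does not follow from $|\partial_u\varphi^{\mathrm{mult}}_\ell(y,u)|\lesssim 1$. Lemma~\ref{lem:mult_network_asymp} gives no Lipschitz bound on the network, and a constant $L>1$ would only give geometric growth $L^i$. The clean argument splits against the exact product. With $P_{i-1}$ the clipped approximant of $y^{i-1}$ and $y\in[0,1]$,
$|\varphi^{\mathrm{mult}}_\ell(y,P_{i-1})-y^i|\le|\varphi^{\mathrm{mult}}_\ell(y,P_{i-1})-yP_{i-1}|+y\,|P_{i-1}-y^{i-1}|\le 2^{-\ell}+|P_{i-1}-y^{i-1}|$.
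This is exactly how the paper handles $\varphi^{\mathrm{pow},n}$, and how you already handle the squarings in Step~3.

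Second, the bias $x_m\asymp m$ in $x-\sigma(x-x_m)$ violates the weight bound $1$ you claim. Clamp after rescaling instead, e.g.\ $y=x/N-\sigma(x/N-1)=\min(x/N,1)$. This suffices because $N\ge x_m$ gives $\mathrm{e}^{-N}\le 2^{-m-1}$. The paper's cutoff $p$, with biases $K$ and $K-1$, has the same blemish.
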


\begin{proof}
	First note that for $x\ge m\log 2\coloneqq K$, we have $\mathrm{e}^{-x}\le 2^{-m}$, and so we need find an approximation $\varphi^{\mathrm{exp}}_m$ satisfying $|\varphi_m^{\mathrm{exp}}(x)-\mathrm{e}^{-x}|\le 2^{-m}$ for $x\in[0, K]$ with $|\varphi^{\mathrm{exp}}_m(x)|\le 2^{-m}$ for $x\ge K$.
	To this end, note then that $\mathrm{e}^{-x}=\big(\mathrm{e}^{-\frac{x}{\lceil K\rceil}}\big)^{\lceil K\rceil}$, whereby we need only approximate $\mathrm{e}^{-x}$ and $x^n$ for $x\in[0, 1]$ and $n\in\N$.
	To this end, let $\varphi_{k_1}^{\mathrm{mult}}$ be as in Lemma \ref{lem:mult_network_asymp} with $C=1$ for some $k_1\in\N$ to be determined later and set $\varphi_{k_1}^{\mathrm{pow}, n}(x)=\varphi_{k_1}^{\mathrm{mult}}(x, \varphi_{k_1})^{\mathrm{pow}, n-1}$ for $n\ge3$ with $\varphi_{k_1}^{\mathrm{pow}, 2}(x)=\varphi_{k_1}^{\mathrm{mult}}(x, x)$.
	Then, $\varphi^{\mathrm{pow}, n}_{k_1}\in\widetilde{\Phi}(nk_1, 1, nk_1, 1)$, and we have
	\[
		|\varphi^{\mathrm{pow}, n}_{k_1}(x)-x^{n}|
		\le|\varphi^{\mathrm{pow}, n}_{k_1}(x)-x\varphi^{\mathrm{pow}, n-1}_{k_1}(x)|+x|\varphi^{\mathrm{pow}, n-1}_{k_1}(x)-x^{n-1}|
		\le 2^{-k_1}+|\varphi^{\mathrm{pow}, n-1}_{k_1}(x)-x^{n-1}|,
	\]
	from which it follows by elementary induction that $|\varphi^{\mathrm{pow}, n}_{k_1}(x)-x^{n}|\lesssim n2^{-k_1}$.
	Next, for some $k_2\in\N$, also to be determined later, let
	\[
		\widetilde{\varphi}_{k_1, k_2}^{\mathrm{exp}}(x)
		=1-x+\sum_{k=2}^{k_2}\frac{(-1)^k\varphi_{k_1}^{\mathrm{pow}, k}(x)}{k!},
	\]
	such that $\widetilde{\varphi}_{k_1, k_2}^{\mathrm{exp}}\in\widetilde{\Phi}(k_1k_2, k_2, k_1k_2^2, 1)$ and
	\[
		|\widetilde{\varphi}_{k_1, k_2}^{\mathrm{exp}}(x)-\mathrm{e}^{-x}|
		\le\Big|\mathrm{e}^{-x}-\sum_{k=0}^{k_2}\frac{(-x)^k}{k!}\Big|+\sum_{k=2}^{k_2}\frac{|\varphi_{k_1}^{\mathrm{pow}, k}(x)-x^k|}{k!}
		\lesssim\frac{1}{k_2!}+2^{-k_1}\sum_{k=2}^{k_2}\frac{k-1}{k!}
		\le\frac{1}{k_2!}+2^{-k_1}
	\]
	for all $x\in[0, 1]$.
	Setting $\overline{\varphi}_m^{\mathrm{exp}}(x)=\varphi_{k}^{\mathrm{pow}, \lceil K\rceil}\circ\widetilde{\varphi}_{k, k}^{\mathrm{exp}}\big(\frac{x}{\lceil K\rceil}\big)$ with $k\ge m\log m\vee 4$ (ensuring $\frac{1}{k!}\le 2^{-k}$) for $x\in[0, K]$, we then see that $\overline{\varphi}_m^{\mathrm{exp}}\in\widetilde{\Phi}(m^2\log^2m, m\log m, m^3\log^3 m, 1)$ 
	\begin{align*}
		\big|\overline{\varphi}_m^{\mathrm{exp}}(x)-\mathrm{e}^{-x}\big|
		&\le\Big|\overline{\varphi}_m^{\mathrm{exp}}(x)-\widetilde{\varphi}_{k, k}^{\mathrm{exp}}\Big(\frac{x}{\lceil K\rceil}\Big)^{\lceil K\rceil}\Big|+\Big|\widetilde{\varphi}_{k, k}^{\mathrm{exp}}\Big(\frac{x}{\lceil K\rceil}\Big)^{\lceil K\rceil}-\Big(\mathrm{e}^{-\frac{x}{\lceil K\rceil}}\Big)^{\lceil K\rceil}\Big| \\
		&\lesssim \lceil K\rceil\Big(2^{-k}+\Big|\widetilde{\varphi}_{k, k}^{\mathrm{exp}}\Big(\frac{x}{\lceil K\rceil}\Big)-\mathrm{e}^{-\frac{x}{\lceil K\rceil}}\Big|\Big) \\
		&\lesssim m2^{-k} \\
		&\le 2^{-m}.
	\end{align*}
	Finally, the function
	\[
		p(x)
		=\begin{cases}
			1, &\text{ if }x\le K-1\\
			K-x, &\text{ if }K-1\le x\le K\\
			0, & \text{ if }x\ge K
		\end{cases}
	\]
	is exactly representable as a neural network, and setting $\varphi^{\mathrm{exp}}_m(x)=\varphi^{\mathrm{mult}}_m(p(x), \overline{\varphi}^{\mathrm{exp}}_m(x))$ ensures that $|\varphi^{\mathrm{exp}}_m(x)|\le 2^{-m}$ for $x\ge K$ without altering the asymptotic size of the network.
\end{proof}

\begin{lemma}
	\label{lem:norm_network}
	For $m\in\N$ and $K\in\N_0$ there exists a neural network $\varphi^{\mathrm{norm}}_{m}\in\widetilde{\Phi}(m, D, Dm, K)$ satisfying
	\[
		\big|\varphi_m^{\mathrm{norm}}(x)-|x|^2\big|
		\le DK^22^{-m},\qquad
		\n{x}_{\infty}\le K.
	\]
\end{lemma}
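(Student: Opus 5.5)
The approach is to reduce $|x|^2=\sum_{i=1}^D x_i^2$ to the approximation of the scalar square map on $[0,1]$, using the multiplication network of Lemma~\ref{lem:mult_network_asymp}. First I rescale each coordinate: for $\n{x}_\infty\le K$, write $x_i = K(2y_i-1)$ with $y_i=\frac{x_i+K}{2K}\in[0,1]$. The affine map $x_i\mapsto y_i$ has weights $1/(2K)$ and $1/2$, so it fits the norm constraint $K$ (for $K\ge1$; the case $K=0$ is trivial with the zero network). It is also cleaner to use $|x_i| = \sigma(x_i)+\sigma(-x_i)$, which lets me work directly with $u_i = |x_i|/K\in[0,1]$ in one ReLU layer. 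Then $x_i^2 = K^2 u_i^2$.

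Next I approximate $u_i^2$ by $\varphi^{\mathrm{mult}}_m(u_i,u_i)$ with $C=1$. By Lemma~\ref{lem:mult_network_asymp}, $|\varphi^{\mathrm{mult}}_m(u_i,u_i)-u_i^2|\le 2^{-m}$, and this network lies in $\widetilde{\Phi}(m,1,m,1)$. Multiplying the output by $K^2$ gives an error of at most $K^22^{-m}$ per coordinate. To keep weights bounded by $K$, I would not use a single weight $K^2$; instead I split the factor $K\cdot K$ across two final linear layers, or place one factor $K$ in the input scaling, writing $u_i=|x_i|/K$ and then multiplying by $K$ twice through consecutive linear maps.

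Finally I run the $D$ coordinate networks in parallel and sum their outputs with a final linear layer whose weights are all $1$. The parallelisation rules stated before Theorem~\ref{theo:score_approx} give depth $\lesssim m$, width $\lesssim D$, sparsity $\lesssim Dm$ and weight bound $\lesssim K$, that is, $\varphi^{\mathrm{norm}}_m\in\widetilde{\Phi}(m,D,Dm,K)$. The triangle inequality over the $D$ coordinates then yields the error bound $DK^22^{-m}$.

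The only delicate point is the bookkeeping that keeps all weights at most of order $K$ rather than $K^2$, and this is handled by splitting the $K^2$ rescaling across two layers. Everything else is a direct application of Lemma~\ref{lem:mult_network_asymp} and the composition and parallelisation rules.
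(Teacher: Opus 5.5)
Your proposal is correct and matches the paper's proof. The paper invokes a ``small modification'' of Lemma~\ref{lem:mult_network_asymp} to get a network $\widetilde{\varphi}^{\mathrm{mult}}_m$ with weights $\lesssim K$ and error $K^2 2^{-m}$ for $xy$ on $[-K,K]^2$, then sums $\widetilde{\varphi}^{\mathrm{mult}}_m(x_i,x_i)$ over the $D$ coordinates; your rescaling $u_i=|x_i|/K$ together with the factor $K\cdot K$ split over two layers spells out that modification. The only bookkeeping to add is that consecutive linear maps must be separated by a ReLU. This costs nothing here, because you may replace the multiplication output by its positive part: the target $u_i^2$ is nonnegative, so this cannot increase the error.
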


\begin{proof}
	A small modification of Lemma \ref{lem:mult_network_asymp} yields a network $\widetilde{\varphi}^{\mathrm{mult}}_m\in\widetilde{\varphi}(m, 1, m, K)$ with
	\[
		|\widetilde{\varphi}^{\mathrm{mult}}_m(x, y)-xy|
		\le K^22^{-m},\qquad x, y\in[-K, K].
	\]
	Setting $\varphi^{\mathrm{norm}}_m(x)=\sum_{i=1}^{D}\widetilde{\varphi}^{\mathrm{mult}}_m(x_i, x_i)$, we have that $\varphi^{\mathrm{norm}}_m\in\widetilde{\Phi}(m, D, Dm, K)$, and for all $x\in[-K, K]^D$, we have
	\[
		\big|\varphi^{\mathrm{norm}}_m(x)-|x|^2\big|
		\le\sum_{i=1}^D|\widetilde{\varphi}_m^{\mathrm{mult}}(x_i, x_i)-x_i^2|
		\le DK^22^{-m}.
	\]
\end{proof}

\begin{lemma}
	\label{lem:covering_network}
	For every compact set $K\subseteq\R^d$ with diameter $R>0$ and every $\varepsilon>0$, there exists a neural network $\varphi_{\bm{1}_K}\in\widetilde{\Phi}\big(\log \frac{R}{\varepsilon}, (\frac{R}{\varepsilon})^d, (\frac{R}{\varepsilon})^d, \frac{1}{\varepsilon}\big)$ satisfying $\varphi_{\bm{1}_K}(x)\in[0, 1]$ for all $x\in\R^d$, $\varphi_{\bm{1}_K}(K)=1$ and $\varphi_{\bm{1}_K}(K_{\varepsilon}^\mathsf{c})=0$, where $K_\varepsilon$ is the $\varepsilon$-fattening of $K$.
\end{lemma}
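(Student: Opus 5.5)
We build $\varphi_{\bm{1}_K}$ by a grid-cube covering and a smooth cutoff, followed by a capped sum. The plan is to smooth the indicator of a finite union of small cubes covering $K$.

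\textbf{Step 1 (grid).} Place $K$ inside an axis-parallel cube of side $R$. Partition it into a grid of cubes $Q_j$ of side $h=\varepsilon/(2\sqrt d)$; there are $\lesssim (R/\varepsilon)^d$ of them. Keep only those $Q_j$ with $Q_j\cap K\neq\varnothing$. These retained cubes cover $K$, and since each has diameter $\le\varepsilon/2$, they lie inside $K_{\varepsilon/2}$.

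\textbf{Step 2 (one trapezoid bump per cube).} For each retained cube $Q_j=\prod_i[a_i,a_i+h]$, we use the one-dimensional trapezoid
\[
\tau_i(x_i)=\sigma\big(1-\sigma(a_i-x_i)/\eta\big)-\sigma\big(-\sigma(a_i-x_i)/\eta\big)\wedge\ldots
\]
or, more simply, $\tau_i(x_i)=\min\{1,\sigma(1-\mathrm{dist}(x_i,[a_i,a_i+h])/\eta)\}$ with $\eta=\varepsilon/(2\sqrt d)$. This is an exact ReLU expression using $O(1)$ neurons and weights of size $\lesssim 1/\varepsilon$ (the inverse slope). The bump $\tau_i$ equals $1$ on $[a_i,a_i+h]$ and vanishes outside its $\eta$-neighbourhood.

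Instead of multiplying the $\tau_i$, which would require approximate multiplication networks, we combine them exactly:
\[
g_j(x)=\sigma\Big(\sum_{i=1}^d\tau_i(x_i)-(d-1)\Big).
\]
This $g_j$ takes values in $[0,1]$, equals $1$ on $Q_j$, and vanishes as soon as some $\tau_i=0$. Hence $g_j$ is supported in $Q_j+[-\eta,\eta]^d$, which lies inside an $\varepsilon/2$-neighbourhood of $Q_j$ and therefore inside $K_\varepsilon$.

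\textbf{Step 3 (capped sum).} Set
\[
\varphi_{\bm{1}_K}=\min\Big\{1,\sum_j g_j\Big\}=1-\sigma\Big(1-\sum_j g_j\Big),
\]
where the last expression is valid because $\sum_j g_j\ge0$. Then:
\begin{itemize}
\item $\varphi_{\bm{1}_K}\in[0,1]$ everywhere;
\item $\varphi_{\bm{1}_K}=1$ on $K$, since each point of $K$ lies in some retained $Q_j$ where $g_j=1$;
\item $\varphi_{\bm{1}_K}=0$ off $K_\varepsilon$, since every $g_j$ vanishes there.
\end{itemize}

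\textbf{Size count.} Each $g_j$ has constant depth and $O(d)$ parameters. The parallel stacking gives width and sparsity $\lesssim(R/\varepsilon)^d$. The weights are bounded by $\lesssim 1/\varepsilon$, apart from the grid offsets $a_i$, which are bounded by $R$ plus the location of $K$.

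This yields depth $O(1)\lesssim\log(R/\varepsilon)$, so the construction lies in the claimed class $\widetilde\Phi$.

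\textbf{Main obstacle.} The delicate point is the parameter bound $B\lesssim1/\varepsilon$. The biases $a_i$ can be large if $K$ is far from the origin. Two fixes are available:
\begin{itemize}
\item translate via a first layer with a bias $c$ (the centre of the bounding cube), assuming $K$ lies in a bounded region, as in all applications here where $K\subset M^*$;
\item alternatively, split a large bias across a few extra layers. This is where the $\log(R/\varepsilon)$ depth allowance can be used.
\end{itemize}
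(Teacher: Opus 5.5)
Your construction is correct, but it takes a different route from the paper's. The paper works in $\|\cdot\|_\infty$ throughout:
\begin{itemize}
\item it takes a minimal $\varepsilon'/4$-net $y_1,\dots,y_N$ of $K$ in $\|\cdot\|_\infty$, with $\varepsilon'=\varepsilon/\sqrt d$;
\item it computes the exact ReLU function $\varphi^{\mathrm{dist}}(x)=\min_i\|x-y_i\|_\infty$ by a divide-and-conquer tree of pairwise minima, which is where its depth $\log N\asymp\log(R/\varepsilon)$ comes from;
\item it applies a single clipped ramp $\bigl(1\wedge(\tfrac32-2\varphi^{\mathrm{dist}}/\varepsilon')\bigr)\vee0$.
\end{itemize}
You instead build the indicator of a union of grid cubes directly. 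Per-coordinate trapezoids are combined by the exact ReLU ``AND'' $\sigma(\sum_i\tau_i-(d-1))$, and the cubes by the capped sum $\min\{1,\sum_j g_j\}$. No $N$-fold minimum is needed, so your network has constant depth, with the same width, sparsity and weight bound $\asymp\sqrt d/\varepsilon$ up to factors of $d$.

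What each route buys: the paper's proof is verified by two inequalities, namely $\varphi^{\mathrm{dist}}<\varepsilon'/4$ on $K$ and $\varphi^{\mathrm{dist}}\ge 3\varepsilon'/4$ off $K^\infty_{\varepsilon'}$. Its output is also a $2/\varepsilon'$-Lipschitz function of an approximate distance. Yours gives a shallower network at the cost of a slightly longer support check, which you carry out correctly: $g_j>0$ forces $\mathrm{dist}(x,Q_j)<\sqrt d\,\eta=\varepsilon/2$, and $Q_j$ lies within $\varepsilon/2$ of $K$.

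Two small points. First, your first displayed formula for $\tau_i$ is garbled. The second version, $\tau_i=\sigma\bigl(1-(\sigma(a_i-x_i)+\sigma(x_i-a_i-h))/\eta\bigr)$, is all you need, and the outer $\min\{1,\cdot\}$ is redundant. Second, ``translating via a first-layer bias $c$'' does not reduce the parameter size, since $c$ is itself a parameter. The accurate statement is that the constant hidden in $B\lesssim 1/\varepsilon$ depends on where $K$ sits. The paper uses this tacitly too, since its biases are the net points $y_i$. It is harmless here because the lemma is only applied to fixed bounded sets $M^*_{-3\varepsilon_M/4}$ and $M^*_{\varepsilon_M/4}$ with fixed $\varepsilon=\varepsilon_M/4$.
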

\begin{proof}
	First, for $r>0$, let $K_r^{\infty}=\{x\in\R^d: \exists y\in K \text{ s.t. }\n{x-y}_{\infty}<r\}$ denote the $r$-fattening of $K$ with respect to $\n{\cdot}_{\infty}$.
	Since $|x|\le\sqrt{d}\n{x}_{\infty}$, we then have that $K_\varepsilon^\mathsf{c}\subseteq (K_{\varepsilon'}^\infty)^\mathsf{c}$, where $\varepsilon'=\frac{\varepsilon}{\sqrt{d}}$, and so we need only find a neural network $\varphi_{\bm{1}_K}$ satisfying $\varphi_{\bm{1}_K}(x)\in[0, 1]$ with $\varphi_{\bm{1}_K}(K)=1$ and $\varphi_{\bm{1}_K}( (K_{\varepsilon'}^\infty)^{\mathsf{c}} )=0$.
	The reason for working with $\n{\cdot}_{\infty}$ rather than $|\cdot|$ is that while $|x|$ needs to be approximated by neural networks, $\n{x}_{\infty}$ is itself exactly representable as a neural network, as $a\vee b=a+\sigma(b-a)$ and $\n{x}_{\infty}=(-x_1\vee x_1)\vee(-x_2\vee x_2)\vee\ldots\vee(-x_d\vee x_d)$.
	Now, let $y_1, y_2, \ldots, y_N$ be a minimal $\frac{\varepsilon'}{4}$-covering of $K$ with respect to $\n{\cdot}_{\infty}$, and set $\varphi^{\mathrm{dist}}(x)=\min_{i\in[N]}\n{x-y_i}_{\infty}$.
	Since also $a\wedge b=b-\sigma(b-a)$, $\varphi^{\mathrm{dist}}$ is also representable as a neural network.
	In particular, by using a divide and conquer strategy, we have that $\varphi^{\mathrm{dist}}\in\widetilde{\Phi}(\log N, N, N, 1)$, and we see that $\varphi^{\mathrm{dist}}$ satisfies $\varphi^{\mathrm{dist}}(x)>\frac{3\varepsilon'}{4}$ for $x\notin K_{\varepsilon'}^{\infty}$, while $\varphi^{\mathrm{dist}}(x)<\frac{\varepsilon'}{4}$ for $x\in K$.
	Lastly, set
	\[
		\varphi_{\bm{1}_{K}}(x)
		=\Big(1\wedge\Big(\frac{3}{2}-\frac{2\varphi^{\mathrm{dist}}(x)}{\varepsilon'}\Big)\Big)\vee0,
	\]
	and we see that $\varphi_{\bm{1}_K}$ satisfies our criteria and that $\varphi_{\bm{1}_K}\in\widetilde{\Phi}(\log N, N, N, \frac{1}{\varepsilon'})$.
	Finally, noting that since $K$ is of diameter $R$ and hence contained in $[0, R]^d+y_0$ for some $y_0\in\R^d$, its covering number is less than that of $[0, R]^D$, i.e.
	\[
		N
		=N\Big(K, \n{\cdot}_{\infty}, \frac{\varepsilon'}{4}\Big)
		\le N\Big([0, R]^d, \n{\cdot}_{\infty}, \frac{\varepsilon'}{4}\Big)
		\le\Big\lceil\frac{4\sqrt{d}R}{\varepsilon}\Big\rceil^d
		\lesssim\Big(\frac{R}{\varepsilon}\Big)^d,
	\]
	whence $\varphi_{\bm{1}_K}\in\widetilde{\Phi}\big(\log \frac{R}{\varepsilon}, (\frac{R}{\varepsilon})^d, (\frac{R}{\varepsilon})^d, \frac{1}{\varepsilon}\big)$ as desired.
\end{proof}

\begin{lemma}[Proposition 1 in \cite{suzuki19}]
	\label{lem:sobolev_network}
	Let $S$ be a Lipschitz domain with $S\subseteq[-K, K]^d$ for some $K\ge1$ and let $g:S\to\R$ have Sobolev smoothness $\gamma$ for some $\gamma>\frac{d}{2}$, i.e. $\n{g}_{H^\gamma}<\infty$.
	Then, for large enough $m\in\N$, there exists a neural network $\varphi_g\in\widetilde{\Phi}( \gamma^2\log m, \gamma^2m, \gamma^4m\log m, m^{\nu})$ where $\nu=\frac{d}{\gamma-\frac{d}{2}}+\frac{1}{d}$, satisfying
	\[
		|g(u)-\varphi_g(u)|
		\lesssim K^{\gamma-\frac{d}{2}}\n{g}_{H^\gamma}m^{-\frac{\gamma}{d}},\qquad u\in[-K, K]^d
	\]
\end{lemma}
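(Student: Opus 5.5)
The plan is to reduce the statement to the Besov-space approximation theorem of \cite{suzuki19} (their Proposition~1) by three reductions: extend $g$ off the Lipschitz domain $S$, rescale the cube $[-K,K]^d$ to $[0,1]^d$ while tracking the Sobolev norm, and embed $H^\gamma$ into the Besov scale. At the end we precompose with an affine layer. I take the approximation theorem of \cite{suzuki19} as given and only verify that its hypotheses and parameters translate into the form stated.

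\emph{Extension.} Since $S$ is a Lipschitz domain, Stein's universal extension operator $E$ gives $Eg\in H^\gamma(\R^d)$ with $\n{Eg}_{H^\gamma(\R^d)}\lesssim\n{g}_{H^\gamma(S)}$. The constant depends only on the Lipschitz character of $S$ and on $\gamma$, and $Eg=g$ on $S$. Because $\gamma>d/2$, Sobolev embedding makes $Eg$ continuous, so pointwise errors are meaningful.

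\emph{Rescaling.} Define $h(v)\coloneqq Eg(2Kv-K)$ for $v\in[0,1]^d$. By the chain rule and a change of variables,
\[
\n{\partial^\beta h}_{L^2([0,1]^d)}=(2K)^{|\beta|-\frac d2}\n{\partial^\beta Eg}_{L^2([-K,K]^d)}.
\]
Since $K\ge1$, every order $|\beta|\le\gamma$ is dominated by the top order, so $\n{h}_{H^\gamma([0,1]^d)}\lesssim K^{\gamma-\frac d2}\n{g}_{H^\gamma(S)}$. This is exactly the prefactor in the claimed bound.

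\emph{Besov approximation.} We use $H^\gamma=B^\gamma_{2,2}\hookrightarrow B^\gamma_{2,\infty}$ with a norm bound. We then apply \cite[Proposition~1]{suzuki19} with parameters $p=2$, $q=\infty$, smoothness $s=\gamma$, target norm $L^\infty$ and $N=m$ basis functions.
\begin{itemize}
\item The requirement $\gamma>d(1/p-1/r)_+=d/2$ holds by assumption. It yields a ReLU network $\widetilde\varphi$ with $\n{h-\widetilde\varphi}_{L^\infty([0,1]^d)}\lesssim\n{h}_{B^\gamma_{2,\infty}}m^{-\gamma/d}$.
\item The network architecture in \cite{suzuki19} comes from cardinal B-spline quasi-interpolation of order $\sim\gamma$, with each B-spline realised by products of piecewise linear pieces.
\item This gives depth $\lesssim\gamma^2\log m$, width $\lesssim\gamma^2 m$, sparsity $\lesssim\gamma^4 m\log m$ and weights bounded by $m^{\nu}$.
\item The weight exponent is $\nu=\frac{d}{\gamma-d/2}+\frac1d$ in our notation. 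It comes from Suzuki's choice of the finest dyadic resolution, needed to control the $L^\infty$ error when $r=\infty>p=2$.
\end{itemize}

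\emph{Affine precomposition.} Set $\varphi_g(u)\coloneqq\widetilde\varphi\big((u+K)/(2K)\big)$. The affine map is absorbed into the first layer. Its entries are $1/(2K)\le1$ and $1/2$, so the class $\widetilde\Phi$ is unchanged. On $[-K,K]^d$, and in particular on $S$, this gives $|g-\varphi_g|\lesssim K^{\gamma-d/2}\n{g}_{H^\gamma}m^{-\gamma/d}$.

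The main obstacle is the third step. Suzuki's statement is phrased in terms of $N$, $\alpha$, $\delta$ and $\nu$ with his own conventions for the weight bound and the polynomial dependence on the smoothness order. Checking that these specialise to the exact exponents claimed ($\gamma^2$, $\gamma^4$, and $\nu=\frac{d}{\gamma-d/2}+\frac1d$) is the delicate part. It requires tracing how the resolution level $K^*$ and the B-spline order enter his construction. My first attempt would be to take his explicit choice of $K^*$ for the case $r>p$ and insert $\delta=d(1/2-1/\infty)=d/2$.
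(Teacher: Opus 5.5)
Your proposal is correct and follows the paper's proof: extend $g$ off the Lipschitz domain $S$, rescale the cube, bound the rescaled Sobolev norm by $K^{\gamma-\frac d2}\|g\|_{H^\gamma}$ using the same chain-rule and change-of-variables computation, and invoke Suzuki's Proposition 1 through $H^\gamma\cong B^\gamma_{2,2}$. The differences are cosmetic: you rescale to $[0,1]^d$ with an affine map (the paper uses $u\mapsto Ku$ onto $[-1,1]^d$), you make the Stein extension and its norm bound explicit, and you detour through $B^\gamma_{2,\infty}$, which is unnecessary since Suzuki's result covers any $q$; also, like the paper (which normalises to the unit Besov ball and multiplies the output by the norm), you implicitly rescale the output by $\|h\|$, and neither version reflects this in the stated weight bound $m^{\nu}$.
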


\begin{proof}
    First, since $S$ is Lipschitz, we may extend $g$ to a function $\widehat{g}:[-K, K]^d\to\R$, also with Sobolev smoothness $\gamma$.
    To avoid the cumbersome notation, we simply assume without loss of generality that $S=[-K, K]^d$.
	Then, let $\eta_K(u)=Ku$ and set $\widetilde{g}=\frac{g\circ\eta_K}{\n{g\circ\eta_K}_{B_{2, 2}^\gamma}}$, where $\n{\cdot}_{B_{2, 2}^\gamma}$ denotes the norm associated with the Besov space $B_{2, 2}^{\gamma}$.
	Since $H^\gamma\cong B_{2, 2}^{\gamma}$, we have $\widetilde{g}\in B_{2, 2}^{\gamma}$ and $\n{\widetilde{g}}_{B_{2, 2}^\gamma}=1$, whence it follows by \cite[Proposition 1]{suzuki19} that there exists a neural network $\widetilde{\varphi}_{g}\in\widetilde{\Phi}(\gamma^2\log m, \gamma^2 m, \gamma^4 m\log m, m^\nu)$ with
	\[
		|\widetilde{\varphi}_g(u)-\widetilde{g}(u)|
		\lesssim m^{-\frac{\gamma}{d}},\qquad u\in[-1, 1]^{d}.
	\]
	Now, letting $\varphi_g(u)=\n{g\circ\eta_K}_{B_{2, 2}^{\gamma}}\widetilde{\varphi}_g(\frac{u}{K})$, it follows that for any $u\in[-K, K]^d$, we have
	\[
		|\varphi_g(u)-g(u)|
		=\n{g\circ\eta_K}_{B_{2, 2}^{\gamma}}\Big|\widetilde{\varphi}_g\Big(\frac{u}{K}\Big)-\widetilde{g}\Big(\frac{u}{K}\Big)\Big|
		\lesssim\n{g\circ\eta_K}_{B_{2, 2}^{\gamma}} m^{-\frac{\gamma}{d}}.
	\]
	To bound $\n{g\circ\eta_K}_{B_{2, 2}^{\gamma}}$, we first note that $\n{g\circ\eta_K}_{B_{2, 2}^{\gamma}}\asymp\n{g\circ\eta_K}_{H^\gamma}$, and that for $\beta\in\N_0^d$ we have $\partial^\beta (g\circ\eta_K)=K^{|\beta|}(\partial^\beta g)\circ\eta_K$, while
	\[
		\n{(\partial^\beta g)\circ\eta_K}_{L^2}^2
		=\int_{\R^d}|\partial^\beta g(Ku)|^2\diff{u}
		=K^{-d}\int_{\R^d}|\partial^\beta g(v)|^2\diff{v}
		=K^{-d}\n{\partial^\beta g}_{L^2}^2.
	\]
	Combining these, we have
	\[
		\n{g\circ\eta_K}_{H^\gamma}
		=\sqrt{\sum_{|\beta|\le\gamma}\n{\partial^\beta(g\circ\eta_K)}_{L^2}^2}
		=\sqrt{\sum_{|\beta|\le\gamma}K^{2|\beta|-d}\n{\partial^\beta g}_{L^2}^2}
		\le K^{\gamma-\frac{d}{2}}\n{g}_{H^\gamma},
	\]
	as desired.
\end{proof}

\begin{lemma}
	\label{lem:capping_network}
	Let $g\colon E\to\R^{k}$ be a function on some  subset $E \subset \R^{W_0}$ such that $|g(s)|\le C$ for all $s\in E$ and some constant $C>0$.
	Then, for all $\widetilde{\varphi}\in\widetilde{\Phi}(L, W, S, B)$, where $W_{L+1} = k$, there exists a neural network $\varphi\in\widetilde{\Phi}(L, W, S, C\vee B)$ satisfying
	\[
		|g(s)-{\varphi}(s)|\le|g(s)-\widetilde{\varphi}(s)|,
	\]
	and $|\varphi(s)|\le \sqrt{k}C$ for all $s\in E$.
\end{lemma}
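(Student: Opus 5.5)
The statement is the capping lemma, Lemma~\ref{lem:capping_network}. The plan is to post-compose $\widetilde{\varphi}$ with a componentwise clipping map. This map is exactly representable by ReLU layers, so it changes the architecture only by a constant.

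For a scalar $y$, define
\[
\mathrm{clip}_C(y)\coloneqq (y\wedge C)\vee(-C)=\sigma(y+C)-\sigma(y-C)-C .
\]
This is a two-layer ReLU expression with weights $\pm1$ and biases $\pm C$. We set $\varphi\coloneqq \mathrm{clip}_C\circ\widetilde{\varphi}$, with the clip applied in each of the $k$ output coordinates. Concretely, we replace the output layer $A_L$ of $\widetilde{\varphi}$ by one more hidden layer of width $2k$. This layer computes $\sigma(A_L h + C)$ and $\sigma(A_L h - C)$ componentwise. It is followed by a linear output map with entries $\pm1$. The constant $-C$ is absorbed by writing $\mathrm{clip}_C(y)=\sigma(y+C)-\sigma(y-C)-C$ through an extra bias-carrying unit, or equivalently by the identity $\mathrm{clip}_C(y)=\sigma(y+C)-C-\sigma(y-C)$ with a constant unit $\sigma(C)=C$.

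The depth then increases by one, and the width increases by at most $2k+1$. The number of nonzero parameters increases by $O(k\,\|A_L\|_0+k)$, since duplicating the last layer at most doubles its nonzeros. All new weights have magnitude at most $1\vee C$. Since $\widetilde{\Phi}$ is only defined up to constants, and $k$ is the fixed output dimension, this gives $\varphi\in\widetilde{\Phi}(L,W,S,C\vee B)$.

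The error claim follows from the fact that $\mathrm{clip}_C$ is a 1-Lipschitz projection of each coordinate onto $[-C,C]$. The bound $|g(s)|\le C$ only gives $|g_j(s)|\le C$ for each coordinate $j$, so $g_j(s)\in[-C,C]$. Projection onto this interval is a contraction towards any of its points, so
\[
|g_j(s)-\mathrm{clip}_C(\widetilde\varphi_j(s))| = |\mathrm{clip}_C(g_j(s))-\mathrm{clip}_C(\widetilde\varphi_j(s))| \le |g_j(s)-\widetilde\varphi_j(s)|.
\]
Summing the squares over $j$ gives $|g(s)-\varphi(s)|\le |g(s)-\widetilde\varphi(s)|$. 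Finally, each coordinate of $\varphi$ lies in $[-C,C]$, so $|\varphi(s)|\le\sqrt{k}\,C$.

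The only point needing care is the bookkeeping of the architecture sizes. One must check that the added clipping layer stays within the $\lesssim$ constants of $\widetilde{\Phi}$. This holds because the added layer scales with the output dimension $k$, which is treated as fixed, and the parameter bound is raised only to $C\vee B$.
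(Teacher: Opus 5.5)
Your proposal is correct and follows the paper's proof: you clip each output coordinate of $\widetilde{\varphi}$ to $[-C,C]$, use that this projection fixes $g_j(s)\in[-C,C]$ and is $1$-Lipschitz to get the error comparison, bound $|\varphi(s)|\le\sqrt{k}\,C$, and note that clipping is exactly ReLU-representable at constant extra cost. The only difference is cosmetic: your one-hidden-layer identity $\sigma(y+C)-\sigma(y-C)-C$ is a slightly leaner representation than the paper's nested formula, and your accounting of the added depth, width and sparsity is more explicit.
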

\begin{proof}
	First, note that for all $s\in E$ we have $\n{g(s)}_{\infty}\le|g(s)|\le C$, so setting
	\[
		\varphi(s)
		=\mat{\varphi_1(s) \\ \vdots \\ \varphi_k(s)}
		=\mat{\widetilde{\varphi}_1(s)\wedge C\vee(-C) \\ \vdots \\ \widetilde{\varphi}_k(s)\wedge C\vee(-C)},
	\]
	we have immediately that $|\varphi_i(s)-g_i(s)|\le|\widetilde{\varphi}_i(s)-g_i(s)|$ and hence $|\varphi(s)-g(s)|\le|\widetilde{\varphi}(s)-g(s)|$, while $|\varphi(s)|\le\sqrt{k}\n{\varphi(s)}_{\infty}\le kC$ for all $s \in E$.
	Finally, noting that
	\[
		\widetilde{\varphi}_i(s)\wedge C\vee(-C)
		=\widetilde{\varphi}_i(s)-\sigma(\widetilde{\varphi}_i(s)-C)+\sigma\big(\sigma(\widetilde{\varphi}_i(s)-C)-\widetilde{\varphi}_i(s)-C\big),
	\]
	it follows that $\varphi\in\widetilde{\Phi}(L, W, S, C\vee B)$.
\end{proof}

\section{Auxiliary technical results}
\begin{lemma}
	\label{lem:brownian_bound}
	Let $(W_t)_{t\ge0}$ be a $k$-dimensional Brownian motion for some $k\in\N$, and let $\rho>1$.
	Then, the following bounds hold:
	\begin{enumerate}[ref=\thelemma.(\alph*), label=(\alph*)]
		\item\label{lem:brownian_bound_a}
		$\mathbb{P}\bigl(|W_t|>\sqrt{t(k+2\rho)}\bigr)\lesssim \rho^{\frac{k}{2}}\mathrm{e}^{-\rho}$;
		\item\label{lem:brownian_bound_b}
		$\mathbb{E}\bigl[|W_t|\bm{1}_{\{|W_t|>\sqrt{t(k+2\rho)}\}}\bigr] \lesssim \sqrt{t}\,\rho^{\frac{k+1}{2}}\mathrm{e}^{-\rho}$.
	\end{enumerate}
\end{lemma}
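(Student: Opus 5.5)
By Brownian scaling $W_t \overset{d}{=} \sqrt{t}\,W_1$, it suffices to treat $t=1$: part (a) is scale-free, and part (b) picks up exactly the factor $\sqrt{t}$. So we work with $R=|W_1|$, which has the chi distribution with $k$ degrees of freedom, with density
\[
c_k r^{k-1}\mathrm{e}^{-r^2/2}, \qquad r>0 .
\]
Set $a=\sqrt{k+2\rho}$. Then
\[
\mathbb{P}(R>a)=c_k\int_a^\infty r^{k-1}\mathrm{e}^{-r^2/2}\,\mathrm{d}r,
\qquad
\mathbb{E}\bigl[R\bm{1}_{\{R>a\}}\bigr]=c_k\int_a^\infty r^{k}\mathrm{e}^{-r^2/2}\,\mathrm{d}r,
\]
and both claims reduce to the one-dimensional tail estimate
\[
\int_a^\infty r^{j}\mathrm{e}^{-r^2/2}\,\mathrm{d}r\lesssim a^{j-1}\mathrm{e}^{-a^2/2}\qquad\text{for } j\in\{k-1,k\}.
\]

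To prove this estimate, substitute $s=r^2/2$. This turns the integral into an upper incomplete Gamma function, $2^{(j-1)/2}\Gamma\bigl(\tfrac{j+1}{2},\tfrac{a^2}{2}\bigr)$. The standard bound $\Gamma(p,x)\le C_p x^{p-1}\mathrm{e}^{-x}$ holds for $x\ge x_0(p)$; for instance it follows for $x\ge 2(p-1)$ by comparing the integrand $s^{p-1}\mathrm{e}^{-s}$ with $x^{p-1}\mathrm{e}^{-s/2}\mathrm{e}^{-(s-x)/2}$, or simply by integrating by parts once and absorbing the remainder. Since $\rho>1$, we have $a^2/2=k/2+\rho\ge k/2+1$, which lies in this range up to a constant depending only on $k$. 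For the finitely many values where this fails, the bound holds trivially with a $k$-dependent constant. All of these constants are absorbed in $\lesssim$, which is allowed to depend on $k$.

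We then insert $a^2=k+2\rho$. This gives $\mathrm{e}^{-a^2/2}=\mathrm{e}^{-k/2}\mathrm{e}^{-\rho}$ and $a^{j-1}\lesssim \rho^{(j-1)/2}$ because $\rho>1$. Taking $j=k-1$ yields the bound $\rho^{(k-2)/2}\mathrm{e}^{-\rho}\le\rho^{k/2}\mathrm{e}^{-\rho}$, which proves (a). Taking $j=k$ yields $\rho^{(k-1)/2}\mathrm{e}^{-\rho}\le\rho^{(k+1)/2}\mathrm{e}^{-\rho}$, and multiplying by $\sqrt{t}$ proves (b). Both stated bounds are therefore weaker than what the computation delivers, so no sharpness is needed. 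The only mildly delicate point is stating the incomplete Gamma bound uniformly over $\rho>1$ with constants depending only on $k$; this is handled by the case split above.
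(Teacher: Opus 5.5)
Your argument is correct, but for part (a) it takes a different route from the paper's.

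\textbf{Part (a).} The paper never uses the incomplete Gamma function here. It applies Markov's inequality to $\mathrm{e}^{\lambda|Z_k|^2}$, uses the $\chi^2_k$ moment generating function $(1-2\lambda)^{-k/2}$, and optimises at $\lambda=\rho/(k+2\rho)$. This gives the explicit bound $(1+2\rho/k)^{k/2}\mathrm{e}^{-\rho}$, valid for every $\rho>0$ with no case distinction.

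\textbf{Part (b).} The paper writes the truncated moment as $\sqrt{2}\,\Gamma(\frac{k+1}{2},x)/\Gamma(\frac{k}{2})$ with $x=\frac{k+2\rho}{2}$. It then uses $\Gamma(s,x)\sim x^{s-1}\mathrm{e}^{-x}$ to get $\Gamma(\frac{k+1}{2},x)/\Gamma(\frac{k}{2},x)\asymp\sqrt{x}$, and bootstraps from (a).

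\textbf{What your route buys.} You use a single non-asymptotic estimate, $\Gamma(p,x)\lesssim x^{p-1}\mathrm{e}^{-x}$ for $x$ bounded away from $0$, for both parts. This removes the detour through (a) in (b). It also gives the sharp prefactors $\rho^{(k-2)/2}$ and $\rho^{(k-1)/2}$, a full factor $\rho$ better than stated. The paper loses that factor in the Chernoff step, and (b) inherits the loss. The cost is the uniformity bookkeeping for small $\rho$. The paper needs the same bookkeeping implicitly when it turns the asymptotic equivalence into $\asymp$ uniformly over $\rho>1$.

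\textbf{Two small slips, neither affecting the conclusion.}

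First, the majorant $x^{p-1}\mathrm{e}^{-s/2}\mathrm{e}^{-(s-x)/2}$ you wrote does not dominate $s^{p-1}\mathrm{e}^{-s}$ for large $s$ when $p>1$. Integrating it would also only give $x^{p-1}\mathrm{e}^{-x/2}$. The correct comparison is $s^{p-1}\mathrm{e}^{-s}\le x^{p-1}\mathrm{e}^{-x}\mathrm{e}^{-(s-x)/2}$ for $s\ge x\ge 2(p-1)$, which holds because $(s/x)^{p-1}\le\mathrm{e}^{(p-1)(s-x)/x}$. Your integration-by-parts alternative, $\Gamma(p,x)\le x^{p-1}\mathrm{e}^{-x}+\frac{p-1}{x}\Gamma(p,x)$ for $p\ge1$, is fine as stated.

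Second, the exceptional set is not finitely many values but the interval $1<\rho<\frac{k}{2}-1$, which is nonempty once $k>4$. The trivial bound still works there. The left-hand sides are bounded by constants depending on $k$, and the right-hand sides are bounded below by a positive $k$-dependent constant on this bounded range of $\rho$.
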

\begin{proof}
	Let $Z_k\sim\mathcal{N}(0,I_k)$.
	For any $\lambda\in(0,\tfrac12)$, Markov's inequality yields
	\[
		\mathbb{P}\bigl(|Z_k|>\sqrt{k+2\rho}\bigr)
		=\mathbb{P}\Bigl(\mathrm{e}^{\lambda |Z_k|^2}>\mathrm{e}^{\lambda(k+2\rho)}\Bigr)
		\le \mathbb{E}\bigl[\mathrm{e}^{\lambda |Z_k|^2}\bigr]\mathrm{e}^{-\lambda(k+2\rho)}
		= M_{\chi^2_k}(\lambda)\,\mathrm{e}^{-\lambda(k+2\rho)},
	\]
	where $M_{\chi^2_k}(\lambda)=(1-2\lambda)^{-k/2}$ denotes the moment generating function of the $\chi^2_k$ distribution.
	Define $\psi(\lambda)\coloneqq(1-2\lambda)^{-\frac{k}{2}}\mathrm{e}^{-\lambda(k+2\rho)}$.
	A direct computation shows that
	\[
	\psi'(\lambda)
	=2\bigl(\lambda(k+2\rho)-\rho\bigr)(1-2\lambda)^{-\frac{k}{2}-1}\mathrm{e}^{-\lambda(k+2\rho)},
	\]
	which is negative for $\lambda<\tfrac{\rho}{k+2\rho}$ and positive for $\lambda>\tfrac{\rho}{k+2\rho}$.
	Hence, $\psi$ attains its minimum at $\lambda=\tfrac{\rho}{k+2\rho}$, and we obtain
	\[
		\mathbb{P}\bigl(|Z_k|>\sqrt{k+2\rho}\bigr)
		\le \psi\Bigl(\frac{\rho}{k+2\rho}\Bigr)
		=\Bigl(1-\frac{2\rho}{k+2\rho}\Bigr)^{-\frac{k}{2}}\mathrm{e}^{-\rho} 
		=\Bigl(1+\frac{2\rho}{k}\Bigr)^{\frac{k}{2}}\mathrm{e}^{-\rho}
		\lesssim \rho^{\frac{k}{2}}\mathrm{e}^{-\rho}.
	\]
	Since $W_t\stackrel{d}{=}\sqrt{t}\,Z_k$, this proves \hyperref[lem:brownian_bound_a]{(a)}.
	We next consider the truncated first moment.
	Using polar coordinates, we compute
	\begin{align*}
		\mathbb{E}\bigl[|Z_k|\bm{1}_{\{|Z_k|>\sqrt{k+2\rho}\}}\bigr]
		&=(2\pi)^{-\frac{k}{2}}\int_{\{|x|>\sqrt{k+2\rho}\}}|x|\mathrm{e}^{-\frac{|x|^2}{2}}\diff x
		=\frac{2^{-\frac{k}{2}+1}}{\Gamma(\frac{k}{2})}
		\int_{\sqrt{k+2\rho}}^{\infty} r^{k}\mathrm{e}^{-\frac{r^2}{2}}\diff r \\
		&=\frac{\sqrt{2}}{\Gamma(\frac{k}{2})}
		\int_{\frac{k+2\rho}{2}}^{\infty} u^{\frac{k+1}{2}-1}\mathrm{e}^{-u}\diff u 
		=\sqrt{2}\,
		\frac{\Gamma\bigl(\frac{k+1}{2},\frac{k+2\rho}{2}\bigr)}{\Gamma(\frac{k}{2})},
	\end{align*}
	where $\Gamma(s,x)$ denotes the Gamma function. Moreover,
	\[
	\mathbb{P}\bigl(|Z_k|>\sqrt{k+2\rho}\bigr)
	=\frac{\Gamma\bigl(\frac{k}{2},\frac{k+2\rho}{2}\bigr)}{\Gamma(\frac{k}{2})}.
	\]
	Combining this with part~\hyperref[lem:brownian_bound_a]{(a)}, we find
	\[
	\mathbb{E}\bigl[|Z_k|\bm{1}_{\{|Z_k|>\sqrt{k+2\rho}\}}\bigr]
	\lesssim
	\frac{\Gamma\bigl(\frac{k+1}{2},\frac{k+2\rho}{2}\bigr)}{\Gamma\bigl(\frac{k}{2},\frac{k+2\rho}{2}\bigr)}
	\rho^{\frac{k}{2}}\mathrm{e}^{-\rho}.
	\]
	Using the asymptotic relation $\Gamma(s,x)\sim x^{s-1}\mathrm{e}^{-x}$ as $x\to\infty$, we obtain
	\[
	\frac{\Gamma(s+\frac{1}{2}, x)}{\Gamma(s, x)}x^{-\frac{1}{2}}
	=\frac{\Gamma(s+\frac{1}{2}, x)x^{1-s-\frac{1}{2}}\mathrm{e}^{x}}{\Gamma(s, x)x^{1-s}\mathrm{e}^{x}}
	\to 1\quad\text{ as }x\to\infty,
	\]
	and hence
	\[
		\mathbb{E}\bigl[|Z_k|\bm{1}_{\{|Z_k|>\sqrt{k+2\rho}\}}\bigr]\lesssim \frac{\Gamma(\frac{k+1}{2}, \frac{k+2\rho}{2})}{\Gamma(\frac{k}{2}, \frac{k+2\rho}{2})}\rho^{\frac{k}{2}}\mathrm{e}^{-\rho}
		\asymp \sqrt{\frac{k+2\rho}{2}}\,
		\rho^{\frac{k}{2}}\mathrm{e}^{-\rho}
		\lesssim \rho^{\frac{k+1}{2}}\mathrm{e}^{-\rho}.
	\]
	Finally, since $|W_t|\stackrel{d}{=}\sqrt{t}\,|Z_k|$, the last display already yields \hyperref[lem:brownian_bound_b]{(b)}.
\end{proof}

\begin{lemma}
	Let $k\in\N$ be given and set $t_j=\cos(\frac{j\pi}{k})$ for $j=0,\ldots,k$ and let $p_i(t)=\prod_{j\neq i}(t-t_j)$ for $i=0, \ldots, k$.
	Then, $|\frac{p_i(t)}{p_i(t_i)}|\le2$ for all $i$ and $t\in(-1, 1)$.
\end{lemma}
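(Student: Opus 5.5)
The plan is to avoid estimating the product $p_i(t)/p_i(t_i)$ directly. It is the Lagrange basis polynomial $\ell_i(t)\coloneqq p_i(t)/p_i(t_i)$ for the Chebyshev--Lobatto nodes $t_j=\cos(j\pi/k)$, and I will expand it in the Chebyshev basis $T_0,\ldots,T_k$ using discrete orthogonality. Throughout write $\sum''$ for a sum over $j=0,\ldots,k$ in which the first and last terms are halved, and set $\delta_j=\tfrac12$ for $j\in\{0,k\}$ and $\delta_j=1$ otherwise.

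\emph{Step 1 (discrete orthogonality).} With $\theta_m=m\pi/k$ one has $T_j(t_m)=\cos(j\theta_m)$. The standard trigonometric identity $\sum''_{m}\cos(j\theta_m)\cos(l\theta_m)=\tfrac{k}{2}\delta_{jl}$ holds for $0\le j,l\le k$ with $(j,l)\notin\{(0,0),(k,k)\}$, and the sum equals $k$ for $j=l\in\{0,k\}$. I will verify this by writing the product as $\tfrac12[\cos((j+l)\theta_m)+\cos((j-l)\theta_m)]$. The sum of $\cos(r\theta_m)$ with halved endpoints vanishes for $0<|r|<2k$ and $r$ not a multiple of $2k$, by the usual geometric-sum or telescoping argument.

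\emph{Step 2 (expansion of $\ell_i$).} Since $\ell_i$ has degree $k$, it can be written uniquely as $\ell_i=\sum''_{j} c_j T_j$. Evaluating at the nodes, multiplying by $T_l(t_m)$ and applying $\sum''_m$ with Step 1, I will identify the coefficients as
\[
c_j=\frac{2}{k}\,\delta_i\,T_j(t_i).
\]
The halvings at $j\in\{0,k\}$ are exactly what the double-prime convention absorbs, and the factor $\delta_i$ comes from the weight of node $i$ in $\sum''_m$.

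\emph{Step 3 (bound).} For $t\in(-1,1)$ we have $|T_j(t)|\le1$ and $|T_j(t_i)|\le1$, so
\[
|\ell_i(t)|\le \frac{2\delta_i}{k}\sum\nolimits''_{j}1=\frac{2\delta_i}{k}\cdot k=2\delta_i\le 2 .
\]
This is the claim.

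The only delicate point is the bookkeeping of the endpoint halvings in Steps 1 and 2. One must check that the double-prime normalisation produces exactly $\tfrac{k}{2}$ off the ends and $k$ at $j\in\{0,k\}$, so that the coefficients come out uniformly as $\tfrac{2\delta_i}{k}T_j(t_i)$.

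As a fallback, I would use the closed form $\ell_i(\cos\theta)=(-1)^{i+1}\delta_i\,\sin\theta\,\sin k\theta/\bigl(k(\cos\theta-\cos\theta_i)\bigr)$, which follows from the node polynomial being proportional to $(t^2-1)T_k'(t)$. I expect this route to give a messier bound, so I would use it only as a check on the expansion.
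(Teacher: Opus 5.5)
Your proof is correct, and it takes a genuinely different route from the paper. The paper follows what you call your fallback. It identifies $\prod_{j=1}^{k-1}(t-t_j)$ with $U_{k-1}(t)/2^{k-1}$ and obtains $\prod_{j=0}^{k}(\cos\theta-t_j)=-\sin\theta\sin k\theta/2^{k-1}$ together with the values $p_i(t_i)$. It then reduces by symmetry to $\theta<\theta_i=i\pi/k$ and factorises $\cos\theta-\cos\theta_i=-2\sin\frac{\theta+\theta_i}{2}\sin\frac{\theta-\theta_i}{2}$. Finally it bounds the two resulting quotients separately: $\lvert\sin\theta/\sin\frac{\theta+\theta_i}{2}\rvert\le 2$, via a monotonicity argument in the auxiliary parameter $a=\theta_i$, and $\lvert\sin k\theta/\sin\frac{\theta-\theta_i}{2}\rvert\le 2k$. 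Together these give $\frac{1}{2k}\cdot 2\cdot 2k=2$.

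Your discrete-orthogonality expansion $\ell_i=\frac{2\delta_i}{k}\sum\nolimits''_{j}T_j(t_i)\,T_j$ replaces this trigonometric case analysis by a single triangle inequality. The endpoint bookkeeping works out as you anticipate. Applying $\sum\nolimits''_m$ to $\sum\nolimits''_j c_jT_j(t_m)T_l(t_m)$ returns $\frac{k}{2}c_l$ for every $l\in\{0,\ldots,k\}$: weight $1$ times $\frac k2$ in the interior, and weight $\frac12$ times $k$ at the ends.

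Your route also buys slightly more. The bound holds on the closed interval $[-1,1]$ and improves to $1$ for $i\in\{0,k\}$. It also sidesteps the double-supremum manipulation in the paper's first quotient estimate. As written, the paper's reindexing only covers $a\in(\pi-\theta,\pi)$. The complementary range $a\in(\theta,\pi-\theta)$ has to be added: there the quotient is decreasing in $a$ and hence at most its value $1$ at $a=\theta$, so the final bound survives. What the paper's approach buys in return is an explicit closed form for $\ell_i$. That form carries pointwise information, such as the decay of $\ell_i(\cos\theta)$ away from $\theta_i$, which a coefficient-wise bound cannot see, though none of this is needed for the lemma as stated.
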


\begin{proof}
	Let $p(t)=\prod_{j=0}^{k}(t-t_j)$ and $\widehat{p}(t)=\prod_{j=1}^{k-1}(t-t_j)$.
	By comparing roots, we see that $\widehat{p}$ is simply a re-scaling of the $k-1$'st Chebyshev polynomial of the second kind $U_{k-1}$ given by $U_{k-1}(\cos\theta)\sin\theta=\sin k\theta$.
	In particular, since by L'Hôpital we have
	\[
		U_{k-1}(1)
		=\lim_{\theta\to0}U_{k-1}(\cos\theta)
		=\lim_{\theta\to0}\frac{\sin k\theta}{\sin\theta}
		=\lim_{\theta\to0}\frac{k\cos k\theta}{\cos\theta}
		=k,
	\]
	and by \cite{trefethen13} that $\widehat{p}(1)=\frac{p_0(t_0)}{2}=\frac{k}{2^{k-1}}$, we have $\widehat{p}(t)=\frac{U_{k-1}(t)}{2^{k-1}}$.
	In particular,
	\[
		p(\cos\theta)
		=(\cos\theta-1)(\cos\theta+1)\widehat{p}(\cos\theta)
		=-\sin^2(\theta)\frac{U_{k-1}(\cos\theta)}{2^{k-1}}
		=\frac{-\sin\theta\sin k\theta}{2^{k-1}},
	\]
	and so for $\theta\neq\frac{i\pi}{k}$,
	\[
		p_i(\cos\theta)
		=-\frac{\sin\theta\sin k\theta}{2^{k-1}(\cos\theta-\cos\frac{i\pi}{k})},
	\]
	while \cite{trefethen13} again yields that
	\[
		p_i(t_i)
		=\begin{cases}
			(-1)^i\frac{k}{2^{k-1}}, &\text{ if }i\notin\{0, k\}\\
			(-1)^i\frac{k}{2^{k-2}}, &\text{ if }i\in\{0, k\}.
		\end{cases}
	\]
	Thus, for $\theta\neq\frac{i\pi}{k}$,
	\[
		\Big|\frac{p_i(\cos\theta)}{p_i(t_i)}\Big|
		\le\frac{|\sin\theta\sin k\theta|}{k|\cos\theta-\cos\frac{i\pi}{k}|}.
	\]
	From the above it follows that $|\frac{p_i(\cos\theta)}{p_i(t_i)}|=|\frac{p_{k-i}(\cos(\pi-\theta))}{p_{k-i}(t_{k-i})}|$, and so we may assume going forward that $\theta<\frac{i\pi}{k}$.
	Next, since $|\cos\theta-\cos\frac{i\pi}{k}|=2|\sin(\frac{\theta}{2}+\frac{i\pi}{2k})\sin(\frac{\theta}{2}-\frac{i\pi}{2k})|$, it follows that
	\[
		\Big|\frac{p_i(t)}{p_i(t_i)}\Big|
		\le\frac{1}{2k}\sup_{\theta\in(0, \frac{i\pi}{k})}\Big|\frac{\sin\theta}{\sin(\frac{\theta}{2}+\frac{i\pi}{2k})}\Big|\sup_{\theta\in(0, \frac{i\pi}{k})}\Big|\frac{\sin k\theta}{\sin(\frac{\theta}{2}-\frac{i\pi}{2k})}\Big|.
	\]
	Now, let $g(a, \theta)=\frac{\sin\theta}{\sin\frac{\theta+a}{2}}$ and note that $g(a, \theta)\ge0$ for $\theta\le a\le\pi$, and we have
	\[
		\sup_{\theta\in(0, \frac{i\pi}{k})}\Big|\frac{\sin\theta}{\sin(\frac{\theta}{2}+\frac{i\pi}{2k})}\Big|
		=\sup_{\theta\in(0, \frac{i\pi}{k})}g\Big(\frac{i\pi}{k}, \theta\Big)
		\le \sup_{a\in(0, \pi)}\sup_{\theta\in(0, a)}g(a, \theta)
		=\sup_{\theta\in(0, \pi)}\sup_{a\in(\pi-\theta, \pi)}g(a, \theta).
	\]
	Since for fixed $\theta\in(0, \pi)$ and $a\in(\pi-\theta, \pi)$ we have
	\[
		\frac{\mathrm{d}}{\mathrm{d}a}g(a, \theta)
		=-\frac{1}{2}g(a, \theta)\cot\frac{\theta+a}{2}
		\ge0,
	\]
	it follows that $\sup_{a\in(\pi-\theta, \pi)}g(a, \theta)=g(\pi, \theta)$.
	Therefore, since $g(\pi, \theta)=2\sin\frac{\theta}{2}$, we have
	\[
		\sup_{\theta\in(0, \frac{i\pi}{k})}\Big|\frac{\sin\theta}{\sin(\frac{\theta}{2}+\frac{i\pi}{2k})}\Big|
		\le 2.
	\]
	Next, we have that
	\[
		\sup_{\theta\in(0, \frac{i\pi}{k})}\Big|\frac{\sin k\theta}{\sin(\frac{\theta}{2}-\frac{i\pi}{2k})}\Big|
		\le\sup_{\theta\in(0, \pi)}\Big|\frac{\sin k\theta}{\sin(\frac{\theta}{2}-\frac{i\pi}{2k})}\Big|
		=\sup_{\theta\in(0, \pi)}\Big|\frac{\sin (k(\theta-\frac{i\pi}{k}))}{\sin(\frac{\theta}{2}-\frac{i\pi}{2k})}\Big|
		=\sup_{\theta\in(0, \pi)}\Big|\frac{\sin k\theta}{\sin\frac{\theta}{2}}\Big|
		=2k.
	\]
	Plugging both of these into the estimate above, we find that $|\frac{p_i(t)}{p_i(t_i)}|\le 2$ as desired.
\end{proof}

\end{document}